\numberwithin{equation}{section}
\theoremstyle{plain}
\newtheorem{thm}{Theorem}[section]
\newtheorem{corollary}[thm]{Corollary}
\newtheorem{lemma}[thm]{Lemma}
\newtheorem*{heuristic}{Heuristic}
\newtheorem{definition}{Definition}[section]
\newtheorem{example}[thm]{Example}
\newtheorem*{definition*}{Definition}
\newtheorem{proposition}{Proposition}
\newtheorem{remark}[thm]{Remark}
\newtheorem{formula}{Formula}[section]
\newcommand\equalsquestion{\stackrel{\mathclap{\normalfont\mbox{?}}}{=}}
\DeclareMathOperator*{\esssup}{ess\,sup}
\begin{document}

\begin{frontmatter}
\title{Pathwise Convergence of the Hard Spheres Kac Process}
\runtitle{Pathwise Convergence of the Kac Process}

\begin{aug}
\author{\fnms{Daniel} \snm{Heydecker}\thanksref{t1}\ead[label=e1]{dh489@cam.ac.uk}}.

\thankstext{t1}{This work was supported by the UK Engineering and Physical Sciences Research Council (EPSRC) grant EP/L016516/1 for the University of Cambridge Centre for Doctoral Training, the Cambridge Centre for Analysis}

\runauthor{D. Heydecker}

\affiliation{University of Cambridge}

\address{Centre for Mathematical Sciences\\
Wilberforce Road, Cambridge\\ CB3 0WA\\
\printead{e1}\\}

\end{aug}

\begin{abstract}
We derive two estimates for the deviation of the $N$-particle, hard-spheres Kac process from the corresponding Boltzmann equation, measured in expected Wasserstein distance. Particular care is paid to the long-time properties of our estimates, exploiting the stability properties of the limiting Boltzmann equation at the level of realisations of the interacting particle system. As a consequence, we obtain an estimate for the propagation of chaos, uniformly in time and with polynomial rates, as soon as the initial data has a $k^\mathrm{th}$ moment, $k>2$.  Our approach is  similar to Kac's proposal of relating the long-time behaviour of the particle system to that of the limit equation. Along the way, we prove a new estimate for the continuity of the Boltzmann flow measured in Wasserstein distance.
\end{abstract}

\begin{keyword}[class=MSC]
\kwd[Primary ]{60J25}
\kwd{60K35}
\kwd[; secondary ]{35Q20}
\end{keyword}

\begin{keyword}
\kwd{Kac Process}
\kwd{Law of Large Numbers}
\kwd{Wasserstein Distance}
\end{Boltzmann Equation}

\end{frontmatter}

 \section{Introduction \& Main Results} Kac \cite{FKT} introduced a Markov model for the behaviour of a dilute gas, corresponding to the spatially homogeneous Boltzmann equation.  We consider an ensemble of $N$ indistinguishable particles, with velocities  $v_1(t), ..., v_N(t) \in \mathbb{R}^d$ at time $t\ge 0$, which are are encoded in the empirical velocity distribution \begin{equation} \mu^N_t=N^{-1}\sum_{i=1}^N \delta_{v_i(t)}. \end{equation} Throughout, unless specified otherwise, we consider only the following example, known as the \emph{hard spheres} kernel, of Kac processes, which is one of two main examples of physical interest. The dynamics are as follows: \begin{enumerate}\item For every (unordered) pair of particles with velocities $v, v_\star \in \text{supp}(\mu^N_t)$, the particles collide at a rate $2|v-v_\star|/N$.

\item When two particles collide, take an independent random variable $\Sigma$, distributed uniformly on $S^{d-1}$. The particles then separate in direction $\Sigma.$ 
\item The velocities change to $v'(v, v_\star, \Sigma)$ and $v'_\star(v,v_\star, \Sigma)$, given by conservation of energy and momentum as \begin{equation}\label{eq: PCV} v'(v, v_\star, \Sigma)=\frac{v+v_\star+\Sigma|v-v_\star|}{2}; \hspace{0.5cm} v_\star'(v, v_\star, \Sigma)=\frac{v+v_\star-\Sigma|v-v_\star|}{2} \end{equation} The measure changes to \begin{equation} \label{eq: change of measure at collision} \mu \mapsto \mu^{N, v, v_\star, \Sigma} = \mu+\frac{1}{N}(\delta_{v'}+\delta_{v'_\star}-\delta_{v}-\delta_{v_\star}). \end{equation} \end{enumerate}  More formally, we consider the space $\mathcal{S}$ of Borel measures on $\mathbb{R}^d$, satisfying \begin{equation} \langle 1, \mu \rangle =1; \hspace{0.5cm} \langle v, \mu \rangle =0; \hspace{0.5cm} \langle |v|^2, \mu \rangle = 1\end{equation} where we have adopted the notational conventions that angle brackets $\langle, \rangle$ denote integration against a measure, and $v$ denotes the identity function on $\mathbb{R}^d$. $\mathcal{S}$ is called the \emph{Boltzmann Sphere}, and consists of those measures with normalised mass, momentum and energy. We write $\mathcal{S}^k$ for the subspace of $\mathcal{S}$ where the $k^\text{th}$ moment $\langle |v|^k, \mu\rangle$ is finite, and define the following family of weights: \begin{equation}\Lambda_k(\mu):=\langle (1+|v|^2)^\frac{k}{2}, \mu\rangle. \end{equation}  This leads to a natural family of subspaces: \begin{equation} \label{eq: definition of SKA} \mathcal{S}^k_a := \{\mu \in \mathcal{S}: \Lambda_k(\mu)\leq a\}. \end{equation} For shorthand, we will often write $\Lambda_k(\mu, \nu):=\max(\Lambda_k(\mu), \Lambda_k(\nu))$. \medskip \\ 
  Let $\mathcal{S}_N$ be the subset of $\mathcal{S}$ consisting of normalised empirical measures on $N$ points; we will typically write $\mu^N$ for a generic element of $\mathcal{S}_N$. Formally, the Kac process is the Markov process on $\mathcal{S}_N$ with kernel \begin{equation} \label{eq: definition of script Q} \mathcal{Q}_N(\mu^N)(A)= N \int_{\mathbb{R}^d \times \mathbb{R}^d \times S^{d-1}} 1(\mu^{N, v, v_\star, \sigma} \in A) |v-v_\star| \mu^N(dv)\mu^N(dv_\star) d\sigma.\end{equation} Note that, since the map $\mu^N \mapsto \mu^{N,v,v_\star, \sigma}$ preserves particle number, momentum, and kinetic energy, $\mathcal{Q}_N(\mu^N)$ is supported on $\mathcal{S}_N$ whenever $\mu^N \in \mathcal{S}_N$. We write $(\mu^N_t)_{t \geq 0}$ for a Kac process on $N$ particles. Observe that the rates are bounded by $2N$, and so for any initial datum $\mu^N_0$, the law of a Kac process started from $\mu^N_0$ exists, and is unique, and the process is almost surely non-explosive.

\paragraph{Measure Solutions to the Boltzmann Equation}Following many previous works, \cite{L&M, M+M, ACE}, we study measure-valued solutions to the Boltzmann equation. We define the Boltzmann collision operator $Q(\mu, \nu)$  for measures $\mu, \nu \in \mathcal{S}$ as \begin{multline} \label{eq: defn of Q} Q(\mu, \nu)=\int_{\mathbb{R}^d\times \mathbb{R}^d\times S^{d-1}} \left\{\delta_{v'}+\delta_{v_\star'}-\delta_v-\delta_{v_\star}\right\}|v-v_\star|d\sigma \mu(dv)\nu(dv_\star). \end{multline} 
For brevity, we will denote $Q(\mu, \mu)$ by $Q(\mu)$.  We say that a family $(\mu_t)_{t\geq 0}$ of measures in $\mathcal{S}$ satisfies the \emph{Boltzmann equation} if, for any bounded measurable $f$ of compact support, \begin{equation} \tag{BE}\label{BE} \forall t \geq 0 \hspace{1cm} \langle f, \mu_t \rangle =\langle f, \mu_0 \rangle +\int_0^t \langle f, Q(\mu_s)\rangle ds. \end{equation} The Boltzmann equation is known to have a unique fixed point $\gamma \in \mathcal{S}$, which is given by the Maxwellian, or Gaussian, density: \begin{equation}
\gamma(dv)=\frac{e^{-\frac{d}{2}|v|^2}}{(2\pi d^{-1})^{d/2}}dv.
 \end{equation}

 \paragraph{Measuring Convergence to the Boltzmann Equation}To discuss the convergence of Kac's process to the Boltzmann equation, we will work with the following \emph{Wasserstein metric} on $\mathcal{S}$. Consider the Sobolev space of test functions  \begin{equation} X=W^{1, \infty}(\mathbb{R}^d)=\{\text{Bounded, Lipschitz functions  } f:\mathbb{R}^d \rightarrow \mathbb{R} \}; \end{equation}
 \begin{equation} \|f\|_X:=\max\left(\sup_{v}|f|(v), \hspace{0.1cm} \sup_{v\neq w} \frac{|f(v)-f(w)|}{|v-w|}\right). \end{equation} We write $B_X$ for the unit ball of $X$; that is, those functions which are $1$-bounded and $1$-Lipschitz. Given a function $f$ on $\mathbb{R}^d$, we write $\hat{f}$ for the function \begin{equation} \hat{f}(v)=\frac{f(v)}{1+|v|^2}. \end{equation} We write $\mathcal{A}$ for the space of weighted-Lipschitz functions:\begin{equation} \label{eq: defn of script A}\mathcal{A}:=\left\{f: \mathbb{R}^d \rightarrow \mathbb{R}: \hat{f}\in X, \|\hat{f}\|_X\leq 1\right\}.\end{equation} We will also write \begin{equation} \label{eq: defn of script A 0}\mathcal{A}_0=\left\{f: \mathbb{R}^d \rightarrow \mathbb{R}: \hat{f}\in L^\infty(\mathbb{R}^d), \|\hat{f}\|_\infty\leq 1\right\}.\end{equation} The weighted Wasserstein metric $W$ is given by the duality: \begin{equation}\label{eq: definition of W} W(\mu, \nu):= \sup_{f\in\mathcal{A}}|\langle f, \mu-\nu\rangle|. \end{equation}  
We make the following remark on alternative possible choices of metric. Our metric $W$ is closely related to the $p$- Wasserstein metrics $W_p$ on the subspaces $\mathcal{S}^p$, given by \begin{equation}\label{eq: definition of Wp} W_p(\mu, \nu)=\inf\left\{\int_{\mathbb{R}^d} |v-w|^p\pi(dv,dw): \hspace{0.2cm} \pi\text{ is a coupling of } \mu \text{ and } \nu\right\}.\end{equation} In the special case $p=1$, the metric $W_1$ is known as the Monge-Kantorovich-Wasserstein (MKW) metric, and can alternatively be given by \begin{equation} W_1(\mu, \nu)=W\left(\frac{\mu}{1+|v|^2}, \frac{\nu}{1+|v|^2}\right).\end{equation} It is straightforward to check that, on the space $\mathcal{S}$, the metrics $W, W_1, W_2$ all induce the same topology, and that for some absolute constant $C$, we have the bound $W_1 \le CW$ on $\mathcal{S}$. Moreover, on the subspaces $\mathcal{S}^k_a$ defined in (\ref{eq: definition of SKA}), with $k>2$, we can find explicit bounds $W \le C W_1^\alpha$, with $\alpha\in(0,1)$. \medskip \\We now state the motivating result of \cite{ACE} on the convergence of the Kac process to the Boltzmann equation:
\begin{proposition}\label{thrm: bad convergence theorem} \cite[Theorem 10.1]{ACE} Let $k>2$. We say that a family $(\mu_t)_{t\ge 0}$ is locally $\mathcal{S}^k$-bounded if $\sup_{s\leq t} \hspace{0.1cm} \Lambda_k(\mu_s) <\infty $ for any $t \ge 0$. \\ \\ For any $\mu_0 \in \mathcal{S}^k$, there is a unique locally $\mathcal{S}^k$-bounded solution to the Boltzmann equation (\ref{BE}), starting from $\mu_0$; we write this solution as $(\phi_t(\mu_0))_{t\geq 0}$.   \medskip\\ Moreover, for any $\epsilon>0$,   $t_\text{fin}<\infty$, $\lambda<\infty$, there exist constants $C(\epsilon, \lambda, k, t_\text{fin})<\infty$ and $\alpha(d,k)>0$ such that, whenever $(\mu^N_t)_{t\geq 0}$ is a Kac process on $N\geq 1$ particles, with $\Lambda_k(\mu^N_0)  \leq \lambda, \Lambda_k(\mu_0) \leq \lambda$, we have \begin{equation} \mathbb{P}\left(\sup_{t\leq t_\text{fin}} W(\mu^N_t, \phi_t(\mu_0))>C(W(\mu^N_0, \mu_0)+N^{-\alpha})\right)< \epsilon.\end{equation} For $d\geq 3$ and $k>8$, we can take $\alpha = \frac{1}{d}$.  \end{proposition}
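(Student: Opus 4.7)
The plan is to split the statement into well-posedness of the Boltzmann equation on $\mathcal{S}^k$, and the quantitative convergence estimate for $\mu^N$ to $\phi(\mu_0)$.

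For well-posedness, I would first establish propagation of $k$-moments for the flow using a Povzner-type estimate on $\langle (1+|v|^2)^{k/2}, Q(\mu) \rangle$, giving a local bound $\Lambda_k(\phi_t(\mu_0)) \leq C(t, \Lambda_k(\mu_0))$. Uniqueness among locally $\mathcal{S}^k$-bounded solutions would then follow from a Gr\"onwall argument based on the key stability estimate
\begin{equation}
\sup_{f \in \mathcal{A}} |\langle f, Q(\mu) - Q(\nu)\rangle| \;\leq\; C\bigl(\Lambda_k(\mu,\nu)\bigr)\, W(\mu,\nu),
\end{equation}
which essentially says that $Q$ is Lipschitz in $W$ on each $\mathcal{S}^k_a$; this uses the weight $1+|v|^2$ built into $\mathcal{A}$ to absorb the factor $|v-v_\star|$ in the definition of $Q$. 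Existence can be obtained either by Picard iteration on a moment-bounded ball or by extracting weak limits of the Kac processes themselves, whose laws are tight by the same moment bounds.

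For the convergence estimate, I would use the semimartingale decomposition of the particle system: for each $f\in\mathcal{A}$,
\begin{equation}
\langle f, \mu^N_t \rangle = \langle f, \mu^N_0 \rangle + \int_0^t \langle f, Q(\mu^N_s)\rangle\,ds + M^{N,f}_t,
\end{equation}
where $M^{N,f}$ is a c\`adl\`ag martingale whose jumps are $O(N^{-1})$ and whose total jump rate is $O(N\Lambda_1(\mu^N_s))$, so $\langle M^{N,f}\rangle_t = O(N^{-1})$ on moment-bounded events. Subtracting the analogous integral identity for $\phi_t(\mu_0)$ (which has no martingale term) and taking sup over $f$, the deterministic part is controlled by the stability estimate above, provided one has an a priori bound $\sup_{s \leq t_\mathrm{fin}} \Lambda_k(\mu^N_s) \leq \lambda'$ with probability $\geq 1-\epsilon/2$ — itself a consequence of Povzner for the particle system and a Doob inequality applied to the supermartingale $\Lambda_k(\mu^N_t) - (\text{drift})$. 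After Gr\"onwall, the entire problem reduces to controlling
\begin{equation}
\mathcal{M}^N_{t_\mathrm{fin}} := \sup_{t \leq t_\mathrm{fin}}\; \sup_{f \in \mathcal{A}}\; |M^{N,f}_t|.
\end{equation}

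The main obstacle, and the origin of the polynomial rate $N^{-\alpha}$, is precisely this supremum over the infinite class $\mathcal{A}$. I would first truncate velocities to a ball of radius $R$ using the tail bound supplied by the $k$-moment, paying a truncation cost $O(R^{-(k-2)})$. The restriction of $\mathcal{A}$ to this ball can be approximated by a $\delta$-net in $\|\cdot\|_\infty$ of cardinality at most $\exp(C(R/\delta)^d)$, since its elements are uniformly Lipschitz with uniform weighted bound. Doob's $L^2$ inequality applied to each $f$ in the net gives fluctuation of size $N^{-1/2}$, and a union bound together with the truncation error of order $\delta R^2 + R^{-(k-2)}$ yields $\mathcal{M}^N_{t_\mathrm{fin}} = O(N^{-\alpha})$ after balancing $R$ and $\delta$ against $N$. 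The threshold $k>8$ for the explicit rate $\alpha = 1/d$ (as opposed to a smaller $\alpha$ for general $k>2$) comes out of this optimisation: one needs enough moments to make the truncation error negligible compared to $\delta \sim N^{-1/d}$, and this is the most delicate technical step of the argument.
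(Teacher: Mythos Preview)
Your overall architecture — well-posedness via Povzner and Gr\"onwall, then a semimartingale decomposition with a compactness argument for the martingale supremum — is reasonable in outline, and the martingale part is close in spirit to what Norris does. However, there is a genuine gap in the drift part, and it is precisely the step you gloss over.

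You claim the stability estimate
\[
\sup_{f\in\mathcal{A}}\,\bigl|\langle f, Q(\mu)-Q(\nu)\rangle\bigr|\;\le\;C\bigl(\Lambda_k(\mu,\nu)\bigr)\,W(\mu,\nu),
\]
justifying it by ``the weight $1+|v|^2$ built into $\mathcal{A}$ absorbs the factor $|v-v_\star|$''. This does not work. Writing $\langle f, Q(\mu)-Q(\nu)\rangle = \langle L_\rho f, \mu-\nu\rangle$ with $\rho=(\mu+\nu)/2$, one computes (using energy conservation and the Lipschitz bound on $\hat f$) only
\[
|L_\rho f(v)| \;\lesssim\; \Lambda_3(\rho)\,(1+|v|^3),
\]
i.e.\ \emph{cubic} growth, one order too many for $L_\rho f\in C\mathcal{A}$. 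This bound is essentially sharp: for $\hat f(v)=\sin(v_1)$, a stationary-phase computation of the $\sigma$-average gives $L_\rho f(Re_1)\sim c\,R^3\sin R$. So the generator is unbounded on $\mathcal{A}$, your Gr\"onwall does not close, and the argument as written fails.

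The approach actually taken in \cite{ACE} (and described in this paper in Definition~\ref{def: LKP}, Propositions~\ref{prop: continuity for branching process},~\ref{prop: bad representation formula},~\ref{prop: very bad rep formula}) sidesteps exactly this obstruction. Instead of the bare decomposition followed by Gr\"onwall, Norris proves a Duhamel-type \emph{representation formula}
\[
\langle f,\mu^N_t-\phi_t(\mu_0)\rangle \;=\; \langle f_{0t},\,\mu^N_0-\mu_0\rangle \;+\; \int_{(0,t]\times\mathcal{S}_N}\langle f_{st},\,\mu^N-\mu^N_{s-}\rangle\,(m^N-\overline m^N)(ds,d\mu^N),
\]
where the propagated test functions $f_{st}$ are defined as expectations over a signed \emph{linearised Kac branching process} in the random environment $\rho_t=\tfrac12(\mu^N_t+\phi_t(\mu_0))$. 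The key estimate (\cite[Proposition~4.3]{ACE}, here Proposition~\ref{prop: continuity for branching process}) is that $f_{st}\in z_t\,\mathcal{A}$ with $z_t=3\exp\bigl(8\int_0^t\Lambda_3(\rho_u)\,du\bigr)$: the \emph{semigroup} is bounded on $\mathcal{A}$ even though its generator is not, because the signed branching exactly preserves $\langle 1+|v|^2,\Xi_t\rangle$. This replaces your Gr\"onwall step, and the exponential $z_t$ is the source of the bad $t_{\mathrm{fin}}$-dependence of the constant $C$ noted after Proposition~\ref{thrm: bad convergence theorem}. The stochastic integral now involves the anticipating integrands $f_{st}$ rather than a fixed $f$, which complicates the martingale analysis, but the quantitative compactness argument (a hierarchical dyadic decomposition of $\mathbb{R}^d$, rather than a single $\epsilon$-net) is then carried out on these objects.
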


While the study of the convergence of the Kac process to the Boltzmann equation is a well-known and extensively studied topic, this is most usually studied through the propagation of chaos, discussed below, by contrast to the \emph{pathwise} style of estimate here which we seek to emulate. We note that the existence of solutions is known \cite{L&M} for the case $k=2$, but that nothing is known for the convergence of the Kac process in this case.\\ 

From existence and uniqueness, we can consider the Boltzmann equation as describing a non-linear semigroup of flow operators on $(\phi_t)_{t\geq 0}$ on $ \cup_{k>2} \mathcal{S}^k$. 
To prove Proposition \ref{thrm: bad convergence theorem}, Norris \cite{ACE} introduces a family of random linear operators $E_{st}$, and develops a representation formula in terms of these operators, which will be reviewed in Sections \ref{sec: continuity of BE}, \ref{sec: LMR}. Cruicial to the proof are estimates for the operator norms of $E_{st}$, which are obtained by Gr\"onwall-style estimates. As a result, the constant $C$ depends badly on the terminal time $t_\text{fin}$, with \textit{a priori} exponential growth. Our work was inspired by the observation that strong \emph{stability estimates} for the non-linear semigroup $(\phi_t)$, proven by Mischler and Mouhot \cite{M+M}, allow us to avoid using Gr\"onwall-style estimates, and hence obtain estimates with better long-time properties.   

\paragraph{Chaoticity} We will also discuss the notion of chaoticity, which is the usual framework used to analyse the convergence of the Kac process to the Boltzmann equation. In this context, it is natural to preserve the labels on the particles, and to consider the \emph{labelled Kac process} $\mathcal{V}^N_t=(v_1(t),...,v_N(t))$, taking values in the labelled Boltzmann Sphere \begin{equation} \mathbb{S}^{N}=\left\{(v_1, ..., v_N) \in (\mathbb{R}^d)^N: \hspace{0.2cm}\sum_{i=1}^N v_i=0, \hspace{0.2cm}\sum_{i=1}^N |v_i|^2 = N\right\}. \end{equation} We may recover  recover $\mathcal{S}_N$ by taking empirical measures: \begin{equation} \theta_N: \mathbb{S}^N \rightarrow \mathcal{S}_N; \hspace{1cm} (v_1, ..., v_N) \mapsto \frac{1}{N} \sum_{i=1}^N \delta_{v_i}.\end{equation}Moreover, if $\mathcal{V}^N_t$ is a labelled Kac process, then $\mu^N_t=\theta_N(\mathcal{V}^N_t)$ is an unlabelled Kac process. We write $\mathcal{LV}^N_t$ for the law of $(v_1(t),..,v_N(t))$ on $\mathbb{S}^N$. We will measure chaoticity using the following  (unweighted) Wasserstein metrics on probability measures on $(\mathbb{R}^d)^l$ for all $l\ge 1$, defined in a similar way to (\ref{eq: definition of W}): \begin{equation}\label{eq: definition of script W} \mathcal{W}_{1,l}\left(\mathcal{L}, \mathcal{L}'\right)=\sup\left\{\int_{(\mathbb{R}^d)^l} f(V)\hspace{0.1cm}(\mathcal{L}(dV)-\mathcal{L}'(dV)) \right\}\end{equation} where the supremum is over all functions $f$ of the form $f=f_1\otimes f_2 \otimes...\otimes f_l$, with each $f_i$ a bounded and Lipschitz test function, $f_i\in B_X$, and the subscript $l$ recalls the relevant dimension. We now recall the following definition from \cite{FKT}: 
\begin{definition*}[Finite Dimensional Chaos] For each $N$, let $\mathcal{L}^N$ be a law on $\mathbb{S}^N$, which is symmetric under permutations of the indexes. We say that $(\mathcal{L}^N)_{N\ge 2}$ is $\mu$-chaotic, if, for all $l \ge 1$, we have \begin{equation} \label{eq: POC}\mathcal{W}_{1,l}\left(\Pi_l[\mathcal{L}^N], \mu^{\otimes l}\right) \rightarrow 0\end{equation} where $\Pi_l$ denotes the marginal distribution on the first $l$ factors. \end{definition*} A stronger notion, put forward by Mischler and Mouhot \cite{M+M}, is that of \emph{infinite-dimensional chaos}, which allows the number of marginals $l$ to vary with $N$:\begin{equation} \label{eq: IDPOC} \max_{1\le l\le N}\left[\frac{1}{l} \mathcal{W}_{1,l}\left(\Pi_l[\mathcal{L}^N], \mu^{\otimes l}\right)\right] \rightarrow 0.\end{equation}
Kac proposed the following \emph{propagation of chaos} property. Let $(\mathcal{V}^N_t)_{t\ge 0}$ be a labelled Kac process, such that the initial distribution $\mathcal{L}\mathcal{V}^N_t$ is $\mu_0$-chaotic. Then, for all times $t\ge 0$, the law $\mathcal{LV}^N_t$ will be $\phi_t(\mu_0)$-chaotic, where $\phi_t(\mu_0)$ is the solution to the Boltzmann equation starting at $\mu_0$. This is the original sense in which Kac proposed to study the convergence of his model to the Boltzmann equation, and has been extensively studied; key previous results in this direction will be discussed in our literature review.
\subsection{Main Results} We now state the main results of the paper, concerning the long-time nature of the convergence to the Boltzmann flow. Our first theorem controls the deviation from the Boltzmann flow at a single, deterministic time $t\geq 0$, which we refer to as a \emph{pointwise} estimate. Moreover, this estimate is \emph{uniform in time}. 
 \begin{thm} \label{thrm: PW convergence} Let $0<\epsilon<\frac{1}{d}$ and let $a\ge 1$. For sufficiently large $k$, depending on $\epsilon, d$, let $(\mu^N_t)_{t\geq 0}$ be a Kac process in dimension $d\geq 3$, and let $\mu_0 \in \mathcal{S}^k$, satisfying the moment bounds \begin{equation} \Lambda_k(\mu^N_0) \leq a; \hspace{1cm} \Lambda_k(\mu_0)\le a. \end{equation}  Then for some $C=C(\epsilon, d, k)< \infty$ and $\zeta=\zeta(d)>0$, we have the uniform bound \begin{equation} \sup_{t\geq 0} \hspace{0.1cm} \left\|W\left(\mu^N_t, \phi_t\left(\mu_0\right)\right)\right\|_{L^2(\mathbb{P})} \leq C a \hspace{0.1cm} \left( N^{\epsilon-1/d} +W\left(\mu^N_0, \mu_0\right)^\zeta\right).\end{equation} This generalises, by conditioning, to the case where the initial data $\mu^N_0$ is random, provided that $\mathbb{E}\Lambda_k(\mu^N_0)\le a$. \end{thm}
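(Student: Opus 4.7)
The plan is to combine the short-time pointwise estimate of Proposition~\ref{thrm: bad convergence theorem} with Mischler and Mouhot's stability of the Boltzmann semigroup, via an iteration over intervals of a fixed length $T$. Proposition~\ref{thrm: bad convergence theorem} relies on Gr\"onwall-type control of the Norris operators $E_{st}$ and hence produces constants growing exponentially with $t_{\mathrm{fin}}$; by contrast, the M\&M stability forces two Boltzmann solutions to approach the Maxwellian $\gamma$ uniformly (up to a H\"older loss with exponent depending only on $d$), which should prevent compounding of short-time errors across long horizons and produce the power $\zeta = \zeta(d)$ in the final bound.

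Concretely, set $\epsilon_k := \| W(\mu^N_{kT}, \phi_{kT}(\mu_0)) \|_{L^2(\mathbb{P})}$. On each interval $[kT,(k+1)T]$, I would condition on $\mathcal{F}_{kT}$ and apply an $L^2$ version of Proposition~\ref{thrm: bad convergence theorem} comparing the Kac process against the Boltzmann solution started at $\mu^N_{kT}$, obtaining
\[
\bigl\| W(\mu^N_{(k+1)T}, \phi_T(\mu^N_{kT})) \bigr\|_{L^2(\mathbb{P})} \leq C(T)\, N^{-\alpha}.
\]
The comparison of $\phi_T(\mu^N_{kT})$ with $\phi_{(k+1)T}(\mu_0) = \phi_T(\phi_{kT}(\mu_0))$ is then controlled by the M\&M stability over time $T$. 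Since $Y \leq C X^\zeta$ pathwise implies $\|Y\|_{L^2} \leq C \|X\|_{L^2}^\zeta$ by Jensen for $\zeta \leq 1$, this gives a recursion $\epsilon_{k+1} \leq C(T) N^{-\alpha} + \rho(T)\,\epsilon_k^\zeta$, with $\rho(T) \to 0$ as $T \to \infty$. Choosing $T$ so that this recursion has a fixed point of size $O(N^{-\alpha} + W(\mu^N_0,\mu_0)^\zeta)$ yields a uniform bound on $\sup_k \epsilon_k$; off-grid times $t \in (kT,(k+1)T)$ are controlled by a final application of Proposition~\ref{thrm: bad convergence theorem} on $[kT, t]$.

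Converting the tail statement of Proposition~\ref{thrm: bad convergence theorem} to $L^2$ is cheap since $W \leq 4$ on $\mathcal{S}$; the moment bound $\Lambda_k(\mu^N_{kT}) \leq a$ required to re-apply it propagates by the expected moment preservation of the Kac process. The exponent $N^{\epsilon - 1/d}$ relies on the refined form of Proposition~\ref{thrm: bad convergence theorem} for $d \geq 3$ and $k > 8$, with the hypothesis ``sufficiently large $k$'' absorbing any further loss needed to bring $\alpha$ arbitrarily close to $1/d$. The main obstacle will be extracting the usable form $W(\phi_T(\mu), \phi_T(\nu)) \leq \rho(T) W(\mu,\nu)^\zeta$ from the M\&M stability: since their estimate is typically phrased as relaxation to $\gamma$, one must carefully combine this with the triangle inequality through $\gamma$ and the uniform bound $W(\cdot,\gamma) \leq 2$, while also tracking the dependence of $\rho(T)$ on the moment $\Lambda_k$ through the iteration so that the constant prefactor $Ca$ in the final bound is correct.
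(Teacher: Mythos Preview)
Your recursion analysis has a genuine gap. The map $x \mapsto A + \rho\, x^\zeta$ with $\zeta < 1$ is \emph{not} contracting near zero: its derivative $\rho\zeta x^{\zeta-1}$ blows up as $x \downarrow 0$, so for fixed $\rho > 0$ and small $A$ the fixed point is of order $\rho^{1/(1-\zeta)}$, independent of $A = C(T)N^{-\alpha}$. Making $\rho(T)$ small by taking $T$ large would force $T$ to depend on $N$, and then the exponentially-growing constant $C(T)$ from Proposition~\ref{thrm: bad convergence theorem} competes with the gain. More fundamentally, the estimate $W(\phi_T(\mu),\phi_T(\nu)) \le \rho(T)\,W(\mu,\nu)^\zeta$ with $\rho(T)\to 0$ that you hope to extract simply does not exist: the Mischler--Mouhot stability (Proposition~\ref{thrm: stability for BE}) is in the $\|\cdot\|_{\mathrm{TV}+2}$ norm on the input side, and $\|\mu^N_{kT} - \phi_{kT}(\mu_0)\|_{\mathrm{TV}}$ is not controlled by $W(\mu^N_{kT},\phi_{kT}(\mu_0))$ when one argument is discrete. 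The only Wasserstein-to-Wasserstein estimate available is Theorem~\ref{thrm: W-W continuity of phit}, which gives $\sup_t W(\phi_t(\mu),\phi_t(\nu)) \le Ca\,W(\mu,\nu)^\zeta$ with no decay in $t$. What \emph{is} available is the absolute bound $W(\phi_T(\mu),\phi_T(\nu)) \le \|\phi_T(\mu)-\phi_T(\nu)\|_{\mathrm{TV}+2} \lesssim a^{1/2}e^{-\lambda_0 T/2}$, independent of the initial distance; this gives the trivially bounded recursion $\epsilon_{k+1} \le C(T)N^{-\alpha} + Ca^{1/2}e^{-\lambda_0 T/2}$, and choosing $T\sim \log N$ is exactly the bootstrap the paper uses for Theorem~\ref{thrm: Main Local Uniform Estimate}---but only \emph{after} first obtaining an honest short-time $L^p$ estimate by other means. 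Your second gap is here: converting Proposition~\ref{thrm: bad convergence theorem} to $L^2$ via $W\le 4$ gives $\mathbb{E}[W^2] \le C(\epsilon)^2(W_0+N^{-1/d})^2 + 16\epsilon$, and optimising over $\epsilon$ degrades the exponent by an amount depending on the unspecified blow-up of $C(\epsilon)$; re-applying the proposition conditionally also requires controlling the random moment $\Lambda_k(\mu^N_{kT})$, which is bounded only in expectation.

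The paper avoids iteration entirely for Theorem~\ref{thrm: PW convergence}. Formula~\ref{form:newdecomposition} writes $\langle f, \mu^N_t - \phi_t(\mu^N_0)\rangle$ as a martingale-type term $M^{N,f}_t$ plus an integral, where every appearance of the Boltzmann flow is of the form $\phi_{t-s}(\mu^N) - \phi_{t-s}(\mu^N_{s-})$ with $\mu^N$ a \emph{single-collision} perturbation of $\mu^N_{s-}$, so that $\|\mu^N - \mu^N_{s-}\|_{\mathrm{TV}} \le 4/N$. Now the total-variation input of Proposition~\ref{thrm: stability for BE} is genuinely small, yielding a factor $e^{-\lambda_0(t-s)/2}\Lambda_k(\mu^N_{s-})^{1/2}N^{\epsilon-1}$ inside the integrands; the exponential makes the $ds$-integral converge uniformly in $t$, and the random moment is handled directly in $L^2$ by Proposition~\ref{thrm:momentinequalities}. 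The supremum over $f\in\mathcal{A}$ is pulled inside the expectation by a quantitative Arzel\`a--Ascoli argument (Lemma~\ref{thrm: pointwise martingale control}), and the case $\mu_0 \neq \mu^N_0$ is handled at the end by Theorem~\ref{thrm: W-W continuity of phit}, which is where the exponent $\zeta(d)$ enters.
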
 This result is, to the best of our knowledge, new, although an equivalent result is known for Maxwell molecules \cite{CF 2018}. We will see, in Theorem \ref{corr: PW convergence as POC}, that estimates of this form imply the propagation of chaos for hard spheres, in the sense of (\ref{eq: POC}-\ref{eq: IDPOC}), with better rates than found in \cite{M+M} for the hard spheres process. \medskip \\
Our second main theorem controls, in $L^p(\mathbb{P})$, the maximum deviation from the Boltzmann flow up to a time $t_\text{fin}$, in analogy with Proposition \ref{thrm: bad convergence theorem}. We refer to this as a \emph{pathwise, local uniform in time} estimate.
 \begin{thm} \label{thrm: Main Local Uniform Estimate}  Let $0<\epsilon<\frac{1}{2d}$, $a\ge 1$ and $p\geq 2$. For sufficiently large $k\geq 0$, depending on $\epsilon, d$, let $(\mu^N_t)_{t\geq 0}$ be a Kac process on $N\geq 2$ particles and let $\mu_0 \in \mathcal{S}^k$, with initial moments \begin{equation} \Lambda_{kp}(\mu^N_0)\le a^p ;\hspace{1cm}\Lambda_k(\mu_0)\le a. \end{equation} For some $\alpha=\alpha(\epsilon, d, p)>0$ and $C=C(\epsilon, d, p, k)< \infty$ and $\zeta=\zeta(d)>0$, we can estimate, for all $t_\text{fin}\ge 0$, \begin{equation} \left\|\hspace{0.1cm}\sup_{t\leq t_\text{fin}} \hspace{0.1cm} W\left(\mu^N_t, \phi_t(\mu_0)\right) \hspace{0.1cm}\right\|_{L^p(\mathbb{P})} \leq Ca\left( (1+t_\text{fin})^{1/p}\hspace{0.1cm} N^{-\alpha} + W(\mu^N_0, \mu_0)^\zeta)\right).\end{equation} $\alpha$ is given explicitly by \begin{equation} \alpha = \frac{p'}{2d}-\epsilon \end{equation}where $1<p'\le 2$ is the H\"{o}lder conjugate to $p$. \end{thm}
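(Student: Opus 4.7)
My plan is to upgrade the pointwise $L^2$ estimate of Theorem~\ref{thrm: PW convergence} to an $L^p$ estimate, then pass to the pathwise bound by discretising time on a unit grid and controlling the oscillation within each interval. Writing $X_t := W(\mu^N_t, \phi_t(\mu_0))$ and placing a grid $t_i = i$ for $i = 0, 1, \ldots, M$ with $M = \lceil t_\text{fin} \rceil$, I would assemble the bound from three ingredients: a uniform-in-$t$ pointwise $L^p$ bound, a local pathwise $L^p$ bound on each unit interval, and a union-type combination producing the $(1+t_\text{fin})^{1/p}$ factor.

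The hardest ingredient is the pointwise $L^p$ bound $\|X_t\|_{L^p} \le C a (N^{-\alpha} + W(\mu^N_0,\mu_0)^\zeta)$, uniform in $t \ge 0$. This requires revisiting the proof of Theorem~\ref{thrm: PW convergence}, which expresses $\langle f, \mu^N_t - \phi_t(\mu_0) \rangle$ (for $f \in \mathcal{A}$) via Norris's representation formula as the image of the initial empirical error under a bounded random operator $E_{0t}$ plus a jump-martingale noise term. The Mischler--Mouhot stability of $\phi_t$ controls $E_{0t}$ uniformly in time (the key gain over the Gr\"onwall estimate of \cite{ACE}), while the noise is estimated by a Burkholder--Davis--Gundy or Rosenthal inequality for pure jump martingales: the predictable quadratic variation in $L^{p/2}$ and the maximum jump in $L^p$. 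The H\"older conjugate $p'$ appears when redistributing moments against the weight $(1+|v|^2)^{k/2}$ carried by test functions in $\mathcal{A}$, and the stronger hypothesis $\Lambda_{kp}(\mu^N_0)\le a^p$ is exactly what is needed to propagate the appropriate $L^p(\mathbb{P})$ moments of $(\mu^N_t)$, adapting the moment-propagation arguments of \cite{L&M, M+M}.

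For the local pathwise bound on $[t_i, t_{i+1}]$, I would apply the same representation formula starting from (the random) $\mu^N_{t_i}$, using Doob's $L^p$ maximal inequality on the jump martingale to obtain $\|\sup_{t\in[t_i,t_{i+1}]} W(\mu^N_t, \phi_{t-t_i}(\mu^N_{t_i}))\|_{L^p} \le Ca N^{-\alpha}$, which is where the short-time pathwise aspect is really needed. A triangle inequality and the semigroup property of $\phi$, together with Mischler--Mouhot stability, then give $\sup_{t\in[t_i,t_{i+1}]} X_t \le Y_i + C X_{t_i}$ pointwise on the probability space, with $\|Y_i\|_{L^p}\le Ca N^{-\alpha}$. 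Combining the two ingredients via
\[
\Bigl\| \max_{0 \le i \le M-1} \sup_{t\in[t_i, t_{i+1}]} X_t \Bigr\|_{L^p} \le \Bigl( \sum_{i=0}^{M-1} \|Y_i + C X_{t_i}\|_{L^p}^p \Bigr)^{1/p} \le (M+1)^{1/p} \cdot Ca \bigl( N^{-\alpha} + W(\mu^N_0,\mu_0)^\zeta\bigr)
\]
then yields the desired estimate.

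The main obstacle is the pointwise $L^p$ estimate with the sharp rate $\alpha = p'/(2d)-\epsilon$: obtaining this exponent, rather than something worse, depends on a careful tracking of H\"older exponents between the quadratic-variation and maximum-jump terms in the BDG bound, and on uniform-in-time propagation of the appropriate $L^p(\mathbb{P})$ moments of the Kac process; the factor of $2$ in the denominator reflects the passage from an $L^2$-isometry to a BDG square-root, and the $p'$ reflects the H\"older cost of handling the unbounded linearly-growing test functions in $\mathcal{A}$. Once this is in hand the remaining assembly --- the Mischler--Mouhot stability input, the short-interval pathwise bound and the union-bound combination --- is, while technical, essentially routine.
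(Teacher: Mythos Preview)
Your overall architecture --- pointwise $L^p$ estimate, local pathwise estimate on short intervals, $\ell^p$-union to produce the $(1+t_\text{fin})^{1/p}$ factor --- is close in spirit to the paper's bootstrap. But two steps are genuinely incomplete as stated, and one explanation is wrong.

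\textbf{The local pathwise step does not reduce to Doob's inequality.} Both Norris's representation and the paper's interpolation decomposition (Formula~\ref{form:newdecomposition}) produce a noise term
\[
M^{N,f}_t=\int_{(0,t]\times\mathcal{S}_N}\langle f,\phi_{t-s}(\mu^N)-\phi_{t-s}(\mu^N_{s-})\rangle\,(m^N-\overline{m}^N)(ds,d\mu^N)
\]
whose integrand depends on the \emph{terminal time} $t$; hence $(M^{N,f}_t)_t$ is not a martingale, and Doob cannot be applied directly. The paper handles this by writing $M^{N,f}_t=Z^{N,f}_t+\int_0^t J^{N,f;s}_s\,ds$ with $Z^{N,f}$ a genuine martingale (Lemma~\ref{lemma:newmartingaleconstruction}), and then controls each piece via Burkholder; this, together with the hierarchical dyadic decomposition of $\mathcal{A}$ needed to bring $\sup_{f\in\mathcal{A}}$ inside the expectation, is the real content of the local uniform martingale control (Lemma~\ref{thrm: local uniform martingale control}) and is not routine. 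In particular, the exponent $\alpha=p'/(2d)-\epsilon$ does \emph{not} come from a BDG square root or from unbounded test functions: it comes from replacing Cauchy--Schwarz by H\"older with exponents $(p,p')$ in the hierarchical estimate $\sup_{f}\sum_B a_B(f)M^{N;B}\le(\sum_B|M^{N;B}|^p)^{1/p}(\sum_B|a_B(f)|^{p'})^{1/p'}$; the $\ell^{p'}$-norm of the coefficients grows like $2^{(d/p'-1)L}$, and balancing against $N^{-1/2}$ forces $L\sim(p'/2d)\log_2 N$.

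\textbf{The combination step as written loses the exponent.} Your triangle inequality $\sup_{t\in[t_i,t_{i+1}]}X_t\le Y_i+C\,X_{t_i}$ needs a Lipschitz $W$-continuity of the Boltzmann flow on a unit interval. Theorem~\ref{thrm: W-W continuity of phit} does give $W(\phi_t(\mu),\phi_t(\nu))\le Ce^{wt}\Lambda_k(\mu,\nu)\,W(\mu,\nu)$, but the prefactor $\Lambda_k(\mu^N_{t_i})$ is a \emph{random} moment. Controlling $\|\Lambda_k(\mu^N_{t_i})\,X_{t_i}\|_{L^p}$ by H\"older then requires $X_{t_i}\in L^q$ for some $q>p$, which forces the worse exponent $q'/(2d)-\epsilon<p'/(2d)-\epsilon$. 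The paper avoids this coupling entirely: it partitions into blocks beginning with an interval of length $\tau\sim\log N$ and uses the \emph{unconditional} TV decay $\|\phi_t(\mu)-\phi_t(\nu)\|_{\mathrm{TV}+2}\lesssim e^{-\lambda_0 t/2}\Lambda_k^{1/2}$, which after time $\tau$ is $\lesssim N^{-\alpha'}\Lambda_k^{1/2}$ \emph{regardless} of the $W$-distance at the start of the block. This decouples the moment factor from $X_{t_i}$ and is what allows the sharp rate to survive the bootstrap.
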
 At the end of this section, we will discuss related results, and how they may be compared to this estimate.  \medskip\\
 An unfortunate feature of these approximation theorems is the dependence on the unknown, and potentially large, moment index $k$; a trivial reformulation which avoids this is to ask instead for an exponential moment bound $\langle e^{z|v|},\mu^N_0\rangle \le b$, for some $z>0$. We will also prove the following variant of the theorems above which allows us to use any moment estimate higher than second.  \begin{thm}\label{thm: low moment regime}[Convergence with few moment estimates] Let $k>2$ and $a\ge 1$. Let $(\mu^N_t)$ be an $N$-particle Kac process, and $\mu_0$ in $\mathcal{S}$ with initial moment estimates \begin{equation} \Lambda_k(\mu^N_0) \le a;\hspace{1cm} \Lambda_k(\mu_0) \le a. \end{equation} There exists $\epsilon=\epsilon(d,k)>0$ and a constant $C=C(d,k)$ such that \begin{equation} \label{eq: pw convergence with few moments}\sup_{t\ge 0} \left\|W\left(\mu^N_t, \phi_t(\mu_0)\right)\right\|_{L^1(\mathbb{P})} \le Ca(N^{-\epsilon}+W(\mu^N_0,\mu_0)^\epsilon).\end{equation} For a local uniform estimate, if $p\ge 2$, then there exists a constant $C=C(d,k,p)$ and $\epsilon=\epsilon(d,k,p)>0$ such that, for all $t_\mathrm{fin}<\infty$, \begin{equation}  \left\|\sup_{t\le t_\mathrm{fin}}W\left(\mu^N_t, \phi_t(\mu_0)\right)\right\|_{L^1(\mathbb{P})} \le Ca((1+t_\mathrm{fin})^{1/p}N^{-\epsilon}+W(\mu^N_0,\mu_0)^\epsilon).\end{equation}  \end{thm}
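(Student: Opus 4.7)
The strategy is to reduce Theorem \ref{thm: low moment regime} to the previous Theorems \ref{thrm: PW convergence} and \ref{thrm: Main Local Uniform Estimate} by truncating the initial data. The obstacle to applying those theorems directly is the requirement ``for sufficiently large $k$'', which is incompatible with a single hypothesis $k>2$; the plan is to trade a few large moments of the data for an error term that shrinks under the $k$-th moment bound.

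First I would fix a cutoff radius $R\ge 1$, to be optimised at the end, and construct approximations $\tilde\mu_0, \tilde\mu^N_0 \in \mathcal{S}$ by restricting $\mu_0, \mu^N_0$ to $\{|v|\le R\}$ (discarding a small proportion of mass, respectively a small fraction of the $N$ particles), then applying a small affine rescaling of velocities to restore the constraints defining $\mathcal{S}$. By Markov's inequality the discarded portion is at most $aR^{-k}$, so $W(\mu_0,\tilde\mu_0)$ and $\mathbb{E} W(\mu^N_0, \tilde\mu^N_0)$ are controlled by $CaR^{-(k-2)}$; meanwhile the truncated measures satisfy $\Lambda_{k'}(\tilde\mu_0), \Lambda_{k'}(\tilde\mu^N_0) \le C_k a R^{k'-k}$ for every $k'>k$, so that they lie in $\mathcal{S}^{k'}$ with a moment bound polynomial in $R$.

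Next I would apply Theorem \ref{thrm: PW convergence} (respectively Theorem \ref{thrm: Main Local Uniform Estimate}, after using Jensen to pass from $L^p$ to $L^1$) to a Kac process started from $\tilde\mu^N_0$ compared against $\phi_t(\tilde\mu_0)$, choosing the moment index $k'$ just large enough to secure some fixed positive exponent $\alpha_0=\alpha_0(d,k')$. The factor of $a$ is then replaced by $CaR^{k'-k}$. To transfer the bound back to the untruncated quantities, I would invoke stability of the Boltzmann flow -- precisely the Mischler--Mouhot-type estimates alluded to in the introduction, which give $W(\phi_t(\mu_0), \phi_t(\tilde\mu_0)) \le C W(\mu_0, \tilde\mu_0)^\beta$ uniformly in $t$ on moment-bounded subsets -- together with a coupling of the Kac processes from $\mu^N_0$ and $\tilde\mu^N_0$ that bounds $\mathbb{E} W(\mu^N_t, \tilde\mu^N_t)$ in terms of the fraction of particles affected by the truncation.

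Combining by the triangle inequality yields a bound of the form
\[
\|W(\mu^N_t, \phi_t(\mu_0))\|_{L^1(\mathbb{P})} \le Ca\left( R^{k'-k} N^{-\alpha_0} + R^{-(k-2)\beta} + W(\mu^N_0,\mu_0)^\zeta\right),
\]
and optimising $R$ as a positive power of $N$ extracts a single exponent $\epsilon=\epsilon(d,k)>0$ of the required form. The main obstacle will be the coupling step: a synchronous coupling of the two Kac processes drifts apart as soon as they experience different collisions, so some care is needed. A clean option is to exploit that we need only $L^1$ control: on the high-probability event that $\mathcal{O}(aNR^{-k})$ particles are removed by the truncation the coupling is usable, and on the low-probability complement we fall back on the trivial bound $W\le 2$. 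The local-uniform-in-time case then follows the same scheme with Theorem \ref{thrm: Main Local Uniform Estimate} in place of Theorem \ref{thrm: PW convergence}, the prefactor $(1+t_{\mathrm{fin}})^{1/p}$ being inherited directly.
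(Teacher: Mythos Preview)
Your truncation-in-velocity strategy is natural, but the coupling step is a genuine gap, not a technicality. For hard spheres the collision rate is $|v-v_\star|$, so once $\mu^N_0$ and $\tilde\mu^N_0$ differ in even a single particle, any attempted synchronous coupling desynchronises immediately: the high-velocity particles you removed are precisely the ones that collide fastest, so their influence spreads through the system in time $O(1)$, and the number of ``contaminated'' particles is not controlled by the initial discrepancy. Your proposed fix---restricting to a high-probability event and using the trivial bound on the complement---does not help, because the difficulty is not a tail event but the deterministic drift of the coupling on \emph{every} realisation once a single collision differs. No uniform-in-time bound of the form $\mathbb{E}\,W(\mu^N_t,\tilde\mu^N_t)\lesssim W(\mu^N_0,\tilde\mu^N_0)^\theta$ is available for the hard-spheres Kac process, and producing one would be at least as hard as the theorem you are trying to prove.

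The paper avoids this entirely by truncating in \emph{time} rather than velocity. It exploits moment production (Proposition~\ref{thrm:momentinequalities}): for any $k_0$, one has $\mathbb{E}\,\Lambda_{k_0}(\mu^N_{u_N})\lesssim a\,u_N^{2-k_0}$ after a short time $u_N>0$, so Theorem~\ref{thrm: PW convergence} can be restarted at time $u_N$ with the required high-moment control, at the cost of a negative power of $u_N$. On the initial interval $[0,u_N]$ the paper falls back on the representation formula of \cite{ACE} (Proposition~\ref{prop: very bad rep formula}), localised via a stopping time $T_N$ that keeps $\Lambda_3$ of the environment under control; the probability that this localisation fails is itself bounded by $a\,u_N^{(k-2)/2}$ via the $k$th-moment hypothesis. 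Optimising $u_N$ as a small power of $N^{-1}+W(\mu^N_0,\mu_0)$ then balances the two regimes. This route never compares two Kac processes and so sidesteps the coupling problem altogether.
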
 In the course of proving this result, we will see that the higher moment conditions are only required to obtain the optimal rates on a very short time interval $[0, u_N]$ and, in particular, we can obtain very good time-dependence without higher moment estimates. \medskip \\ 
We also study the long-time behaviour of the Kac Process. We cannot extend Theorem \ref{thrm: Main Local Uniform Estimate} to control the maximum deviations over all times $t\geq 0$, due to the following recurrence features of the Kac process. 
\begin{thm}\label{thrm: No Uniform Estimate} There exists a universal constant $C>0$ such that, for every $N$, for every $k> 2$ and $a>1$, there exists a Kac process $(\mu^N_t)_{t\geq 0}$ with initial moment $\Lambda_{k}(\mu^N_0)\le a$ but, almost surely, \begin{equation} \label{eq: conclusion of 1.5} \limsup_{t\rightarrow \infty} \hspace{0.1cm} W\left(\mu^N_t, \phi_t(\mu^N_0)\right) \geq 1-\frac{C}{\sqrt{N}}.\end{equation} Hence we cannot omit the factor of $(1+t_\text{fin})^{1/p}$ in Theorem \ref{thrm: Main Local Uniform Estimate}. \end{thm}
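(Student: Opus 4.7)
The plan is to construct a specific extreme initial condition $\mu^N_0 \in \mathcal{S}_N$ which is far from the Maxwellian equilibrium $\gamma$, and then exploit the recurrence of the Kac process on the compact labelled sphere $\mathbb{S}^N$: at infinitely many times $t_k\to\infty$ the process returns close to $\mu^N_0$, while $\phi_{t_k}(\mu^N_0)$ has relaxed toward $\gamma$, leaving the gap $W(\mu^N_{t_k},\phi_{t_k}(\mu^N_0))$ close to $W(\mu^N_0,\gamma)$.

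For $N$ even, I take $N/2$ particles at $+e_1$ and $N/2$ at $-e_1$, giving $\mu^N_0=\tfrac12(\delta_{e_1}+\delta_{-e_1})\in\mathcal{S}_N$ with $\Lambda_k(\mu^N_0)=2^{k/2}$ (small perturbations handle $N$ odd). Since Jensen forces $\Lambda_k(\mu)\geq 2^{k/2}$ on $\mathcal{S}$, the hypothesis $\Lambda_k(\mu^N_0)\leq a$ is either vacuous or admits this configuration. A direct test-function estimate with $f(v)=(1+|v|^2)\psi(v)$, where $\psi$ is $1$-Lipschitz, $1$-bounded and supported in small balls around $\pm e_1$, gives $\langle f,\mu^N_0\rangle=2$, while $\langle f,\gamma\rangle$ is controlled by the Gaussian mass of those balls. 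Up to universal constants this recovers $W(\mu^N_0,\gamma)\geq 1-C/\sqrt{N}$. The relation $W(\phi_t(\mu^N_0),\gamma)\to 0$ then follows from classical qualitative trend to equilibrium for the hard-spheres Boltzmann equation.

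For the recurrence, the labelled process $\mathcal{V}^N_t$ is a Feller Markov chain on the compact manifold $\mathbb{S}^N$ with invariant Kac measure $\pi_N$, and the uniform spectral gap of the Kac walk makes it uniformly ergodic. Since $\pi_N$ has full support on $\mathbb{S}^N$, for any $\epsilon>0$ there exists $T<\infty$ with $\inf_x \mathbb{P}_x(W(\mu^N_T,\mu^N_0)<\epsilon)\geq p_\epsilon>0$. Applying the strong Markov property at times $kT$ and conditional Borel--Cantelli yields $\liminf_{t\to\infty} W(\mu^N_t,\mu^N_0)=0$ almost surely. The triangle inequality
\[
W(\mu^N_t,\phi_t(\mu^N_0)) \geq W(\mu^N_0,\gamma) - W(\mu^N_t,\mu^N_0) - W(\phi_t(\mu^N_0),\gamma),
\]
applied along a subsequence $t_k\to\infty$ with $W(\mu^N_{t_k},\mu^N_0)\to 0$, then delivers \eqref{eq: conclusion of 1.5}. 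The main obstacle is the recurrence step: unlike in the countable-state case, a.s.\ Poincar\'e recurrence from a deterministic start is not automatic, so one must upgrade convergence of laws (via the spectral gap) to pathwise return.
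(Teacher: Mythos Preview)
Your overall strategy---recurrence of the particle system combined with relaxation $\phi_t(\mu^N_0)\to\gamma$---is the same as the paper's, but the execution has two real gaps.

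\textbf{The recurrence step.} You assert that the spectral gap ``makes it uniformly ergodic'' and hence, for every $\epsilon>0$, there is $T$ with $\inf_x\mathbb{P}_x(W(\mu^N_T,\mu^N_0)<\epsilon)\ge p_\epsilon>0$. But an $L^2(\pi_N)$ spectral gap only gives $\|h_t-1\|_{L^2(\pi_N)}\le e^{-\lambda t}\|h_0-1\|_{L^2(\pi_N)}$ when the \emph{initial law has an $L^2$ density}; it says nothing about the transition kernel from a Dirac mass, and it does not imply a Doeblin minorisation. Your deterministic start $\mu^N_0$ has no density with respect to $\sigma^N$, so the spectral gap is not applicable at time $0$, and the jump from ``spectral gap $+$ full support'' to the uniform lower bound $p_\epsilon>0$ is unjustified. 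The paper handles this by a different route: it proves an $L^2$ ergodic theorem for the process \emph{started from equilibrium} $\sigma^N$, deduces that for $\sigma^N$-almost every starting point the process visits any set of positive $\sigma^N$-measure on an unbounded set of times, and then \emph{chooses} $\mu^N_0$ from the (full-measure) set of good starting points intersected with the (open, positive-measure) set $\{\Lambda_k<a\}$. The paper explicitly remarks that it leaves open whether an exceptional set of bad starting points exists; your argument would need positive Harris recurrence, which you have not established.

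\textbf{The quantitative bound.} Your test-function computation is incorrect: if $\psi$ is $1$-Lipschitz and supported in balls of radius $r$ about $\pm e_1$, then $\psi(\pm e_1)\le r$, so $\langle f,\mu^N_0\rangle\le 2r$, not $2$. More importantly, your $\mu^N_0=\tfrac12(\delta_{e_1}+\delta_{-e_1})$ does not depend on $N$, so $W(\mu^N_0,\gamma)$ is a fixed dimensional constant $c_0(d)$; your argument, even if the recurrence worked, would give $\limsup\ge c_0(d)$, and you have not shown $c_0(d)\ge 1$, which is what the stated bound $1-C/\sqrt{N}$ requires as $N\to\infty$. The paper instead lets the process visit an $N$-dependent ``energy-concentrated'' region (two particles near $\pm\sqrt{N/2}\,e_1$, the rest near $0$) and uses the $N$-dependent test function $\hat f(v)=\min(|v|/\sqrt{N/2},1)$: on that region $\langle f,\cdot\rangle>1$ while $\langle f,\gamma\rangle\le N^{-1/2}\langle(1+|v|^2)|v|,\gamma\rangle$, which is exactly the source of the $1-C/\sqrt{N}$.
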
  In keeping with the terminology above, we say that there is no \emph{pathwise, uniform in time} estimate. In the course of proving Theorem \ref{thrm: No Uniform Estimate}, we will show that the long-time deviation (\ref{eq: conclusion of 1.5}) is typical for the Kac process. We will show that the Kac process returns, infinitely often, to `highly ordered' subsets of $\mathcal{S}_N$, which are far from the Boltzmann flow. However, we make the following remark on the times necessary for such deviations to occur. \begin{corollary}\label{corr: variation 3} Define \begin{equation} T_{N, \epsilon}=\inf\left\{t\geq 0: W(\mu^N_t, \phi_t(\mu_0))>\epsilon \right\}.\end{equation} Let $(\mu^N_t)$ be a family of  Kac processes with an initial exponential moment bound: $\langle e^{z|v|}, \mu^N_0\rangle \leq b$, for some $z>0$ and $b>0$. Let $\mu_0\in\mathcal{S}$ satisfy $\langle e^{z|v|}, \mu_0\rangle \le b$, and suppose that $W(\mu^N_0, \mu_0)\rightarrow 0$ in probability. \medskip\\ Let $t_{N, \epsilon, \delta}$ be the quantile constants of $T_{N, \epsilon}$ under $\mathbb{P}$; that is, \begin{equation} \mathbb{P}(T_{N, \epsilon} \leq t_{N, \epsilon, \delta})\geq \delta. \end{equation} Then, for fixed $\epsilon, \delta >0$, $t_{N, \epsilon, \delta}\rightarrow \infty$, faster than any power of $N$. \end{corollary}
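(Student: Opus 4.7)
The strategy is to apply Theorem \ref{thrm: Main Local Uniform Estimate} with a polynomial terminal time $t_\mathrm{fin}=N^\beta$ and a large exponent $p$, then convert the resulting $L^p$ bound into a tail bound via Markov's inequality. It suffices to show that, for every fixed $\beta>0$, $\mathbb{P}(T_{N,\epsilon}\le N^\beta)\to 0$ as $N\to\infty$, since then the quantile definition forces $t_{N,\epsilon,\delta}>N^\beta$ for all large $N$, giving $t_{N,\epsilon,\delta}/N^\beta\to\infty$ for every $\beta$.

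The exponential moment hypothesis $\langle e^{z|v|},\mu^N_0\rangle\le b$ is used precisely to make arbitrarily large $p$ admissible: from $(1+|v|^2)^{k'/2}\le C_{k',z}e^{z|v|}$ we have $\Lambda_{k'}(\mu^N_0)\le C_{k',z,b}$ almost surely for every $k'\ge 0$, and likewise for $\mu_0$. Invoking Theorem \ref{thrm: Main Local Uniform Estimate} (extended to random initial data by conditioning, as in the remark after Theorem \ref{thrm: PW convergence}) with any $p\ge 2$, sufficiently large $k$, and some $\epsilon'\in(0,1/(2d))$ so that $\alpha=p'/(2d)-\epsilon'$, and combining with Markov's inequality and the elementary bound $(x+y)^p\le 2^{p-1}(x^p+y^p)$, yields
\begin{equation*}
\mathbb{P}(T_{N,\epsilon}\le N^\beta)\le \epsilon^{-p}\,\mathbb{E}\Bigl[\sup_{t\le N^\beta}W\bigl(\mu^N_t,\phi_t(\mu_0)\bigr)^p\Bigr]\le C\bigl(N^{\beta-p\alpha}+\mathbb{E}\bigl[W(\mu^N_0,\mu_0)^{\zeta p}\bigr]\bigr),
\end{equation*}
for some constant $C=C(\beta,p,d,k,z,b,\epsilon')$ independent of $N$.

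Finally I choose $p$ so that both terms vanish. Since $pp'=p^2/(p-1)\ge p$, we have $p\alpha\ge p(1/(2d)-\epsilon')$, which diverges with $p$ for $\epsilon'$ fixed below $1/(2d)$; hence $p$ can be taken so large that $p\alpha>\beta$, making $N^{\beta-p\alpha}\to 0$. For the second term, $W$ is uniformly bounded on $\mathcal{S}$ (any $f\in\mathcal{A}$ obeys $|\langle f,\mu-\nu\rangle|\le\Lambda_2(\mu)+\Lambda_2(\nu)=4$), so the assumption $W(\mu^N_0,\mu_0)\to 0$ in probability upgrades to $L^{\zeta p}$ convergence by bounded convergence. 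Both terms therefore vanish, yielding $\mathbb{P}(T_{N,\epsilon}\le N^\beta)\to 0$ and completing the proof.

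The only non-trivial point is the observation that $p\alpha\to\infty$ as $p\to\infty$; this is exactly where the exponential moment hypothesis is essential, since invoking Theorem \ref{thrm: Main Local Uniform Estimate} with arbitrarily large $p$ requires control of $\Lambda_{kp}(\mu^N_0)$ of correspondingly high order. Without such moments one would be capped at a fixed $p$, and the resulting growth rate of $t_{N,\epsilon,\delta}$ would only be polynomial, explaining the need for the exponential moment assumption in the statement.
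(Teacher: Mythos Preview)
Your proof is correct and follows exactly the approach the paper intends: the paper states only that the corollary is an immediate consequence of Theorem~\ref{thrm: Main Local Uniform Estimate}, and in the discussion (point 5b) spells out that one takes $p\to\infty$ so that $\alpha(d,p)$ stays bounded away from $0$ while the time exponent $1/p$ vanishes, giving convergence on any polynomial time scale. Your write-up makes this explicit, including the Markov-inequality step and the bounded-convergence argument for the initial-data term, and correctly identifies why the exponential moment is needed.
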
 This follows as an immediate consequence of Theorem \ref{thrm: Main Local Uniform Estimate}. Taken together with Theorem \ref{thrm: No Uniform Estimate}, we see that macroscopic deviations occur, but typically at times growing faster than any power of $N$. 

In the course of proving Theorems \ref{thrm: PW convergence}, \ref{thrm: Main Local Uniform Estimate}, we will establish the following continuity estimate for the Boltzmann flow $\phi_t$ measured in the Wasserstein distance $W$, which may be of independent interest. \begin{thm} \label{thrm: W-W continuity of phit} There exist constants $k, C, w$ depending only on $d$ such that, whenever $a\ge 1$ and $\mu, \nu \in \mathcal{S}^k_a$, we have the estimate  \begin{equation} W\left(\phi_t(\mu),  \phi_t(\nu)\right) \le Ce^{wt}a W(\mu, \nu). \end{equation} Moreover, for all $k>2$, there exist constants $C=C(k,d)$ and $\zeta=\zeta(k,d)>0$ such that, whenever $\mu, \nu \in \mathcal{S}^k_a$, we have the estimate \begin{equation} \label{eq: good continuity estimate} \sup_{t\ge 0} \hspace{0.1cm}W\left(\phi_t(\mu),\phi_t(\nu)\right) \le C a W(\mu, \nu)^\zeta.\end{equation} \end{thm}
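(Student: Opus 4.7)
The plan is to prove the two bounds sequentially: first establish the short-time, Gr\"onwall-type estimate with exponential growth, then upgrade it to the uniform-in-time H\"older estimate by interpolating against a long-time stability bound towards the Maxwellian $\gamma$.

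For the first estimate, I would exploit the bilinearity of the collision operator by writing
\begin{equation}
Q(\phi_t(\mu)) - Q(\phi_t(\nu)) = Q\bigl(\phi_t(\mu)-\phi_t(\nu),\, \phi_t(\mu)\bigr) + Q\bigl(\phi_t(\nu),\, \phi_t(\mu)-\phi_t(\nu)\bigr),
\end{equation}
so that for any $f \in \mathcal{A}$, the mild formulation of (\ref{BE}) yields
\begin{equation}
\langle f, \phi_t(\mu)-\phi_t(\nu)\rangle = \langle f, \mu-\nu\rangle + \int_0^t \langle f,\, Q(\phi_s(\mu)) - Q(\phi_s(\nu))\rangle\, ds.
\end{equation}
The key ingredient is the estimate $|\langle f, Q(\rho,\eta)\rangle| \le C \|\hat f\|_X \, \Lambda_k(\eta)\, W(\rho, 0)$, which holds for $k$ large enough and follows from the fact that, although $f$ may grow like $1+|v|^2$, the weighted increment $\hat f(v')+\hat f(v'_\star)-\hat f(v)-\hat f(v_\star)$ is controlled in both a bounded and a Lipschitz sense by elementary algebra at a collision. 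Combined with the standard Povzner moment propagation $\sup_{s\ge 0} \Lambda_k(\phi_s(\mu)) \le C a$, taking the supremum over $f \in \mathcal{A}$ then gives
\begin{equation}
W(\phi_t(\mu),\phi_t(\nu)) \le W(\mu,\nu) + C a \int_0^t W(\phi_s(\mu),\phi_s(\nu))\, ds,
\end{equation}
from which Gr\"onwall produces the bound $C e^{wt} a W(\mu,\nu)$ with $w$ proportional to $a$ (absorbed into the constants by working on a fixed moment ball).

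For the second, uniform-in-time estimate, I would use a short-time/long-time dichotomy. For $t \le T$, the first part already gives $W(\phi_t(\mu),\phi_t(\nu)) \le C a e^{wT} W(\mu,\nu)$. For $t \ge T$, the Mischler--Mouhot stability results quoted in the discussion following Proposition \ref{thrm: bad convergence theorem} provide a rate $W(\phi_t(\rho),\gamma) \le \eta(t)$, uniform over $\rho \in \mathcal{S}^k_a$, with $\eta(t) \to 0$. The triangle inequality then gives $W(\phi_t(\mu),\phi_t(\nu)) \le 2\eta(t)$ for $t \ge T$. Taking the minimum of the two bounds and choosing $T$ as a function of $W(\mu,\nu)$ to balance the growing and decaying terms yields the H\"older estimate $\sup_{t \ge 0} W(\phi_t(\mu),\phi_t(\nu)) \le C a W(\mu,\nu)^\zeta$ with $\zeta \in (0,1)$ depending only on $w$ and the decay rate of $\eta$; if the convergence to equilibrium is only polynomial in $t$ (as is all that is needed under the moment hypothesis $k>2$), one simply absorbs the resulting logarithmic factors into the H\"older exponent.

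The main obstacle I anticipate is the first step: obtaining the bilinear estimate on $\langle f, Q(\rho,\eta)\rangle$ in a form that involves $W(\rho,0)$ rather than a stronger moment-weighted variation norm, since $f \in \mathcal{A}$ is only weighted-Lipschitz with weight $(1+|v|^2)$ while the hard-spheres kernel contributes an additional factor $|v-v_\star|$. The technical heart of the argument lies in showing that $\hat f(v')+\hat f(v'_\star)-\hat f(v)-\hat f(v_\star)$, after the $\sigma$-integration, can be written as a sum of terms of the form $(1+|v|^2)(1+|v_\star|^2)^{k/2}\, \psi(v,v_\star)$ with $\psi$ bounded and suitably Lipschitz, so that pairing against $\rho(dv)\eta(dv_\star)$ produces exactly the combination $W(\rho,0)\Lambda_k(\eta)$ required for Gr\"onwall to close.
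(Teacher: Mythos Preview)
Your high-level strategy --- a short-time Gr\"onwall estimate combined with a long-time stability bound, then an interpolation in the crossover time --- is exactly the skeleton of the paper's proof. However, there are two substantive gaps.

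\textbf{First gap: the $a$-dependence of $w$.} Your Gr\"onwall argument gives an exponential rate $w$ proportional to $a$, which you claim can be ``absorbed into the constants by working on a fixed moment ball''. But the theorem asserts that $w$ depends only on $d$, with the $a$-dependence appearing only as a \emph{multiplicative} prefactor. This is not cosmetic: in your interpolation step for the second estimate, balancing $e^{wT}W(\mu,\nu)$ against $e^{-\lambda_0 T/2}$ gives a H\"older exponent $\zeta = \lambda_0/(\lambda_0 + 2w)$. If $w$ scales with $a$, then $\zeta$ does too, and the second assertion of the theorem --- that $\zeta = \zeta(k,d)$ is independent of $a$ --- fails. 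The paper avoids this by not using the crude bound $\Lambda_3(\phi_s(\mu)) \lesssim a$; instead it uses the \emph{logarithmic} moment creation property (Proposition~\ref{thrm:momentinequalities}(iii)), which gives $\int_0^t \Lambda_3(\phi_s(\mu))\,ds \le C_1 t + C_2 \log \Lambda_5(\mu)$, so that $\exp\bigl(c\int_0^t \Lambda_3(\phi_s(\mu))\,ds\bigr) \le e^{wt}\Lambda_k(\mu)$ with $w$ depending only on $d$ and the moment weight $\Lambda_k(\mu) \le a$ pulled out as a factor.

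\textbf{Second gap: the bilinear estimate.} Your proposed inequality $|\langle f, Q(\rho,\eta)\rangle| \le C \|\hat f\|_X \Lambda_k(\eta)\, W(\rho,0)$ is ill-posed as written ($W(\rho,0)$ is undefined), and what you actually need --- that for $f \in \mathcal{A}$ the map $v \mapsto \int [f(v')+f(v'_\star)-f(v)-f(v_\star)]\,|v-v_\star|\,\rho(dv_\star)\,d\sigma$ lies in $C\Lambda_3(\rho)\cdot\mathcal{A}$ --- is precisely the infinitesimal version of \cite[Proposition~4.3]{ACE}. This is true but non-trivial, and your sketch does not prove it. The paper sidesteps this by invoking the representation formula (Proposition~\ref{prop: bad representation formula}) and the continuity estimate (Proposition~\ref{prop: continuity for branching process}) for the linearised Kac process from \cite{ACE} directly, which package exactly this propagation-of-regularity statement.

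Finally, for the case $k>2$ arbitrary, your remark about ``absorbing logarithmic factors'' is too vague. The paper handles this by a localisation argument: it controls the flow on a short interval $[0,u]$ using a stopping-time comparison with the moment production rate, then restarts at time $u$ where the high moments $\Lambda_{k_0}(\phi_u(\mu))$ have been created, and finally optimises over $u$.
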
 In the second part of the theorem, and in Theorems \ref{thrm: PW convergence}, \ref{thrm: Main Local Uniform Estimate} above, the exponent $\zeta$ can be taken to be $\lambda_0/({\lambda_0+2w})$ by making $k$ large enough, where $w$ is as in the first part of the theorem, and $\lambda_0=\lambda_0(d)>0$ is the spectral gap of the linearised Boltzmann operator. While it may be possible to obtain better continuity results, with $\zeta$ close to 1, we will not explore this here.\medskip \\   
Due to a result of Sznitman \cite{Sznitman Chaos}, the property of chaoticity is equivalent to convergence of the empirical measures in expected Wasserstein distance $W$. Therefore, as mentioned before, the theorems displayed above are closely related to the propagation of chaos for the hard-spheres Kac process, proven in \cite{M+M}. We now give a chaoticity result which may be derived from the previous theorems. \begin{thm}[Theorems \ref{thrm: PW convergence}, \ref{thm: low moment regime} as a Chaos Estimate] \label{corr: PW convergence as POC} We can view Theorems \ref{thrm: PW convergence}, \ref{thm: low moment regime} as \emph{Propagation of Chaos} and \emph{Conditional Propagation of Chaos}, as follows. \medskip \\ We denote $\mathcal{P}_t^N(\mathcal{V}^N, \cdot)$ the transition probabilities of the $N$-particle labelled Kac process, started at $\mathcal{V}^N\in\mathbb{S}^N$. We form the symmetrised version, which we denote $\mathcal{P}^N_t(\mu^N, \cdot)$ by \begin{equation} \mathcal{P}^N_t(\mu^N, A)=\frac{1}{\#\theta_N^{-1}(\mu^N)}\hspace{0.1cm}\sum_{\mathcal{V}^N \in \theta_N^{-1}(\mu^N)}\mathcal{P}^N_t(\mathcal{V}^N, A). \end{equation} Let $k>2$ and $a\ge 1$, and suppose $\mu^N_0 \in \mathcal{S}_N$ satisfies a moment bound $\Lambda_k(\mu^N_0)\le a$. Then we can estimate \begin{equation} \label{eq: CPOC} \sup_{t\ge 0}\hspace{0.1cm}\max_{1\le l\le N} \hspace{0.1cm} \frac{\mathcal{W}_{1,l}\left(\Pi_l[\mathcal{P}^N_t(\mu^N_0, \cdot)], \phi_t(\mu^N_0)^{\otimes l}\right)}{l} \le C\hspace{0.1cm}a \hspace{0.1cm} N^{-\beta}  \end{equation} for some constants $C=C(d,k)<\infty$; $\beta=\beta(d,k)>0$. This has the following consequences: 

\begin{enumerate}[label=\roman{*}).]

\item \emph{(Chaotic case)} Let $k, a$ be as above, and suppose $\mu_0\in \mathcal{S}$ satisfies $\Lambda_k(\mu_0)\le a.$ \medskip \\ Construct initial data $\mathcal{V}^N_0=(v_1(0),...v_N(0))$ as follows. Let $u_1, ....u_N$ be an independent, and identically distributed sample from $\mu_0$. Define \begin{equation} \overline{u}_N=\frac{1}{N}\sum_{i=1}^N u_i; \hspace{1cm}s_N=\frac{1}{N}\sum_{i=1}^N|u_i-\overline{u}_N|^2\end{equation} and set \begin{equation} v_i(0)=s_N^{-1/2}(u_i-\overline{u}_N),\hspace{0.3cm} i=1,2,...,N;\hspace{0.6cm} \mathcal{V}^N_0=(v_1(0),...,v_N(0)). \end{equation} Let $\mathcal{V}^N_t$ be a labelled Kac process starting from $\mathcal{V}^N_0$. Then there exist constants $C=C(d,k)<\infty$; $\beta=\beta(d,k)>0$ such that \begin{equation} \sup_{t\ge 0}\hspace{0.1cm}\max_{1\le l\le N} \hspace{0.1cm} \frac{\mathcal{W}_{1,l}\left(\Pi_l[\mathcal{LV}^N_t], \phi_t(\mu_0)^{\otimes l}\right)}{l} \le C\hspace{0.1cm}N^{-\beta}.  \end{equation}
 \item \emph{(General Case)}  Let $a, k$ be as above, and suppose that $(\mathcal{V}^N_t)_{t\ge 0}$ are labelled Kac processes such that the empirical measures $\mu^N_0$ satisfy \begin{equation} \mathbb{E}\Lambda_k(\mu^N_0) \le a.\end{equation} Then we have the estimate \begin{equation} \sup_{t\ge 0}\hspace{0.1cm}\max_{1\le l\le N} \hspace{0.1cm} \frac{\mathcal{W}_{1,l}\left(\Pi_l[\mathcal{LV}^N_t], \mathcal{L}^l_t\right)}{l} \le C\hspace{0.1cm}a\hspace{0.1cm}N^{-\beta} \end{equation} for $C$ and $\beta$ as in the main statement, and where $\mathcal{L}^l_t$ is the probability measure given by \begin{equation} \label{eq: defn of ll} \mathcal{L}^l_t=\mathbb{E}\left[\phi_t(\mu^N_0)^{\otimes l}\right]. \end{equation} \end{enumerate}  \end{thm}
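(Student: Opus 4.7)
The plan is to derive the master inequality (\ref{eq: CPOC}) by a Sznitman-type decomposition combined with the pointwise convergence provided by Theorem \ref{thm: low moment regime}, and then to deduce the two corollaries by triangle inequality together with the convexity of $\mathcal{W}_{1,l}$ under expectation. Throughout I exploit two routine observations: that every $f\in B_X$ lies in $C\mathcal{A}$ for an absolute constant $C$, so that $|\langle f,\mu-\nu\rangle|\le CW(\mu,\nu)$; and that $\mathcal{W}_{1,l}$ is a dual norm, hence convex and satisfying Jensen's inequality for measure-valued random variables.

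For the master inequality, fix a deterministic $\mu^N_0\in\mathcal{S}_N$ and a tensor test function $f=f_1\otimes\cdots\otimes f_l$ with $f_j\in B_X$. By symmetry, $\int f\,d\Pi_l[\mathcal{P}^N_t(\mu^N_0,\cdot)]$ equals the expectation of $\frac{(N-l)!}{N!}\sum_{\text{distinct}}\prod_j f_j(v_{i_j}(t))$ under a labelled Kac process. Replacing the sum over distinct $l$-tuples by the full sum over $[N]^l$ introduces an error $O(l^2/N)$, rewriting the expression as $\mathbb{E}[\prod_j\langle f_j,\mu^N_t\rangle]+O(l^2/N)$. A telescoping sum then controls the deviation $|\mathbb{E}[\prod_j\langle f_j,\mu^N_t\rangle]-\prod_j\langle f_j,\phi_t(\mu^N_0)\rangle|\le Cl\,\mathbb{E}[W(\mu^N_t,\phi_t(\mu^N_0))]$. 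Theorem \ref{thm: low moment regime}, applied with initial datum $\mu_0:=\mu^N_0$ so that $W(\mu^N_0,\mu_0)=0$, gives $\sup_t\mathbb{E}[W(\mu^N_t,\phi_t(\mu^N_0))]\le CaN^{-\epsilon}$. Dividing by $l$ produces the bound $\mathcal{W}_{1,l}/l\le Cl/N+CaN^{-\epsilon}$; combining with the universal estimate $\mathcal{W}_{1,l}/l\le 2/l$ and balancing at $l\sim N^{1/2}$ yields the uniform-in-$l$ bound $CaN^{-\beta}$ with $\beta=\min(\epsilon,1/2)$.

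For (ii), the symmetry of the initial law makes $\mathcal{LV}^N_t=\mathbb{E}[\mathcal{P}^N_t(\mu^N_0,\cdot)]$, whence $\Pi_l[\mathcal{LV}^N_t]=\mathbb{E}[\Pi_l[\mathcal{P}^N_t(\mu^N_0,\cdot)]]$; Jensen's inequality gives $\mathcal{W}_{1,l}(\Pi_l[\mathcal{LV}^N_t],\mathcal{L}^l_t)\le\mathbb{E}[\mathcal{W}_{1,l}(\Pi_l[\mathcal{P}^N_t(\mu^N_0,\cdot)],\phi_t(\mu^N_0)^{\otimes l})]$, and applying the master inequality pathwise in $\mu^N_0$ together with the moment hypothesis $\mathbb{E}\Lambda_k(\mu^N_0)\le a$ closes the argument. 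For (i), insert the intermediate $\phi_t(\mu^N_0)^{\otimes l}$: the first piece of the triangle inequality is treated as in (ii), while the second piece $\mathbb{E}[\mathcal{W}_{1,l}(\phi_t(\mu^N_0)^{\otimes l},\phi_t(\mu_0)^{\otimes l})]$ is dominated via the elementary product bound $\mathcal{W}_{1,l}(\mu^{\otimes l},\nu^{\otimes l})\le ClW(\mu,\nu)$ combined with the uniform-in-time continuity estimate (\ref{eq: good continuity estimate}), reducing matters to $\mathbb{E}[W(\mu^N_0,\mu_0)^\zeta]$. For the rescaled i.i.d.\ initial data this decays polynomially by Fournier--Guillin applied to the unrescaled empirical measure $\tilde\mu^N_0$ of the $u_i$, the rate-$N^{-1/2}$ concentration of $\bar u_N$ and $s_N$ at $0$ and $1$ under the moment hypothesis on $\mu_0$, and the comparison $W\le CW_1^\alpha$ valid on $\mathcal{S}^k_a$.

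The main obstacle is securing a rate uniform in $l\in\{1,\ldots,N\}$: the naive Sznitman estimate, once normalised by $l$, leaves a residual $l/N$ that is trivial when $l$ is of order $N$, and the key device is to interpolate against the universal bound $2/l$, the two agreeing at $l\sim\sqrt N$. A secondary technical point arises in (i), where standard empirical-convergence results are phrased in the unweighted metric $W_1$; converting them into a bound in the weighted metric $W$ costs an exponent via the comparison on $\mathcal{S}^k_a$, but the resulting polynomial rate still suffices for the claimed conclusion.
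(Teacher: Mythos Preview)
Your argument is correct, and the decomposition into the combinatorial term $\mathcal{T}_1$ and the pathwise term $\mathcal{T}_2$ (plus $\mathcal{T}_3$ in the chaotic case) is exactly what the paper does. The one genuine difference is how you upgrade the $l=2$ estimate to uniform-in-$l$. The paper carries out the decomposition only for $l=2$, obtaining
\[
\sup_{t\ge 0}\,\mathcal{W}_{1,2}\bigl(\Pi_2[\mathcal{P}^N_t(\mu^N_0,\cdot)],\phi_t(\mu^N_0)^{\otimes 2}\bigr)\lesssim aN^{-\epsilon},
\]
and then invokes the Hauray--Mischler equivalence result \cite{Hauray Mischler} (see also \cite[Theorem~2.1]{Mischler}), which bounds $\max_{l\le N}\mathcal{W}_{1,l}/l$ in terms of $\mathcal{W}_{1,2}^{\alpha_1}+N^{-\alpha_2}$ for explicit $\alpha_1,\alpha_2>0$. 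By contrast, you run the Sznitman decomposition for general $l$, obtaining $\mathcal{W}_{1,l}/l\le Cl/N+CaN^{-\epsilon}$, and then balance against the trivial bound $\mathcal{W}_{1,l}/l\le 2/l$ at $l\sim N^{1/2}$. Your route is more elementary and self-contained, at the cost of a potentially cruder exponent $\beta=\min(\epsilon,1/2)$; the paper's route outsources the combinatorics but keeps the proof short. Both are valid.

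For case (i), the paper cites \cite[Proposition~9.2]{ACE} directly for $\mathbb{E}\,W(\mu^N_0,\mu_0)\lesssim N^{-\beta}$ in the weighted metric, whereas you sketch how to assemble this from Fournier--Guillin in $W_1$, concentration of $(\bar u_N,s_N)$, and the comparison $W\le CW_1^\alpha$ on $\mathcal{S}^k_a$. This is essentially how that proposition is proved, so the difference is only cosmetic. Case (ii) is handled identically in both.
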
  

 \begin{remark} \begin{enumerate}[label=\roman{*}).] 
 \item Roughly, (\ref{eq: CPOC}) says that, conditional on the observation of the empirical data $\mu^N_0$ at time $0$, the law $\mathcal{L}\mathcal{V}^N_t$ is quantitatively $\phi_t(\mu^N_0)$-chaotic. This may be viewed as propagation of chaos, with the heuristic that `conditional on $\mu^N_0, \mathcal{V}^N_0$ is $\mu^N_0$-chaotic'. We term this \emph{conditional propagation of chaos}. In this spirit, we may view the main estimate (\ref{eq: CPOC}) and point (ii.) as a \emph{quenched} and \emph{annealed} pair.
 \item The polynomial result obtained here improves on the previously known result \cite[Theorem 6.2]{M+M} for the hard spheres chaos. This improvement is due to the continuity estimate (\ref{eq: good continuity estimate}), which improves on the corresponding estimate in \cite[Equations 6.39, 6.42]{M+M}; we could derive the chaoticity estimate (\ref{eq: CPOC}) by using the estimate (\ref{eq: good continuity estimate}) in the arguments of \cite[Section 6]{M+M}, at the cost of potentially requiring a stronger initial moment control. We will recall the relevant arguments for completeness, and this will be discussed in the literature review.
 \item This construction of chaotic initial data in point (i.) is due to \cite[Proposition 9.2]{ACE},  which may be thought of as `as close to perfect independence as possible'.  
\item We will show that the main point can be deduced from Theorem \ref{thrm: PW convergence} or \ref{thm: low moment regime}. However, we will see in  Section \ref{sec: proof of POC} that deriving either of these from this result appears to be no less technical than the main proof presented in Section \ref{sec: proof of pw}. \end{enumerate}   \end{remark}  In our arguments, we will frequently encounter numerical constants which are ultimately absorbed into the constants $C$ whose dependence is specified in the relevant theorem. To ease notation, we will denote inequality, up to such a constant, by $\lesssim$.

\subsection{Plan of the paper} Our programme will be as follows:  \begin{enumerate} [label=\text{\roman*.}] 
\item In the remainder of this section, we will present a review of known results in the study of the Kac process and similar models. We will then discuss several aspects of our results, and how they may be interpreted. 

\item For later convenience, we discuss some classical moment estimates for the Kac process and the Boltzmann equation. These allow us to stochastically control the weights $\Lambda_k$ in appropriate $L^p$ spaces.

\item We cite the analytical \emph{regularity and stability estimates} from Mischler and Mouhot, \cite{M+M}. The stability estimates, in particular, are crucial to obtaining the good time-dependence in Theorems \ref{thrm: PW convergence}, \ref{thrm: Main Local Uniform Estimate}.
\item As a first application of the stability estimates, we analyse the continuity of the Boltzmann flow $\phi_t$ on subsets $\mathcal{S}^k_a$, with respect to the metric $W$, and uniformly in time. This is the content of Theorem \ref{thrm: W-W continuity of phit}, and allows us to reduce Theorems \ref{thrm: PW convergence}, \ref{thrm: Main Local Uniform Estimate} to the special case $\mu_0=\mu^N_0$.
\item We use ideas of infinite-dimensional differential calculus, developed by \cite{M+M}, to prove an \emph{interpolation decomposition} of the difference $\mu^N_t - \phi_t(\mu^N_0)$. This is the key identity used for the proofs of Theorems \ref{thrm: PW convergence}, \ref{thrm: Main Local Uniform Estimate}, as all of the terms appearing in our formula can be controlled by the stability estimates.
\item We then turn to the proof of Theorem \ref{thrm: PW convergence}. The main technical aspect is the control of a family of martingales $(M^{N,f}_t)_{f\in \mathcal{A}}$, uniformly in $f$.  This is obtained using a quantitative compactness argument similar to that in \cite{ACE}.
\item For a local uniform analysis, we first adopt the ideas of Theorem \ref{thrm: PW convergence} to a local uniform setting, with suitable adaptations, to state a local uniform martingale estimate, and deduce a preliminary, weak version of Theorem \ref{thrm: Main Local Uniform Estimate} with worse dependence in $t_\text{fin}$. We then use the stability estimates to `bootstrap' to the improved estimate Theorem \ref{thrm: Main Local Uniform Estimate}, and finally return to prove the local martingale estimate.
  \item We next prove Theorem \ref{thm: low moment regime}. The strategy here is to use a localised form of the main argument from \cite{ACE} to control behaviour on a very short time interval $[0, u_N]$, and use the previous results, together with the \emph{moment production} property recalled in Section \ref{sec:moment estimates}, to control behaviour at times larger than $u_N$. 
\item We prove Theorem \ref{thrm: No Uniform Estimate}, based on relaxation to equilibrium. 
\item Finally, we prove the chaoticity result Theorem \ref{corr: PW convergence as POC}. This proof follows a similar pattern to the proof in \cite{M+M}, using our esimates. \end{enumerate} 

\subsection{Literature Review} We will now briefly discuss related works, to which our results may be compared.

\paragraph{1. Probabilistic Techniques for the Kac Process and Boltzmann Equation} The probabilistic, \emph{pathwise} approach to the Kac process was pioneered by Tanaka \cite{Tanaka 78,Tanaka 02}, who constructed a Markov process describing the velocity of a `typical' particle in the Kac process with Maxwell molecules, and whose law at time $t$ is the solution to the associated Boltzmann equation. This was generalised by Fournier and M\'el\'eard \cite{FM} to include the cases without cutoff, and for non-Maxwellian molecules. A similar idea was used by Rousset \cite{Rousset} to prove convergence to equilibrium as $t\rightarrow \infty$. \medskip \\ Our main convergence results may be compared to the motivating work of Norris \cite{ACE}, of which the main result is recalled in Proposition \ref{thrm: bad convergence theorem} above. Theorem \ref{thrm: Main Local Uniform Estimate} improves on Proposition \ref{thrm: bad convergence theorem} in two notable ways. Firstly, we have much better asymptotic behaviour in the time-horizon $t_\text{fin}$, which was the original motivation for our work. Secondly, we control the deviation in the stronger sense of $L^p$, rather than in probability; this arises as a result of using moment estimates within the framework of a `growth control', rather than excluding events of small probability where the moments are large. We also remark that the analysis of the martingale term in Sections \ref{sec: proof of pw}, \ref{sec: proof of LU}  is simplified from the equivalent analysis in \cite[Theorem 1.1]{ACE} by our `interpolation decomposition', Formula \ref{form:newdecomposition}, which removes anticipating behaviour.  
\paragraph{2. Propagation of Chaos for the Kac Process} The problem of propagation of chaos for the Kac Process and Boltzmann equation has been extensively studied. The earliest results in this direction are due to McKean \cite{McKean 67}, Gr\"unbaum \cite{Gruenbaum} and Sznitman \cite{Sznitman BE}, and prove the qualitative statement (\ref{eq: POC}) for the cases of the hard spheres kernel considered here, or for the related case of Maxwell molecules. Recent work has produced quantitative estimates: Mischler and Mouhout \cite{M+M} showed propagation of infinite-dimensional chaos (\ref{eq: IDPOC}) for both hard spheres and Maxwell molecules. The estimates are uniform in time, with a quantitative estimate going as $(\log N)^{-r}$ for the hard spheres case. As remarked above, our estimates (Theorem \ref{thrm: PW convergence}, \ref{thm: low moment regime}, \ref{corr: PW convergence as POC}) improve this rate; this improvement is due to the improvement of Theorem \ref{thrm: W-W continuity of phit} over the corresponding estimate in \cite{M+M}, and this will be discussed further below. More recently, \cite{CF 2018} proved a chaoticity estimate for Maxwell molecules in $d=3$, measured in the $L^2(\mathbb{P})$ norm of Wasserstein$_2$ distance (\ref{eq: definition of Wp}), and with an almost optimal rate $N^{\epsilon-1/3}$, which is almost completely analagous to Theorem \ref{thrm: PW convergence}. 

\paragraph{3. Propagation of Chaos for Related Models} We also mention the study of other models in kinetic theory where chaoticity has been studied. Malrieu \cite{Malrieu} studied a McKean-Vlasov model related to granular media equations, and deduced chaoticity for a related system. The main estimate here is a uniform in time estimate, similar in nature to Theorem \ref{thrm: PW convergence}. Similarly, Bolley, Guillin and Malrieu  \cite{BGM} have also proven propagation of chaos for a particle system associated to a Vlasov-Focker-Plank equation, through a pointwise convergence result. Most recently, Durmus et al. \cite{Durmus} have proved a uniform in time chaoticity estimate based on a coupling approach, for the case with a confinement potential. Both of these models are amenable to the general framework of \cite{M+M}, and propagation of chaos for these models has been proven using the same techniques in a companion paper \cite{M+MCompanion}. \medskip \\We may also compare Theorem \ref{thrm: Main Local Uniform Estimate} to a result of Bolley, Guillin and Villani \cite[Theorem 2.9]{BGV}, which proves exponential concentration of the maximum $\sup_{t\le t_\text{fin}} W(\mu^N_t, \phi_t(\mu))$ about $0$, for McKean-Vlasov dynamics. This improves upon the rates $\mathcal{O}(N^{-\infty})$ which would be obtained using Theorem \ref{thrm: Main Local Uniform Estimate}, but does not produce an explicit $L^p(\mathbb{P})$ bound. More recently, Holding \cite{Holding} proved a result similar to Theorem \ref{thrm: Main Local Uniform Estimate} for McKean-Vlasov systems interacting through a H\"older continuous force, in order to deduce propagation of chaos. However, neithere of these results track the dependence in the terminal time $t_\text{fin}$, and so may have much weaker time dependence than our result. To the best of our knowledge, no local uniform estimate for the McKean-Vlasov system exists which seeks to optimise time dependence in the spirit of Theorem \ref{thrm: Main Local Uniform Estimate}; the applicability of our methods to this system will be considered in the discussion section below.   \medskip \\ The notion of chaoticity has also been studied in more abstract settings. Sznitman \cite{Sznitman Chaos} has studied equivalent conditions for a family of measures to be chaotic, and Gottlieb \cite{Gottlieb} has produced a necessary and sufficient condition for families of Markov chains to propagate chaoticity.

\paragraph{4. Relaxtion to Equilibrium of the Kac Process} Kac \cite{FKT} proposed to relate the asymptotic behaviour of the Boltzmann flow $\phi_t(\mu_0)$ to the asymptotic relaxation to equilibrium of the particle system, and conjectured the existence of a spectral gap for the master equation. This has been extensively studied, and Kac's conjecture on the spectral gap positively answered \cite{Carlen 00,Janvresse,Carlen 03,Malsen}. However, this is not an entirely satisfactory answer for Kac's question on convergence to equilibrium; for chaotic initial data, this still requires times order $\mathcal{O}(N)$ to show relaxation to equilibrium. Carlen et al. also considered in a later paper \cite{Carlen 08} the more intricate notion of convergence \emph{in relative entropy}, which somewhat avoids this problem. Mischler and Mouhot \cite{M+M} answered Kac's question, proving relaxation to equilibrium in Wasserstein distance, uniformly in $N$, for the cases of hard spheres and Maxwell molecules. \medskip \\ We remark that our philosophy is similar to Kac's proposal. Rather than investigating the long-time behaviour of the \emph{law} $\mathcal{LV}^N_t$ of the Kac process, our results use the asymptotics of the Boltzmann equation to partially understand the asymptotics of \emph{realisations} of the Kac process. Moreover, Theorem \ref{thrm: No Uniform Estimate} shows that this cannot be extended to completely understand the full, long-time asymptotics in this sense.

 \subsection{Discussion of Our Results}\label{sec: discussion of our results}
In this subsection, we will discuss the interpretation of our results, especially in view of the framework of chaoticity set out above. 

\paragraph{1. Theorems \ref{thrm: PW convergence}, \ref{thrm: Main Local Uniform Estimate} as a pathwise interpretation of the Boltzmann Equation} The main philosophy of our approach follows \cite{ACE}, in considering the Kac process as a Markov chain, and adapting techniques \cite{D&N,PDF} from the general scaling limits of Markov processes.  \medskip \\ It is instructive to compare this to the case of a particle system evolving under Vlasov dynamics. In this case, we write $\mu^{N,\text{Vl}}_t$ for the $N$-particle empirical measure, evolving under (nonrandom) Hamiltonian dynamics; Dobrushin \cite{Dobrushin} showed that $\mu^{N, \text{Vl}}_t$ is a weak measure solution to the associated mean field PDE, the Vlasov equation. For the case of Kac dynamics, we may interpret Theorems \ref{thrm: PW convergence}, \ref{thrm: Main Local Uniform Estimate} as saying that \begin{equation}  \forall t\ge 0\hspace{1cm} \mu^N_t=\phi_t(\mu^N_0)+\mathcal{N}^N_t\end{equation} where $\mathcal{N}^N_t$ is a stochastic noise term, which is small in an appropriate sense. This is a general phenomenon in the `fluid limit' scaling of Markov processes \cite{D&N,PDF,ACE}. In this sense, we may interpret the Boltzmann equation in a \emph{pathwise} sense; we stress that this interpretation of the Boltzmann equation does \emph{not} require any chaoticity assumptions on the initial data. 

\paragraph{2. Theorem \ref{thrm: PW convergence} as Propagation of Chaos}  It is natural, and instructive, to compare our chaoticity result Theorem \ref{corr: PW convergence as POC} and our techniques to those of \cite{M+M}, on whose work we build. \medskip \\  In Theorem \ref{corr: PW convergence as POC}, we have improved the rate of chaoticity, from $(\log N)^{-r}$ to a polynomial estimate $N^{-\alpha}$. In proving this result, we will compare our estimates to the estimates of the three error terms $\mathcal{T}_1$, $\mathcal{T}_2$, $\mathcal{T}_3$ in the abstract result \cite[Theorem 3.1]{M+M}:\begin{enumerate}[label=\roman{*}).] \item The first term $\mathcal{T}_1$ is a purely combinatorial term which may be controlled by general, elementary arguments. \item The second error term $\mathcal{T}_2$ may be controlled by $\mathbb{E}W(\phi_t(\mu^N_0), \mu^N_t)$, which is a special case of Theorems \ref{thrm: PW convergence}, \ref{thm: low moment regime} with $\mu_0=\mu^N_0$. \item The third error $\mathcal{T}_3$ depends on the continuity of the Boltzmann flow $\phi_t$ in Wasserstein distance, which is controlled by the H\"older estimates Theorem \ref{thrm: W-W continuity of phit}. \end{enumerate} As mentioned above, the improvement over \cite[Theorem 6.2]{M+M} is due to the improved control on $\mathcal{T}_3$, using the estimate (\ref{eq: good continuity estimate}). The controls on $\mathcal{T}_1, \mathcal{T}_2$ are similar to those in \cite{M+M}, and the claimed result (\ref{eq: CPOC}) follows by using our estimates (\ref{eq: pointwise bound on martingale term}, \ref{eq: good continuity estimate}) in the arguments of \cite[Section 6]{M+M}. In order to give a self-contained proof, we will recall the relevant arguments in Section \ref{sec: proof of POC}. \medskip\\  We also remark that we use each of the assumptions (\textbf{A1}-\textbf{5}) from \cite{M+M} in our analysis: \begin{enumerate}[label=\roman{*}).] \item Assumption (\textbf{A1}) corresponds to the moment bounds, which follow from the discussion of moment bounds in Proposition \ref{thrm:momentinequalities}.
\item Assumption (\textbf{A2}i) and (\textbf{A5}) concern the continuity of the Boltzmann flow $\phi_t$, which is addressed in Theorem \ref{thrm: W-W continuity of phit}. Assumption (\textbf{A2}ii) concerns the continuity of the collision operator $Q$, which is discussed in Section \ref{sec: Regularity and Stability Estimates}.
\item Assumption (\textbf{A3}) is the convergence of the generators. A special case of this is the content of Lemma \ref{lemma:DAP}, which is used to prove our `interpolation decomposition' Formula \ref{form:newdecomposition}. 
\item Assumption (\textbf{A4}) is the differential stability of the Boltzmann flow $\phi_t$, recalled in Proposition \ref{thrm: stability for BE}, which is crucial to obtaining estimates with good long-time properties.\end{enumerate} We will also see that, in order to recover Theorem \ref{thrm: PW convergence} theorem from either of the chaoticity results (Theorem \ref{corr: PW convergence as POC} or \cite[Theorem 6.2]{M+M}), we would need to move a supremum over test functions $f$ \emph{inside an expectation}, which corresponds to one of the most technical steps in our proof (Lemmas \ref{thrm: pointwise martingale control}, \ref{thrm: local uniform martingale control}). Moreover, this technique cannot generalise to produce a pathwise, local uniform convergence result analogous to Theorem \ref{thrm: Main Local Uniform Estimate} or Proposition \ref{thrm: bad convergence theorem}.

 \paragraph{3. Theorems \ref{thrm: PW convergence}, \ref{thrm: Main Local Uniform Estimate} without chaoticity} We also remark that neither of the approximation results Theorems \ref{thrm: PW convergence}, \ref{thrm: Main Local Uniform Estimate} require special preparation of the initial data, beyond a moment estimate; in particular, both are valid even if the initial data $\mathcal{V}^N_0$ are not chaotic. We will now give an explicit example of such a distribution where this chaoticity property fails. \begin{example}[Non-chaotic initial data]\label{ex: nonchaotic initial data} Assume that $N$ is a multiple of $2^d$. Choose $\Sigma \in S^{d-1}$ uniformly at random, and let $P_1, P_2,...,P_{2^d}$ be the $2^d$ points obtained from $\Sigma$ by all reflections in coordinate axes. Let $\mathcal{V}^N_0$ be given by giving $\frac{N}{2^d}$ particles velocity $P_i$, for each $i=1,...,2^d$, such that the resulting law $\mathcal{LV}^N_0$ is symmetric. Then each marginal distribution is the uniform distribution $\text{Uniform}(S^{d-1}) \in \mathcal{S}$, but there exists a constant $\delta>0$, uniform in $N$, such that \begin{equation} W\left(\mu^N_0, \text{Uniform}(S^{d-1})\right) \ge \delta >0\end{equation} almost surely, where $\mu^N_0$ is the empirical measure of $\mathcal{V}^N_0$. In particular, by Sznitman's characterisation, $\mathcal{V}^N_0$ is not $\text{Uniform}(S^{d-1})$-chaotic.  \end{example}
In cases such as this, we may still understand the Boltzmann equation as `nearly' holding pathwise, in the sense of point 1. Alternatively, we may view the result Theorem \ref{thrm: PW convergence}, and its consequence in Theorem \ref{corr: PW convergence as POC}, as a chaoticity estimate for $\mathcal{V}^N_t$ about $\phi_t(\mu^N_0)$, \emph{conditional on the initial measure $\mu^N_0$}.

\paragraph{4. Theorem \ref{thrm: No Uniform Estimate} in view of the $H$-Theorem}
As commented after the statement of Theorem \ref{thrm: No Uniform Estimate}, the key idea of the proof of Theorem \ref{thrm: No Uniform Estimate} is that the Kac process $\mu^N_t$ will, infinitely often, return to `highly ordered' subsets of the state space $\mathcal{S}_N$. However, this appears to contradict a na\"ive statement of Boltzmann's celebrated $H$-Theorem \cite{Boltzmann H Thrm}, that \emph{``entropy increases"}. Indeed, this is highly reminiscent of Zermelo's objection, based on Poincar\'e recurrence of deterministic dynamical systems \cite{Zermelo H Thrm}. \medskip \\ However, our results are compatible with the $H$-Theorem, which is rigorously established in \cite{M+M}. This apparent paradox arises because the $H$-functional, representing the negative of entropy, is a \emph{statistical}, and not \emph{pathwise}, concept; that is, $H_t$ depends on the data $\mathcal{V}^N_t$ through the law $\mathcal{LV}^N_t$, rather than being a random variable depending directly on a particular observation $\mathcal{V}^N_t(\omega)$. In particular, for our case, the time $T_N$ of reaching the `ordered state' is a large, random time, and observing a particular realisation $T_N(\omega)=t$ tells us very little about the general behaviour $\mathcal{LV}^N_t$, and so about the entropy at time $t$.
\paragraph{5. Sharpness of our Results} We will now discuss how sharp the main results (Theorems \ref{thrm: PW convergence}, \ref{thrm: Main Local Uniform Estimate}) are, with regards to dependencies in $N$, and the terminal time $t_\text{fin}$ in the case of Theorem \ref{thrm: Main Local Uniform Estimate}. \\ 
\subparagraph{5a. $N$-dependence} It is instructive to first consider the `optimal' case of independent particles, for which the empirical measure converges in Wasserstein distance at rate $N^{-1/d}$. More precisely, for $d\ge 3$, let $\mu \in \mathcal{S}^k_a$ for $k\ge \frac{3d}{d-1}$, and let $\mu^N$ be an empirical measure for $N$ independent draws from $\mathcal{S}$. Then, for some $C=C(a, k, d)$, we have\begin{equation} \left\|W(\mu^N, \mu)\right\|_{L^2(\mathbb{P})} \le C N^{-1/d}. \end{equation} This is shown in \cite[Proposition 9.3]{ACE}. Moreover, this rate is optimal: if $\mu$ is absolutely continuous with respect to the underlying Lebesgue measure, then the optimal approximation in $W$ metric is of the order $N^{-1/d}$, for $d\ge 3$. Results of Talagrand (\cite{T1,T2}, and discussion in \cite{Wasserstein}) suggest that this may also be true for higher $L^p$ norms, at least for the simple  case of the uniform distribution on $(-1, 1]^d$. \medskip \\ In view of this, we see that the exponent for the pointwise bound is \emph{almost sharp}, in the sense that we obtain exponents $\epsilon-\frac{1}{d}$ which are arbitrarily close to the optimal exponent $-\frac{1}{d}$, but cannot obtain the optimal exponent itself. This appears to be a consequence of using a particular estimate (\ref{eq: stability for BE 1}) from \cite{M+M}, which is `almost Lipschitz' in a similar sense. For the local uniform estimate Theorem \ref{thrm: Main Local Uniform Estimate}, we obtain exponent $-\alpha$, where $\alpha$ is given by \begin{equation} \alpha=-\epsilon+\frac{p'}{2d}; \hspace{1cm} \frac{1}{p}+\frac{1}{p'}=1.\end{equation} In the special case $p=2$, this produces the almost sharp exponent as discussed above. However, for $p>2$, the exponents are bounded away from $-\frac{1}{d}$, and so do not appear to be sharp.  \\ 
\subparagraph{5b. Time Dependence} In light of Theorem \ref{thrm: No Uniform Estimate}, we see that we cannot exclude the factor $(1+t_\text{fin})^{1/p}$ in Theorem \ref{thrm: Main Local Uniform Estimate}. Hence, this time dependence is sharp \emph{among power laws}. However, we do not know what the \emph{true} sharpest time-dependence is. Similar techniques to those of Graversen and Peskir \cite{GP} may be able to provide a sharper bound; we do not explore this here.\medskip \\ We remark that Theorem \ref{thrm: Main Local Uniform Estimate} interpolates between almost optimal $N$ dependence at $p=2$, and almost optimal $t_\text{fin}$ dependence as $p\rightarrow \infty$. Moreover, by taking $p\rightarrow \infty$, we sacrifice optimal dependence in $N$, but the exponent $\alpha(d,p)$ is bounded away from $0$, and so we have good convergence, on any polynomial time scale. This is the content of Corollary \ref{corr: variation 3}.
\paragraph{6. Further Applicability of our Methods in Kinetic Theory} Finally, we will mention other models in kinetic theory which may be amenable to our techniques.
\begin{enumerate}[label=\alph{*}).] \item \emph{Sharp $N$ dependence for hard spheres.} We believe that our techniques could be modified to prove an estimate for Theorem \ref{thrm: PW convergence}, and Theorem \ref{thrm: Main Local Uniform Estimate} in the case $p=2$, in order to obtain the optimal rate $N^{-1/d}$ discussed above; however, this would likely come at the cost of poor dependence in time. Since a similar result (Proposition \ref{thrm: bad convergence theorem}) is already known, and since this is not the spirit of this work in seeking to optimise time dependence, we will not consider this further. \\ \item \emph{The Kac process on Maxwell Molecules.} In addition to the hard spheres case analysed here, the main collision kernel of physical interest is the case of \emph{Maxwell molecules} with or without cutoff. Many of the estimates used in our argument for the hard spheres kernel have an analagous version for Maxwell molecules, including the stability estimates proven in \cite{M+M}. For this case, a result similar to Theorem \ref{thrm: PW convergence} is already known \cite[Theorem 2]{CF 2018}.\\ 
\item \emph{McKean-Vlasov Dynamics, and Inelastic Collisions.} Other kinetic system which may be analysed in the framework of \cite{M+M} include cases of \emph{McKean-Vlasov} dynamics, and \emph{Inelastic Collisions, coupled to a heat bath}, which have been studied in the functional framework of \cite{M+M} by Mischler, Mouhot and Wennburg in a companion paper \cite{M+MCompanion}. In these cases, the analagous estimates for stability and differentiability, computed in \cite{M+MCompanion}, have potentially poor dependence in time. As a result, our methods would still apply, but with correspondingly poor time dependence. \medskip \\ For the case of McKean-Vlasov dynamics without confinement potential, this is a fundamental limitation; Malrieu \cite{Malrieu} showed that the propagation of chaos is \emph{not} uniform in time. Instead, he proposed to study a \emph{projected} particle system, which satisfies uniform propagation of chaos, and whose limiting flow has exponential convergence to equilibrium \cite[Theorem 6.2]{Malrieu}. This suggests that it may be possible to use our bootstrap method, used in the proof of Theorem \ref{thrm: Main Local Uniform Estimate}, to obtain a pathwise estimates with good long-time properties, analagous to Theorem \ref{thrm: Main Local Uniform Estimate}. \medskip \\ We remark that, in the case of McKean-Vlasov dynamics, the presence of Brownian noise may complicate the derivation of the interpolation decomposition (Formula \ref{form:newdecomposition}), which is the key identity required for our argument.   \end{enumerate}

\subsection*{Acknowledgements} I am grateful to my supervisor, James Norris, for the suggestion of this project and for several useful remarks which allowed me to strengthen the results, and to Cl\'ement Mouhot, for a useful conversation concerning the interpretation of our results. I would also like to express my gratitude to the two anonymous reviewers, whose suggestions and comments over the course of two iterations led to several substantial strengthening of the results.   \section{Moment estimates}\label{sec:moment estimates}

In order to deal with the appearance of the moment-based weights $\Lambda_k$ in future calculations, we discuss the moment structure of Kac's Process and the Boltzmann Equation. That is, we seek bounds on  $\Lambda_k(\mu_t)$ where $\mu_t$ is, correspondingly, either a Kac process, or a solution to the Boltzmann equation.\medskip \\ The results presented here are mostly classical, and the arguments are well-known for the Boltzmann equation. Central to the proof is an inequality due to Povzner \cite{Povzner}, from which Elmroth \cite{Elmroth} deduced global moment bounds for the (function-valued) Boltzmann equation in terms of the moments of the initial data. This conclusion was strengthened to moment \emph{production} by Desvillettes \cite{Desvilettes} provided control of an initial moment $\Lambda_s(\mu_0)$ for any $s>2$. Wennberg \cite{Wennberg,Wennberg Mischler} demonstrated an optimal version of this result, only requiring finite initial energy $\langle |v|^2, \mu_0\rangle$.  Bobylev \cite{Bobylev} proved propagation of exponential moments, which may also be applied here as a simplification. These results have been proven for measure-valued solutions of the Boltzmann equation by Lu and Mouhot \cite{L&M}, and the techniques have been applied to the Kac process by Mischler and Mouhot \cite{M+M} and Norris \cite{ACE}. We collect below the precise results which we will use.
 \begin{proposition} [Moment Inequalities for the Kac Process and Boltzmann Equation] \label{thrm:momentinequalities} We have the following moment bounds for polynomial velocity moments: \\		
\begin{enumerate}[label={(\roman*.)},ref={2.\roman*.}] \item \label{lemma:momentboundpt1}  Let $(\mu^N_t)_{t\geq 0}$ be a Kac process on $N\geq 1$ particles, and let $q>2$, $p\ge 2$ with $q\ge p$. Then there exists a constant $C(p,q)<\infty$ such that, for all $t\ge 0$, \begin{equation} \label{eq: pointwise moment bound}  \mathbb{E}\left[ \Lambda_q(\mu^N_t) \right] \leq C(1+t^{p-q})\Lambda_p(\mu^N_0)\end{equation} and, for another constant $C=C(q)$, \begin{equation} \label{eq: local uniform moment bound} \mathbb{E}\left(\sup_{0\leq t \leq t_\text{fin}} \Lambda_q(\mu^N_t)\right) \leq (1+C(q)t_\text{fin})\Lambda_q(\mu^N_0).\end{equation} 
\item \label{lemma:momentboundpt2} Let $p, q$ be as above, and let, and $\mu_0\in \cup_{k>2}\mathcal{S}^k$. Then there exists a constant $C=C(p,q)$ such that the solution $\phi_t(\mu_0)$ to (\ref{BE}) satisfies \begin{equation}\label{eq: BE moment bound}  \Lambda_q(\phi_t(\mu_0)) \le C(1+t^{p-q})\Lambda_p(\mu_0).\end{equation} \item There exist constants $C_1, C_2 < \infty$ such that, whenever $\mu_0 \in \cup_{k>2} \mathcal{S}^k$, we have the bound for all $t\ge 0$ \begin{equation} \int_0^t \Lambda_3(\phi_s(\mu_0))ds \le C_1 t+C_2\langle\hspace{0.05cm} (1+|v|^2) \hspace{0.05cm}\log(1+|v|^2),\mu_0\rangle.\end{equation}   As a consequence, if $c\ge0$, then there exists $w<\infty, k<\infty$ such that, for all $t\ge 0$, \begin{equation} \exp\left(c\int_0^t \Lambda_3(\phi_s(\mu_0))ds\right)\le e^{wt}\Lambda_k(\mu_0). \end{equation}
\end{enumerate} \end{proposition}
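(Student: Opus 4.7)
The proof rests on a Povzner-type inequality for the hard spheres kernel: for every $q > 2$, spherical averaging of $(1+|v|^2)^{q/2}$ admits a bound of the form
\begin{equation*}
\int_{S^{d-1}} \bigl[(1+|v'|^2)^{q/2} + (1+|v'_\star|^2)^{q/2} - (1+|v|^2)^{q/2} - (1+|v_\star|^2)^{q/2}\bigr] d\sigma \le -c_q\bigl[(1+|v|^2)^{q/2} + (1+|v_\star|^2)^{q/2}\bigr] + C_q\,(\text{mixed, lower order}),
\end{equation*}
and multiplying by $|v-v_\star|$ and integrating against $\mu \otimes \mu$ (using $\Lambda_2(\mu) = 2$ on $\mathcal{S}$) yields $\langle (1+|v|^2)^{q/2}, Q(\mu)\rangle \le -c\,\Lambda_{q+1}(\mu) + C\,\Lambda_q(\mu)$. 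This identity is the common backbone of all three parts and can be imported from \cite{L&M, M+M, ACE} without further derivation.

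For part (ii), testing (\ref{BE}) against $(1+|v|^2)^{q/2}$ gives $\frac{d}{dt}\Lambda_q(\phi_t(\mu_0)) \le -c\,\Lambda_{q+1}(\phi_t(\mu_0)) + C\,\Lambda_q(\phi_t(\mu_0))$. Using Jensen, $\Lambda_{q+1} \ge \Lambda_q^{(q+1)/q}$, so in fact $\frac{d}{dt}\Lambda_q \le -c'\Lambda_q^{1+1/q} + C\Lambda_q$; a standard Wennberg-type analysis of this scalar ODE yields the production bound (\ref{eq: BE moment bound}), with the factor $t^{p-q}$ encoding the integrable blow-up at $t=0$. For part (i) one observes that the Kac generator acts on $\Lambda_q$ in the same way as the Boltzmann operator, $\mathcal{Q}_N\Lambda_q(\mu^N) = \langle (1+|v|^2)^{q/2}, Q(\mu^N)\rangle$ (the factors of $N$ and $1/N$ in (\ref{eq: definition of script Q}) and (\ref{eq: change of measure at collision}) cancel), so that taking expectations in the Dynkin formula and running the same ODE argument delivers (\ref{eq: pointwise moment bound}). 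For the sup estimate (\ref{eq: local uniform moment bound}), decompose $\Lambda_q(\mu^N_t) = \Lambda_q(\mu^N_0) + A_t + M_t$ with $A_t = \int_0^t \mathcal{Q}_N\Lambda_q(\mu^N_s)\,ds$ and $M_t$ a pure-jump martingale; Povzner absorbs the positive drift into the dissipation so that $\mathcal{Q}_N\Lambda_q \le C$, giving $A_t \le Ct_\text{fin}$, and Doob combined with a bracket estimate on $M_t$ (whose integrand is controlled via the pointwise bound already obtained) then bounds $\mathbb{E}\sup_{t \le t_\text{fin}} |M_t|$ by a linear-in-$t_\text{fin}$ quantity.

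For part (iii), the delicate ingredient is a Povzner-type inequality with the borderline log-weight $\phi(v) := (1+|v|^2)\log(1+|v|^2)$, yielding $\langle \phi, Q(\mu)\rangle \le C - c\,\Lambda_3(\mu)$ uniformly on $\mathcal{S}$. Testing (\ref{BE}) against $\phi$ and integrating gives
\begin{equation*}
c\int_0^t \Lambda_3(\phi_s(\mu_0))\,ds \le Ct + \langle \phi, \mu_0\rangle - \langle \phi, \phi_t(\mu_0)\rangle \le Ct + \langle \phi, \mu_0\rangle,
\end{equation*}
which is the desired bound after relabelling the constants. For the exponential consequence, a short Jensen argument controls the log-moment by a polynomial one: writing $L_0 = \langle \phi, \mu_0\rangle = \Lambda_2(\mu_0)\int \log(1+|v|^2)\, \frac{(1+|v|^2)\mu_0(dv)}{\Lambda_2(\mu_0)}$ and applying Jensen's inequality to the concave function $\log$ gives $L_0 \le \Lambda_2(\mu_0)\log(\Lambda_4(\mu_0)/\Lambda_2(\mu_0)) \le C\log\Lambda_4(\mu_0)$. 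Exponentiating the already-established integral bound then yields $\exp(c\int_0^t \Lambda_3\,ds) \le e^{wt}\Lambda_4(\mu_0)^{\beta} \le e^{wt}\Lambda_k(\mu_0)$ for any $k$ chosen large enough that $4\beta/k \le 1$ (recalling $\Lambda_k \ge 1$ on $\mathcal{S}$).

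The main technical obstacle is the log-weighted Povzner inequality underlying part (iii); the polynomial Povzner and the resulting ODE analysis for parts (i)--(ii) follow a well-worn template, and the remaining steps (Dynkin, Doob, Jensen) are assembly.
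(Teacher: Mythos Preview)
Your proposal is correct and follows the same Povzner-based strategy that underlies the references the paper cites for this result (\cite{ACE} for parts (i)--(ii), \cite{M+M} for part (iii)); the paper itself does not give a proof but defers to those references, supplementing only with the interpolation $\langle (1+|v|^2)\log(1+|v|^2),\mu\rangle \le 8(1+\log\Lambda_5(\mu))$, which is essentially your Jensen step with $\Lambda_5$ in place of $\Lambda_4$. One small point the paper makes explicit that your sketch omits: for part (ii), testing (\ref{BE}) against $(1+|v|^2)^{q/2}$ requires the solution to be locally $\mathcal{S}^q$-bounded, which is only known \emph{a priori} for some $k>2$; the paper handles this by restarting from $\mu_\delta=\phi_\delta(\mu_0)$ (which has all moments by qualitative moment creation) and taking $\delta\downarrow 0$.
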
  The first item is exactly \cite[Proposition 3.1]{ACE}. For the second item, if $\phi_t(\mu_0)$ is locally $\mathcal{S}^k$ bounded for all $k$, then we can apply the same reasoning as the cited proposition to the Boltzmann equation. To remove this condition, we consider the Boltzmann equation started from $\mu_\delta=\phi_\delta(\mu_0)$: thanks to the qualitative moment creation property \cite{Desvilettes, Wennberg Mischler}, the Boltzmann flow started at $\mu_\delta$ is locally $\mathcal{S}^k$ bounded for all $k$, and so the claimed result holds with $\mu_\delta$ in place of $\mu_0$. The claimed result may then be obtained by carefully taking the limit $\delta\downarrow 0$. \medskip \\  The first conclusion of item iii.  is proven in \cite[Equation 6.20]{M+M}, and the final point follows, using the interpolation, for all $\mu \in \mathcal{S}$,\begin{equation} \langle (1+|v|^2) \hspace{0.05cm}\log(1+|v|^2),\mu\rangle \le 8(1+\log \Lambda_5(\mu)). \end{equation}  In our estimates for the various terms of the interpolation decomposition, we will frequently encounter the weightings $\Lambda_k(\mu^N_t)$ appearing in the integrand. We refer to points (i-ii.) of Proposition \ref{thrm:momentinequalities}, along with the following lemma, as \emph{growth control} of the weightings, which allows us to control these factors in suitable $L^p$ norms.  \begin{lemma} \label{lemma:momentincreaseatcollision} Let $\left(\mu^N_t\right)_{t\geq 0}$ be a Kac process on $N\geq 1$ particles, and fix an exponent $k\geq 2$. Then for any time $t\geq 0$, and any measure $\mu^N$ which can be obtained from $\mu^N_t$ by a collision, \begin{equation}  \Lambda_k(\mu^N) \leq 2^{\frac{k}{2}+1} \Lambda_k(\mu^N_t)\end{equation} \end{lemma}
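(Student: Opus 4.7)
The plan is to directly compute the change in $\Lambda_k$ at a single collision, and bound the new contribution by the existing one using conservation of kinetic energy together with the convexity of $x \mapsto x^{k/2}$.

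First, I would unpack the statement of the lemma using the change-of-measure formula (\ref{eq: change of measure at collision}). If $\mu^N$ is obtained from $\mu^N_t$ by a collision of particles with velocities $v, v_\star$, producing new velocities $v', v_\star'$ given by (\ref{eq: PCV}), then
\begin{equation*}
N \Lambda_k(\mu^N) = N\Lambda_k(\mu^N_t) - (1+|v|^2)^{k/2} - (1+|v_\star|^2)^{k/2} + (1+|v'|^2)^{k/2} + (1+|v_\star'|^2)^{k/2}.
\end{equation*}
Thus everything reduces to controlling the outgoing weights in terms of the incoming ones.

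Next, I would use the conservation of kinetic energy, $|v'|^2 + |v_\star'|^2 = |v|^2 + |v_\star|^2$, which implies $|v'|^2 \le |v|^2 + |v_\star|^2$ and similarly for $v_\star'$. Consequently $1 + |v'|^2 \le (1+|v|^2) + (1+|v_\star|^2)$, and by convexity of $x \mapsto x^{k/2}$ for $k \ge 2$,
\begin{equation*}
(1+|v'|^2)^{k/2} \le 2^{k/2 - 1}\bigl[(1+|v|^2)^{k/2} + (1+|v_\star|^2)^{k/2}\bigr],
\end{equation*}
and the same bound for $(1+|v_\star'|^2)^{k/2}$. Summing gives $(1+|v'|^2)^{k/2} + (1+|v_\star'|^2)^{k/2} \le 2^{k/2}\bigl[(1+|v|^2)^{k/2} + (1+|v_\star|^2)^{k/2}\bigr]$.

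Finally, since $v, v_\star \in \mathrm{supp}(\mu^N_t)$, the two terms $(1+|v|^2)^{k/2}$ and $(1+|v_\star|^2)^{k/2}$ are among the $N$ summands of $N\Lambda_k(\mu^N_t)$, so their sum is at most $N\Lambda_k(\mu^N_t)$. Plugging into the identity above and dropping the negative terms yields
\begin{equation*}
N\Lambda_k(\mu^N) \le N\Lambda_k(\mu^N_t) + (2^{k/2} - 1)\bigl[(1+|v|^2)^{k/2} + (1+|v_\star|^2)^{k/2}\bigr] \le 2^{k/2} N\Lambda_k(\mu^N_t),
\end{equation*}
which already gives the sharper bound $\Lambda_k(\mu^N) \le 2^{k/2}\Lambda_k(\mu^N_t)$ and in particular the claim. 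There is no genuine obstacle here; the only subtlety worth flagging is the care needed to recognise that $(1+|v|^2)^{k/2}$ and $(1+|v_\star|^2)^{k/2}$ are individual atoms of $N\Lambda_k(\mu^N_t)$ and so bounded by the total.
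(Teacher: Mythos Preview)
Your proof is correct and follows essentially the same approach as the paper: bound the post-collisional weights $(1+|v'|^2)^{k/2}$ and $(1+|v_\star'|^2)^{k/2}$ via energy conservation and a power inequality, then absorb into $N\Lambda_k(\mu^N_t)$. Your use of the convexity inequality $(a+b)^{k/2}\le 2^{k/2-1}(a^{k/2}+b^{k/2})$ is slightly sharper than the paper's cruder bound $(a+b)^{k/2}\le 2^{k/2}(a^{k/2}+b^{k/2})$, which is why you obtain the constant $2^{k/2}$ rather than the stated $2^{k/2+1}$.
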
  \begin{proof} This is immediate, by noting that if $v, v_\star$ are pre-collision velocities leading to post-collision $v', v_\star'$, we have the bound \begin{equation} \begin{split} (1+|v'|^2)^k &\le ((1+|v|^2)+(1+|v_\star|^2))^\frac{k}{2} \\ & \le 2^{k/2}((1+|v|^2)^\frac{k}{2}+(1+|v_\star|^2)^\frac{k}{2}). \end{split}\end{equation} Using the same bound for $v_\star'$ leads to the claimed result. \end{proof}

A final property of the weighting estimates which will prove useful is the following correlation inequality: \begin{lemma} \label{lemma: correlation of moments} Let $k_1, k_2 \geq 2$, and let $\mu \in \mathcal{S}^{k_1+k_2}$. Then we have \begin{equation} \Lambda_{k_1}(\mu)\Lambda_{k_2}(\mu)\leq \Lambda_{k_1+k_2}(\mu).\end{equation}  \end{lemma}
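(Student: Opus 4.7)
The plan is to prove this as a standard Chebyshev/FKG-type correlation inequality, exploiting the fact that the two integrands are both monotone functions of the same scalar quantity $|v|^2$. Writing $f_1(v) = (1+|v|^2)^{k_1/2}$ and $f_2(v) = (1+|v|^2)^{k_2/2}$, the key observations are that $f_1 \cdot f_2 = (1+|v|^2)^{(k_1+k_2)/2}$ by construction, and that both $f_1$ and $f_2$ are nondecreasing functions of $|v|^2$, so that for any $v, w \in \mathbb{R}^d$ the two differences $f_1(v) - f_1(w)$ and $f_2(v) - f_2(w)$ have the same sign.

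Concretely, I would form the obviously nonnegative double integral
\begin{equation}
0 \le \int_{\mathbb{R}^d}\int_{\mathbb{R}^d} \bigl(f_1(v) - f_1(w)\bigr)\bigl(f_2(v) - f_2(w)\bigr)\, \mu(dv)\mu(dw).
\end{equation}
Expanding the product and using the fact that $\mu$ is a probability measure, i.e.\ $\langle 1, \mu\rangle = 1$, the four resulting integrals collapse to
\begin{equation}
2\langle f_1 f_2, \mu\rangle - 2\langle f_1,\mu\rangle\langle f_2,\mu\rangle \ge 0,
\end{equation}
which is exactly the claim $\Lambda_{k_1}(\mu)\Lambda_{k_2}(\mu) \le \Lambda_{k_1+k_2}(\mu)$. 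The assumption $\mu \in \mathcal{S}^{k_1+k_2}$ guarantees that all quantities are finite, so Fubini applies.

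There is no real obstacle in this argument; the only thing one has to check is that $\mu$ has total mass one, which is built into the definition of $\mathcal{S}$, and that the symmetric pairing really does yield the product of the two marginal integrals when expanded. Notice that the hypothesis $k_1, k_2 \ge 2$ is in fact not used beyond the definition of $\Lambda_k$: the same proof works for any $k_1, k_2 \ge 0$ with the same monotonicity on $|v|^2$.
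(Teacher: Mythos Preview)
Your proof is correct and is essentially identical to the paper's own argument: the paper also writes down the pointwise inequality $\bigl((1+|v|^2)^{k_1/2}-(1+|v_\star|^2)^{k_1/2}\bigr)\bigl((1+|v|^2)^{k_2/2}-(1+|v_\star|^2)^{k_2/2}\bigr)\ge 0$ from monotonicity and integrates both variables against $\mu$.
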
 \begin{proof} Since the maps $x\mapsto (1+|x|^2)^{k_i/2}$, for $i=1,2$, are both monotonically increasing on $[0, \infty)$, for any $v, v_\star$ we have the bound \begin{equation} \left\{(1+|v|^2)^{k_1/2}-(1+|v_\star|^2)^{k_1/2}\right\}\left\{(1+|v|^2)^{k_2/2}-(1+|v_\star|^2)^{k_2/2}\right\}\geq 0.\end{equation} Integrating both variables with respect to $\mu$ produces the result.  \end{proof}
\section{Regularity and Stability Estimates} \label{sec: Regularity and Stability Estimates}

In this section, we give precise statements of analytical results concerning the flow maps $(\phi_t)_{t\geq 0}$, and the drift operator $Q$, which will be used in our convergence theorems. We need a combination of \emph{regularity} for the drift map $Q$, which appears in the proof of Lemma \ref{thrm: local uniform martingale control}, and \emph{differentiability and stability} results for the flow maps $(\phi_t)_{t\geq 0}$.

\subsection{Stability Estimates}
The key component to our analysis of the Kac process is the \emph{stability} of the limiting Boltzmann equation - that is, that the limit flow suppresses errors, rather than allowing exponential amplification. We begin by defining appropriate linear structures.
\begin{definition}\label{def: weighted normed spaces} Consider the space $Y$ of signed measures, given by \begin{equation} Y=\left\{ \xi: \hspace{0.3cm}\|\xi\|_\mathrm{TV} <\infty;\hspace{0.3cm} \langle 1, \xi \rangle =0\right\}.\end{equation}  We equip $Y$ with the total variation norm $\|\cdot\|_\mathrm{TV}$. For real $q\geq 0$, we define the subspace $Y_q$ of measures with finite $q^\text{th}$ moments: \begin{equation} Y_q =\left\{ \xi \in Y: \langle 1+|v|^q, |\xi|\rangle < \infty  \right\}. \end{equation} We define the norm with $q$-weighting on $Y_q$ by \begin{equation} \|\xi\|_{\mathrm{TV}+q}=\langle 1+|v|^q, |\xi|\rangle.\end{equation} The notation $\|\cdot\|_{\mathrm{TV}+q}$ is chosen to emphasise that this is a total variation norm, with additional polynomial weighting of order $q$, while avoiding potential ambiguity with the $L^q$ norms of random variables. \end{definition}  \begin{remark}\label{rmk: compactness} The total variation norms $\|\cdot\|_{\mathrm{TV}+q}$ appearing in the following analysis are much stronger than the Wasserstein distance appearing in Theorems \ref{thrm: PW convergence}, \ref{thrm: Main Local Uniform Estimate}, \ref{thm: low moment regime}. We can understand this as follows. Recalling the definitions of $\mathcal{A}, \mathcal{A}_0$ in (\ref{eq: defn of script A}, \ref{eq: defn of script A 0}), we note that the $\mathrm{TV}+2$ distance is given by a duality \begin{equation} \|\mu-\nu\|_{\mathrm{TV}+2} =\sup_{f\in \mathcal{A}_0} \hspace{0.1cm} |\langle f, \mu-\nu\rangle| \end{equation} and, if we write $\mathcal{A}|_r, \mathcal{A}_0|_r$ for the restriction of functions to $[-r,r]^d$, then the inclusion \begin{equation} \mathcal{A}|_r\subset \mathcal{A}_0|_r\end{equation} is compact in the norm of $\mathcal{A}_0|_r$, by the classical theorem of Arzel\'a-Ascoli. This is at the heart of a \emph{quantitative compactness} argument in Lemmas \ref{thrm: pointwise martingale control}, \ref{thrm: local uniform martingale control}, which allows us to to take the supremum over $f\in\mathcal{A}$ inside the expectation.  \end{remark} 
We can now state the precise results as they appear in \cite[Lemma 6.6]{M+M}:
 \begin{proposition}\label{thrm: stability for BE} Let $\eta \in (0,1)$. Then there are absolute constants $C\in (0, \infty)$ and $\lambda_0>0$ such that, for $k$ large enough (depending only on $\eta$), and all $\mu, \nu \in \mathcal{S}^k$, there is a unique solution $(\xi_t)_{t\geq 0} \subset Y_2$ to the linearised differential equation \begin{equation} \label{eq: definition of the difference term} \xi_0=\nu-\mu; \hspace{0.5cm} \partial_t \xi_t = 2Q(\phi_t(\mu), \xi_t).\end{equation}  This solution satisfies the bounds \begin{equation} \label{eq: stability for BE 1}  \|\phi_t(\nu)-\phi_t(\mu)\|_{\mathrm{TV}+2} \leq C e^{-\lambda_0 t/2} \Lambda_{k}(\mu, \nu)^\frac{1}{2}\|\mu-\nu\|_\mathrm{TV}^\eta; \end{equation} \begin{equation} \label{eq: stability for BE 1.5}  \|\xi_t\|_{\mathrm{TV}+2} \leq C e^{-\lambda_0 t/2} \Lambda_{k}(\mu, \nu)^\frac{1}{2}\|\mu-\nu\|_\mathrm{TV}^\eta;\end{equation} \begin{equation}\label{eq: stability for BE 2}
\|\phi_t(\nu)-\phi_t(\mu) - \xi_t \|_{\mathrm{TV}+2} \leq C e^{-\lambda_0 t/2} \Lambda_{k}(\mu, \nu)^\frac{1}{2}\|\mu-\nu\|_\mathrm{TV}^{1+\eta}.\end{equation}  This allows us to define a linear map $\mathcal{D}\phi_t(\mu)$ by \begin{equation} \mathcal{D}\phi_t(\mu)[\nu-\mu]:=\xi_t.\end{equation} This linear map will play the r\^ole of a functional derivative for the Boltzmann flow $\phi_t$ in the calculus developed by \cite{M+M}. \end{proposition}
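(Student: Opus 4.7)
The plan is to prove this in four stages, following the general structure developed for linearised Boltzmann equations. First, I would establish existence and uniqueness of $\xi_t$ by treating (\ref{eq: definition of the difference term}) as a non-autonomous linear inhomogeneous ODE in the Banach space $Y_q$ for some $q>2$. The operator $\xi \mapsto 2Q(\phi_t(\mu), \xi)$ maps $Y_q$ boundedly into $Y_{q-1}$, with operator norm controlled by $\Lambda_{q-1}(\phi_t(\mu))$ (the extra moment comes from the $|v-v_\star|$ cross-section), and this is uniformly bounded in $t$ by Proposition \ref{thrm:momentinequalities}(ii). Standard Picard iteration on short intervals, combined with \emph{a priori} moment bounds on $\xi_t$ obtained by Gr\"onwall, gives a unique global solution in $Y_2$.

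Second, for the short-time estimates $t \le T_0$, I would apply Gr\"onwall directly to $\|\xi_t\|_{\mathrm{TV}+2}$ using
$$\partial_t \|\xi_t\|_{\mathrm{TV}+2} \lesssim \Lambda_3(\phi_t(\mu))\|\xi_t\|_{\mathrm{TV}+2},$$
so that Proposition \ref{thrm:momentinequalities}(iii) yields $\|\xi_t\|_{\mathrm{TV}+2} \lesssim e^{wt} \Lambda_k(\mu)\|\xi_0\|_{\mathrm{TV}+2}$ for some $k<\infty$. The fractional exponent $\eta$ in the right-hand side is then produced by a one-parameter truncation argument: for any $R>0$,
$$\|\nu-\mu\|_{\mathrm{TV}+2} \le (1+R^2)\|\nu-\mu\|_{\mathrm{TV}} + R^{2-k}\|\nu-\mu\|_{\mathrm{TV}+k},$$
and optimising in $R$ produces an interpolation of the form $\|\nu-\mu\|_{\mathrm{TV}+2} \lesssim \Lambda_k(\mu,\nu)^{1-\eta} \|\nu-\mu\|_{\mathrm{TV}}^\eta$ with $\eta$ approaching $1$ as $k\to\infty$. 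Absorbing the mismatched weight into $\Lambda_k(\mu,\nu)^{1/2}$ (possibly at the cost of enlarging $k$) gives the short-time version of (\ref{eq: stability for BE 1.5}).

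Third, and this is the main obstacle, I would establish the exponential decay rate $e^{-\lambda_0 t/2}$ by invoking the \emph{spectral gap} of the linearised collision operator $\mathcal{L}_\gamma[\xi] := 2Q(\gamma, \xi)$ at the Maxwellian. On the codimension-$(d+2)$ subspace of $Y_2$ consisting of signed measures with vanishing mass, momentum and energy — which is invariant under the linearised dynamics because these are collision invariants — $\mathcal{L}_\gamma$ is known to generate a semigroup with norm decaying like $e^{-\lambda_0 t}$ in an appropriately weighted norm. To transfer this to the non-autonomous operator $2Q(\phi_t(\mu), \cdot)$, I would use Mischler--Mouhot's exponential relaxation $\phi_t(\mu) \to \gamma$ in $\mathrm{TV}+2$ to write the true operator as a small perturbation of $\mathcal{L}_\gamma$ for $t \ge T_1$, and close the argument by a standard semigroup perturbation / cut-and-paste between $[0,T_1]$ (handled by step two) and $[T_1,\infty)$ (handled by the gapped semigroup).

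Fourth, for (\ref{eq: stability for BE 1}) and the quadratic refinement (\ref{eq: stability for BE 2}), define the residual $r_t := \phi_t(\nu) - \phi_t(\mu) - \xi_t$, which satisfies $r_0 = 0$ and, by direct computation from (\ref{BE}) and (\ref{eq: definition of the difference term}),
$$\partial_t r_t = 2Q(\phi_t(\mu), r_t) + Q\bigl(\phi_t(\nu) - \phi_t(\mu)\bigr),$$
the last term being bilinear in the difference $\phi_t(\nu) - \phi_t(\mu)$. Applying the bound (\ref{eq: stability for BE 1.5}) first to $\phi_t(\nu) - \phi_t(\mu)$ (which gives (\ref{eq: stability for BE 1}) after controlling the nonlinear error by iteration), and then using Duhamel's formula for $r_t$ against the decaying linearised semigroup from step three, squares the small factor $\|\mu-\nu\|_{\mathrm{TV}}^\eta$ and produces (\ref{eq: stability for BE 2}). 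The hard part throughout is arranging the interpolations so that one single exponent $\eta \in (0,1)$ and one single moment index $k$ control every term simultaneously — which is precisely why the statement is restricted to $\eta<1$ and $k$ large depending on $\eta$.
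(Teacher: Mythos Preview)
This proposition is not proved in the paper at all; it is quoted verbatim from \cite[Lemma~6.6]{M+M}, as the sentence introducing it in Section~\ref{sec: Regularity and Stability Estimates} makes explicit. There is therefore no ``paper's own proof'' to compare against.

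That said, your four-stage outline is a fair sketch of the Mischler--Mouhot strategy: Picard existence in a weighted space, short-time Gr\"onwall plus moment interpolation to manufacture the H\"older exponent $\eta$, spectral-gap decay near the Maxwellian glued to the short-time estimate, and Duhamel on the residual $r_t$ to square the small factor. The one place your sketch is too optimistic is step three. The spectral gap for $\mathcal{L}_\gamma$ is classically established in a Hilbert space such as $L^2(\gamma^{-1})$, not in the total-variation-plus-polynomial-weight space $(Y_2,\|\cdot\|_{\mathrm{TV}+2})$ where you need it; transferring the gap across this change of functional setting is precisely the content of the Gualdani--Mischler--Mouhot ``enlargement of the functional space'' machinery, and is by far the deepest ingredient in \cite{M+M}. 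A na\"ive semigroup perturbation argument on $[T_1,\infty)$ will not close without it, because the perturbation $Q(\phi_t(\mu)-\gamma,\cdot)$ is not small as an operator on $Y_2$ in any obvious sense. If you want a self-contained proof you will have to either reproduce that enlargement argument or cite it separately.
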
 
To obtain estimates with the weighted metric $W$, we will use a version of Proposition \ref{thrm: stability for BE} with the difference $\phi_t(\mu)-\phi_t(\nu)$ measured in stronger norms $\|\cdot \|_{\mathrm{TV}+q}$. The following estimate may be obtained by a simple interpolation between Propositions \ref{thrm:momentinequalities}, \ref{thrm: stability for BE}. \begin{corollary} \label{cor: new stability for BE} Let $q\geq 2$, $\eta \in (0,1)$ and $\lambda<\lambda_0$. Then for all $k$ large enough, depending on $\eta, \lambda$ and $q$, there exists a constant $C$ such that \begin{equation} \forall \mu, \nu \in \mathcal{S}^k, \hspace{1cm}\|\phi_t(\mu)-\phi_t(\nu)\|_{\mathrm{TV}+q} \leq C e^{-\lambda t/2} \Lambda_{k}(\mu, \nu)^\frac{1}{2} \|\mu-\nu\|_\mathrm{TV}^{\eta}. \end{equation} \end{corollary}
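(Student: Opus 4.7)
The natural plan, given the hint, is to interpolate the weight-$2$ stability estimate (\ref{eq: stability for BE 1}) from Proposition~\ref{thrm: stability for BE}---which carries the good decay rate $e^{-\lambda_0 t/2}$---against a uniform-in-time bound in a much higher weight, which can be controlled by Proposition~\ref{thrm:momentinequalities}.ii. The technical ingredient is the pointwise comparison
\[
1+|v|^q \;\le\; 2\,(1+|v|^2)^{\theta}\,(1+|v|^r)^{1-\theta},\qquad \theta := \frac{r-q}{r-2},
\]
valid for all $r > q \ge 2$; combined with H\"older's inequality applied to the measure $|\xi|$ with conjugate exponents $1/\theta$ and $1/(1-\theta)$, this yields the log-convexity estimate
\[
\|\xi\|_{\mathrm{TV}+q}\;\le\;2\,\|\xi\|_{\mathrm{TV}+2}^{\theta}\,\|\xi\|_{\mathrm{TV}+r}^{\,1-\theta}
\]
for every $\xi \in Y_r$. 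The case $q=2$ is immediate from Proposition~\ref{thrm: stability for BE} (using $\lambda < \lambda_0$), so the interest is the case $q > 2$.

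For the parameter choices, I would fix $\theta \in \bigl(\max(\eta,\lambda/\lambda_0),1\bigr)$---non-empty since $\eta<1$ and $\lambda<\lambda_0$---and define $r = (q-2\theta)/(1-\theta)$ together with $\tilde\eta = \eta/\theta \in (0,1)$. Proposition~\ref{thrm: stability for BE} applied with the exponent $\tilde\eta$ produces a moment threshold $\tilde k = \tilde k(\tilde\eta)$ and a constant $C$ such that
\[
\|\phi_t(\mu)-\phi_t(\nu)\|_{\mathrm{TV}+2} \;\le\; C\, e^{-\lambda_0 t/2}\, \Lambda_{\tilde k}(\mu,\nu)^{1/2}\,\|\mu-\nu\|_{\mathrm{TV}}^{\tilde\eta}.
\]
For the $\mathrm{TV}+r$ factor, Proposition~\ref{thrm:momentinequalities}.ii with $p = q = r$ gives $\Lambda_r(\phi_t(\mu)) \le C \Lambda_r(\mu)$ with no time dependence; the pointwise bound $1+|v|^r \le 2(1+|v|^2)^{r/2}$ transfers this to $\|\phi_t(\mu)-\phi_t(\nu)\|_{\mathrm{TV}+r} \le C(\Lambda_r(\mu)+\Lambda_r(\nu))$, again uniformly in $t$. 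Feeding both bounds into the interpolation inequality yields a bound with rate $e^{-\lambda_0\theta t/2} \le e^{-\lambda t/2}$ and $\|\mu-\nu\|_{\mathrm{TV}}$-exponent $\theta \tilde\eta = \eta$, as required.

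The main obstacle is bookkeeping the exponent on $\Lambda_k$: a crude estimate $\Lambda_r \le \Lambda_k$ produces a combined exponent $\theta/2 + (1-\theta) = 1-\theta/2$, which is \emph{weaker} than the target $1/2$ because $\Lambda_k \ge 1$ on $\mathcal{S}$. The fix is to exploit that $\mu,\nu$ are probability measures: for $r \le k$ the concavity of $x \mapsto x^{r/k}$ and Jensen's inequality give $\Lambda_r(\mu) \le \Lambda_k(\mu)^{r/k}$, and likewise $\Lambda_{\tilde k}(\mu) \le \Lambda_k(\mu)^{\tilde k/k}$. Collecting the contributions, the exponent on $\Lambda_k(\mu,\nu)$ becomes $\tilde k \theta/(2k) + r(1-\theta)/k$, which is at most $1/2$ whenever $k \ge \tilde k\,\theta + 2r(1-\theta)$; a convenient sufficient choice is $k \ge \tilde k + 2r$. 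Since $\Lambda_k \ge 1$, any such smaller exponent can be absorbed into $\Lambda_k^{1/2}$, producing the claimed bound. Tracing back, $k$ depends only on $\eta,\lambda,q$ through $\tilde k$ and $r$, matching the statement.
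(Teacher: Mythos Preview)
Your proof is correct and follows essentially the same route as the paper: H\"older-interpolate the weight-$q$ norm between the $\mathrm{TV}+2$ stability bound (\ref{eq: stability for BE 1}) and a uniform-in-time higher-moment bound from Proposition~\ref{thrm:momentinequalities}.ii. The only cosmetic difference is in the bookkeeping of the $\Lambda_k$ exponent: the paper combines $\Lambda_{k_0}^{1/2}\Lambda_{q'/\delta}^{1/2}\le \Lambda_{k_0+q'/\delta}^{1/2}$ via the correlation inequality (Lemma~\ref{lemma: correlation of moments}), whereas you use Jensen in the form $\Lambda_r(\mu)\le \Lambda_k(\mu)^{r/k}$; both devices are equivalent here.
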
We emphasise that the rapid decay is the key property that allows us to obtain good long-time behaviour for our estimates. The pointwise estimate Theorem \ref{thrm: PW convergence} and the initial estimate for pathwise local uniform convergence Lemma \ref{lemma: initial LU bound} would hold for estimates  \begin{equation} \label{eq: weaker stability 1}  \|\phi_t(\nu)-\phi_t(\mu)\|_{\mathrm{TV}+5} \leq F(t) \Lambda_{k}(\mu, \nu)^\frac{1}{2}\|\mu-\nu\|_\mathrm{TV}^\eta; \end{equation}  \begin{equation}\label{eq: weaker stability 2}
\|\phi_t(\nu)-\phi_t(\mu) - \xi_t \|_{\mathrm{TV}+2} \leq G(t) \Lambda_{k}(\mu, \nu)^\frac{1}{2}\|\mu-\nu\|_\mathrm{TV}^{1+\eta}\end{equation} for functions $F,G$ such that \begin{equation} \label{eq: weaker stability 3} \left(\int_0^\infty F^2 dt\right)^{1/2}<\infty;\hspace{0.5cm} \int_0^\infty G dt<\infty. \end{equation} The full strength of exponential decay is used to `bootstrap' to the pathwise local uniform estimate Theorem \ref{thrm: Main Local Uniform Estimate}, which provides better behaviour in the time horizon $t_\text{fin}$, with only a logarithmic loss in the number of particles $N$. Provided that $F\rightarrow 0$ as $t\rightarrow \infty$, we could use the same `bootstrap', but with a potentially much larger loss in $N$. 
\subsection{Regularity Estimates}

 For the proof of the local uniform estimate Lemma \ref{thrm: local uniform martingale control}, it will be important to control the continuity of $Q$ \emph{after application of the flow maps} $\phi_t$; for brevity, we will write the composition as $Q_t=Q\circ \phi_t$. We can exploit the use of the stronger $\|.\|_{\mathrm{TV}+2}-$ norm in the stability estimates Proposition \ref{thrm: stability for BE}, to prove a strong notion of continuity for $Q_t$, including the dependence on $t$. \medskip \\ It is well known that, for $q\geq 1$, and $\mu, \nu \in \mathcal{S}^{q+1}$, we have the bilinear estimate \begin{equation} \|Q(\mu)-Q(\nu)\|_{\mathrm{TV}+q}\lesssim \Lambda_{q+1}(\mu, \nu)^\frac{1}{2}\|\mu-\nu\|_{\mathrm{TV}+(q+1)} \end{equation} and, by interpolating, this leads to \begin{equation} \label{eq: holder continuity of Q} \|Q(\mu)-Q(\nu)\|_{\mathrm{TV}+q}\lesssim \Lambda_{3(q+1)}(\mu, \nu)^\frac{1}{2}\|\mu-\nu\|_{\mathrm{TV}}^\frac{1}{2}. \end{equation}  Combining this the stability estimate in Corollary \ref{cor: new stability for BE}, we deduce the following. For  $q\geq 1$, $\eta \in (0,1)$ and $\lambda<\lambda_0$, then there exists $k$ such that, for $\mu, \nu \in \mathcal{S}^k$, we have the estimate \begin{equation} \label{eq: Lipschitz continuity of Q}\|Q_t(\mu)-Q_t(\nu)\|_{\mathrm{TV}+q} \lesssim e^{-\lambda t} \Lambda_{k}(\mu, \nu)^\frac{1}{2}\|\mu-\nu\|_\mathrm{TV}^\eta.\end{equation}
\section{Proof of Theorem \ref{thrm: W-W continuity of phit}}\label{sec: continuity of BE}  As a first application of the stability estimates, we will now prove Theorem \ref{thrm: W-W continuity of phit}, which establishes a continuity result for the Boltzmann flow $(\phi_t)$ with respect to our weighted Wasserstein metric $W$. For Theorems \ref{thrm: PW convergence}, \ref{thrm: Main Local Uniform Estimate}, we wish to approximate a given starting point $\mu_0$ by an empirical measure $\mu^N_0 \in \mathcal{S}_N$ on $N$ points; in this context, the total variation distance is too strong, as there is no discrete approximation to any continuous measure $\mu_0$. We therefore seek a continuity estimate for the Boltzmann flow $\phi_t$, measured in the Wasserstein distance $W$ defined in (\ref{eq: definition of W}), and which is uniform in time. \medskip \\  The proof combines a representation formula, and associated estimates, from \cite{ACE}, which establishes the first claim; the second claim will then follow using a long-time estimate recalled in Proposition \ref{thrm: stability for BE}. We will first review the definition, and claimed representation formula for the Boltzmann flow. \begin{definition}[Linearised Kac Process]\label{def: LKP} Write  $V=\mathbb{R}^d$ and $V^*$ for the signed space $V^*=V\times\{\pm 1\}=V^+\sqcup V^-$. We write $\pi: V^*\rightarrow V$ as the projection onto the first factor, and $\pi_\pm: V^\pm\rightarrow V$ for the obvious bijections.  \\ Let $(\rho_t)_{t\ge 0}$ be family of measures on $V=\mathbb{R}^d$ such that \begin{equation} \langle 1, \rho_t \rangle =1;\hspace{1cm} \langle |v|^2, \rho_t\rangle =1;\end{equation}  \begin{equation} \label{eq: integrability for environment}  \int_0^t \Lambda_3(\rho_s)ds <\infty \hspace{1cm} \text{for all }t<\infty.  \end{equation} The \emph{Linearised Kac Process} \emph{in environment $(\rho_t)_{t\ge 0}$} is the branching process on $V^*$ where each particle of type $(v,1)$, at rate $2|v-v_\star|\rho(dv_\star) d\sigma$, dies, and is replaced by three particles, of types \begin{equation} (v'(v,v_\star,\sigma),1);\hspace{0.5cm}(v_\star'(v,v_\star, \sigma),1);\hspace{0.5cm}(v_\star,-1) \end{equation} where $v', v_\star'$ are the post-collisional velocities given by (\ref{eq: PCV}). The dynamics are identical for particles of type $(v,-1)$, with the signs exchanged. \medskip \\ We write $\Xi^*_t$ for the associated process of unnormalised empirical measures on $V^*$, and define a signed measure $\Xi_t$ on $V$ by including the sign at each particle: \begin{equation} \Xi_t=\Xi^+_t-\Xi^-_t ; \hspace{1cm} \Xi^\pm_t=\Xi^\star_t\circ \pi_\pm^{-1}.\end{equation} We can also consider the same branching process, started from a time $s\ge 0$ instead. We write $E$ for the expectation over the branching process, which is not the full expectation in the case where $\rho$ is itself random. When we wish to emphasise the initial velocity $v$ and starting time $s$, we will write $E_{(s,v)}$ when the process is started from $\Lambda^*_0=\delta_{(v,1)}$ at time $s$, and $E_v$ in the case $s=0$. \end{definition} Provided that the initial data $\Xi_0$ is finitely supported, one can show that the branching process is almost surely non-explosive, and that \begin{equation}\label{eq: no explosion}E_{v_0} \langle 1+|v|^2, |\Xi_t|\rangle \le (1+|v|^2)\exp\left[8\int_0^t \Lambda_3(\rho_s) ds\right]. \end{equation}   \begin{remark} We can connect this branching process with a different proof of existence and uniqueness for the difference $\xi_t$ in Theorem \ref{thrm: stability for BE}. For existence, consider the linearised Kac process $(\Xi_t)_{t\ge 0}$ in environment $\rho_t=\phi_t(\mu)$, where particles are initialised at $t=0$ according to a Poisson random measure of intensity  \begin{equation} \theta(dv)= \begin{cases} \xi_0^+(dv)=\nu(dv) & \text{on }V^+  \\ \xi_0^-(dv)=\mu(dv) & \text{on }V^-. \end{cases}  \end{equation} Let $\xi_t = \mathbb{E}(\Xi_t)$, which may be formalised in the sense of a Bochner integral in the weighted space $(Y_2, \|\cdot\|_{\mathrm{TV}+2})$ defined in (\ref{def: weighted normed spaces}). Then the same proof of the representation formula \cite[Proposition 4.2]{ACE} shows that $\partial_t \xi_t = 2Q(\phi_t(\mu), \xi_t)$, and that this solution is unique. \end{remark} Recall from the introduction that $\mathcal{A}$ is the set of all functions $f$ on $\mathbb{R}^d$, such that $\widehat{f}(v)=(1+|v|^2)^{-1}f(v)$ satisfies \begin{equation} |\widehat{f}(v)|\le 1; \hspace{1cm} \frac{|f(v)-f(w)|}{|v-w|}\le 1 \hspace{1cm}\text{for all }v\neq w. \end{equation} From the bound (\ref{eq: no explosion}), we can now define, for functions of quadratic growth, \begin{equation} \label{eq: defn of f0t} f_{st}(v_0)=E_{(s,v_0)}\left[\langle f, \Xi_t\rangle \right].\end{equation} When we wish to emphasise the environment, we will write $f_{st}[\rho](v_0)$. We now recall the following estimates from \cite{ACE}:

\begin{proposition}[Continuity Estimates for $f_{st}$]\label{prop: continuity for branching process} Fix $t\ge 0$, and let $z_t$ be given by \begin{equation} z_t=3\exp\left[8\int_0^t  \Lambda_3(\rho_u)du \right].\end{equation}  Then, for $f\in\mathcal{A}$ and $s\le t$, we have $f_{st} \in z_t\hspace{0.1cm} \mathcal{A}$.  This is, in our notation, a reformulation of \cite[Propositions 4.3]{ACE}. \end{proposition}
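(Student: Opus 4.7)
The plan is to derive a backward Kolmogorov equation for $f_{st}$ associated to the linearised Kac process in environment $(\rho_u)_{u \ge 0}$, and then control separately the pointwise size of $\hat{f}_{st}$ and its Lipschitz constant by combining the moment bound \eqref{eq: no explosion} with a Gr\"onwall-type argument in the backward time variable $s$.

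First I would observe that, by Dynkin's formula applied to the generator of the branching process and the linear statistic $\Xi \mapsto \langle f, \Xi \rangle$, the function $f_{st}$ defined in \eqref{eq: defn of f0t} satisfies, for $s \in [0,t]$, the backward evolution
\begin{equation*}
\partial_s f_{st}(v_0) = -2 \int_{\mathbb{R}^d \times S^{d-1}} \bigl\{ f_{st}(v') + f_{st}(v_\star') - f_{st}(v_0) - f_{st}(v_\star) \bigr\} |v_0 - v_\star| \, \rho_s(dv_\star)\, d\sigma,
\end{equation*}
with terminal condition $f_{tt} = f$, where $v', v_\star'$ are given by \eqref{eq: PCV} with $v = v_0$. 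The pointwise bound on $\hat{f}_{st}$ is then immediate: since $f \in \mathcal{A}$ gives $|f(v)| \le 1 + |v|^2$, we have
\begin{equation*}
|f_{st}(v_0)| \le E_{(s,v_0)} \langle 1+|v|^2, |\Xi_t| \rangle \le (1+|v_0|^2) \exp\!\Bigl[8 \int_s^t \Lambda_3(\rho_u)\,du\Bigr] \le \tfrac{z_t}{3} (1+|v_0|^2),
\end{equation*}
so $\|\hat{f}_{st}\|_\infty \le z_t/3$.

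The more delicate part is the Lipschitz estimate on $\hat{f}_{st}$, and I would handle it by coupling two copies of the branching process started at $(s,v_0)$ and $(s, w_0)$ respectively, sharing the same Poisson times, the same partner velocities $v_\star$ drawn from $\rho_s$, and the same sphere variables $\sigma$. Exploiting that the post-collisional maps \eqref{eq: PCV} are $1$-Lipschitz in the incident velocity and that the collision rate $|v - v_\star|$ itself is $1$-Lipschitz in $v$, one obtains a backward differential inequality of the form
\begin{equation*}
\tfrac{d}{ds}\, L(s) \ge - C\, \Lambda_3(\rho_s) \bigl( L(s) + \|\hat{f}_{st}\|_\infty \bigr),
\end{equation*}
where $L(s)$ denotes the Lipschitz constant of $\hat{f}_{st}$, with terminal datum $L(t) = \|\hat{f}\|_{\mathrm{Lip}} \le 1$. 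Plugging in the pointwise bound already obtained and integrating via Gr\"onwall gives $L(s) \le z_t/3$ as well; adding the two contributions shows that $\hat{f}_{st}$ is $z_t$-bounded and $z_t$-Lipschitz, which is exactly the assertion $f_{st} \in z_t\,\mathcal{A}$.

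The main obstacle is the Lipschitz bookkeeping: one needs to be careful that the three offspring $(v',1), (v_\star',1), (v_\star,-1)$ contribute with the correct signs when tracking $f_{st}(v') + f_{st}(v_\star') - f_{st}(v_\star)$ through the coupling, and that the weighted $\hat{f}$ formulation (rather than raw $f$) is preserved under the evolution. In practice one works directly with the branching process representation as in \cite[Proposition 4.3]{ACE}, where the combinatorics of offspring and the quadratic weight $(1+|v|^2)$ together explain both the exponential factor $\exp[8\int_0^t \Lambda_3(\rho_u)\,du]$ and the prefactor $3$ appearing in $z_t$.
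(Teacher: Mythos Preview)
The paper does not actually prove this proposition: it is stated as a direct reformulation of \cite[Proposition 4.3]{ACE} and no argument is given beyond the citation. So there is no in-paper proof to compare against; your outline is essentially a reconstruction of the branching-process argument underlying the cited result, and the pointwise bound $\|\hat f_{st}\|_\infty \le z_t/3$ via \eqref{eq: no explosion} is exactly right.

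One point in your sketch deserves more care. Your coupling for the Lipschitz estimate proposes ``the same Poisson times, the same partner velocities $v_\star$, the same $\sigma$'' for the two processes started at $v_0$ and $w_0$. But the branching rate is $2|v-v_\star|$, which differs at $v_0$ and $w_0$, so the two processes cannot share Poisson clocks na\"ively; a jump that fires for one initial condition may not fire for the other. The standard fix---and what is done in \cite{ACE}---is either to work directly with the backward PDE you wrote down and estimate $\hat f_{st}(v_0)-\hat f_{st}(w_0)$ analytically (splitting the difference into a term from the rate discrepancy $|v_0-v_\star|-|w_0-v_\star|$, bounded by $|v_0-w_0|$, times a factor controlled by $\|\hat f_{st}\|_\infty\Lambda_3(\rho_s)$, and a term from the post-collisional displacement, controlled by $L(s)\Lambda_3(\rho_s)$), or to use a thinning/maximal-rate coupling. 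Either route yields a backward Gr\"onwall inequality of the shape you wrote, with an explicit constant that recovers the factor $z_t$; your differential inequality is morally correct, but the derivation is not as immediate as ``share all randomness'' suggests.
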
  The other result which we will use is the representation formula \cite[Proposition 4.2]{ACE}, which expresses the difference of two Boltzmann flows $\phi_t(\mu)-\phi_t(\nu)$ in terms of the functions $f_{0t}$. This may be obtained from the proof of \cite[Proposition 4.2]{ACE} without essential modification, as in the proof of \cite[Theorem 10.1]{ACE}. \begin{proposition}[Representation Formula]\label{prop: bad representation formula} Let $\mu, \nu \in \mathcal{S}^k$ for some $k>2$, and let $(\rho_t)_{t\ge 0}$ be given by \begin{equation} \rho_t=\frac{1}{2}(\phi_t(\mu)+\phi_t(\nu)) \end{equation} where $\phi_t(\mu)$ is the unique, locally $\mathcal{S}^k$-bounded solution to the Boltzmann equation, starting at $\mu$, and similarly for $\nu$. Then, for all $f\in \mathcal{A}$, we have \begin{equation} \label{eq: bad representation formula} \langle f, \phi_t(\mu)-\phi_t(\nu)\rangle =\left\langle f_{0t}[\rho], \mu-\nu\right\rangle.\end{equation}  \end{proposition}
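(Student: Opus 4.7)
The plan is to introduce the auxiliary function $\psi(s) = \langle f_{st}[\rho],\, \phi_s(\mu) - \phi_s(\nu)\rangle$ for $s \in [0,t]$, show that $\psi$ is constant, and then read off the claimed identity by comparing endpoints: $\psi(t) = \langle f, \phi_t(\mu)-\phi_t(\nu)\rangle$ since $f_{tt}=f$, and $\psi(0) = \langle f_{0t}[\rho],\, \mu - \nu\rangle$.

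Two differentiation formulas feed into $\psi'(s)$. First, bilinearity and symmetry of $Q(\cdot,\cdot)$ (the latter inherited from the $v \leftrightarrow v_\star$ invariance of the collision integrand in \eqref{eq: defn of Q}) give
\[
Q(\phi_s(\mu)) - Q(\phi_s(\nu)) = 2\, Q\bigl(\rho_s,\, \phi_s(\mu) - \phi_s(\nu)\bigr),
\]
so $\phi_s(\mu) - \phi_s(\nu)$ satisfies $\partial_s(\phi_s(\mu)-\phi_s(\nu)) = 2Q(\rho_s, \phi_s(\mu)-\phi_s(\nu))$ in the weak sense of \eqref{BE}. Second, the generator $\mathcal{L}_s$ of the linearised branching process of Definition~\ref{def: LKP} acts on a test function $g$ at a particle of type $(v_0,+1)$ by
\[
\mathcal{L}_s g(v_0) = 2\!\int\! [g(v') + g(v_\star') - g(v_\star) - g(v_0)]\,|v_0 - v_\star|\, d\sigma\, \rho_s(dv_\star),
\]
reflecting that, under a collision at rate $2|v_0-v_\star|$, the offspring $(v_\star,-1)$ contributes $-g(v_\star)$ to $\langle g, \Xi\rangle$. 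Pairing $\mathcal{L}_s g$ against a signed measure $\xi \in Y$ and using the symmetry of $Q$ yields the adjoint identity $\langle \mathcal{L}_s g, \xi\rangle = 2\langle g, Q(\rho_s, \xi)\rangle$. The Markov property of the (time-inhomogeneous) branching process then gives the backward Kolmogorov equation $-\partial_s f_{st} = \mathcal{L}_s f_{st}$, and substituting the two displays above into the product rule gives
\[
\psi'(s) = -2\langle f_{st}, Q(\rho_s, \phi_s(\mu)-\phi_s(\nu))\rangle + 2\langle f_{st}, Q(\rho_s, \phi_s(\mu)-\phi_s(\nu))\rangle = 0.
\]

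The main obstacle is justifying these manipulations rigorously: the test function $f \in \mathcal{A}$ has quadratic growth, Proposition~\ref{prop: continuity for branching process} only places $f_{st}$ in the weighted class $z_t\,\mathcal{A}$, and the measures $\phi_s(\mu),\phi_s(\nu)$ are only known to have finite $k$-th moments with $k>2$. I would first prove the identity for $f \in C_c(\mathbb{R}^d)$, where non-explosion of the branching process together with the integrability $\int_0^t \Lambda_3(\rho_s)\,ds < \infty$ guaranteed by Proposition~\ref{thrm:momentinequalities}(iii) (applied to both $\phi_s(\mu)$ and $\phi_s(\nu)$) provides uniform control of every integrand. The extension to general $f \in \mathcal{A}$ follows by a truncation $f_R(v) = f(v)\chi(|v|/R)$ and dominated convergence, using the weighted non-explosion bound \eqref{eq: no explosion} together with the local $\mathcal{S}^k$-boundedness of $\phi_s(\mu), \phi_s(\nu)$ to control the quadratic tails as $R \to \infty$. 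This follows the same structure as the proof of \cite[Proposition 4.2]{ACE}, with the only essential modification being that the martingale identity used there for the empirical Kac trajectory is replaced by the deterministic weak identity \eqref{BE} satisfied by each measure solution $\phi_s(\mu)$, $\phi_s(\nu)$.
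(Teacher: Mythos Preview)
Your proposal is correct and matches the approach the paper indicates: the paper does not spell out a proof but states that the result ``may be obtained from the proof of \cite[Proposition 4.2]{ACE} without essential modification, as in the proof of \cite[Theorem 10.1]{ACE}.'' Your interpolation $\psi(s)=\langle f_{st}[\rho],\phi_s(\mu)-\phi_s(\nu)\rangle$, the identity $Q(\phi_s(\mu))-Q(\phi_s(\nu))=2Q(\rho_s,\phi_s(\mu)-\phi_s(\nu))$ via the symmetry of $Q$, and the backward equation $-\partial_s f_{st}=\mathcal{L}_s f_{st}$ are exactly the ingredients of that adaptation, and you correctly flag that the stochastic (martingale) part of \cite[Proposition~4.2]{ACE} is replaced here by the deterministic weak formulation \eqref{BE}.
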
 Note that the moment production property in Proposition \ref{thrm:momentinequalities} guarantees that (\ref{eq: integrability for environment}) holds for this environment. This will allow us to find an estimate for the Boltzmann flow $\phi_t$ which behaves well in short time. We now give the proof of Theorem \ref{thrm: W-W continuity of phit} \begin{proof}[Proof of Theorem \ref{thrm: W-W continuity of phit}] From the representation formula (\ref{eq: bad representation formula}) and continuity estimate Proposition \ref{prop: continuity for branching process}, for any $f\in \mathcal{A}$, \begin{equation} \label{eq: short time bound on BE} \langle f, \phi_t(\mu)-\phi_t(\nu)\rangle =\langle f_{0t}[\rho],\mu-\nu\rangle \le z_t\hspace{0.1cm} W(\mu, \nu) \end{equation} where $\rho_t=(\phi_t(\mu)+\phi_t(\nu))/2$. It therefore suffices to bound \begin{equation} z_t:=3\exp\left(4\int_0^t  \left[\Lambda_3(\phi_s(\mu))+\Lambda_3(\phi_s(\nu))\right] ds \right).\end{equation} Using the logarithmic moment production for the Boltzmann equation recalled in Proposition \ref{thrm:momentinequalities}, there exist constants $k,w$ such that  \begin{equation} \begin{split} z_t & \lesssim e^{wt} \Lambda_{k/2}(\mu)\Lambda_{k/2}(\nu) \\ &\hspace{0.5cm} \lesssim e^{wt} \Lambda_{k/2}(\mu, \nu)^2 \lesssim e^{wt}\Lambda_k(\mu, \nu).\end{split} \end{equation} This proves the first claim. For the second claim, we first deal with the case where $k\ge 3$ is large enough that the above holds, and such that the stability estimate Proposition \ref{thrm: stability for BE} holds with H\"older exponent $\eta=\frac{1}{2}$. Fix $\mu, \nu \in \mathcal{S}^k_a$, and assume without loss of generality that $0<W(\mu, \nu)<1$. From the stability estimate (\ref{eq: stability for BE 1}) we have \begin{equation} \|\phi_t(\mu)-\phi_t(\nu)\|_{\mathrm{TV}+2} \lesssim a^\frac{1}{2} e^{-\lambda_0t/2} \hspace{0.05cm} \end{equation} for some constants $\lambda_0>0$. It is immediate from the definitions that \begin{equation} W(\mu, \nu)\le \|\mu-\nu\|_{\mathrm{TV}+2} \end{equation} and so combining with the previous result, we have\begin{equation} W\left(\phi_t(\mu), \phi_t(\nu)\right)\lesssim a \min\left(e^{-\lambda_0 t/2}, W(\mu, \nu)e^{wt}\right).\end{equation} The right hand side is maximised when $e^{-\lambda_0 t/2}=W(\mu, \nu)e^{wt}$, which occurs when \begin{equation} t=-\frac{2}{\lambda_0+2w} \hspace{0.1cm} \log \hspace{0.05cm}W(\mu, \nu).\end{equation} Therefore, the maximum value of the right-hand side is \begin{equation} \label{eq: holder continuity} \begin{split} \sup_{t\ge 0} W\left(\phi_t(\mu), \phi_t(\nu)\right) & \lesssim a\exp\left(\frac{\lambda_0}{\lambda_0+2w} \log W(\mu, \nu)\right) \\ & =aW(\mu, \nu)^\zeta\end{split} \end{equation} with \begin{equation} \zeta(d)=\frac{\lambda_0}{\lambda_0+2w}\end{equation} which is the claimed H\"older continuity, for $k$ sufficiently large. \medskip \\ Finally, we deal with the second point for arbitrary $k>2$. This argument uses a localisation principle to control the moments on a very short initial interval $[0,u]$, and may be read as a warm-up to the more involved arguments in the proof of Theorem \ref{thm: low moment regime}. \medskip \\ Let $k_0$ be large enough such that the estimate (\ref{eq: holder continuity}) holds, and let $\zeta_0$ be the resulting exponent. Let $\beta=\frac{k-2}{2}$, let $\mu, \nu$ be as in the statement of the result, and let $u\in (0,1]$ be chosen later. Define \begin{equation} T=\inf\left\{t\ge 0: \Lambda_3(\rho_t)>\frac{\beta t^{\beta-1}+1}{2}\right\}\end{equation} where $\rho_t$ is as above. We now deal with the two cases $T>u, T\le u$ separately. \medskip \\ If $T>u$, then we have the estimate \begin{equation} \begin{split} z_u&:=3\exp\left(4\int_0^u \Lambda_3(\rho_s)ds\right) \\ & \le 3\exp\left(4\int_0^1 \frac{\beta s^{\beta-1}+1}{2} ds\right)\lesssim 1. \end{split} \end{equation} Using the representation formula in Proposition \ref{prop: bad representation formula} as in (\ref{eq: short time bound on BE}), we therefore obtain \begin{equation} \label{eq: v short time BE estimate} \sup_{t\le u} W(\phi_t(\mu),\phi_t(\nu)) \lesssim W(\mu, \nu).\end{equation} Using (\ref{eq: holder continuity}) on $\phi_u(\mu), \phi_u(\nu)$, and using the moment production property recalled in Proposition \ref{thrm:momentinequalities}, we have the estimate \begin{equation} \label{eq: restarted BF estimate}\sup_{t\ge u} W(\phi_t(\mu),\phi_t(\nu)) \lesssim u^{2-k_0}W(\mu, \nu)^{\zeta_0}. \end{equation} We next deal with the case $T\le u$. In this case, comparing the moment production property to the definition of $T$ shows that \begin{equation}   T^{\beta-1}\lesssim \Lambda_3(\phi_T(\mu))+\Lambda_3(\phi_T(\nu))\lesssim a T^{k-3} ;\hspace{1cm} T\le u\end{equation} which rearranges to produce the bound $1\lesssim au^{k/2-1}$. In particular, in this case, we have \begin{equation} \label{eq: bad moment case} \sup_{t\ge 0} W(\phi_t(\mu),\phi_t(\nu))\le 4 \lesssim a u^{k/2-1}. \end{equation} Combining estimates (\ref{eq: v short time BE estimate}, \ref{eq: restarted BF estimate}, \ref{eq: bad moment case}), we see that in all cases, \begin{equation} \sup_{t\ge 0} W(\phi_t(\mu), \phi_t(\nu)) \lesssim u^{2-k_0}W(\mu,\nu)^{\zeta_0}+au^{k/2-1}.\end{equation} Now, if we choose $u=\min(1,W(\mu, \nu)^\delta) $ for sufficiently small $\delta>0$, we obtain \begin{equation} \sup_{t\ge 0} W(\phi_t(\mu), \phi_t(\nu)) \lesssim aW(\mu,\nu)^\zeta \end{equation} for a new exponent $\zeta=\zeta(d,k)>0$. \end{proof}

\section{The Interpolation Decomposition for Kac's Process}\label{sec: interpolation decomposition} 
We introduce a pair of random measures associated to the Markov process $(\mu^N_t)_{t\geq 0}$. The \emph{jump measure} $m^N$ is the un-normalised empirical measure on $(0,\infty) \times \mathcal{S}_N$, of all pairs $(t, \mu^N)$, such that the system collides at time $t$, with new measure $\mu^N$. Its \emph{compensator} $\overline{m}^N$ is the random measure on $(0, \infty)\times \mathcal{S}_N$ given by \begin{equation}\label{eq: definition of mbar} \overline{m}^N(dt,d\mu^N)=\mathcal{Q}_N(\mu^N_{t-}, d\mu^N)dt \end{equation} where $\mathcal{Q}_N(\cdot, \cdot)$ is the transition kernel of the Kac process, given by (\ref{eq: definition of script Q}). The goal of this section is to prove the following `interpolation decomposition' for the difference between Kac's process and the Boltzmann flow, which is the key identity required for the proofs of Theorems \ref{thrm: PW convergence}, \ref{thrm: Main Local Uniform Estimate}. This is based on an idea of Norris \cite{PDF}, which was inspired by \cite[Section 3.3]{M+M}.\begin{formula}\label{form:newdecomposition}  Let $\mu^N_t$ be a Kac process on $N\geq 2$ particles, and suppose $f \in \mathcal{A}_0$ is a test function. To ease notation, we write \begin{equation} \label{eq: definition of Delta} \Delta(s,t,\mu^N)=\phi_{t-s}(\mu^N)-\phi_{t-s}(\mu^N_{s-}); \hspace{0.5cm} 0\le s \le t, \hspace{0.2cm}\mu^N \in \mathcal{S}_N; \end{equation} \begin{equation} \label{eq: definition of psi} \psi(u,\mu, \nu)= \phi_{u}(\nu)-\phi_{u}(\mu)-\mathcal{D}\phi_{u}(\mu)[\nu-\mu]; \hspace{0.3cm} u\ge 0,\hspace{0.3cm} \mu, \nu \in \bigcap_{k>2} \mathcal{S}^k\end{equation} where $\mathcal{D}\phi_t$ is the derivative of the Boltzmann flow $\phi_t$, defined in Proposition \ref{thrm: stability for BE}; this makes sense, provided that all moments of $\mu, \nu$ are finite. Then we can decompose \begin{equation}  \langle f, \mu^N_t -\phi_t(\mu^N_0) \rangle =M^{N,f}_t + \int_0^t \langle f, \rho^N(t-s, \mu^N_s) \rangle ds   \end{equation} where \begin{equation} \label{eq: definition of MNFT} M^{N,f}_t=\int_{(0,t]\times \mathcal{S}_N} \langle f, \Delta(s,t,\mu^N) \rangle (m^N-\overline{m}^N)(ds, d\mu^N_s)\end{equation} and where $\rho^N$ is given in terms of the transition kernel $\mathcal{Q}_N$ (\ref{eq: definition of script Q}) by \begin{equation} \label{eq: definition of rho} \langle f, \rho^N(u, \mu^N)\rangle  = \int_{\mathcal{S}_N}\langle f, \psi(u,\mu^N, \nu) \rangle \mathcal{Q}_N(\mu^N, d\nu).\end{equation} \end{formula} \begin{remark} \begin{enumerate}[label=\roman{*}).] \item This is the key identity needed for Theorems \ref{thrm: PW convergence}, \ref{thrm: Main Local Uniform Estimate}; the remainder of the proofs are to establish suitable controls over each of the two terms. 
\item This representation formula offers two major advantages over the equivalent representation formula in \cite{ACE}, which will be recalled in Proposition \ref{prop: very bad rep formula}.  \begin{itemize} \item Firstly, all the quantities appearing in our formula are adapted to the natural filtration of $(\mu^N_t)_{t\ge 0}$, and so we can use martingale estimates directly; by contrast, \cite[Proposition 4.2]{ACE} contains anticipating terms. This allows us to prove convergence in $L^p$ spaces, rather than simply in probability.
\item Secondly, all terms appearing in our formula may be controlled by the stability estimates (\ref{eq: stability for BE 1}, \ref{eq: stability for BE 2}). This allows us to exploit the stability of the limit equation, at the level of \emph{individual realisations of} the empirical particle system $\mu^N_0$.  \end{itemize}  \end{enumerate} \end{remark}  The main technicality in the proof of this is to derive a Chapman-Kolmogorov-style equation, which allows us to manipulate the functional derivatives $\mathcal{D}\phi_t$. This is the content of the following lemma.
\begin{lemma}[Exchange Lemma]\label{lemma:DAP} Let $\mu^N \in \mathcal{S}_N$ and $f\in \mathcal{A}$. Then for all times $t\geq 0$, we have the equalities \begin{equation}\label{eq:exchangeDandI} \begin{split} &\frac{d}{dt}\langle f, \phi_t(\mu^N)\rangle = \langle f,\mathcal{D}\phi_t(\mu^N)\left[Q(\mu^N)\right]\rangle \\& \hspace{1cm}= \int_{\mathbb{R}^d \times \mathbb{R}^d \times S^{d-1}} \hspace{0.05cm} \langle f,\mathcal{D}\phi_t(\mu^N)[\mu^{N, v, v_\star, \sigma}-\mu^N]\rangle \hspace{0.05cm} |v-v_\star|\hspace{0.05cm}N\hspace{0.05cm}d\sigma \mu^N(dv)\mu^N(dv_\star)\end{split} \end{equation} where $\mu^{N,v,v_\star,\sigma}$ is the post-collision measure given by (\ref{eq: change of measure at collision}), $\mathcal{Q}_N$ is the generator of the Kac process (\ref{eq: definition of script Q}) and where $\mathcal{D}\phi_t$ is the functional derivative given by Proposition \ref{thrm: stability for BE}.  \end{lemma}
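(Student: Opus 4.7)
The plan is to establish each equality in turn, using the definition of the Boltzmann collision operator $Q$ from~(\ref{eq: defn of Q}) together with the definition of $\mathcal{D}\phi_t$ from Proposition~\ref{thrm: stability for BE}. Since $\mu^N \in \mathcal{S}_N$ is finitely supported, it lies in $\mathcal{S}^k$ for every $k$, so all moment-dependent hypotheses are vacuous here; likewise $\phi_t(\mu^N)$ is locally $\mathcal{S}^k$-bounded in every $k$ by Proposition~\ref{thrm:momentinequalities}. A preliminary step is to note that, because the linearised equation $\partial_t\xi=2Q(\phi_t(\mu^N),\xi)$ is linear in $\xi$, its solution operator extends in a well-defined linear fashion to initial data $\xi_0$ in any of the weighted spaces $Y_q$ of Definition~\ref{def: weighted normed spaces}; this lets us speak of $\mathcal{D}\phi_t(\mu^N)[\xi_0]$ for signed measures $\xi_0$ that are not themselves differences of probability measures.

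For the first equality, the (BE) identity applied at $\phi_t(\mu^N)$ gives
\begin{equation*}
\tfrac{d}{dt}\langle f,\phi_t(\mu^N)\rangle \;=\; \langle f, Q(\phi_t(\mu^N))\rangle,
\end{equation*}
so it suffices to show $Q(\phi_t(\mu^N)) = \mathcal{D}\phi_t(\mu^N)[Q(\mu^N)]$ in $Y_2$. I would verify this by noting that both curves $t\mapsto Q(\phi_t(\mu^N))$ and $t\mapsto \mathcal{D}\phi_t(\mu^N)[Q(\mu^N)]$ solve the same linear initial value problem in $Y_2$: the right-hand side does so by the very definition of $\mathcal{D}\phi_t$, while the left-hand side does so because, by bilinearity of $Q$ and the Boltzmann equation for $\phi_t(\mu^N)$,
\begin{equation*}
\partial_t Q(\phi_t(\mu^N)) \;=\; 2Q(\phi_t(\mu^N),\partial_t\phi_t(\mu^N)) \;=\; 2Q(\phi_t(\mu^N), Q(\phi_t(\mu^N))),
\end{equation*}
with initial value $Q(\mu^N)$ at $t=0$. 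Uniqueness of the linearised equation (Proposition~\ref{thrm: stability for BE}) then forces the two curves to coincide.

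For the second equality, I would start from~(\ref{eq: defn of Q}) and use~(\ref{eq: change of measure at collision}) to rewrite the atomic piece of the integrand,
\begin{equation*}
\delta_{v'}+\delta_{v'_\star}-\delta_v-\delta_{v_\star} \;=\; N\bigl(\mu^{N,v,v_\star,\sigma}-\mu^N\bigr),
\end{equation*}
which displays $Q(\mu^N)$ as a measure-valued integral of the elementary differences $\mu^{N,v,v_\star,\sigma}-\mu^N$. Each such difference is a signed measure of bounded total variation supported on at most four atoms, and both $\mu^{N,v,v_\star,\sigma}$ and $\mu^N$ lie in $\mathcal{S}_N$, so $\mathcal{D}\phi_t(\mu^N)$ acts on each of them directly in the sense of Proposition~\ref{thrm: stability for BE}. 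It then remains to commute the bounded linear operator $\mathcal{D}\phi_t(\mu^N)$ with the integral against $|v-v_\star|\,d\sigma\,\mu^N(dv)\mu^N(dv_\star)$; this is a standard Bochner/Fubini step once uniform $\|\cdot\|_{\mathrm{TV}+2}$ bounds on the image are available, which they are because the integrand is uniformly controlled by $\Lambda_3(\mu^N)<\infty$ and because of the stability bound~(\ref{eq: stability for BE 1.5}).

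The main obstacle is the somewhat delicate point of the preliminary step: extending $\mathcal{D}\phi_t(\mu^N)$ to act on signed measures like $Q(\mu^N)$ in a way that is genuinely linear and continuous on an appropriate $Y_q$, and confirming that the ``continuous superposition'' formula
\begin{equation*}
\mathcal{D}\phi_t(\mu^N)\Bigl[\textstyle\int \eta_{v,v_\star,\sigma}\,K(dv,dv_\star,d\sigma)\Bigr] \;=\; \int \mathcal{D}\phi_t(\mu^N)[\eta_{v,v_\star,\sigma}]\,K(dv,dv_\star,d\sigma)
\end{equation*}
really holds for the collision kernel $K(dv,dv_\star,d\sigma)=|v-v_\star|\,d\sigma\,\mu^N(dv)\mu^N(dv_\star)$. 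Once this interchange is justified and the extension is in place, both equalities of~(\ref{eq:exchangeDandI}) follow from the algebraic identifications described above, and no further analytic input beyond the stability and moment estimates already recalled is needed.
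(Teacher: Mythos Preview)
Your outline is correct, and for the second equality it is essentially the paper's argument: integrate the individual linearised solutions, use Fubini and the boundedness of $\xi\mapsto Q(\phi_s(\mu^N),\xi)$ to show the integrated object again solves the linearised equation, and conclude by uniqueness. The paper stresses that the ``standard Bochner/Fubini'' step is \emph{not} automatic from the H\"older bound~(\ref{eq: stability for BE 1.5}) (which does not make $\mathcal{D}\phi_t(\mu^N)$ a bounded linear operator), and instead proves an auxiliary moment/continuity lemma for the linearised flow to justify the exchange; your acknowledgement of this as the ``main obstacle'' is apt, but you should not expect the interchange to follow from~(\ref{eq: stability for BE 1.5}) alone.

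For the first equality your route is genuinely different from the paper's. The paper argues via a first-order expansion: writing $\nu^N_s=\mu^N+sQ(\mu^N)\in\mathcal{S}$ for small $s$, it splits $\phi_{t+s}(\mu^N)-\phi_t(\mu^N)-s\,\mathcal{D}\phi_t(\mu^N)[Q(\mu^N)]$ into two error terms controlled respectively by~(\ref{eq: stability for BE 1}) and~(\ref{eq: stability for BE 2}), and shows both are $o(s)$. Your argument instead identifies $Q(\phi_t(\mu^N))$ with $\mathcal{D}\phi_t(\mu^N)[Q(\mu^N)]$ by checking that $t\mapsto Q(\phi_t(\mu^N))$ itself solves the linearised equation $\partial_t\xi_t=2Q(\phi_t(\mu^N),\xi_t)$ with the right initial datum, then invoking uniqueness. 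This is cleaner and avoids the error-term analysis; to make it fully rigorous you should justify the time-differentiability of $t\mapsto Q(\phi_t(\mu^N))$ in $(Y_2,\|\cdot\|_{\mathrm{TV}+2})$ (which follows from strong differentiability of $t\mapsto\phi_t(\mu^N)$ in $Y_{q}$ for large $q$, available here since $\mu^N\in\mathcal S_N$) and note that uniqueness for the linearised equation with initial datum $Q(\mu^N)$ is covered by Proposition~\ref{thrm: stability for BE} via the representation $Q(\mu^N)=s^{-1}(\nu^N_s-\mu^N)$.
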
The first equality is familiar from semigroup theory, but is complicated by the non-linearity of the flow maps; we resolve this by using ideas of the infinite dimensional differential calculus developed in \cite{M+M}. The second equality can be thought of as a continuity property for the linear map $\mathcal{D}\phi_t(\mu^N)[\cdot]$, and is justified in Lemma \ref{lemma:DAP} by the explicit construction of the derivative in Proposition \ref{thrm: stability for BE}.\medskip \\ Assuming this for the moment, we now prove the interpolation decomposition Formula \ref{form:newdecomposition}. \begin{proof}[Proof of Formula \ref{form:newdecomposition}] To begin with, we restrict to bounded, measurable $f$. Fix $t\geq 0$, and consider the process $\Gamma^{N,f,t}_s=\langle f, \phi_{t-s}(\mu^N_s)\rangle$, for $0\leq s \leq t$. Then $\Gamma^{N,f,t}$ is c\`{a}dl\`{a}g, and is differentiable on intervals where $\mu^N_s$ is constant. On such intervals, Lemma \ref{lemma:DAP} tells us that 
\begin{multline}\begin{split} \frac{d}{ds} & \langle f, \phi_{t-s}(\mu^N_s)\rangle =-\left.\frac{d}{du}\right|_{u=t-s}\langle f, \phi_u(\mu^N_s)\rangle\\[1ex] & = -\int_{\mathbb{R}^d\times \mathbb{R}^d\times S^{d-1}}  \langle f, \mathcal{D}\phi_{t-s}(\mu^N_s)[\mu^{N,v,v_\star, \sigma}-\mu^N_s]  \rangle  |v-v_\star|\hspace{0.05cm}N\hspace{0.05cm} \mu^N_s(dv)\mu^N_s(dv_\star)d\sigma \\[1ex] & = - \int_{\mathcal{S}_N}  \langle f, \mathcal{D}\phi_{t-s}(\mu^N_s)[\mu^N-\mu^N_s]  \rangle \mathcal{Q}_N(\mu^N_s, d\mu^N)\end{split}\end{multline} 
where the final equality is to rewrite integral in terms of the transition kernel $\mathcal{Q}_N$ of the Kac process, defined in (\ref{eq: definition of script Q}). Writing $\mathcal{I}_t$ for the (finite) set of jumps $\mathcal{I}_t=\{s\le t: \mu^N_s \neq \mu^N_{s-}\}$, the contribution to $\Gamma^{N,f,t}_t-\Gamma^{N,f,t}_0$ from drift between jumps is \begin{equation} \begin{split}&\int_{(0,t]\setminus \mathcal{I}_t} \hspace{0.1cm}\frac{d}{ds}\langle \phi_{t-s}(\mu^N_s)\rangle \hspace{0.1cm} ds \\&=-\int_{((0,t]\setminus \mathcal{I}_t)\times\mathcal{S}_N} \langle f, \mathcal{D}\phi_{t-s}(\mu^N_s)[\mu^N-\mu^N_s]\rangle\mathcal{Q}_N(\mu^N_s, d\mu^N) ds.\end{split} \end{equation} Using the definitions (\ref{eq: definition of Delta}, \ref{eq: definition of psi}) of $\psi$ and $\Delta$, the integrand can be expressed as \begin{equation} \langle f, \mathcal{D}\phi_{t-s}(\mu^N_s)[\mu^N-\mu^N_s]\rangle = \langle f,\Delta(s,t,\mu^N)-\psi(t-s, \mu^N_s, \mu^N)\rangle \end{equation} for any $s \not \in \mathcal{I}_t.$ Since the set $\mathcal{I}_t$ has $0$ Lebesgue measure, the set $\mathcal{I}_t\times\mathcal{S}_N$ has $0$ measure with respect to $\mathcal{Q}_N(\mu^N_s,d\mu^N)ds$, and so the inclusion of this set does not change the integral. Using the definitions (\ref{eq: definition of mbar}, \ref{eq: definition of rho}) of $\overline{m}^N$ and $\rho^N$, we can rewrite the integral as \begin{equation} \label{eq: contribution from drift} \begin{split} &\int_{(0,t]\times \mathcal{S}_N} \langle f, \psi(t-s, \mu^N_s,\mu^N)-\Delta(s, t, \mu^N)\rangle \mathcal{Q}_N(\mu^N_s, d\mu^N) ds \\ =&\int_0^t \langle f, \rho^N(t-s, \mu^N_s)\rangle ds - \int_{(0,t]\times \mathcal{S}_N} \langle f, \Delta(s,t,\mu^N)\rangle \overline{m}^N(ds, d\mu^N). \end{split}\end{equation}  
On the other hand, at the times when $\mu^N_s$ jumps, we have \begin{equation} \Gamma^{N,f,t}_s-\Gamma^{N,f,t}_{s-}=\langle f, \phi_{t-s}(\mu^N_s)-\phi_{t-s}(\mu^N_{s-})\rangle =\langle f, \Delta(s,t,\mu^N_s)\rangle. \end{equation}
Therefore, the contribution to $\Gamma^{N,f,t}_t-\Gamma^{N,f,t}_0$ from jumps is \begin{equation} \label{eq: contribution from jump} \begin{split} \sum_{s\in \mathcal{I}_t} \Gamma^{N,f,t}_s-\Gamma^{N,f,t}_{s-}= \int_{(0,t]\times \mathcal{S}_N}\langle f, \Delta(s,t,\mu^N)\rangle\hspace{0.1cm} m^N(ds, d\mu^N) \\ = M^{N,f}_t+\int_{(0,t]\times \mathcal{S}_N}\langle f, \Delta(s,t,\mu^N)\rangle\hspace{0.1cm} \overline{m}^N(ds, d\mu^N)  \end{split} \end{equation} Combining the contributions (\ref{eq: contribution from drift}, \ref{eq: contribution from jump}), we see that \begin{equation} \begin{split} \langle f, \mu^N_t-\phi_t(\mu^N_0)\rangle &= \Gamma^{N,f,t}_t-\Gamma^{N,f,t}_0 \\[1ex]& = \int_{(0,t]\setminus \mathcal{I}_t} \hspace{0.1cm}\frac{d}{ds} \langle f, \phi_{t-s}(\mu^N_s)\rangle ds + \sum_{s\in \mathcal{I}_t} \Gamma^{N,f,t}_s-\Gamma^{N,f,t}_{s-} \\[1ex] & = M^{N,f}_t+\int_0^t\langle f ,\rho^N(t-s, \mu^N_s)ds  \end{split} \end{equation}  as desired. \end{proof}
\subsection{Proof of Lemma \ref{lemma:DAP}} In this subsection, we will prove the Chapman-Kolmogorov property Lemma \ref{lemma:DAP}, which is crucial to the interpolation decomposition. We prove the two claimed equalities separately. 
\begin{lemma} Let $N\ge 2$ and let $\mu^N \in \mathcal{S}_N$. Then, for all $t>0$ and $f\in \mathcal{A}$, we have the differentiability  \begin{equation} \label{eq: first equality of CK} \frac{d}{dt}\langle f, \phi_t(\mu^N)\rangle = \langle f, \mathcal{D}\phi_t(\mu^N)[Q(\mu^N)]\rangle.  \end{equation} At $t=0$, this is a one-sided, right differentiability.  \end{lemma}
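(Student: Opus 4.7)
The plan is to combine the semigroup property of the Boltzmann flow with the first-order stability expansion from Proposition \ref{thrm: stability for BE} and the integral form of the Boltzmann equation at infinitesimal time. Fix $\mu^N\in\mathcal{S}_N$, which has finite support, so $\mu^N\in\cap_{k>2}\mathcal{S}^k$ and Proposition \ref{thrm: stability for BE} applies; choose $k$ large so that the estimate (\ref{eq: stability for BE 2}) holds with some fixed $\eta\in (0,1)$. By the semigroup property of $\phi_t$ on $\cup_{k>2}\mathcal{S}^k$, for $h>0$,
\begin{equation*}
 \phi_{t+h}(\mu^N)-\phi_t(\mu^N)=\phi_t(\phi_h(\mu^N))-\phi_t(\mu^N).
\end{equation*}
Applying (\ref{eq: stability for BE 2}) with $\mu=\mu^N$ and $\nu=\phi_h(\mu^N)$ yields the decomposition
\begin{equation*}
 \phi_{t+h}(\mu^N)-\phi_t(\mu^N)=\mathcal{D}\phi_t(\mu^N)[\phi_h(\mu^N)-\mu^N]+\psi(t,\mu^N,\phi_h(\mu^N)),
\end{equation*}
where $\|\psi\|_{\mathrm{TV}+2}\lesssim \Lambda_k(\mu^N,\phi_h(\mu^N))^{1/2}\|\phi_h(\mu^N)-\mu^N\|_{\mathrm{TV}}^{1+\eta}$, with moment factors uniformly bounded for small $h$ by Proposition \ref{thrm:momentinequalities}.

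Next I would use the integral form of the Boltzmann equation, $\phi_h(\mu^N)-\mu^N=\int_0^h Q(\phi_s(\mu^N))\,ds$. Because $\mu^N$ is finitely supported, $\|Q(\mu^N)\|_{\mathrm{TV}+2}<\infty$, and the bilinear bound $\|Q(\rho)\|_{\mathrm{TV}+2}\lesssim \Lambda_3(\rho)$ together with the moment bounds in Section \ref{sec:moment estimates} gives $\|Q(\phi_s(\mu^N))\|_{\mathrm{TV}+2}\lesssim 1$ uniformly for small $s$. Consequently $\|\phi_h(\mu^N)-\mu^N\|_{\mathrm{TV}+2}=O(h)$, and the remainder satisfies $\|\psi\|_{\mathrm{TV}+2}=O(h^{1+\eta})=o(h)$. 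By the linearity and $\mathrm{TV}+2$-continuity of $\mathcal{D}\phi_t(\mu^N)[\cdot]$, which are built into its construction via the linear equation (\ref{eq: definition of the difference term}), I may interchange the derivative with the Bochner integral to obtain
\begin{equation*}
 \mathcal{D}\phi_t(\mu^N)[\phi_h(\mu^N)-\mu^N]=\int_0^h \mathcal{D}\phi_t(\mu^N)\bigl[Q(\phi_s(\mu^N))\bigr]\,ds.
\end{equation*}
Combined with the continuity of $s\mapsto Q(\phi_s(\mu^N))$ in $\mathrm{TV}+2$ at $s=0$, which follows from the H\"older continuity (\ref{eq: holder continuity of Q}) of $Q$ and the continuity of $s\mapsto\phi_s(\mu^N)$ at $s=0$, this integral equals $h\,\mathcal{D}\phi_t(\mu^N)[Q(\mu^N)]+o(h)$.

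Pairing with $f\in\mathcal{A}$, which is bounded by $1$ against the $\mathrm{TV}+2$ norm (since $\mathcal{A}\subset \mathcal{A}_0$ and we dualise against $(1+|v|^2)$-weighted norms), gives
\begin{equation*}
 \langle f,\phi_{t+h}(\mu^N)-\phi_t(\mu^N)\rangle = h\,\langle f,\mathcal{D}\phi_t(\mu^N)[Q(\mu^N)]\rangle + o(h),
\end{equation*}
and dividing by $h$ and sending $h\downarrow 0$ delivers the right derivative, which at $t=0$ is the one-sided statement (recalling that $\mathcal{D}\phi_0(\mu^N)$ is the identity on $Y_2$, so this case reduces to the Boltzmann equation itself at the initial time). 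For $t>0$, the left derivative follows by the same argument applied to $\phi_t(\mu^N)=\phi_h(\phi_{t-h}(\mu^N))$, together with the $t$-continuity of $\mathcal{D}\phi_t(\mu^N)[Q(\mu^N)]$ inherited from the linearised equation. The main obstacle will be justifying the Bochner commutation and the continuity of $s\mapsto Q(\phi_s(\mu^N))$ at $s=0$ in the correct norm; both follow from the stability apparatus in Section \ref{sec: Regularity and Stability Estimates} and the moment controls of Section \ref{sec:moment estimates} once $k$ is chosen large enough relative to $\eta$.
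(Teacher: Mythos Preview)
Your argument has a genuine gap at the step where you commute $\mathcal{D}\phi_t(\mu^N)$ with the Bochner integral $\int_0^h Q(\phi_s(\mu^N))\,ds$. You justify this by asserting the ``$\mathrm{TV}+2$-continuity of $\mathcal{D}\phi_t(\mu^N)[\cdot]$, which are built into its construction via the linear equation~(\ref{eq: definition of the difference term})''. But Proposition~\ref{thrm: stability for BE} only defines $\mathcal{D}\phi_t(\mu)[\xi_0]$ for $\xi_0=\nu-\mu$ with $\mu,\nu\in\mathcal{S}^k$, and the accompanying bounds (\ref{eq: stability for BE 1.5}) control $\|\xi_t\|_{\mathrm{TV}+2}$ in terms of $\Lambda_k(\mu,\nu)$ and $\|\mu-\nu\|_{\mathrm{TV}}^\eta$, not linearly in $\|\xi_0\|_{\mathrm{TV}+2}$. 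The paper itself makes this point explicitly (just before Lemma~5.5): ``we \emph{cannot} simply exchange the integration with the linear map $\mathcal{D}\phi_t$, as the construction in Proposition~\ref{thrm: stability for BE} does not guarantee that $\mathcal{D}\phi_t(\mu^N)$ is bounded as a linear map.'' There is also a domain issue: for $s>0$ the measure $\mu^N+\delta\,Q(\phi_s(\mu^N))$ need not lie in $\mathcal{S}$ for any $\delta>0$, since the negative part of $Q(\phi_s(\mu^N))$ is supported on $\operatorname{supp}(\phi_s(\mu^N))$, which is generally larger than $\operatorname{supp}(\mu^N)$. So even writing $\mathcal{D}\phi_t(\mu^N)[Q(\phi_s(\mu^N))]$ in the integrand is not justified by Proposition~\ref{thrm: stability for BE}.

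The paper's proof avoids both problems by introducing the intermediate point $\nu^N_s=\mu^N+sQ(\mu^N)$, which \emph{does} belong to $\mathcal{S}$ for small $s$ (because $\mu^N$ is a finite sum of atoms of mass $1/N$, and the negative part of $Q(\mu^N)$ is supported on these atoms). One then splits
\[
\phi_{t+s}(\mu^N)-\phi_t(\mu^N)-s\,\mathcal{D}\phi_t(\mu^N)[Q(\mu^N)]
=\underbrace{\phi_t(\phi_s(\mu^N))-\phi_t(\nu^N_s)}_{\mathcal{T}_1}
+\underbrace{\phi_t(\nu^N_s)-\phi_t(\mu^N)-\mathcal{D}\phi_t(\mu^N)[\nu^N_s-\mu^N]}_{\mathcal{T}_2},
\]
and each piece is $o(s)$ purely from the stability estimates (\ref{eq: stability for BE 1}),~(\ref{eq: stability for BE 2}) applied to pairs of probability measures, together with $\|\phi_s(\mu^N)-\nu^N_s\|_{\mathrm{TV}}=O(s^{3/2})$. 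No boundedness of the linear map is needed. Your approach could be repaired by first proving a Gr\"onwall-type bound $\|\xi_t\|_{\mathrm{TV}+2}\le C(t,\mu^N)\|\xi_0\|_{\mathrm{TV}+2}$ for general $\xi_0\in Y_2$ (this is essentially the content of Lemma~\ref{lemma: moments of xi}, established later), but as written the step is not justified by the tools you invoke.
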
 The following proof uses ideas of \cite{M+M}, notably the concept of the infinite-dimensional differential calculus and building on ideas of \cite[Lemma 2.11]{M+M}. \begin{proof} Throughout, fix $\mu^N\in \mathcal{S}_N$ and $f\in \mathcal{A}$. Recall, for clarity, the notation $Q_t(\mu)=Q(\phi_t(\mu))$. Using the boundedness of appropriate moments of $\mu^N\in\mathcal{S}_N$, together with the continuity estimate (\ref{eq: holder continuity of Q}), it is straightforward to see that the map $t\mapsto Q_t(\mu^N)$ is H\"older continuous in time, with respect to the weighted norm $\|\cdot\|_{\mathrm{TV}+2}$: for some constant $C_1=C_1(N)$, we have the estimate \begin{equation}\label{eq: time Holder continuity of Qt} \|Q_t(\mu^N)-Q_s(\mu^N)\|_{\mathrm{TV}+2} \le C_1 |t-s|^\frac{1}{2}. \end{equation} From the definition (\ref{BE}) of the Boltzmann dynamics, together with dominated convergence, we have that \begin{equation} \langle f, \phi_t(\mu^N_0)\rangle =\langle f, \mu^N\rangle +\int_0^t \langle f, Q_s(\mu^N)\rangle ds. \end{equation} Therefore, the map $t\mapsto \langle f, \phi_t(\mu^N)\rangle$ is continuously differentiable in time, with derivative \begin{equation} \frac{d}{dt} \langle f, \phi_t(\mu^N)\rangle=\langle f, Q_t(\mu^N)\rangle\end{equation} where, at $t=0$, this is a one-sided, right derivative. It therefore suffices to show that (\ref{eq: first equality of CK}) holds as a \emph{right} derivative.  \medskip \\ Fix $t\ge 0$, and observe that, for $s>0$ small enough, $\nu^N_s=\mu^N+sQ(\mu^N)$ defines a measure $\nu^N_s\in \mathcal{S}$. From the semigroup property, it follows that  $\phi_t(\phi_s(\mu^N))=\phi_{t+s}(\mu^N)$, and we can therefore expand \begin{equation} \begin{split} &\big\langle f, \phi_{t+s}(\mu^N)-\phi_t(\mu^N)-s\mathcal{D}\phi_t(\mu^N)[Q(\mu^N)]\big\rangle\\ &=\underbrace{\langle f, \phi_t(\phi_s(\mu^N))-\phi_t(\nu^N_s)\rangle}_{:=\mathcal{T}_1(s)}  + \underbrace{\langle f,\phi_t(\nu^N_s)-\phi_t(\mu^N)-s\mathcal{D}\phi_t(\mu)[Q(\mu^N)]\rangle}_{:=\mathcal{T}_2(s)}.  \end{split}\end{equation} We will now show that each of the two terms $\mathcal{T}_1, \mathcal{T}_2$ are $o(s)$, which implies the result.
\paragraph{Estimate on $\mathcal{T}_1(s)$} Let $\eta\in (\frac{2}{3},1)$, and choose $k$ large enough that the stability estimates (\ref{eq: stability for BE 1}, \ref{eq: stability for BE 2}) hold with exponent $\eta$. As $s\downarrow 0$, the probability measures $\nu^N_s =\mu^N+sQ(\mu^N)$ and $\phi_s(\mu^N)$ are bounded in $\mathcal{S}^k$. Therefore, from (\ref{eq: stability for BE 1}), there exists a constant $C_2=C_2(N)<\infty$ such that, for all $s>0$ small enough, \begin{equation} \label{eq: est of T1, 1} \|\phi_t(\phi_s(\mu))-\phi_t(\nu_s)\|_{\mathrm{TV}+2} \le C_2\|\phi_s(\mu)-\nu_s\|^\eta_{\mathrm{TV}+2}.\end{equation} The left-hand side is a bound for $\mathcal{T}_1(s)$. Using the estimate (\ref{eq: time Holder continuity of Qt}) above, we estimate the right-hand side, following \cite[Lemma 2.11]{M+M}: \begin{equation} \label{eq: est of T1, 2}\begin{split} \|\phi_s(\mu^N)-\nu^N_s\|_{\mathrm{TV}+2} &= \left\|\int_0^s (Q_u(\mu^N)-Q_0(\mu^N)) du\right\|_{\mathrm{TV}+2} \\ & \le \int_0^s \|Q_u(\mu^N)-Q_0(\mu^N)\|_{\mathrm{TV}+2} \hspace{0.1cm}du \\ & \le C_1(N)\int_0^s u^\frac{1}{2}du = \frac{2}{3}C_1(N) s^\frac{3}{2}.\end{split} \end{equation} Combining the estimates (\ref{eq: est of T1, 1}, \ref{eq: est of T1, 2}), we see that \begin{equation} \mathcal{T}_1(s) \le C_2\left(\frac{2}{3}C_1\right)^\eta s^\frac{3\eta}{2}.\end{equation} Since we chose $\eta>\frac{2}{3}$, this shows that $\mathcal{T}_1$ is $o(s)$ as $s\downarrow 0$. 

\paragraph{Estimate on $\mathcal{T}_2$} Let $\eta$ and $k$ be as above, and recall that in (\ref{eq: stability for BE 2}), $\xi_t$ is the definition of $\mathcal{D}\phi_t(\mu)[\nu-\mu]$. We now apply this estimate to $\mu^N$ and $\nu^N_s$, noting that $\nu^N_s=\mu^N+sQ(\mu^N)$ and $\phi_s(\mu^N)$ are bounded in $\mathcal{S}^k$ as $s\downarrow 0$, and that $\nu^N_s-\mu^N=sQ(\mu^N)$. The bound (\ref{eq: stability for BE 2}) now shows that, for some constants $C_3, C_4<\infty$, \begin{equation} \begin{split} \|\phi_t(\nu^N_s)-\phi_t(\mu^N)-s\mathcal{D}\phi_t(\mu^N)[Q(\mu^N)]\|_{\mathrm{TV}+2} &\le C_3\|\nu^N_s-\mu^N\|_\mathrm{TV}^{1+\eta} \\& = C_3\|sQ(\mu^N)\|_\mathrm{TV}^{1+\eta}\\ & \le C_4 s^{1+\eta}. \end{split}\end{equation} The left-hand side is a bound for $\mathcal{T}_2$, which implies that $\mathcal{T}_2$ is $o(s)$, as desired. Together with the previous estimate on $\mathcal{T}_1$, this concludes the proof.\end{proof} 

   
We now turn to the proof of the second equality in (\ref{eq:exchangeDandI}), that is, \begin{equation}\label{eq:DAP}\begin{split}&\langle f,\mathcal{D}\phi_t(\mu^N)\left[Q(\mu^N)\right]\rangle \\& = \int_{\mathbb{R}^d \times \mathbb{R}^d \times S^{d-1}} \hspace{0.05cm} \langle f,\mathcal{D}\phi_t(\mu^N)[\mu^{N, v, v_\star, \sigma}-\mu^N]\rangle \hspace{0.05cm}N\hspace{0.05cm} |v-v_\star|d\sigma \mu^N(dv)\mu^N(dv_\star). \end{split} \end{equation} Using the definition (\ref{eq: change of measure at collision}), we see that the integral on the right-hand side is equivalent to that defining $Q(\mu^N)$ in (\ref{eq: defn of Q}). However, we \emph{cannot} simply exchange the integration with the linear map $\mathcal{D}\phi_t$, as the construction in Proposition \ref{thrm: stability for BE} does not guarantee that $\mathcal{D}\phi_t(\mu^N)$ is bounded as a linear map. We will instead prove (\ref{eq:DAP}) from the \emph{explicit} way in which $\mathcal{D}\phi_t(\mu^N)$ is constructed in Proposition \ref{thrm: stability for BE}, and show that this construction implies `enough' continuity. \medskip\\ This is closely related to, and may be derived from, condition (\textbf{A3}), convergence of the generators, in \cite{M+M}. We present here a more direct proof, to avoid introducing additional spaces and notation. The crucial observation of our argument is that `enough' small perturbations of a discrete measure $\mu^N\in\mathcal{S}_N$ will remain in $\mathcal{S}$; this is made precise in equation (\ref{eq: proof of dap delta is small}). The same idea is present in the corresponding argument \cite[Section 5.5]{M+M}, but not made explicit. \medskip \\ Before turning to the proof of (\ref{eq:DAP}), we will prove the following auxiliary lemma. In order to justify the exchange of various integrals, we wish to improve the moments of the derivative $\xi_t=\mathcal{D}\phi_t(\mu)[\nu-\mu]$ in Proposition \ref{thrm: stability for BE}. The following argument combines ideas of \cite[Proposition 4.2]{ACE} and \cite[Lemma 6.3]{M+M}. \begin{lemma}\label{lemma: moments of xi} Suppose $\mu, \nu \in \cap_{k\ge 2} \mathcal{S}^k$, and let $(\xi_t)_{t\ge 0}$ be the solution to the differential equation (\ref{eq: definition of the difference term}). Then, for all $k\ge 2$, there exists a constant $c=c(k)$ such that, for all $t\ge 0$, \begin{equation} \|\xi_t\|_{TV+k}\le 2\Lambda_k(\mu,\nu) \exp\left(ct \Lambda_{k+1}(\mu)\right). \end{equation} Moreover, if $k'>2$ is large enough, then we have the continuity estimate, for all $0\le s \le t$, and for some absolute constants $C_1, C_2$, \begin{equation} \|\xi_t-\xi_s\|_{\mathrm{TV}+k}\le C_1\Lambda_{k+k'}(\mu, \nu)^\frac{1}{2}\exp\left(\frac{1}{2}C_2\Lambda_{2(k+1)}(\mu)t\right)(t-s)^\frac{1}{2}.\end{equation}    \end{lemma}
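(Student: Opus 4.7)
The plan is to handle both parts using the branching process (linearised Kac process) representation from Definition~\ref{def: LKP} and the remark that follows it. Starting from signed initial data $\xi_0 = \nu-\mu$ and running the branching process in the environment $\rho_t = \phi_t(\mu)$, one has $\xi_t = \mathbb{E}[\Xi_t]$. By the Jordan--Hahn decomposition, for any test function $f$ with $|f(v)| \le (1+|v|^2)^{k/2}$,
$$|\langle f, \xi_t\rangle| \;=\; |\mathbb{E}\langle f, \Xi_t\rangle| \;\le\; \mathbb{E}\langle (1+|v|^2)^{k/2}, |\Xi_t^*|\rangle,$$
where $|\Xi_t^*|$ is the total unsigned population measure on $V$. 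This reduces the problem to estimating the expected weighted mass $F_k(t) := \mathbb{E}\langle h_k,|\Xi_t^*|\rangle$ with $h_k(v)=(1+|v|^2)^{k/2}$, started from $F_k(0) \le \Lambda_k(\mu)+\Lambda_k(\nu) \le 2\Lambda_k(\mu,\nu)$.

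For Part (i), I would apply the generator of the branching process to the linear functional $\langle h_k,\cdot\rangle$: each particle at $v$ spawns three particles at $v',v_\star',v_\star$ at rate $2|v-v_\star|\rho_t(dv_\star)d\sigma$, producing an increment $h_k(v')+h_k(v_\star')+h_k(v_\star)-h_k(v)$. Using a Povzner-type inequality for the hard-spheres kernel to control $h_k(v')+h_k(v_\star')$ in terms of $h_k(v)+h_k(v_\star)$ (the negative gain term absorbing the extra power of $|v-v_\star|$), and integrating against the environment, one obtains a differential inequality of the form
$$F_k'(t) \;\le\; c(k)\,\Lambda_{k+1}(\phi_t(\mu))\,F_k(t).$$
Gronwall's inequality, combined with the uniform moment propagation $\Lambda_{k+1}(\phi_s(\mu)) \le C\Lambda_{k+1}(\mu)$ from Proposition~\ref{thrm:momentinequalities}(ii) (applied with $p=q=k+1$), then yields $F_k(t) \le 2\Lambda_k(\mu,\nu)\exp(ct\Lambda_{k+1}(\mu))$.

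For Part (ii), use the integrated linearised equation $\xi_t - \xi_s = 2\int_s^t Q(\phi_u(\mu),\xi_u)\,du$ together with Cauchy--Schwarz in the time variable:
$$\|\xi_t-\xi_s\|_{\mathrm{TV}+k} \;\le\; 2(t-s)^{1/2}\left(\int_0^t \|Q(\phi_u(\mu),\xi_u)\|_{\mathrm{TV}+k}^2\,du\right)^{1/2}.$$
The bilinear continuity estimate $\|Q(\rho,\xi)\|_{\mathrm{TV}+k} \le C\,\Lambda_{k+1}(\rho)\|\xi\|_{\mathrm{TV}+(k+1)}$ combined with Part (i) at weight $k+1$ gives, after squaring and using Jensen's inequality $\Lambda_{k+1}(\cdot)^2 \le \Lambda_{2(k+1)}(\cdot)$ and moment propagation,
$$\|Q(\phi_u(\mu),\xi_u)\|_{\mathrm{TV}+k}^2 \;\le\; C'\,\Lambda_{2(k+1)}(\mu,\nu)\,\Lambda_{2(k+1)}(\mu)\exp\bigl(C_2\Lambda_{2(k+1)}(\mu)u\bigr).$$
Integrating yields the claimed bound with $k' = k+2$ so that $k+k' = 2(k+1)$; note that trading the factor $(t-s)$ obtained from direct integration for $(t-s)^{1/2}$ via Cauchy--Schwarz exploits only the value of the $L^2$ integral over the fixed window $[0,t]$.

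\paragraph{Main obstacle.} The delicate point is closing the moment hierarchy in Part~(i): naive bounding of the increment $\int d\sigma\,|v-v_\star|[h_k(v')+h_k(v_\star')+h_k(v_\star)-h_k(v)]$ produces the factor $(1+|v|^2)^{(k+1)/2}$, which would force the evolution of $F_k$ to be controlled only by $F_{k+1}$, yielding an open hierarchy. The sharp Povzner inequality for the hard-spheres kernel provides a negative gain at the leading order that, combined with a moment-propagation bound on the environment, absorbs this extra half-power and lets the inequality close at level $k$, with $\Lambda_{k+1}(\phi_t(\mu))$ appearing only as a scalar prefactor. Everything else---Part (ii), the matching of constants, and the choice of $k'$---then follows routinely.
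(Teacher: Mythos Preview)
Your approach is correct and gives the claimed bounds, but it differs from the paper's in both parts.

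\textbf{Part (i).} The paper does not go through the branching process. Instead, it works directly with $|\xi_t|$: using a sign-function lemma of Norris it writes $|\xi_t| = |\xi_0| + \int_0^t f_s\,\partial_s\xi_s\,ds$ for a measurable $f_s:\mathbb{R}^d\to\{-1,0,1\}$, and then integrates $\langle 1+|v|^k,\cdot\rangle$ against this identity. The crucial closing step is the same in spirit as yours, but the mechanism is more elementary than you describe: one does not need the \emph{sharp} Povzner inequality with a negative gain. Since $|v'|^2+|v_\star'|^2=|v|^2+|v_\star|^2$, the elementary bound $(a+b)^{k/2}-a^{k/2}\lesssim a^{k/2-1}b+b^{k/2}$ already gives
\[
|v'|^k+|v_\star'|^k+|v_\star|^k-|v|^k \;\le\; C(k)\bigl(|v|^{k-2}|v_\star|^2+|v_\star|^k\bigr),
\]
so the leading $|v|^k$ cancels outright, and after multiplying by $|v-v_\star|$ one lands on $(1+|v|^k)(1+|v_\star|^{k+1})$. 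Your branching-process route reaches the same differential inequality and is equally valid; it has the advantage of avoiding the signed-measure variation lemma, at the cost of invoking the probabilistic representation.

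\textbf{Part (ii).} Here the paper takes a genuinely different route. Rather than Cauchy--Schwarz in time, it interpolates
\[
\|\xi_t-\xi_s\|_{\mathrm{TV}+k}\;\le\;\|\xi_t-\xi_s\|_{\mathrm{TV}}^{1/2}\,\|\xi_t+\xi_s\|_{\mathrm{TV}+2k}^{1/2},
\]
bounds the $\mathrm{TV}$ factor by $\Lambda_{k'}(\mu,\nu)(t-s)$ using the stability estimate~(\ref{eq: stability for BE 1.5}) (this is where the universal $k'$ enters), and bounds the $\mathrm{TV}+2k$ factor via Part~(i). The payoff is that $k'$ is a fixed constant independent of $k$, whereas your argument, which feeds Part~(i) at level $k+1$ into the bilinear estimate, produces a prefactor $\Lambda_{2(k+1)}(\mu,\nu)$ to the first power and hence needs $k'$ to grow with $k$ to match the stated exponent~$1/2$. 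Both versions suffice for the downstream application (which only uses $k=2,4$), so the discrepancy is harmless, but the paper's interpolation is slightly sharper in its dependence on $k$.
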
 \begin{proof} Firstly, we observe that, by hypothesis, the map $t\mapsto \xi_t$ is continuous in the norm $\|\cdot\|_{\mathrm{TV}+2}$, and is therefore locally bounded. We have the estimate on total variation \begin{equation}\label{eq: bound TV of Q} \|Q(\phi_t(\mu), \xi_t)\|_\mathrm{TV} \le 4\int_{\mathbb{R}^d\times\mathbb{R}^d}|v-v_\star|\phi_t(\mu)(dv)|\xi_t|(dv_\star) \le 8 \|\xi_t\|_{\mathrm{TV}+2}\end{equation} where we have used the bound $|v-v_\star|\le (1+|v|^2)(1+|v_\star|^2)$. Similarly, we estimate \begin{equation} \label{eq: continuity of partial t xi t}\begin{split} &\|Q(\phi_t(\mu), \xi_t)-Q(\phi_s(\mu), \xi_s)\|_\mathrm{TV} \\[0.5ex] & \hspace{1cm}\le \|Q(\phi_t(\mu)-\phi_s(\mu), \xi_t)\|_\mathrm{TV} +\|Q(\phi_s(\mu), \xi_t-\xi_s)\|_\mathrm{TV} \\[0.5ex] & \hspace{1cm} \le 4(\|\xi_t\|_{\mathrm{TV}+2}\hspace{0.05cm}\|\phi_t(\mu)-\phi_s(\mu)\|_{\mathrm{TV}+2}+2\hspace{0.05cm}\|\xi_t-\xi_s\|_{\mathrm{TV}+2}).  \end{split} \end{equation} Since $t\mapsto \phi_t(\mu)$ is continuous in $\|\cdot\|_{\mathrm{TV}+2}$, it follows that the map \begin{equation} t\mapsto \partial_t\xi_t=2Q(\phi_t(\mu), \xi_t) \end{equation} is continuous and locally bounded in $\|\cdot\|_{\mathrm{TV}}$. Therefore, for all $t\ge 0$, the measure $\pi_t=\int_0^t |\partial_s\xi_s| ds$ is a finite measure, and $\partial_s \xi_s$ is absolutely continuous with respect to $\pi_t$ for all $0\le s\le t$. Therefore, by a result of Norris \cite[Lemma 11.1]{ACE} on the time variation of signed measures, there exists a measurable map $f: [0,\infty)\times \mathbb{R}^d \rightarrow \{-1,0,1\}$ such that \begin{equation}\label{eq: definition of f}  \xi_t=f_t|\xi_t|;\hspace{1cm} |\xi_t|=|\xi_0|+\int_0^t f_s\partial_s\xi_s ds.\end{equation}Writing $\check{f}_s(v)=(1+|v|^k)f_s$, we have the bound \begin{equation} \begin{split} &\hspace{1cm}\label{eq: gain of integrability of xi} \langle 1+|v|^k, |\xi_t|-|\xi_0|\rangle\\  &=\int_0^t ds \int_{\mathbb{R}^d\times\mathbb{R}^d\times S^{d-1}} (\check{f}(v')+\check{f}(v_\star')-\check{f}(v_\star)-\check{f}(v))|v-v_\star|\hspace{0.05cm}\phi_s(\mu)(dv)\hspace{0.05cm}\xi_s(dv_\star)d\sigma \\[1ex]& \le \int_0^t ds \int_{\mathbb{R}^d\times \mathbb{R}^d \times S^{d-1}} (2+|v'|^k+|v_\star'|^k+|v_\star|^k-|v|^k)|v-v_\star|\hspace{0.05cm}\phi_s(\mu)(dv_\star)\hspace{0.05cm}|\xi_s|(dv)d\sigma.\end{split}  \end{equation} Now, there exists a constant $C_1=C_1(k)$ such that, for all $v, v_\star, \sigma$, we have the bound \begin{equation} |v'|^k+|v_\star'|^k+|v_\star|^k-|v|^k \le C_1(k)(|v|^{k-2}|v_\star|^2+|v_\star|^k)\end{equation} Therefore, for a different constant $C_2=C_2(k)$,  \begin{equation} 2|v-v_\star|(2+|v'|^k+|v_\star'|^k+|v_\star|^k-|v|^k) \le C_2(k)(1+|v|^k)(1+|v_\star|^{k+1}).\end{equation} Using the moment bounds in Proposition \ref{thrm:momentinequalities}, we obtain for some $c=c(k)$, \begin{equation} \begin{split} &  \langle 1+|v|^k, |\xi_t|\rangle  \le \langle 1+|v|^k, |\xi_0|\rangle \\& \hspace{2cm}+ C_2\int_0^t \int_{\mathbb{R}^d\times\mathbb{R}^d}(1+|v|^k)(1+|v_\star|^{k+1})|\xi_s|(dv)\phi_s(\mu)(dv_\star) \\[1ex] &\hspace{1cm} \le \langle1+|v|^k, |\xi_0|\rangle +c\Lambda_{k+1}(\mu)\int_0^t \langle 1+|v|^k, |\xi_s|\rangle ds.\end{split} \end{equation} Gr\"onwall's lemma now gives the claimed moment bound. For the continuity statement, if $k'$ is chosen large enough that (\ref{eq: stability for BE 1.5}) holds for some $\eta<1$, then (\ref{eq: bound TV of Q}) gives the bound \begin{equation}\label{eq: continuity in TV} \|Q(\phi_t(\mu), \xi_t)\|_\mathrm{TV} \le C_3\Lambda_{k'}(\mu, \nu) \end{equation} and therefore, for all $0\le s\le t$, \begin{equation} \label{eq: continuity in TV'} \|\xi_t-\xi_s\|_\mathrm{TV} \le C_3\Lambda_{k'}(\mu, \nu)(t-s).\end{equation} The continuity statement follows by combining (\ref{eq: continuity in TV'}) with the moment bound for $2k$, with the interpolation \begin{equation} \|\xi_t-\xi_s\|_{\mathrm{TV}+k}\le \|\xi_t-\xi_s\|^{1/2}_{\mathrm{TV}}\hspace{0.1cm}\|\xi_t+\xi_s\|_{\mathrm{TV}+2k}^{1/2} \end{equation} and using the correlation property (Lemma \ref{lemma: correlation of moments}) to absorb both moment terms. \end{proof} We can now prove the second claimed equality in Lemma \ref{lemma:DAP}. \begin{lemma} Let $\mu^N \in \mathcal{S}_N$, for $N\ge 2$. Then we have the equality \begin{equation}\label{eq:DAP2}\begin{split}&\hspace{1cm}\mathcal{D}\phi_t(\mu^N)\left[Q(\mu^N)\right] \\& = \int_{\mathbb{R}^d \times \mathbb{R}^d \times S^{d-1}} \hspace{0.05cm} \mathcal{D}\phi_t(\mu^N)[\mu^{N, v, v_\star, \sigma}-\mu^N] \hspace{0.05cm}N\hspace{0.05cm} |v-v_\star|d\sigma \mu^N(dv)\mu^N(dv_\star). \end{split} \end{equation} where the right hand side is a Bochner integral in the space $(Y_2,\|\cdot\|_{\mathrm{TV}+2})$. In particular, the equality (\ref{eq:DAP}) holds.  \end{lemma}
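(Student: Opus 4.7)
The plan is to prove (\ref{eq:DAP2}) by showing that both sides satisfy the same linear Cauchy problem in $(Y_2, \|\cdot\|_{\mathrm{TV}+2})$, and then invoking uniqueness of the linearised Boltzmann equation. Since $\mu^N$ is supported on finitely many atoms, it lies in $\mathcal{S}^k$ for every $k$, so Proposition \ref{thrm: stability for BE} is applicable. For each pair of atoms $(v,v_\star)$ in $\mathrm{supp}(\mu^N)$ and each $\sigma \in S^{d-1}$, the post-collision measure $\mu^{N,v,v_\star,\sigma}$ also lies in $\mathcal{S}^k$ for all $k$, so the quantity
\[
\xi_t^{v,v_\star,\sigma} := \mathcal{D}\phi_t(\mu^N)\left[\mu^{N,v,v_\star,\sigma}-\mu^N\right]
\]
is directly defined as the unique solution to $\partial_t \xi = 2Q(\phi_t(\mu^N),\xi)$ with $\xi_0 = \mu^{N,v,v_\star,\sigma}-\mu^N$.

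The first step is to verify that the candidate right-hand side makes sense as a Bochner integral in $(Y_2, \|\cdot\|_{\mathrm{TV}+2})$. Lemma \ref{lemma: moments of xi} applied with $k=2$ gives, for a constant depending on the bounded moments of $\mu^N$,
\[
\sup_{v,v_\star \in \mathrm{supp}(\mu^N),\,\sigma \in S^{d-1}} \|\xi_t^{v,v_\star,\sigma}\|_{\mathrm{TV}+2} \le C(N,t) < \infty,
\]
and its continuity statement gives strong measurability of the map $(v,v_\star,\sigma) \mapsto \xi_t^{v,v_\star,\sigma}$ into $(Y_2,\|\cdot\|_{\mathrm{TV}+2})$. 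Since the integration measure $N|v-v_\star|\,d\sigma\,\mu^N(dv)\mu^N(dv_\star)$ is finite (recalling that $\mathrm{supp}(\mu^N)$ is finite), the Bochner integral
\[
\mathcal{I}_t := \int_{\mathbb{R}^d \times \mathbb{R}^d \times S^{d-1}} \xi_t^{v,v_\star,\sigma} \, N|v-v_\star|\, d\sigma\, \mu^N(dv)\mu^N(dv_\star) \in Y_2
\]
exists.

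Next, I would pass from each $\xi_t^{v,v_\star,\sigma}$ to $\mathcal{I}_t$ by integrating the Duhamel identity
\[
\xi_t^{v,v_\star,\sigma} = (\mu^{N,v,v_\star,\sigma}-\mu^N) + \int_0^t 2Q(\phi_s(\mu^N),\xi_s^{v,v_\star,\sigma})\,ds
\]
against $N|v-v_\star|\,d\sigma\,\mu^N(dv)\mu^N(dv_\star)$. The map $\eta \mapsto Q(\phi_s(\mu^N),\eta)$ is a bounded linear operator on $(Y_2,\|\cdot\|_{\mathrm{TV}+2})$, with operator norm controlled by $\Lambda_3(\phi_s(\mu^N))$, which is locally integrable in $s$ by Proposition \ref{thrm:momentinequalities}. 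This boundedness justifies (i) commuting $Q(\phi_s(\mu^N),\cdot)$ with the Bochner integral over $(v,v_\star,\sigma)$ and (ii) applying Fubini to exchange the inner and outer integrations. The result is
\[
\mathcal{I}_t = Q(\mu^N) + \int_0^t 2Q(\phi_s(\mu^N),\mathcal{I}_s)\,ds,
\]
where I have used that $\int (\mu^{N,v,v_\star,\sigma}-\mu^N)\,N|v-v_\star|\,d\sigma\,\mu^N(dv)\mu^N(dv_\star) = Q(\mu^N)$ by the definition (\ref{eq: defn of Q}). This is precisely the integrated form of the linear ODE defining $\mathcal{D}\phi_t(\mu^N)[Q(\mu^N)]$, so uniqueness for this linear evolution equation—applicable because $Q(\mu^N) \in Y_q$ for all $q$, since it is a finite combination of Dirac masses—forces $\mathcal{I}_t = \mathcal{D}\phi_t(\mu^N)[Q(\mu^N)]$, which is (\ref{eq:DAP2}). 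Pairing with any bounded measurable $f$ gives (\ref{eq:DAP}).

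The main obstacle is the Fubini/exchange step, which is where the paper's emphasis on the discreteness of $\mu^N$ matters: had we tried to prove (\ref{eq:DAP2}) for a general $\mu$, the measure $Q(\mu)$ might not have finite higher moments, and there would be no \emph{a priori} reason that $\mu + \epsilon \eta$ stays in $\mathcal{S}$ for relevant $\eta$. For $\mu^N \in \mathcal{S}_N$, however, the finiteness of $\mathrm{supp}(\mu^N)$ together with the moment bound in Lemma \ref{lemma: moments of xi} supplies the uniform control on $\xi_t^{v,v_\star,\sigma}$ needed to legitimise all of the above exchanges.
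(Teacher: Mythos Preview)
Your overall strategy is the same as the paper's: define $\mathcal{I}_t$ as a Bochner integral of the individual $\xi_t^{v,v_\star,\sigma}$, show it satisfies the integrated linearised equation, and conclude by uniqueness. However, there is a genuine gap in the execution.

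The claim that $\eta \mapsto Q(\phi_s(\mu^N),\eta)$ is a bounded linear operator on $(Y_2,\|\cdot\|_{\mathrm{TV}+2})$ is false. The hard-spheres kernel $|v-v_\star|$ costs one moment: to control $\langle 1+|w|^2,|Q(\phi_s(\mu^N),\xi)|\rangle$ you must bound an integral containing the term $|v_\star|^3$ against $|\xi|(dv_\star)$, and this is not dominated by $\|\xi\|_{\mathrm{TV}+2}$ no matter how many moments $\phi_s(\mu^N)$ has. Consequently, you cannot commute $Q(\phi_s(\mu^N),\cdot)$ with a Bochner integral taken only in $(Y_2,\|\cdot\|_{\mathrm{TV}+2})$, and the Duhamel identity for $\mathcal{I}_t$ does not follow as written. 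The paper's proof avoids this by invoking Lemma~\ref{lemma: moments of xi} with a higher exponent (obtaining uniform $\|\cdot\|_{\mathrm{TV}+4}$ bounds on the $\xi^{v,v_\star,\sigma}_t$), performing the Bochner integral in $(Y_3,\|\cdot\|_{\mathrm{TV}+3})$, and then using that $Q(\mu,\cdot):(Y_3,\|\cdot\|_{\mathrm{TV}+3})\to(Y_2,\|\cdot\|_{\mathrm{TV}+2})$ is bounded. Your argument is easily repaired in the same way: apply Lemma~\ref{lemma: moments of xi} with $k=3$ rather than $k=2$.

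Two minor points. First, $Q(\mu^N)$ is not a finite combination of Dirac masses: the $\sigma$-integral spreads mass over spheres. It \emph{is} true that $Q(\mu^N)\in Y_q$ for all $q$, but because its support is bounded, not because it is atomic. Second, the continuity statement in Lemma~\ref{lemma: moments of xi} concerns continuity in $t$, not in the initial datum, so it does not by itself give strong measurability in $(v,v_\star,\sigma)$; since $(v,v_\star)$ ranges over a finite set, you need continuity of $\sigma\mapsto\xi_t^{v,v_\star,\sigma}$, which follows from linearity of the equation and the estimates of Proposition~\ref{thrm: stability for BE}, but this should be said explicitly.
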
 \begin{proof} We exploit the fact that, for $\delta>0$ small enough, we have \begin{equation} \label{eq: proof of dap delta is small} \mu^N + \delta Q(\mu^N)\in \mathcal{S}; \hspace{0.5cm} \forall v, v_\star, \sigma, \hspace{0.2cm}\mu^N +\delta[\mu^{N,v,v_\star,\sigma}-\mu^N] \in \mathcal{S}. \end{equation} We will assume that $\delta>0$ is chosen so that this holds. For $v, v_\star \in \text{Supp}(\mu^N)$ and $\sigma \in S^{d-1}$, we define $\xi^{N,v,v_\star,\sigma}_t$ by the differential equation \begin{equation} \xi^{N,v,v_\star,\sigma}_0 = \delta [\mu^{N, v, v_\star, \sigma}-\mu^N]; \hspace{1cm} \partial_t \xi^{N,v,v_\star,\sigma}_t = 2Q(\phi_t(\mu^N),\xi^{N,v,v_\star,\sigma}_t). \end{equation}From Proposition \ref{thrm: stability for BE}, the solution to this equation exists, and is unique. By the characterisation of the derivative $\mathcal{D}\phi_t(\mu^N)$, we also have \begin{equation} \xi^{N,v,v_\star,\sigma}_t = \delta \hspace{0.1cm} \mathcal{D}\phi_t(\mu^N)[\mu^{N,v,v_\star,\sigma}-\mu^N]\end{equation} From Lemma \ref{lemma: moments of xi}, we also have a bound that $\| \xi^{N,v,v_\star,\sigma}_s\|_{\mathrm{TV}+4} \leq C$ for all $s\le t$, and for some constant $C=C(\mu^N,N,t)$ independent of $v, v_\star$ and $\sigma$. In this notation, we wish to establish the equality \begin{equation}\label{eq: desired equality for DAP} \mathcal{D}\phi_t(\mu^N)[Q(\mu^N)]\equalsquestion\int_{\mathbb{R}^d\times\mathbb{R}^d\times S^{d-1}} \xi^{N,v,v_\star,\sigma} |v-v_\star|\mu^N(dv)\mu^N(dv_\star) d\sigma.\end{equation} From the bound above, the right-hand side is well-defined as a Bochner integral in $(Y_2, \|\cdot\|_{\mathrm{TV}+2})$. \medskip \\ Firstly, arguing as in (\ref{eq: bound TV of Q}), for all $t \ge 0$, there is a constant $C=C(\mu^N, N, t)$ such that, for all $v, v_\star, \sigma$ and $s\le t$, we have   \begin{equation}\begin{split} &\|Q(\phi_t(\mu^N),\xi^{N,v,v_\star,\sigma}_s)\|_{\mathrm{TV}+3} \le C.\end{split}\end{equation} We now define \begin{equation} \xi_t = \int_{\mathbb{R}^d\times \mathbb{R}^d \times S^{d-1}} \xi^{N,v,v_\star,\sigma}_t |v-v_\star| d\sigma \hspace{0.1cm} \mu^N(dv) \mu^N (dv_\star) \end{equation} where the right-hand side is a Bochner integral in $(Y_3, \|\cdot\|_{\mathrm{TV}+3})$. From the definition (\ref{eq: defn of Q}) of $Q$, we have \begin{equation} \xi_0 = \delta \hspace{0.1cm}N^{-1}\hspace{0.1cm} Q(\mu^N)\end{equation} Moreover, using Fubini, we can express \begin{equation} \label{eq: use fubini}\begin{split} &\xi_t-\xi_0 \\[1ex] &=\int_{\mathbb{R}^d\times \mathbb{R}^d \times S^{d-1}} \left\{ \int_0^t 2Q(\phi_s(\mu^N),\xi^{N,v,v_\star,\sigma}_s) ds\right\} |v-v_\star| d\sigma \hspace{0.1cm} \mu^N(dv) \mu^N (dv_\star) \\[1ex]& =\int_0^t \left\{ \int_{\mathbb{R}^d\times \mathbb{R}^d \times S^{d-1}} 2Q(\phi_s(\mu^N),\xi^{N,v,v_\star,\sigma}_s)  |v-v_\star| d\sigma \hspace{0.1cm} \mu^N(dv) \mu^N (dv_\star) \right\} ds.\end{split}\end{equation} The same argument as in (\ref{eq: bound TV of Q}) shows that, for fixed $\mu\in \mathcal{S}^3$, the map \begin{equation} Q(\mu, \cdot): (Y_3, \|\cdot\|_{\mathrm{TV}+3})\rightarrow (Y_2, \|\cdot\|_{\mathrm{TV}+2});\hspace{1cm}\xi\mapsto Q(\mu, \xi)  \end{equation} is a bounded linear map. It follows that, for all $s \ge 0$,  \begin{equation} Q(\phi_s(\mu^N), \xi_s)=\int_{\mathbb{R}^d\times \mathbb{R}^d \times S^{d-1}} Q(\phi_s(\mu^N),\xi^{N,v,v_\star,\sigma}_s)  |v-v_\star| d\sigma \hspace{0.1cm} \mu^N(dv) \mu^N (dv_\star) \end{equation} as an equality of Bochner integrals in $(Y_2, \|\cdot\|_{\mathrm{TV}+2})$. Therefore, (\ref{eq: use fubini}) shows that, for all $t\ge 0$,\begin{equation}\label{eq: integral eqn for xi t} \xi_t=\xi_0+\int_0^t 2Q(\phi_s(\mu^N), \xi_s)ds. \end{equation} From Lemma \ref{lemma: moments of xi}, there exists a constant $C=C(\mu^N,N,t)$ such that, for all $v, v_\star, \sigma$ and $0\le s \le t$, \begin{equation} \|\xi^{N,v,v_\star, \sigma}_t-\xi^{N,v,v_\star, \sigma}_s\|_{\mathrm{TV}+2} \le C(t-s)^\frac{1}{2}\end{equation} and therefore, for a different constant $C'$,  \begin{equation} \|\xi_t-\xi_s\|_{\mathrm{TV}+2} \le C'(t-s)^\frac{1}{2}.\end{equation} By the same reasoning as (\ref{eq: continuity of partial t xi t}), we see that the map $t\mapsto 2Q(\phi_t(\mu^N),\xi_t)$ is continuous with respect to the norm $\|\cdot\|_{\mathrm{TV}+2}$, and so we may differentiate (\ref{eq: integral eqn for xi t}) to obtain $\partial_t \xi_t = 2Q(\phi_t(\mu^N),\xi_t)$. From Proposition \ref{thrm: stability for BE}, this uniquely characterises the derivative $\mathcal{D}\phi_t(\mu^N)[\delta \hspace{0.05cm}N^{-1}\hspace{0.05cm} Q(\mu^N)]$. Hence we have the claimed equality\begin{equation} \begin{split} &\mathcal{D}\phi_t(\mu^N)[Q(\mu^N)] =\delta^{-1} N \xi_t \\& =\delta^{-1} \int_{\mathbb{R}^d\times \mathbb{R}^d \times S^{d-1}} \xi^{N,v,v_\star,\sigma}_t |v-v_\star|\hspace{0.05cm}N\hspace{0.05cm} d\sigma \hspace{0.1cm} \mu^N(dv) \mu^N (dv_\star) \\& =\int_{\mathbb{R}^d\times \mathbb{R}^d \times S^{d-1}} \mathcal{D}\phi_t(\mu^N)[\mu^{N,v,v_\star,\sigma}-\mu^N] |v-v_\star|\hspace{0.05cm}N\hspace{0.05cm} d\sigma \hspace{0.1cm} \mu^N(dv) \mu^N (dv_\star). \end{split} \end{equation} \end{proof}

\section{Proof of Theorem \ref{thrm: PW convergence}} \label{sec: proof of pw}

The main difficulty in obtaining a pathwise statement is the martingale term $M^{N,f}_t$ in Formula \ref{form:newdecomposition}, which we defined above as \begin{equation} M^{N,f}_t = \int_{(0,t]\times \mathcal{S}_N} \bigg \langle f, \phi_{t-s}(\mu^N)-\phi_{t-s}(\mu^N_{s-})\bigg\rangle (m^N-\overline{m}^N)(ds, d\mu^N). \end{equation}  Recall the definition of $\mathcal{A}$ as those functions $f: \mathbb{R}^d \rightarrow \mathbb{R}$ satisfying \begin{equation} \forall \hspace{0.1cm} v, v'\in \mathbb{R}^d, \hspace{0.5cm}|\hat{f}(v)|\leq 1; \hspace{1cm}|\hat{f}(v)-\hat{f}(v')|\leq |v-v'|.\end{equation} We will be interested in controlling an expression of the form $\sup_{f\in\mathcal{A}} |M^{N,f}_t|$, either pointwise in time, or (pathwise) locally uniformly in time. However, unlike in the finite dimensional cases in \cite{D&N}, we cannot directly apply estimates from the elementary theory of martingales, as such estimates degrade in large dimensions. Instead, we will use the relative compactness  discussed in Remark \ref{rmk: compactness} to argue that this is an  \emph{effectively finite dimensional} problem. More precisely, we show that it can be approximated by a discretised, finite dimensional martingale approximation problem, with the following trade off: that making the truncation error small requires taking a large (finite) dimensional martingale. As in \cite{D&N,ACE}, the martingale term is `small', as a function of $N$, but will increase as a function of the dimension of the approximation. By optimising over the  discretisation, we will be able to balance the two terms to find a useful estimate on the family of processes. This is the same approach as used for an equivalent problem in \cite[Theorem 1.1]{ACE}. \medskip \\  Finding the best exponents of $N$ we have been able to obtain uses a `hierarchical decomposition'. This approach was inspired by an equivalent technique used in \cite[Proposition 7.1]{ACE}. 
 \begin{lemma} \label{thrm: pointwise martingale control} Let $\epsilon>0$, $a\ge 1$ and $0<\lambda<\lambda_0$. Let $k$ be large enough that Corollary \ref{cor: new stability for BE} holds with $q=4$, exponent $\lambda$ and H\"{o}lder exponent $1-\epsilon$.  \\ Let $(\mu^N_t)_{t\geq 0}$ be a Kac process in dimension $d\geq 3$, with initial moment $\Lambda_{k}(\mu^N_0) \leq a$. Let $M^{N,f}_t$ be the processes given by (\ref{eq: definition of MNFT}). Then we have, uniformly in $t\geq 0$, \begin{equation} \left\|\hspace{0.1cm} \sup_{f\in \mathcal{A}} \left|M^{N,f}_t\right| \hspace{0.1cm} \right\|_{L^2(\mathbb{P})} \lesssim a^{1/2} \hspace{0.1cm} N^{\epsilon-1/d}.\end{equation}  \end{lemma}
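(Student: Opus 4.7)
The plan is to combine an $L^2$ bound for a single test function with a quantitative compactness argument in order to handle the supremum over the infinite-dimensional class $\mathcal{A}$. Fix $t\ge 0$. For each $f\in\mathcal{A}$, the process $s\mapsto M^{N,f,t}_s:=\int_{(0,s]\times\mathcal{S}_N}\langle f,\Delta(u,t,\mu)\rangle(m^N-\overline m^N)(du,d\mu)$ is a mean-zero jump martingale in $s$ with terminal value $M^{N,f}_t$. At any jump of the Kac process one has $\|\mu^N-\mu^N_{s-}\|_\mathrm{TV}\le 4/N$, and the duality $|\langle f,\theta\rangle|\le\|\theta\|_{\mathrm{TV}+2}\le\|\theta\|_{\mathrm{TV}+4}$ for $f\in\mathcal{A}$, combined with Corollary \ref{cor: new stability for BE} at $q=4$, yields the jump bound $|\langle f,\Delta(s,t,\mu^N)\rangle|\lesssim a^{1/2}\Lambda_k(\mu^N_{s-})^{1/2}e^{-\lambda(t-s)/2}N^{-(1-\epsilon)}$. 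The compensator has total rate $O(N)$ from any state, and $\mathbb{E}\Lambda_k(\mu^N_s)\le Ca$ uniformly in $s$ by Proposition \ref{thrm:momentinequalities} (taking $p=q=k$), so the predictable quadratic variation satisfies
\[
\mathbb{E}\langle M^{N,f,t}\rangle_t\;\lesssim\;N\cdot N^{-2(1-\epsilon)}\int_0^t e^{-\lambda(t-s)}\,\mathbb{E}\Lambda_k(\mu^N_{s-})\,ds\;\lesssim\;a\,N^{2\epsilon-1}/\lambda,
\]
giving $\|M^{N,f}_t\|_{L^2}\lesssim a^{1/2}N^{\epsilon-1/2}$ uniformly in $t$ for each single $f\in\mathcal{A}$.

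To promote this to a supremum, I would exploit the quantitative compactness highlighted in Remark \ref{rmk: compactness}. Fix a radius $r\ge 1$: the restriction of $\mathcal{A}$ to $[-r,r]^d$ embeds in a class of Lipschitz functions of sup and Lipschitz norms $O(r^2)$, and hence admits a sup-norm $\delta$-net $\mathcal{F}_\delta$ with $\log|\mathcal{F}_\delta|\lesssim(r^3/\delta)^d$. For $f\in\mathcal{A}$ with nearest projection $f^\ast\in\mathcal{F}_\delta$, the difference $g=f-f^\ast$ satisfies $\|g\|_{\infty,[-r,r]^d}\le\delta$ and $|g(v)|\le 2(1+|v|^2)$; splitting the integral on $[-r,r]^d$ and its complement and using $|v|\ge r$ outside yields $|\langle g,\Delta\rangle|\lesssim(\delta+r^{-2})\|\Delta\|_{\mathrm{TV}+4}$, which is precisely where the hypothesis $q=4$ is consumed. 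The same quadratic-variation computation then gives the discretisation bound $\|M^{N,g}_t\|_{L^2}\lesssim(\delta+r^{-2})a^{1/2}N^{\epsilon-1/2}$, uniformly in $f$ and its projection $f^\ast$.

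The maximum over $\mathcal{F}_\delta$ is controlled not by a crude union bound but via exponential martingale inequalities: since the jumps of $M^{N,\cdot,t}$ are uniformly bounded, Bernstein-type inequalities give subgaussian tails on the relevant scale, so $\|\max_{g\in\mathcal{F}_\delta}|M^{N,g}_t|\|_{L^2}\lesssim\sqrt{\log|\mathcal{F}_\delta|}\cdot a^{1/2}N^{\epsilon-1/2}\lesssim(r^3/\delta)^{d/2}a^{1/2}N^{\epsilon-1/2}$. To reach the target exponent $\epsilon-1/d$ I would follow the hierarchical scheme of \cite[Proposition 7.1]{ACE}: build nets $\mathcal{F}_n$ of $\mathcal{A}|_{[-r,r]^d}$ at dyadic scales $\delta_n=2^{-n}$, telescope $f=f_0+\sum_{n=1}^{M}(f_n-f_{n-1})+(f-f_M)$, apply the maximal inequality to each finite layer of pairs $(g_n,g_{n-1})\in\mathcal{F}_n\times\mathcal{F}_{n-1}$, and control the finest-scale residual uniformly in $f$ via the discretisation bound at scale $\delta_M$. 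Choosing $r$ and the truncation depth $M$ as polynomial functions of $N$ balances the entropy cost against the stability gain.

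The main obstacle is precisely this final chaining-and-balance step. Naive covering at a single scale fails because $\sqrt{(r^3/\delta)^d}$ would force $\delta$ too large; naive chaining to arbitrarily fine scales fails because the resulting series $\sum_n(r^3/\delta_n)^{d/2}(\delta_{n-1}+r^{-2})$ diverges in dimension $d\ge 3$. The truncation level $M=M(N)$ and radius $r=r(N)$ must therefore be tuned together, and the $r^{-2}$ gain in the discretisation step (available \emph{only} because Corollary \ref{cor: new stability for BE} is invoked at $q=4$ rather than $q=2$) is what allows the balance to close at exponent $1/d$; the $\epsilon$ slack absorbs the H\"older slack $1-\epsilon$. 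The uniformity in $t$ throughout comes for free from the exponential factor $e^{-\lambda(t-s)/2}$ in the stability estimate, which integrates to a $t$-independent constant.
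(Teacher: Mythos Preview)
Your outline correctly identifies the two ingredients---the single-function $L^2$ bound via the stability estimate and the need for a quantitative compactness argument over $\mathcal{A}$---but the proof is incomplete, and the route you sketch has a genuine gap.

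First, you yourself flag that ``the main obstacle is precisely this final chaining-and-balance step,'' and then do not resolve it: you describe a telescoping chaining scheme and assert that suitable choices of $r=r(N)$ and $M=M(N)$ will close the balance at exponent $1/d$, but no such choice is exhibited. This is exactly the hard part, and the series you write down, $\sum_n (r^3/\delta_n)^{d/2}(\delta_{n-1}+r^{-2})$, does diverge in $d\ge 3$, so something more than a generic chaining argument is required.

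Second, your appeal to Bernstein-type subgaussian tails is not justified as stated. The jump size of $M^{N,f,t}$ is bounded by a multiple of $\Lambda_k(\mu^N_{s-})^{1/2}N^{\epsilon-1}$, which is \emph{random}, not deterministic; Freedman's inequality requires a deterministic jump bound. You would need either to localise on $\{\sup_{s\le t}\Lambda_k(\mu^N_s)\le M\}$ and control the complement (which only has moment bounds, not exponential tails), or to replace the tail-bound approach by something that works directly with $L^2$ moments.

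The paper's proof sidesteps both difficulties by a concrete multiscale dyadic decomposition rather than abstract chaining. One writes $f=\sum_{j\le J}\sum_{l\le L}\sum_{B\in\mathcal{P}_{j,l}} a_B(f)\,h_B+\beta(f)$, where $h_B=2^{2j}(1+|v|^2)1_B$ for dyadic boxes $B$ at scale $2^{j-l}$ inside the annulus $\{|v|\sim 2^j\}$, and the coefficients $a_B(f)$ are differences of averages of $\hat f$ over nested boxes. The Lipschitz bound on $\hat f$ gives $|a_B(f)|\lesssim 2^{j-l}$ uniformly in $f$, so Cauchy--Schwarz over $(j,B)$ at fixed $l$ reduces the supremum over $f$ to $\bigl(\sum_{j,B}|M^{N;B}_t|^2\bigr)^{1/2}$ times an explicit constant $2^{(d/2-1)l}$. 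The crucial point is that $\sum_{j,B}\mathbb{E}|M^{N;B}_t|^2$ is bounded, via It\^o isometry and Minkowski, by a single integral $\mathbb{E}\int\|\Delta(s,t,\mu^N)\|_{\mathrm{TV}+4}^2\,\overline m^N(ds,d\mu^N)\lesssim aN^{2\epsilon-1}$, \emph{independently of} $l$. This is where the $q=4$ hypothesis is actually used: summing $h_B$ over all boxes gives $(1+|v|^4)$. The remainder $\beta(f)$ is then handled by the uniform pointwise bound $|\beta(f)|\lesssim(2^{-L}+2^{-2J})(1+|v|^4)$, and one optimises $L\sim (\log_2 N)/d$ to balance $2^{(d/2-1)L}N^{\epsilon-1/2}$ against $2^{-L}$, yielding $N^{\epsilon-1/d}$. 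No exponential martingale inequalities are needed; everything stays in $L^2$.
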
  Once we have obtained the control of the martingale term, the remaining proof of Theorem \ref{thrm: PW convergence} is straightforward.  \begin{proof}[Proof of Theorem \ref{thrm: PW convergence}] Take $k=k(\epsilon)$ as in Lemma \ref{thrm: pointwise martingale control}, and such that Proposition \ref{thrm: stability for BE} holds with exponent $\max(1-\epsilon, \frac{1}{2})$. \medskip \\ We first note that it is sufficient to prove the case $\mu_0=\mu^N_0$. Given this case, we use the continuity established in Theorem \ref{thrm: W-W continuity of phit} to estimate the difference \begin{equation} W\left(\phi_t(\mu^N_0), \phi_t(\mu_0)\right) \lesssim a^{1/2} W(\mu^N_0, \mu_0)^\zeta\end{equation} for some $\zeta=\zeta(d,k)$, which implies the claimed result. \medskip \\ From now on, we assume that $\mu_0=\mu^N_0$. From the interpolation decomposition Formula \ref{form:newdecomposition}, we majorise \begin{equation} \label{eq: dominate pointwise bound} W\left(\mu^N_t, \phi_t\left(\mu^N_0\right)\right) \leq \sup_{f\in \mathcal{A}} \left|M^{N,f}_t\right| + \int_0^t \sup_{f\in \mathcal{A}} \hspace{0.1cm} \langle f, \rho^N(t-s, \mu^N_s)\rangle \hspace{0.1cm} ds \end{equation} where, as in (\ref{eq: definition of psi}, \ref{eq: definition of rho}), the integrand is given by \begin{equation} \langle f, \rho^N(t-s, \mu^N_s)\rangle = \int_{\mathcal{S}_N} \langle f, \psi(t-s,\mu^N_s, \nu)\rangle \mathcal{Q}_N(\mu^N, d\nu); \end{equation} \begin{equation} \psi(u,\mu, \nu)=\phi_u(\nu)-\phi_u(\mu)-\mathcal{D}\phi_u(\mu)[\nu-\mu] \end{equation} and $\mathcal{Q}_N$ is the transition kernel (\ref{eq: definition of script Q}) of the Kac process.   \medskip \\ The first term of (\ref{eq: dominate pointwise bound}) is controlled in $L^2$ by Lemma \ref{thrm: pointwise martingale control}, and so it remains to bound the second term in $L^2$. Let $s\ge 0$, and let $\mu^N$ be a measure obtained from $\mu^N_s$ by a collision, as in (\ref{eq: change of measure at collision}). Then, using the estimate (\ref{eq: stability for BE 2}), we bound 
 \begin{equation} \begin{split} \|\psi(t-s, \mu^N_s, \mu^N)\|_{\mathrm{TV}+2} & = \|\phi_{t-s}(\mu^N)-\phi_{t-s}(\mu^N_s)-\mathcal{D}\phi_{t-s}(\mu^N_s)\|_{\mathrm{TV}+2} \\[2ex] &\lesssim e^{-\lambda_0(t-s)/2} \|\mu^N-\mu^N_2\|^{2-\epsilon}_\mathrm{TV}\hspace{0.1cm}\Lambda_k(\mu^N, \mu^N_s)^\frac{1}{2}. \end{split}\end{equation} By Lemma \ref{lemma:momentincreaseatcollision}, we know that $\Lambda_k(\mu^N)\lesssim \Lambda_k(\mu^N_s)$. Moreover, from the form (\ref{eq: change of measure at collision}) of possible $\mu^N$, we know that \begin{equation} \|\mu^N-\mu^N_s\|_\mathrm{TV}\le \frac{4}{N}\hspace{0.5cm}\text{for }\mathcal{Q}_N(\mu^N_s, \cdot)\text{-almost all }\mu^N. \end{equation} Therefore, almost surely, for all $s$ and $\mathcal{Q}_N(\mu^N_s, \cdot)$-almost all $\mu^N$, we have the bound \begin{equation} \|\psi(t-s,\mu^N_s, \mu^N)\|_{\mathrm{TV}+2} \lesssim e^{-\lambda_0(t-s)/2} N^{\epsilon-2}\hspace{0.1cm}\Lambda_k(\mu^N_s)^\frac{1}{2} \end{equation} where the implied constants are independent of $s, \mu^N_s$. Integrating with respect to $\mathcal{Q}_N(\mu^N_s, d\mu^N)$, we obtain an upper bound for $\langle f, \rho^N(t-s,\mu^N_s)\rangle$:\begin{equation} \label{eq: dominate rho} \begin{split}  \sup_{f\in \mathcal{A}}\hspace{0.05cm} \langle f, \rho^N(t-s, \mu^N_s)\rangle  &\leq \int_{\mathcal{S}_N} \left\|\psi(t-s, \mu^N_s, \mu^N)\right\|_{\mathrm{TV}+2} \hspace{0.1cm} \mathcal{Q}_N(\mu^N_s, d\mu^N) \\ & \lesssim e^{-\lambda_0(t-s)/2} \hspace{0.1cm} N^{\epsilon-1} \hspace{0.1cm} \Lambda_k(\mu^N_s)^\frac{1}{2}.\end{split}  \end{equation} We now take the $L^2$ norm of the second term in (\ref{eq: dominate pointwise bound}). Using Proposition \ref{lemma:momentboundpt1} to control the moments $\Lambda_k$ appearing in the integral, we obtain \begin{equation} \begin{split} \left\|\hspace{0.1cm} \int_0^t \sup_{f\in \mathcal{A}} \hspace{0.1cm} \langle f, \rho^N(t-s, \mu^N_s)\rangle \hspace{0.1cm} ds \hspace{0.1cm}\right\|_{L^2(\mathbb{P})} &\leq   \mathlarger{\mathlarger{\int}}_0^t \hspace{0.1cm} \left\| \sup_{f\in \mathcal{A}} \hspace{0.1cm} \langle f, \rho^N(t-s, \mu^N_s)\rangle \right\|_{L^2(\mathbb{P})}  ds \\ & \lesssim \int_0^t e^{-\lambda(t-s)/2}\hspace{0.1cm} N^{\epsilon-1} \hspace{0.1cm} \left\|\Lambda_{k}(\mu^N_s)^\frac{1}{2}\right\|_{L^2(\mathbb{P})} ds \\ & \lesssim N^{\epsilon-1} \hspace{0.1cm}  a^{1/2}.   \end{split} \end{equation} Noting that the exponent $\epsilon-1 < \epsilon-\frac{1}{d}$, we combine this with Lemma \ref{thrm: pointwise martingale control}, and keep the worse asymptotics. \end{proof} \begin{proof}[Proof of Lemma \ref{thrm: pointwise martingale control}] We begin by reviewing the following estimates for $1-$Lipschitz functions from \cite{ACE}. Following \cite{ACE}, we use angle brackets $\langle f \rangle_C $ to denote the average of a bounded function $f$ over a Borel set $C$ of finite, nonzero measure. \\ Let $f$ be $1-$ Lipschitz, and consider $B=[0,2^{-j}]^d$. Then, for some numerical constant $c_d$, we have \begin{equation} \label{eq:scalebound} \forall v \in B, \hspace{0.1cm} |f(v)-\langle f \rangle_B|\le  c_d 2^{-j};\hspace{1cm} |\langle f \rangle _B -\langle f \rangle _{2B} | \le c_d 2^{-j}. \end{equation} We note that both of these bounds are linear in the length scale $2^{-j}$ of the box.  We deal with the case $N\ge 2^{2d}$. \medskip\\ The proof is based on the following `hierarchical' partition of $\mathbb{R}^d$, given in the proof \cite[Proposition 7.1]{ACE}. \begin{itemize} \item For $j \in \mathbb{Z}$, we take $B_j=(-2^j, 2^j]$. \item Set $A_0 = B_0$ and, for $j\geq 1$, $A_j = B_j \setminus B_{j-1}$. \item For $j\geq 1$ and $l \ge 2$, there is a unique partition $\mathcal{P}_{j,l}$ of $A_j$ by $2^{ld}-2^{(l-1)d}$ translates of $B_{j-l}$. \item Similarly, write $\mathcal{P}_{0,l}$ for the unique partition of $A_0$ by $2^{dl}$ translates of $B_{-l}$. \item For $l\geq 3$ and $k\in \mathbb{Z}$, let $B\in \mathcal{P}_{j,l}$. We write $\pi(B)$ for the unique element of of $\mathcal{P}_{j,l-1}$ such that $B\subset \pi(B)$.\end{itemize} We deal first with the case $d\geq 3$. Fix discretisation parameters $L, J \ge 1$. Given a test function $f\in \mathcal{A}$, we can decompose \begin{equation} f=\sum_{j=0}^J \hspace{0.2cm} \sum_{l=2}^L \hspace{0.1cm} \sum_{B \in \mathcal{P}_{j,l}}  a_B(f)(1+|v|^2) 1_{B}+ \beta(f)\end{equation} where we define \begin{equation} a_B(f)=\begin{cases} \langle \hat{f} \rangle_ B  & \text{if } B \in \mathcal{P}_{j,2}, \text{ for some } j \ge 0 \\ \langle \hat{f} \rangle _B - \langle \hat{f} \rangle _{\pi(B)} & \text{if } B \in \mathcal{P}_{j,l}, \text{ for some } j \ge 0, l\ge 3\end{cases} \end{equation} and the equation serves to define the remainder term $\beta(f)$.  Write $h_B = 2^{2j}(1+|v|^2)1_B$, for $B\in \mathcal{P}_{j,l}$, and write $M^{N;B}_t = M^{N, h_B}_t$. We can now write \begin{equation}\begin{split} \label{eq: decomposition of MNFT} & M^{N,f}_t= \sum_{j=0}^J\sum_{l=2}^L \sum_{B \in \mathcal{P}_{j,l}} 2^{-2j}a_B(f) M^{N;B}_t  +R^{N,f}_t; \end{split} \end{equation} \begin{equation} \label{eq: definition of RNFT} R^{N,f}_t=   \int_{(0,t]\times \mathcal{S}_N}  \langle \beta(f), \Delta(s,t,\mu^N)\rangle (m^N-\overline{m}^N)(ds, d\mu^N) \end{equation} and where $\Delta$, $m^N$ and $\overline{m}^N$ are defined in Section \ref{sec: interpolation decomposition}.  This is the key decomposition in the proof. Roughly speaking: \begin{itemize} \item The martingales $M^{N;B}$ are controlled by a bound (\ref{eq: elltwo martingale control}) from the general theory of Markov chains, \emph{independently of f}. \item The coefficients $a_B$ depend on $f$, but are bounded, uniformly over $f\in \mathcal{A}$. \item On $B_J$, $\beta(f)$ will be small, uniformly in $f$, due to the Lipschitz bound on $f$ and the estimate (\ref{eq:scalebound}). This may be viewed as a \emph{relative compactness} argument, as discussed in Remark \ref{rmk: compactness}: given $\epsilon>0$, one could use this construction to produce a finite $\epsilon$-net for $\mathcal{A}|_{B_J}$ in the norm of $\mathcal{A}_0|_{B_J}$. \item $|\beta(f)|\leq 1$ is bounded on $\mathbb{R}^d\setminus B_J$, and the contribution from this region will be controlled by the moment bounds.\end{itemize}  To control the martingale term uniformly in $f$, observe that for $B \in \mathcal{P}_{j,l}$, the bound (\ref{eq:scalebound}) gives $2^{-2j}|a_B(f)|\lesssim 2^{-j-l}$, and $\#\mathcal{P}_{j,l}\le 2^{dl}$. Hence, independently of $f\in \mathcal{A}$, \begin{equation} \left(\sum_{j=0}^J \sum_{B \in \mathcal{P}_{j,l}} (a_B(f)2^{-2j})^2\right) \lesssim 2^{(d-2)l}.\end{equation} Now, by Cauchy-Schwarz, \begin{equation} \label{eq: use of CS} \sup_{f\in \mathcal{A}} \hspace{0.1cm} \left|\sum_{j=0}^J\sum_{l=2}^L \sum_{B \in \mathcal{P}_{j,l}} 2^{-2j}a_B(f) M^{N;B}_t\right| \lesssim \sum_{l=2}^L \left(\sum_{j=0}^J \sum_{B \in \mathcal{P}_{j,l}} \left\{M^{N;B}_t\right\}^2\right)^{1/2}2^{(d/2-1)l}. \end{equation} Let $(M^{N;B;t}_s)_{s\leq t}$ be the martingale \begin{equation} \label{eq: MNBTS} M^{N;B;t}_s = \int_{(0,s]\times\mathcal{S}_N} \langle h_B, \Delta(u,t,\mu^N)\rangle (m^N-\overline{m}^N)(du, d\mu^N).\end{equation} We can control the remaining martingale term pointwise in $L^2$ by applying the martingale bound (\ref{eq: elltwo martingale control}) at the terminal time $t$: \begin{equation} \begin{split} &  \left\|M^{N;B}_t\right\|_{L^2(\mathbb{P})}^2   = \mathbb{E} \int_{(0,t]\times \mathcal{S}_N} \langle (1+|v|^2)2^{2j}1_{B}, \Delta(s,t,\mu^N)\rangle ^2 \overline{m}^N(ds, d\mu^N) \\& \lesssim  \mathbb{E} \left[\int_{(0,t]\times \mathcal{S}_N} \langle (1+|v|^4)1_{B}, |\Delta(s,t,\mu^N)|\rangle ^2 \overline{m}^N(ds, d\mu^N)\right].\end{split} \end{equation} Summing over $B\in \mathcal{P}_{j,l}$ and $j=0, ..,J$, we Minkowski's inequality to move the sum inside the integral against $\Delta$, and note that $\sum_j \sum_{B\in\mathcal{P}_{j,l}} h_B \lesssim (1+|v|^4)$. This produces the bound  \begin{equation} 
 \begin{split}   &\sum_{j=0}^J \sum_{B\in \mathcal{P}_{j,l}} \left\|M^{N;B}_t\right\|_{L^2(\mathbb{P})}^2    \lesssim \mathbb{E} \left[\int_{(0,t]\times \mathcal{S}_N} \langle (1+|v|^4), |\Delta(s,t,\mu^N)|\rangle ^2 \overline{m}^N(ds, d\mu^N) \right] \\[1ex] & = \mathbb{E} \left[\int_{(0,t]\times \mathcal{S}_N} \|\phi_{t-s}(\mu^N)-\phi_{t-s}(\mu^N_{s-})\|^2_{\mathrm{TV}+4} \hspace{0.1cm} \overline{m}^N(ds, d\mu^N) \right] 
 \end{split}\end{equation} where the second line follows by the definition of $\Delta$ in (\ref{eq: definition of Delta}). Using the stability estimates in Corollary \ref{cor: new stability for BE} with $q=4$, we find \begin{equation} \begin{split} \sum_{j=0}^J\sum_{B\in\mathcal{P}_{j,l}} \left\|M^{N;B}_t\right\|_{L^2(\mathbb{P})}^2 \lesssim \mathbb{E} \left[\int_{(0,t]\times \mathcal{S}_N} e^{-\lambda(t-s)}\Lambda_k(\mu^N_s, \mu^N) N^{2(\epsilon-1)}\hspace{0.1cm}\overline{m}^N(ds, d\mu^N) \right]. \end{split} \end{equation} For $\overline{m}^N$-almost all $(s, \mu^N)$, we bound $\Lambda_k(\mu^N_s, \mu^N)\lesssim \Lambda_k(\mu^N_s)$ by Lemma \ref{lemma:momentincreaseatcollision}, and $\overline{m}^N(ds, \mathcal{S}_N)\le 2Nds$, to bound the right hand side by  \begin{equation} \begin{split} \sum_{j=0}^J\sum_{B\in\mathcal{P}_{j,l}} \left\|M^{N;B}_t\right\|_{L^2(\mathbb{P})}^2 &\lesssim \int_0^t e^{-\lambda(t-s)}N^{2\epsilon-1} \hspace{0.1cm} \mathbb{E}[\Lambda_k(\mu^N_s)]\hspace{0.1cm}ds \\ & \lesssim N^{2\epsilon-1}a^\frac{1}{2}\end{split} \end{equation} where the second line follows using the moment estimates for the Kac process, established in Proposition \ref{thrm:momentinequalities}. Therefore, (\ref{eq: use of CS}) gives \begin{equation} \begin{split} \label{eq: pointwise bound on martingale term} \left\|\hspace{0.1cm} \sup_{f\in \mathcal{A}} \hspace{0.1cm} \left|\sum_{j=0}^J\sum_{l=2}^L \sum_{B \in \mathcal{P}_{j,l}} a_B(f) M^{N;l}_t\right|\hspace{0.1cm} \right\|_{L^2(\mathbb{P})} & \lesssim N^{\epsilon-1/2} a^{1/2} \sum_{l=2}^L 2^{(d/2-1)l} \\[1ex] & \lesssim  N^{\epsilon-1/2} \hspace{0.2cm}2^{(d/2-1)L}\hspace{0.1cm}a^{1/2}. \end{split} 
 \end{equation}  The remaining points are a control on $\beta(f)$, uniformly in $f\in\mathcal{A}$, dealing with $B_J$ and $\mathbb{R}^d \setminus B_J$ separately.  Fix $f\in \mathcal{A}$ and let $B \in \mathcal{P}_{j,L}$ with $j\le J$.  The definition gives $\hat{\beta}(f)= \hat{f} - \langle \hat{f} \rangle _B$  on $B$, and so \begin{equation} \text{On } B, \hspace{0.5cm} |\beta(f)| = (1+|v|^2)|\hat{f}-\langle \hat{f} \rangle _B|  \hspace{0.1cm}\lesssim \hspace{0.1cm} (1+|v|^2)2^{j-L}.  \end{equation} Since $|v|\ge 2^{j-1}$ on $B$, and $B \in \mathcal{P}_{j,L}$ is arbitrary, we see that \begin{equation} \text{On } B_J, \hspace{0.5cm} |\beta(f)|\lesssim 2^{-L}(1+|v|^4).\end{equation} On the other hand, the uniform bound $\|\hat{f}\|_\infty \le 1$ implies that \begin{equation} \text{On }B_J^c, \hspace{0.5cm} |\beta(f)|\leq (1+|v|^2) \leq 2^{-2J}(1+|v|^4).\end{equation} Combining, we have the global bound for all $f\in \mathcal{A}$:\begin{equation} \forall v\in \mathbb{R}^d, \hspace{0.5cm} |\beta(f)| \lesssim (2^{-2J}+2^{-L})(1+|v|^4).\end{equation} Recalling the definition (\ref{eq: definition of Delta}) of $\Delta$, we use the stability estimate in Corollary \ref{cor: new stability for BE}, with $q=4$, and the moment increase bound Lemma \ref{lemma:momentincreaseatcollision}, as above to see that almost surely, for  $m^N+\overline{m}^N$-almost all $(s, \mu^N)$, we have the bound  \begin{equation} \label{eq: majorise integrand of error term}  \begin{split}  \sup_{f\in \mathcal{A}} \left| \langle \beta(f), |\Delta(s,t,\mu^N)|\rangle \right| & \lesssim (2^{-2J}+2^{-L})\hspace{0.1cm}\|\Delta(s,t,\mu^N)\|_{\mathrm{TV}+4} \\ &  \lesssim (2^{-2J}+2^{-L})\hspace{0.1cm} e^{-\lambda(t-s)/2}\hspace{0.1cm} N^{\epsilon-1} \hspace{0.1cm} \Lambda_{k}(\mu^N_{s-})^\frac{1}{2} \\ &=:H_s\end{split}\end{equation} where we introduced the shorthand $H_s$ for the final expression, for simplicity. We now use the trivial observation that \begin{equation} \label{eq: dominate integrand and integrator seperately} \sup_{f\in \mathcal{A}} \hspace{0.1cm}\left|R^{N,f}_t\right| \le \int_{(0,t]\times\mathcal{S}_N} \left\{\sup_{f\in \mathcal{A}}\hspace{0.1cm}\left\langle |\beta(f)|, |\Delta(s,t,\mu^N)|\right\rangle\right\}(m^N+\overline{m}^N)(ds, d\mu^N).\end{equation}We split the measure $m^N +\overline{m}^N= (m^N-\overline{m}^N)+2\overline{m}^N$ to obtain a uniform bound for the error terms $R^{N,f}_t$ defined in (\ref{eq: definition of RNFT}): \begin{equation} \label{eq: introduce t1 t2} \begin{split} \left\|\hspace{0.1cm}\sup_{f\in \mathcal{A}} \hspace{0.1cm}R^{N,f}_t\hspace{0.1cm}\right\|_{L^2(\mathbb{P})}  &  \lesssim \left\| \int_0^t H_s (m^N+\overline{m}^N)(ds,\mathcal{S}_N)\right\|_{L^2(\mathbb{P})} \\[1ex] &  \lesssim (2^{-2J}+2^{-L})N^{\epsilon-1}\left[\mathcal{T}_1+\mathcal{T}_2\right]\end{split} \end{equation} where we have written \begin{equation} \mathcal{T}_1=\left\| \int_0^t  e^{-\lambda(t-s)/2} \Lambda_{k}(\mu^N_{s-})^\frac{1}{2} \overline{m}^N(ds,\mathcal{S}_N)\right\|_{L^2(\mathbb{P})} \end{equation} \begin{equation} \mathcal{T}_2=  \left\| \int_0^t  e^{-\lambda(t-s)/2} \Lambda_{k}(\mu^N_{s-})^\frac{1}{2} (m^N-\overline{m}^N)(ds,\mathcal{S}_N)\right\|_{L^2(\mathbb{P})}. \end{equation}  $\mathcal{T}_1$ is controlled by dominating $\overline{m}^N(ds, \mathcal{S}_N)\leq 2N ds$ to obtain \begin{equation} \begin{split} \label{eq:dominatembar}  \mathcal{T}_1 \lesssim N \left\|\int_0^t e^{-\lambda(t-s)/2} \Lambda_{k}(\mu^N_{s})^\frac{1}{2} ds \right\|_{L^2(\mathbb{P})} & \lesssim N \int_0^t e^{-\lambda(t-s)/2} \|\Lambda_{k}(\mu^N_{s})^\frac{1}{2}\|_{L^2(\mathbb{P})} \hspace{0.1cm}  ds \\[1ex] &\lesssim N a^{1/2}. \end{split}\end{equation} We control $\mathcal{T}_2$ by It\^{o}'s isometry for $m^N-\overline{m}^N$, which is reviewed in (\ref{eq: QV of M}): \begin{equation} \label{eq:itoisometrycontrol}\begin{split}  \mathcal{T}_2^2 &= \mathbb{E} \left\{  \int_0^t  e^{-\lambda(t-s)} \Lambda_k(\mu^N_{s-}) \overline{m}^N(ds,\mathcal{S}_N)\right\}  \\& \lesssim N \int_0^t e^{-\lambda(t-s)} \mathbb{E}\left\{\Lambda_k(\mu^N_{s-}) \right\} ds \\ & \lesssim  N \hspace{0.1cm}a.\end{split} \end{equation} Combining (\ref{eq: introduce t1 t2}, \ref{eq:dominatembar}, \ref{eq:itoisometrycontrol}), we obtain \begin{equation} \label{eq:control of error term pw} \begin{split} \left\|\hspace{0.1cm}\sup_{f\in \mathcal{A}} \hspace{0.1cm}R^{N,f}_t\hspace{0.1cm}\right\|_{L^2(\mathbb{P})}  \lesssim (2^{-2J}+2^{-L})\hspace{0.1cm}N^{\epsilon-1}\hspace{0.1cm}a^{1/2}.\end{split} \end{equation} Finally, we combine (\ref{eq: decomposition of MNFT}, \ref{eq: pointwise bound on martingale term}, \ref{eq:control of error term pw}) to obtain \begin{equation} \left\|\hspace{0.1cm} \sup_{f \in \mathcal{A}} \left|M^{N,f}_t\right|\right\|_{L^2(\mathbb{P})} \hspace{0.2cm} \lesssim \hspace{0.2cm} N^{\epsilon} \hspace{0.1cm} a^{1/2} (N^{-1/2}\hspace{0.1cm}2^{(d/2-1)L}+2^{-L}+2^{-2J}). \end{equation} Taking $L=\lfloor \log_2(N)/d \rfloor$ and $J\uparrow \infty$ produces the claimed result. For $d=2$, we replace $2^{(d/2-1)L}$ by $L$ in (\ref{eq: pointwise bound on martingale term}), and optimise as before, absorbing the factors of $(\log N)$ to make the exponent of $N$ slightly larger. \end{proof} 
\section{Proof of Theorem \ref{thrm: Main Local Uniform Estimate}} \label{sec: proof of LU}
We now adapt the ideas of Theorem \ref{thrm: pointwise martingale control} to a local uniform setting, and working in $L^p$, to prove the local uniform approximation result Theorem \ref{thrm: Main Local Uniform Estimate}. As in the proof above, most of the work is in controlling the martingale term $(M^{N,f}_t)_{f\in \mathcal{A}}$ defined in (\ref{eq: definition of MNFT}), uniformly in $f$; for a pathwise local uniform estimate, we wish to control an expression of the form \begin{equation}\label{eq: local unf mg exp} \left\| \hspace{0.1cm}\sup_{f\in \mathcal{A}} \hspace{0.1cm}\sup_{t\leq t_\text{fin}} \hspace{0.1cm} \left|M^{N,f}_t\right|\right\|_{L^p(\mathbb{P})}. \end{equation} Since we will frequently encounter suprema of processes on compact time intervals, we introduce notation. For any stochastic process $M$, we write \begin{equation}\label{eq: use of star} M_{\star,t}=\sup_{s\leq t}|M_t| \end{equation}  Proving the sharpest asymptotics in the time horizon $t_\text{fin}$ requires working in $L^p$ instead of $L^2$, for large exponents $p$. This leads to a weaker exponent in $N$: we obtain only $N^{\epsilon-p'/2d}$ instead of $N^{\epsilon -1/d}$, where $p'\leq 2$ is the H\"{o}lder conjugate to $p$. However, by making $p$ large, we are able to obtain estimates which degrade slowly in the time horizon $t_\text{fin}$, with only a factor of $(1+t_\text{fin})^{1/p}$. The exponent for $t_\text{fin}$ can thus be made arbitrarily small, while the resulting exponent for $N$ is bounded away from $0$ as we make $p$ large. \\ \\ The key result required for the local uniform estimate is the following control of the expression (\ref{eq: local unf mg exp}), in analogy to Lemma \ref{thrm: pointwise martingale control}. \begin{lemma} \label{thrm: local uniform martingale control} Let $\epsilon>0$, $a\ge 1$ and $p\geq 2$, and let $1<p'\le 2$ be the H\"{o}lder conjugate to $p$. Let $k$ be large enough that Corollary \ref{cor: new stability for BE} holds for $q=5$, with H\"{o}lder exponent $1-\epsilon$, and with some $0<\lambda<\lambda_0.$ \\ Let $(\mu^N_t)_{t\geq 0}$ be a Kac process on $N\geq 2$ particles, with initial moment $\Lambda_{kp}(\mu^N_0)\leq a^p$. Let $M^{N,f}_t$ be the processes given by (\ref{eq: definition of MNFT}), and $M^{N,f}_{\star, t}$ their local suprema, as in (\ref{eq: use of star}). Then, for any time horizon $t_\text{fin}\in [0,\infty)$, we have the control \begin{equation} \left\|\hspace{0.1cm} \sup_{f\in \mathcal{A}} \hspace{0.1cm}  \hspace{0.1cm} M^{N,f}_{\star,t_\text{fin}}\hspace{0.1cm}\right\|_{L^p(\mathbb{P})} \lesssim a^{1/2} \hspace{0.1cm} N^{-\alpha}\hspace{0.1cm}(\log N)^{1/p'}\hspace{0.1cm}(1+t_\text{fin})^\frac{3p+1}{2p}\end{equation} where $\alpha = \frac{p'}{2d}-\epsilon$.\end{lemma}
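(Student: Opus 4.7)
The plan is to adapt the hierarchical--decomposition argument of Lemma~\ref{thrm: pointwise martingale control} to the $L^p$, local-uniform setting, combined with a discretization of the time variable $t$. Exactly as in the pointwise case, I would partition $\mathbb{R}^d$ via the hierarchy $\{\mathcal{P}_{j,l}\}$ and expand each $f \in \mathcal{A}$ as
\begin{equation}
f = \sum_{j=0}^J \sum_{l=2}^L \sum_{B \in \mathcal{P}_{j,l}} 2^{-2j} a_B(f) h_B + \beta(f),
\end{equation}
inducing $M^{N,f}_t = \sum 2^{-2j} a_B(f) M^{N;B}_t + R^{N,f}_t$. The same Cauchy--Schwarz bound on the coefficients $a_B(f)$ as in~\eqref{eq: use of CS}, applied pathwise, pulls the supremum over $f\in\mathcal{A}$ outside both suprema, so that the main term reduces to controlling $\sum_{l=2}^L 2^{(d/2-1)l}\bigl\|\bigl(\sum_{j,B\in\mathcal{P}_{j,l}} M^{N;B}_{\star,t_\mathrm{fin}}{}^{2}\bigr)^{1/2}\bigr\|_{L^p}$.

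The genuinely new difficulty is that $t\mapsto M^{N;B}_t$ is not itself a martingale: only the auxiliary process $(M^{N;B;T}_s)_{s\le T}$ of~\eqref{eq: MNBTS} is a martingale in $s$ for each fixed terminal $T$, and these agree only at $s=T$. I would therefore introduce a time grid $T_k = k\delta$, $k=0,\ldots,K:=\lceil t_\mathrm{fin}/\delta\rceil$, with mesh $\delta$ of order $N^{-c}$ to be optimized, and split
\begin{equation}
M^{N;B}_{\star,t_\mathrm{fin}} \le \max_{0\le k\le K} |M^{N;B}_{T_k}| + \max_{0\le k<K} \sup_{t\in[T_k,T_{k+1}]} |M^{N;B}_t - M^{N;B}_{T_k}|.
\end{equation}
For the grid values, the Burkholder--Davis--Gundy inequality applied to each $(M^{N;B;T_k}_s)_{s\le T_k}$ bounds $|M^{N;B}_{T_k}|=|M^{N;B;T_k}_{T_k}|$ in $L^p$ by the quadratic variation, which is estimated via Corollary~\ref{cor: new stability for BE} with $q=5$ and the Kac-moment estimates of Proposition~\ref{thrm:momentinequalities}(i) used now in $L^{p/2}$ (exploiting the hypothesis $\Lambda_{kp}(\mu^N_0)\le a^p$ and the Jensen interpolation $\Lambda_k\le \Lambda_{kp}^{1/p}$). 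A union bound $\|\max_k X_k\|_{L^p} \le K^{1/p}\max_k\|X_k\|_{L^p}$ over the $K$ grid points contributes the factor $K^{1/p}$ on top of the pointwise estimate.

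For the short-interval residual I would use the semigroup identity $\phi_{t-s}-\phi_{T_k-s}=-\int_{T_k-s}^{t-s}Q_w\,dw$ to write the increment $M^{N;B}_t - M^{N;B}_{T_k}$ as a contribution from new jumps in $(T_k,t]$ plus an integral over $w$ of differences of $Q_w$-integrals against $m^N-\overline{m}^N$. The regularity estimate~\eqref{eq: Lipschitz continuity of Q} for $Q_w\circ\phi_w$---whose use in the $\|\cdot\|_{\mathrm{TV}+5}$-norm is precisely why the lemma requires $q=5$---produces a $\delta^{1/2}$ short-time factor; another application of BDG plus a union bound over the $K$ intervals again costs $K^{1/p}$. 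Setting $L=\lfloor\log_2 N/d\rfloor$ as in the pointwise proof, summing over $l$, and optimizing $\delta$ against $K=t_\mathrm{fin}/\delta$ yields the exponent $\alpha=p'/(2d)-\epsilon$ and the logarithmic factor $(\log N)^{1/p'}$ from the balance of discretization errors, while the polynomial prefactor $(1+t_\mathrm{fin})^{(3p+1)/(2p)}$ accumulates the two union-bound powers $K^{1/p}$, the $L^{p/2}$-moment growth $(1+t_\mathrm{fin})^{1/(2p)}$, and the $(1+t_\mathrm{fin})^{1/2}$ coming from integrating the compensator over $[0,t_\mathrm{fin}]$. The remainder $R^{N,f}_t$ is handled by the same grid strategy and the uniform-in-$f$ bound on $\beta(f)$ from Lemma~\ref{thrm: pointwise martingale control}, again with BDG replacing It\^o's isometry. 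The main obstacle is coordinating these three time-polynomial accumulations so that the exponential decay in Corollary~\ref{cor: new stability for BE} absorbs all of them cleanly without sacrificing the near-optimal rate in $N$.
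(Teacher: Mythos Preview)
Your approach via a time grid is genuinely different from the paper's and, while it could in principle be made to work, it both introduces an unnecessary layer and leaves a real gap.

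The paper avoids time discretisation altogether. The key observation (Lemma~\ref{lemma:newmartingaleconstruction}) is that there is a \emph{global} decomposition
\[
M^{N,f}_t \;=\; Z^{N,f}_t \;+\; \int_0^t J^{N,f;s}_s\,ds,
\]
where $Z^{N,f}_t=\int_{(0,t]\times\mathcal S_N}\langle f,\mu^N-\mu^N_{s-}\rangle\,(m^N-\overline m^N)(ds,d\mu^N)$ is a genuine martingale in $t$ (its integrand does not depend on the terminal time), and each $J^{N,f;s}$ is a martingale built from the $Q_{s-u}$--differences that your semigroup identity produces. One then applies Doob's $L^p$ inequality directly to $Z$, bounds $|Z_{t_{\mathrm{fin}}}|\le |M_{t_{\mathrm{fin}}}|+\int_0^{t_{\mathrm{fin}}}|J_s|\,ds$, and handles the time integral by H\"older, yielding the comparison of Lemma~\ref{lemma:break up LU}. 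Burkholder is applied once to $M^{N;B;t_{\mathrm{fin}}}$ and once to the $J^{N;B;t}$, each at a single terminal time; no union bound over a grid is ever needed.

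The gap in your scheme is the short-interval step. Within $[T_k,T_{k+1}]$ the process $t\mapsto M^{N;B}_t$ is still not a martingale, so ``another application of BDG plus a union bound'' does not apply as stated. Your own semigroup identity splits the increment into an ``integrand--change'' piece and a ``new jumps in $(T_k,t]$'' piece, but the latter, $\int_{(T_k,t]}\langle h_B,\Delta(s,t,\mu^N)\rangle\,(m^N-\overline m^N)$, still carries the $t$--dependent integrand $\Delta(s,t,\cdot)$ and is therefore not a martingale in $t$ either. To control its supremum you would have to either bound jumps crudely (sacrificing martingale cancellation and hence the $N$--rate) or freeze the terminal time and correct---which is exactly the $Z$--decomposition above, at which point the grid is redundant. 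A secondary issue: to obtain the exponent $\alpha=p'/(2d)-\epsilon$ the paper uses H\"older with exponents $(p,p')$ on the coefficient sum, giving the factor $2^{(d/p'-1)l}$; your Cauchy--Schwarz step gives $2^{(d/2-1)l}$ and would need an additional H\"older on the box sum to recover the correct exponent, which you do not mention.
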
 The proof of this Lemma follows the same ideas as the proof of the equivalent result, Lemma \ref{thrm: pointwise martingale control}, for the pointwise bound. However, in this case, we must modify the argument to work in $L^p$ rather than $L^2$, and also to control all terms uniformly on the compact time interval $[0, t_\text{fin}]$. This will be deferred until the end of this section.\medskip\\
Following the argument of the pointwise bound in Theorem \ref{thrm: PW convergence}, we can now produce an initial pathwise, local uniform estimate for the case $\mu_0=\mu^N_0$, with worse long-time behaviour. From this, we will `bootstrap' to the desired long-time behaviour in Theorem \ref{thrm: Main Local Uniform Estimate}.  \begin{lemma} \label{lemma: initial LU bound} Let $\epsilon>0$, $a\ge 1$ and $p\geq 2$, with H\"older conjugate $p'\le 2$. Choose $k$ large enough that Proposition \ref{thrm: stability for BE} holds with exponent $1-\epsilon$, and that Corollary \ref{cor: new stability for BE} holds with exponent $1-\epsilon$ and $q=5$. Let $(\mu^N_t)_{t\geq 0}$ be a Kac process on $N\geq 2$ particles, with initial moment $\Lambda_{kp}(\mu^N_0)\leq a^p$. Then, for any time horizon $t_\text{fin}\ge 0$, we have the control \begin{equation} \left\| \hspace{0.1cm} \sup_{t\leq t_\text{fin}} \hspace{0.1cm}W\left(\mu^N_t, \phi_t\left(\mu^N_0\right)\right) \hspace{0.1cm} \right\|_{L^p(\mathbb{P})} \lesssim a^{1/2} \hspace{0.1cm}N^{\epsilon-\frac{p'}{2d}}\hspace{0.1cm} (\log N)^{1/p'}(\hspace{0.1cm}1+t_\text{fin})^\frac{3p+1}{2p}.\end{equation}   \end{lemma}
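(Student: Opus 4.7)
The plan is to mirror the structure of the proof of Theorem \ref{thrm: PW convergence}, but to carry everything inside a pathwise supremum over $[0, t_\text{fin}]$ and to work in $L^p(\mathbb{P})$ in place of $L^2(\mathbb{P})$. I will rely on the local uniform martingale estimate (Lemma \ref{thrm: local uniform martingale control}) as a black box for the stochastic part, and use the stability estimate together with the growth control of the moments for the drift part.

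The starting point is the interpolation decomposition Formula \ref{form:newdecomposition}, applied for each $f\in\mathcal{A}$. Taking first the supremum over $f\in\mathcal{A}$ and then over $t\le t_\text{fin}$, I would majorise
\begin{equation*}
\sup_{t\le t_\text{fin}} W\!\left(\mu^N_t, \phi_t(\mu^N_0)\right) \;\le\; \sup_{f\in\mathcal{A}} M^{N,f}_{\star, t_\text{fin}} \;+\; \sup_{t\le t_\text{fin}}\int_0^t \sup_{f\in\mathcal{A}} \bigl|\langle f, \rho^N(t-s,\mu^N_s)\rangle\bigr|\, ds.
\end{equation*}
The first term is exactly the quantity controlled, in $L^p(\mathbb{P})$, by Lemma \ref{thrm: local uniform martingale control}, which under the hypotheses yields $a^{1/2} N^{\epsilon-p'/(2d)}(\log N)^{1/p'}(1+t_\text{fin})^{(3p+1)/(2p)}$.

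For the drift term I would argue exactly as in the pointwise case. Using the definition of $\rho^N$, the stability bound (\ref{eq: stability for BE 2}) from Corollary \ref{cor: new stability for BE} with $q=2$ and H\"older exponent $1-\epsilon$, together with the fact that $\|\mu^N-\mu^N_s\|_{\mathrm{TV}}\le 4/N$ for $\mathcal{Q}_N(\mu^N_s,\cdot)$-almost every $\mu^N$, and the moment stability under a collision (Lemma \ref{lemma:momentincreaseatcollision}), I obtain
\begin{equation*}
\sup_{f\in\mathcal{A}}\bigl|\langle f, \rho^N(t-s,\mu^N_s)\rangle\bigr| \;\lesssim\; e^{-\lambda(t-s)/2}\, N^{\epsilon-1}\, \Lambda_k(\mu^N_s)^{1/2},
\end{equation*}
after integrating $\|\psi(t-s,\mu^N_s,\mu^N)\|_{\mathrm{TV}+2}$ against $\mathcal{Q}_N(\mu^N_s, d\mu^N)$, whose total mass is $\lesssim N$. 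The exponential in $t-s$ lets me pull the sup over $t\le t_\text{fin}$ inside:
\begin{equation*}
\sup_{t\le t_\text{fin}}\int_0^t e^{-\lambda(t-s)/2}\Lambda_k(\mu^N_s)^{1/2}\, ds \;\lesssim\; \sup_{s\le t_\text{fin}}\Lambda_k(\mu^N_s)^{1/2}.
\end{equation*}

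Taking the $L^p(\mathbb{P})$ norm, I would bound the drift contribution by using $\Lambda_k(\mu^N_s)^{p/2}\le \Lambda_{kp/2}(\mu^N_s)$ (Jensen, since $p\ge 2$ and $\mu^N_s$ is a probability measure), the local uniform moment inequality (\ref{eq: local uniform moment bound}) in Proposition \ref{thrm:momentinequalities}, and the hypothesis $\Lambda_{kp}(\mu^N_0)\le a^p$ together with the elementary Cauchy--Schwarz-type bound $\Lambda_{kp/2}(\mu^N_0)\le \Lambda_{kp}(\mu^N_0)^{1/2}\le a^{p/2}$. This yields
\begin{equation*}
\left\|\sup_{s\le t_\text{fin}}\Lambda_k(\mu^N_s)^{1/2}\right\|_{L^p(\mathbb{P})}\;\lesssim\;(1+t_\text{fin})^{1/p}\, a^{1/2},
\end{equation*}
so the drift contribution is of order $a^{1/2} N^{\epsilon-1}(1+t_\text{fin})^{1/p}$, which is dominated by the martingale bound since $p'/(2d)\le 1/d\le 1$ and $1/p\le (3p+1)/(2p)$.

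The main obstacle here is entirely hidden in the martingale estimate Lemma \ref{thrm: local uniform martingale control}: balancing the $L^p$ Doob-type control of a whole family $\{M^{N,f}_t\}_{f\in\mathcal{A}}$ of martingales with the quantitative compactness (hierarchical discretisation) of $\mathcal{A}$, while keeping the time dependence to $(1+t_\text{fin})^{(3p+1)/(2p)}$. The drift estimate carried out here is routine by comparison; the only delicate points are ensuring that the stability estimate gives the correct power $N^{\epsilon-1}$ (rather than $N^{\epsilon-2}$ per collision times $N$ collisions per unit time, which requires the $\|\cdot\|_{\mathrm{TV}+2}$ norm and not the weaker $W$ metric at this stage) and that the initial moment hypothesis $\Lambda_{kp}(\mu^N_0)\le a^p$ is strong enough to convert the pathwise supremum of $\Lambda_k$ into an $L^p(\mathbb{P})$ bound with only a factor $(1+t_\text{fin})^{1/p}$.
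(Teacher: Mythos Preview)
Your proposal is correct and follows essentially the same approach as the paper: decompose via Formula \ref{form:newdecomposition}, invoke Lemma \ref{thrm: local uniform martingale control} for the martingale part, and bound the drift using the second-order stability estimate (\ref{eq: stability for BE 2}) together with the local uniform moment bound. The only minor difference is that you retain the exponential factor $e^{-\lambda(t-s)/2}$ when bounding the time integral (giving a slightly sharper drift estimate than the paper, which simply multiplies by $t_\text{fin}$), but since the drift term is in any case dominated by the martingale bound, this is immaterial.
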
  \begin{proof}[Proof of Lemma \ref{lemma: initial LU bound}] As in Theorem \ref{thrm: PW convergence}, it remains to control the supremum of the integral term in Formula \ref{form:newdecomposition}\begin{equation} \sup_{t\leq t_\text{fin}} \int_0^t \sup_{f\in \mathcal{A}} \langle f, \rho^N(t-s, \mu^N_s)\rangle ds \end{equation} where $\rho^N$ is given by (\ref{eq: definition of rho}). Following the previous calculation (\ref{eq: dominate rho}), we majorise, for $s\leq t\leq t_\text{fin}$, \begin{equation} \label{eq: loc unf dominate rho}\sup_{f\in \mathcal{A}} \langle f, \rho^N(t-s, \mu^N_s)\rangle \lesssim  N^{\epsilon-1} \hspace{0.1cm} \sup_{u\leq t_\text{fin}} \left\{ \Lambda_{k}(\mu^N_u)^\frac{1}{2}\right\} \end{equation} from which it follows that \begin{equation} \label{eq: loc unf dominate rho 2} \sup_{t\leq t_\text{fin}} \int_0^t \sup_{f\in \mathcal{A}} \langle f, \rho^N(t-s, \mu^N_s)\rangle ds \lesssim N^{\epsilon-1} \hspace{0.1cm} t_\text{fin} \hspace{0.1cm} \sup_{u\leq t_\text{fin}} \left\{ \Lambda_{k}(\mu^N_u)^\frac{1}{2}\right\}.\end{equation} From the local uniform moment bound established in Proposition \ref{lemma:momentboundpt1}, and the initial moment bound on $\mu^N_0$, \begin{equation} \label{eq: locunf moment bound} \begin{split}  \left\|\hspace{0.1cm} \sup_{u\leq t_\text{fin}} \left\{ \Lambda_{k}(\mu^N_u)^\frac{1}{2}\right\} \right\|_{L^p(\mathbb{P})} &\leq \left\| \hspace{0.1cm} \sup_{u\leq t_\text{fin}} \left\{ \Lambda_{k}(\mu^N_u)^\frac{1}{2}\right\} \right\|_{L^{2p}(\mathbb{P})} \leq \mathbb{E}\left[\sup_{u\leq t_\text{fin}} \Lambda_{pk}(\mu^N_u)^\frac{1}{2}\right]^{1/2p} \\[1ex] & \lesssim a ^{1/2} \hspace{0.1cm} (1+t_\text{fin})^{1/2p}. \end{split} \end{equation} Combining the estimates (\ref{eq: loc unf dominate rho 2}, \ref{eq: locunf moment bound}), we see that \begin{equation} \left\|\sup_{t\leq t_\text{fin}} \int_0^t \sup_{f\in \mathcal{A}} \langle f, \rho^N(t-s, \mu^N_s)\rangle ds\right\|_{L^p(\mathbb{P})} \lesssim N^{\epsilon-1}\hspace{0.1cm}a^{1/2} \hspace{0.1cm}(1+t_\text{fin})^\frac{2p+1}{2p}.\end{equation} We combine this with Lemma \ref{thrm: local uniform martingale control} and keep the worse asymptotics.\end{proof}  

We will now show how to `bootstrap' to better dependence of the time horizon $t_\text{fin}$. Heuristically, the proof allows us to replace powers of $t_\text{fin}$ in the initial bound with the same power of $\log N$, and introduce an additional factor of $(1+t_\text{fin})^{1/p}$. As was remarked below Proposition \ref{thrm: stability for BE}, we could derive Theorem \ref{thrm: PW convergence} and Lemma \ref{lemma: initial LU bound} under the milder assumptions \begin{equation} \label{eq: weaker stability 4}  \|\phi_t(\nu)-\phi_t(\mu)\|_{\mathrm{TV}+5} \leq F(t) \Lambda_{k}(\mu, \nu)^\frac{1}{2}\|\mu-\nu\|_\mathrm{TV}^\eta; \end{equation}  \begin{equation}\label{eq: weaker stability 5}
\|\phi_t(\nu)-\phi_t(\mu) - \xi_t \|_{\mathrm{TV}+2} \leq G(t) \Lambda_{k}(\mu, \nu)^\frac{1}{2}\|\mu-\nu\|_\mathrm{TV}^{1+\eta}\end{equation} for functions $F,G$ such that \begin{equation} \label{eq: weaker stability 6} \left(\int_0^\infty F^2 dt\right)^{1/2}<\infty;\hspace{0.5cm} \int_0^\infty G dt<\infty. \end{equation} If we also assume that $F\rightarrow 0$ as $t\rightarrow \infty$, we can use an identical bootstrap argument, with $\log N$ replaced by a power of \begin{equation} \tau_N := \sup\{t: F(t) > N^{-\alpha}\}\end{equation} which produces a potentially larger loss. \emph{Hence, the the full strength of exponential decay in Proposition \ref{thrm: stability for BE} is used to control the asymptotic loss due to the bootstrap}. 
  \begin{proof}[Proof of Theorem \ref{thrm: Main Local Uniform Estimate}] As in the proof of Theorem \ref{thrm: PW convergence}, it is sufficient to prove the case $\mu^N_0=\mu_0$. Then, making $k$ larger if necessary, we may use Theorem \ref{thrm: W-W continuity of phit} to control $\sup_{t\ge 0}W(\phi_t(\mu^N_0), \phi_t(\mu_0))$, which proves the general result.\medskip \\  Let $0<\epsilon'<\epsilon$, and choose $k$ such that Lemma \ref{lemma: initial LU bound} holds for $\epsilon'$.  Let $\alpha'<\alpha$ be the exponent of $N$ obtained with $\epsilon'$ in place of $\epsilon$. From the stability estimate Proposition \ref{thrm: stability for BE}, we have \begin{equation} \forall \mu, \nu \in \mathcal{S}^k_a, \hspace{0.2cm}\|\phi_t(\mu)-\phi_t(\nu)\|_{\mathrm{TV}+2} \lesssim \Lambda_{k}(\mu, \nu)^\frac{1}{2} e^{-\lambda_0 t/2}.\end{equation}Define $\tau = \tau_N = -2\lambda_0^{-1} \log(N^{-\alpha'})$ and consider $t_\text{fin}> \tau +1 $. Fix a positive integer $n$, and partition the interval $[0, t_\text{fin}]$ as $I_1\cup I_1 \cup...\cup I_n$: \begin{equation} I_0=[0,\tau]; \hspace{0.5cm}I_r = \left[\tau+(r-1)\frac{t_\text{fin}-\tau}{n},\tau+r\frac{t_\text{fin}-\tau}{n}\right]=:[s_r+\tau,t_r].\end{equation} Write also $H_r=[s_r, t_r] \supset I_r$. Since the norm $\|\cdot\|_{\mathrm{TV}+2}$ dominates the Wasserstein distance $W$, we have the bound\begin{equation} \label{eq: bootstrap bound}\sup_{t\in I_r} \hspace{0.1cm} W(\mu^N_t, \phi_t(\mu^N_0)) \lesssim \sup_{t\in H_r} \hspace{0.1cm} W(\mu^N_t, \phi_{t-s_r}(\mu^N_{s_r})) + e^{-\lambda \tau} \Lambda_{k}(\mu^N_{s_r}, \phi_{s_r}(\mu^N_0))^\frac{1}{2}.\end{equation} We bound the two terms in (\ref{eq: bootstrap bound}) separately. Denote $(\mathcal{F}^N_t)_{t\geq 0}$ the natural filtration of $(\mu^N_t)_{t\geq 0}$. We control the first term by Lemma \ref{lemma: initial LU bound}, applied to the restarted process $(\mu^N_t)_{t\geq s_r}$: \begin{equation}\begin{split} \left\|\hspace{0.1cm}\sup_{t\in H_r} \hspace{0.1cm} W(\mu^N_t, \phi_{t-s_r}(\mu^N_{s_r})) \right \|_{L^p(\mathbb{P})} ^p = \mathbb{E}\left\{\mathbb{E}\left(\left[\left.\sup_{s_r \leq t \leq t_r} \hspace{0.1cm} W(\mu^N_t, \phi_{t-s_r}(\mu^N_{s_r}))\right]^p \right| \mathcal{F}^N_{s_r} \right) \right\} \\ \lesssim \mathbb{E} \left\{\Lambda_{pk}(\mu^N_{s_r}) ^{1/p}\right\}  \left(1+ \tau + \frac{t-\tau}{n}\right)^\frac{3p+1}{2} N^{-p\alpha'}\hspace{0.1cm}(\log N)^\frac{p}{p'}.\end{split}\end{equation} We control the moment in the usual way, using Proposition \ref{thrm:momentinequalities}\ref{lemma:momentboundpt1}, to obtain \begin{equation} \label{eq: Bootstrap bound 2}  \left\|\hspace{0.1cm}\sup_{t\in H_r} \hspace{0.1cm} W(\mu^N_t, \phi_{t-s_r}(\mu^N_{s_r})) \right \|_{L^p(\mathbb{P})} ^p    \lesssim a^p \left(1+ \tau + \frac{t-\tau}{n}\right)^\frac{3p+1}{2} N^{-p\alpha'}\hspace{0.1cm}(\log N)^\frac{p}{p'}.\end{equation}  We now turn to the second term in (\ref{eq: bootstrap bound}). Using the definition of $\tau$ and the moment estimates (\ref{eq: pointwise moment bound}, \ref{eq: BE moment bound}) in Proposition \ref{thrm:momentinequalities}, \begin{equation} \label{eq: Bootstrap bound 3} \|e^{-\lambda \tau/2} \Lambda_{k}(\mu^N_{s_r}, \phi_{s_r}(\mu^N_0))^\frac{1}{2} \|_{L^p(\mathbb{P})} \lesssim N^{-\alpha'} \hspace{0.1cm}  a^{1/2}. \end{equation} Combining the estimates (\ref{eq: Bootstrap bound 2}, \ref{eq: Bootstrap bound 3}), and absorbing  powers of $\tau$ into the powers of $(\log N)$, we obtain \begin{equation} \left\| \hspace{0.1cm} \sup_{t\in I_r} W(\mu^N_t, \phi_t(\mu^N_0)) \right\|_{L^p(\mathbb{P})} \lesssim a^{1/2} \hspace{0.1cm} \left(1+ \frac{t_\text{fin}-\tau}{n}\right)^\frac{3p+1}{2p}\left(N^{-\alpha'} (\log N)^{\frac{3p+1}{2p}+\frac{1}{p'}}\right).\end{equation} Observe that \begin{equation} \left\{ \sup_{\tau \leq t \leq t_\text{fin}} W\left(\mu^N_t, \phi_t(\mu^N_0)\right) \right\}^p  \leq \mathlarger{\mathlarger{\sum}}_{r=1}^n \left\{ \sup_{t \in I_r} W\left(\mu^N_t, \phi_t(\mu^N_0)\right) \right\}^p. \end{equation} Taking expectations and $p^\text{th}$ root, we find that\begin{equation} \begin{split} &\left\|\hspace{0.1cm} \sup_{\tau \leq t \leq t_\text{fin}} W\left(\mu^N_t, \phi_t(\mu^N_0)\right) \right\|_{L^p(\mathbb{P})} \\& \hspace{2cm} \lesssim n^\frac{1}{p} \hspace{0.1cm} a^{1/2} \hspace{0.1cm} \left(1+ \frac{t_\text{fin}-\tau}{n}\right)^\frac{3p+1}{2p}\left(N^{-\alpha'} (\log N)^{\frac{3p+1}{2p}+\frac{1}{p'}}\right).\end{split} \end{equation} This is optimised at $n\sim (t_\text{fin}-\tau)$, where we obtain the estimate \begin{equation} \begin{split} \label{eq: tfin > tau} \left\|\hspace{0.1cm} \sup_{\tau \leq t \leq t_\text{fin}} W\left(\mu^N_t, \phi_t(\mu^N_0)\right) \right\|_{L^p(\mathbb{P})} &\lesssim a^{1/2} (t_\text{fin}-\tau)^\frac{1}{p}\hspace{0.1cm}\left(N^{-\alpha'} (\log N)^{\frac{3p+1}{2p}+\frac{1}{p'}}\right)  \\ &\leq a^{1/2} \hspace{0.1cm} t_\text{fin}^\frac{1}{p}\hspace{0.1cm}\left(N^{-\alpha'} (\log N)^{\frac{3p+1}{2p}+\frac{1}{p'}}\right).  \end{split} \end{equation} From Lemma \ref{lemma: initial LU bound} applied up to time $\tau=\tau_N$, we have \begin{equation} \begin{split} \label{eq:shorttimecontrolforiteration} \left\|\hspace{0.1cm} \sup_{0 \leq t \leq \tau_N} W\left(\mu^N_t, \phi_t(\mu^N_0)\right) \right\|_{L^2(\mathbb{P})} &\lesssim a^{1/2} \hspace{0.1cm} N^{-\alpha'} \left(1+\frac{2\alpha}{\lambda} \log(N)\right)^{\frac{3p+1}{2p}}(\log N)^{\frac{1}{p'}}  \\ &\lesssim a^{1/2} \hspace{0.1cm}\left(N^{-\alpha} (\log N)^{\frac{3p+1}{2p}+\frac{1}{p'}}\right) .\end{split}\end{equation} Combining (\ref{eq: tfin > tau}, \ref{eq:shorttimecontrolforiteration}), and absorbing the powers of $(\log N)$ into $N^{\epsilon-\epsilon'}$, we have \begin{equation} \begin{split} \left\|\hspace{0.1cm} \sup_{0 \leq t \leq t_\text{fin}} W\left(\mu^N_t, \phi_t(\mu^N_0)\right) \right\|_{L^p(\mathbb{P})} \lesssim a^{1/2} \hspace{0.1cm}(1+t_\text{fin})^\frac{1}{p} \hspace{0.1cm}N^{-\alpha}.\end{split} \end{equation} The case where $t_\text{fin}\leq \tau+ 1$ is essentially identical to (\ref{eq:shorttimecontrolforiteration}).  \end{proof} \begin{remark} We note that this `bootstrap' argument would produce the same result with any \emph{polynomial} time dependence in  Lemma \ref{lemma: initial LU bound}. As a result, the precise time dependence of Lemmas \ref{thrm: local uniform martingale control}, \ref{lemma: initial LU bound} is uninteresting, and we do not attempt to optimise it. We also remark that this method produces the same long-time behaviour even starting from an exponential estimate, at the cost of a fractional power of $N$.  \end{remark} 

It remains to prove Lemma \ref{thrm: local uniform martingale control}. We draw attention to the fact that $M^{f, N}$ are \emph{not} themselves martingales, despite the general construction (\ref{eq: M is for martingale}), since the integrand $\phi_{t-s}(\mu^N)-\phi_{t-s}(\mu^N_{s-})$ depends on the terminal time $t$. We address this by computing an associated family of martingales:  \begin{lemma}\label{lemma:newmartingaleconstruction} Let $(M^{N,f}_t)_{t\geq 0}$ be the processes defined in Formula \ref{form:newdecomposition}. Recalling the notation $Q_t=Q\circ \phi_t$, define \begin{equation} \label{eq: defn of chi} \chi(s,t,\mu^N)=Q_{t-s}(\mu^N)-Q_{t-s}(\mu^N_{s-}).\end{equation} Suppose $f$ satisfies a growth condition $|f(v)|\leq (1+|v|^q)$, for some $q\geq 0$. Consider the martingales $Z^{N,f}_t$ given by \begin{equation}Z^{N,f}_t=\int_{(0,t]\times \mathcal{S}_N}\langle f,\mu^N-\mu^N_{s-}\rangle (m^N-\overline{m}^N)(ds, d\mu^N) \rangle. \end{equation} Then we have the equality  \begin{equation}\begin{split} &Z^{N,f}_t=M^{N,f}_t- C^{N,f}_t\\ &= M^{N,f}_t-\int_0^t ds \int_{(0,s]\times \mathcal{S}_N}  \langle f, \chi(u,s,\mu^N) \rangle (m^N-\overline{m}^N)(du, d\mu^N).\end{split}\end{equation} \end{lemma}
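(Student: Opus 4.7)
The plan is to show directly that $M^{N,f}_t - Z^{N,f}_t = C^{N,f}_t$ by rewriting the integrand of $M^{N,f}_t$ using the integral form of the Boltzmann equation, and then swapping the order of integration via Fubini.

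First, recall that for any measure $\nu \in \mathcal{S}^k$ and test function $f$ with $|f(v)| \lesssim (1+|v|^q)$, the Boltzmann semigroup satisfies $\langle f, \phi_u(\nu) \rangle = \langle f, \nu \rangle + \int_0^u \langle f, Q(\phi_r(\nu))\rangle\, dr$, provided the requisite moments are under control (which they are on $\mathcal{S}_N$, with any finite growth bound uniformly in $u$ on compact intervals, via Proposition \ref{thrm:momentinequalities}). Applying this to both $\mu^N$ and $\mu^N_{s-}$ and subtracting, then making the change of variable $r \mapsto s+r$, yields the key identity
\begin{equation}
\langle f, \Delta(s,t,\mu^N)\rangle = \langle f, \mu^N - \mu^N_{s-}\rangle + \int_s^t \langle f, \chi(s,r,\mu^N)\rangle\, dr,
\end{equation}
valid for every $0 \le s \le t$ and every $\mu^N \in \mathcal{S}_N$.

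Next I would substitute this identity into the definition (\ref{eq: definition of MNFT}) of $M^{N,f}_t$. The first piece gives $Z^{N,f}_t$ exactly. The second piece is a triple integral over $(s,\mu^N,r)$, and we wish to swap the $r$-integration (over $r \in [s,t]$) with the $(m^N-\overline{m}^N)$-integration. The region $\{(s,r) : 0 < s \le r \le t\}$ may instead be parametrised by fixing $r \in (0,t]$ and letting $s$ range over $(0,r]$; after Fubini and relabelling $r \mapsto s$, $s \mapsto u$, the second piece becomes exactly $C^{N,f}_t$ as displayed in the statement, completing the proof.

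The main obstacle is the justification of Fubini's theorem for the compensated stochastic integral $m^N - \overline{m}^N$, since this is only a signed random measure. The standard route is to verify absolute integrability against $m^N + \overline{m}^N$: using the stability/continuity bounds from Corollary \ref{cor: new stability for BE} applied to $\chi(s,r,\mu^N) = Q_{r-s}(\mu^N) - Q_{r-s}(\mu^N_{s-})$ (via the regularity estimate (\ref{eq: Lipschitz continuity of Q})), together with the bound $\|\mu^N - \mu^N_{s-}\|_{\mathrm{TV}} \le 4/N$ at jumps (Lemma \ref{lemma:momentincreaseatcollision}), the moment control on $\Lambda_k(\mu^N_s)$ from Proposition \ref{thrm:momentinequalities}, and the estimate $\overline{m}^N(ds,\mathcal{S}_N) \le 2N\, ds$, one obtains an integrable upper bound uniform on $[0,t]$. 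This legitimises the exchange of integrals and the identification of the pieces, yielding the desired equality.
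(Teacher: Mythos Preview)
Your proposal is correct and follows essentially the same route as the paper: both proofs rewrite the difference $M^{N,f}_t-Z^{N,f}_t$ via the integral form of the Boltzmann dynamics applied to $\Delta(s,t,\mu^N)$, and then invoke Fubini (justified by the boundedness coming from the regularity estimate (\ref{eq: Lipschitz continuity of Q})) to identify the result with $C^{N,f}_t$. The only cosmetic difference is that the paper starts from $C^{N,f}_t$ and Fubini's its way to $M^{N,f}_t-Z^{N,f}_t$, whereas you go in the opposite direction.
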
  \begin{proof} Firstly, we note that $Z^{N,f}_t$ are martingales by standard results from Markov chains, (\ref{eq: M is for martingale}). Observe that the  integrand in the definition of $C^{N,f}_t$ is bounded, since whenever $0\leq u \leq s$, and $\mu^N$ is obtain from $\mu^N_{u-}$ by collision, we use the estimate (\ref{eq: Lipschitz continuity of Q}) with $\eta=\frac{1}{2}$, to obtain for some $k$ \begin{equation} \begin{split} |\langle f, \chi(u,s,\mu^N)\rangle | & \leq \| Q_{s-u}(\mu^N)-Q_{s-u}(\mu^N_{u-})\|_{\mathrm{TV}+q} \\[1ex] & \lesssim\Lambda_{k}(\mu^N, \mu^N_{u-})^\frac{1}{2}N^{-\frac{1}{2}} \lesssim N^{\frac{k-2}{4}} <\infty. \end{split} \end{equation} Moreover, for initial data $\mu^N \in \mathcal{S}_N$, the Boltzmann flow $(\phi_s(\mu^N))_{s=0}^t$ has uniformly bounded $(q+1)^\text{th}$ moments and so, by approximation, the Boltzmann dynamics (\ref{BE}) extend to $f$. Now, we apply Fubini to  the integral: \begin{equation} \begin{split}  &C^{N,f}_t \\  &= \int_{(0,t]\times \mathcal{S}_N} \int_0^t ds\hspace{0.1cm} \langle f, Q_{s-u}(\mu^N)-Q_{s-u}(\mu^N_{u-})\rangle \hspace{0.1cm} 1[u \le s \le t] \hspace{0.1cm} (m^N-\overline{m}^N)(du, d\mu^N)  \\&  = \int_{(0,t]\times \mathcal{S}_N} \left\{ \int_u^t  \left(\langle f, Q_{s-u}(\mu^N)\rangle -\langle f, Q_{s-u}(\mu^N_{u-})\rangle \right) ds\right\} (m^N-\overline{m}^N)(du, d\mu^N) \\ & =\int_{(0,t]\times\mathcal{S}_N} \left\{\langle f, \phi_{t-u}(\mu^N)-\phi_{t-u}(\mu^N_{u-})\rangle -\langle f, \mu^N-\mu^N_{u-}\rangle\right\}(m^N-\overline{m}^N)(du, d\mu^N) \\& \hspace{2cm} =: M^{N,f}_t - Z^{N,f}_t \end{split}\end{equation} where the third equality is precisely the (extended) Boltzmann dynamics (\ref{BE}) in the variable $s \in[u,t]$.  \end{proof} 
To prove Lemma \ref{thrm: local uniform martingale control}, we return to the decomposition (\ref{eq: decomposition of MNFT}) used in the proof of Lemma \ref{thrm: pointwise martingale control}. Our first point is to establish a control on \begin{equation} \mathbb{E} \left[ \sum_{j=0}^J \sum_{B\in \mathcal{P}_{j,l}} \left\{ M^{N;B}_{\star,t_\text{fin}}\right\}^p\right]\end{equation} where $\star$ denotes the local supremum (\ref{eq: use of star}). We will do so by breaking the supremum into two parts, each of which can be controlled by elementary martingale estimates. Let $(J^{N;B;t}_s)_{0\le s\le t}$ be the process \begin{equation} J^{N;B;t}_s = \int_{(0,s]\times\mathcal{S}_N} \langle h_B, Q_{t-u}(\mu^N)-Q_{t-u}(\mu^N_{u-})\rangle (m^N-\overline{m}^N)(du, d\mu^N)\end{equation} where, as in the proof of Theorem \ref{thrm: PW convergence}, \begin{equation}\label{eq: defn of hB} h_B=2^{2j}(1+|v|^2)1_B; \hspace{1cm} B \in \mathcal{P}_{j,l}. \end{equation} Each process $(J^{N;B:t}_s)_{0\le s\le t}$ is a martingale, by standard results for Markov chains (\ref{eq: M is for martingale}). Writing $Z^{N;B}=Z^{N,h_B}$, Lemma \ref{lemma:newmartingaleconstruction} gives \begin{equation} Z^{N;B}_t = M^{N;B}_t +\int_0^t J^{N;B;s}_s \hspace{0.1cm} ds.  \end{equation}  \begin{lemma} \label{lemma:break up LU} Let $p\geq 2$, and let $p'$ be the H\"{o}lder conjugate to $p$. In the notation above, we have the comparison \begin{equation}  \mathbb{E} \left[ \sum_{j=0}^J \sum_{B\in \mathcal{P}_{j,l}} \left\{ \left|M^{N;B}_{\star,t_\text{fin}}\right|\right\}^p\right] \lesssim \mathbb{E} \left[ \sum_{j=0}^J \sum_{B\in \mathcal{P}_{j,l}} \left\{\hspace{0.1cm} \left|M^{N;B}_{t_\text{fin}}\right|^p+t_\text{fin}^{p/p'}\int_0^{t_\text{fin}} \left|J^{N;B;t}_t \right|^p dt\right\}\right].  \end{equation} \end{lemma}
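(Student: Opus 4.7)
The plan is to use the martingale identity from Lemma \ref{lemma:newmartingaleconstruction} to replace the non-martingale $M^{N;B}$ by the true martingale $Z^{N;B}$ plus a drift term, and then apply Doob's $L^p$ inequality to the martingale part and H\"older's inequality in time to the drift part. Specifically, Lemma \ref{lemma:newmartingaleconstruction} (applied with $f=h_B$) gives
\begin{equation*}
M^{N;B}_t = Z^{N;B}_t - \int_0^t J^{N;B;s}_s\, ds, \qquad 0\le t\le t_\text{fin},
\end{equation*}
and $(Z^{N;B}_t)_{t\ge 0}$ is a genuine martingale.

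Taking suprema and using the triangle inequality, we obtain
\begin{equation*}
M^{N;B}_{\star,t_\text{fin}} \le Z^{N;B}_{\star,t_\text{fin}} + \int_0^{t_\text{fin}} |J^{N;B;s}_s|\, ds,
\end{equation*}
so that $|M^{N;B}_{\star,t_\text{fin}}|^p \lesssim |Z^{N;B}_{\star,t_\text{fin}}|^p + \bigl(\int_0^{t_\text{fin}} |J^{N;B;s}_s|\, ds\bigr)^p$. Doob's $L^p$-maximal inequality bounds $\mathbb{E}|Z^{N;B}_{\star,t_\text{fin}}|^p$ by a constant multiple of $\mathbb{E}|Z^{N;B}_{t_\text{fin}}|^p$, and then the same identity $Z^{N;B}_{t_\text{fin}} = M^{N;B}_{t_\text{fin}} + \int_0^{t_\text{fin}} J^{N;B;s}_s\, ds$ lets us trade the endpoint of $Z$ for the endpoint of $M$, at the cost of an additional drift term. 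H\"older's inequality in $s$ against Lebesgue measure on $[0,t_\text{fin}]$, noting $p/p' = p-1$, gives
\begin{equation*}
\left(\int_0^{t_\text{fin}} |J^{N;B;s}_s|\, ds \right)^p \le t_\text{fin}^{p/p'} \int_0^{t_\text{fin}} |J^{N;B;s}_s|^p\, ds.
\end{equation*}

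Combining these steps yields, for each fixed $B$,
\begin{equation*}
\mathbb{E}|M^{N;B}_{\star,t_\text{fin}}|^p \lesssim \mathbb{E}|M^{N;B}_{t_\text{fin}}|^p + t_\text{fin}^{p/p'} \int_0^{t_\text{fin}} \mathbb{E}|J^{N;B;t}_t|^p\, dt,
\end{equation*}
and summing over $B\in\mathcal{P}_{j,l}$ and $j=0,\ldots,J$, together with Tonelli to swap the sum and the time integral on the right, gives the claimed inequality. There is no single hard step here: the only subtlety is to recognise that Lemma \ref{lemma:newmartingaleconstruction} converts the anticipating process $M^{N;B}$ into a martingale $Z^{N;B}$ up to a drift that, being an ordinary Lebesgue integral, is amenable to H\"older's inequality, so that the maximal inequality can finally be brought to bear.
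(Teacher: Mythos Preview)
Your proof is correct and follows essentially the same approach as the paper: use Lemma~\ref{lemma:newmartingaleconstruction} to write $M^{N;B}_t$ as the martingale $Z^{N;B}_t$ plus a time integral of $J^{N;B;s}_s$, apply Doob's $L^p$ inequality to $Z^{N;B}$, use the identity again at the endpoint to pass from $Z^{N;B}_{t_\text{fin}}$ back to $M^{N;B}_{t_\text{fin}}$, apply H\"older in $s$ to the drift, and sum.
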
  \begin{proof} For each $B$, we observe that \begin{equation} \begin{split} \sup_{t\le t_\text{fin}} \left|M^{N;B}_t - Z^{N;B}_t\right| \le \int_0^{t_\text{fin}}\left|J^{N;B;s}_s\right| ds\end{split} \end{equation} which implies the two bounds \begin{equation} \label{eq: comparing ZNB and MNB} M^{N;B}_{\star,t_\text{fin}} \le Z^{N;B}_{\star, t_\text{fin}} +\int_0^{t_\text{fin}} \left|J^{N;B;s}_s\right| ds; \hspace{1cm} Z^{N;B}_{t_\text{fin}} \le M^{N;B}_{t_\text{fin}}+\int_0^{t_\text{fin}} \left|J^{N;B;s}_s\right| ds.\end{equation} By Doob's $L^p$ inequality, we have \begin{equation} \label{eq: Doob LP} \begin{split} \left\|\hspace{0.1cm} Z^{N;B}_{\star,t_\text{fin}}\hspace{0.1cm}\right\|_{L^p(\mathbb{P})} &\leq p'\hspace{0.1cm}\left\|\hspace{0.1cm} Z^{N;B}_{t_\text{fin}}\hspace{0.1cm}\right\|_{L^p(\mathbb{P})}\end{split}. \end{equation} Combining (\ref{eq: comparing ZNB and MNB}, \ref{eq: Doob LP}), we obtain \begin{equation} \left\|\hspace{0.1cm} M^{N;B}_{\star,t_\text{fin}}\hspace{0.1cm}\right\|_{L^p(\mathbb{P})} \lesssim \left\|\hspace{0.1cm} M^{N;B}_{t_\text{fin}}\hspace{0.1cm}\right\|_{L^p(\mathbb{P})} + \left\|\hspace{0.1cm}\int_0^{t_\text{fin}}\left|J^{N;B;s}_s\right|ds \hspace{0.1cm}\right\|_{L^p(\mathbb{P})}.\end{equation} Using H\"{o}lder's inequality on the integral,\begin{equation} \begin{split} \mathbb{E} \left[\left\{\hspace{0.1cm} M^{N;B}_{\star,t_\text{fin}}\right\}^p \right] &\lesssim \mathbb{E}\left[ \hspace{0.1cm}\left|M^{N;B}_{t_\text{fin}}\right|^p \hspace{0.1cm}\right] + \mathbb{E}\left[\hspace{0.1cm}\left\{\int_0^{t_\text{fin}} \left|J^{N;B;s}_s\right| \hspace{0.1cm} ds \right\}^p\hspace{0.1cm}\right] \\ & \lesssim \mathbb{E}\left[ \hspace{0.1cm}\left|M^{N;B}_{t_\text{fin}}\right|^p \hspace{0.1cm}\right] +t_\text{fin}^{p/p'} \int_0^{t_\text{fin}} \mathbb{E}\left[\hspace{0.1cm}\left|J^{N;B;t}_t\right|^p \hspace{0.1cm}\right]\hspace{0.1cm} ds .\end{split} \end{equation} Summing over $B\in \mathcal{P}_{j,l}$ and $j=0,1,\dots ,J$, we obtain the desired comparison. \end{proof}  

   \begin{proof}[Proof of Lemma \ref{thrm: local uniform martingale control}] We begin by controlling the integral term in Lemma \ref{lemma:break up LU}. The quadratic variation is given by \begin{equation} \begin{split} \left[J^{N;B;t}\right]_s &= \int_{(0,s]\times \mathcal{S}_N} \langle h_B, \chi(u,t,\mu^N)\rangle^2 m^N(du, d\mu^N) \\& \le \int_{(0,s]\times \mathcal{S}_N} \langle h_B, |\chi(u,t,\mu^N)|\rangle^2 m^N(du, d\mu^N) \end{split}  \end{equation} where $h_B$ is as in (\ref{eq: defn of hB}) and $\chi$ is as in (\ref{eq: defn of chi}). Hence, using Burkholder's inequality (\ref{lemma:Burkholder}) we see that, for all $t\leq t_\text{fin}$,  \begin{equation} \begin{split} &\mathbb{E} \left[\sum_{j=0}^J \sum_{B\in \mathcal{P}_{j,l}} \left\{\hspace{0.1cm} \left|J^{N;B;t}_t\right|\right\}^p\right]  \\ & \hspace{2cm}\lesssim \mathbb{E} \left[\sum_{j=0}^J \sum_{B\in \mathcal{P}_{j,l}}  \left\{\int_{(0,t]\times \mathcal{S}_N}\langle h_B, |\chi(u,t,\mu^N)|\rangle^2 m^N(du, d\mu^N)\right\}^{p/2}\right].   \end{split} \end{equation} Using Minkowski's inequality to move the double sum inside the parentheses, and recalling that $\sum_j \sum_{B\in \mathcal{P}_{j,l}} h_B \lesssim (1+|v|^4)$, we obtain the bound \begin{equation}\label{eq: LU mg bound on J}\begin{split} &\mathbb{E} \left[\sum_{j=0}^J \sum_{B\in \mathcal{P}_{j,l}} \left\{\hspace{0.1cm} \left|J^{N;B;t}_t\right|\right\}^p\right]\\  &\hspace{0.5cm}\lesssim \mathbb{E}\left[  \left\{ \int_{(0,t]\times \mathcal{S}_N} \langle 1+|v|^4, |\chi(u,t,\mu^N)|\rangle^2 m^N(du, d\mu^N)  \right\}^{p/2} \right] \\ &\hspace{0.5cm}\lesssim \mathbb{E} \left[\left\{\int_{(0,t]\times \mathcal{S}_N} \|Q_{t-u}(\mu^N)-Q_{t-u}(\mu^N_{u-})\|_{\mathrm{TV}+4}^2 \hspace{0.1cm} m^N(du, d\mu^N)\right\}^{p/2}\right] \end{split} \end{equation} where the second equality is the definition of $\chi$ (\ref{eq: defn of chi}). \medskip \\ Using the continuity estimate for $Q$ established in (\ref{eq: Lipschitz continuity of Q}), and arguing as in the proof of Lemma \ref{thrm: pointwise martingale control}, we see that almost surely, for $m^N$-almost all $(u, \mu^N)$, we have \begin{equation} \|Q_{t-u}(\mu^N)-Q_{t-u}(\mu^N_{u-})\|_{\mathrm{TV}+4} \lesssim N^{\epsilon-1}\Lambda_k(\mu^N_{u-}).\end{equation} Therefore, using Cauchy-Schwarz, (\ref{eq: LU mg bound on J}) gives the bound \begin{equation} \label{eq: LU mg bound on J 2} \begin{split} & \mathbb{E} \left[\sum_{j=0}^J \sum_{B\in \mathcal{P}_{j,l}} \left\{\left|J^{N;B;t}_t\right|\right\}^p\right]  \\& \hspace{1cm}\lesssim N^{p(\epsilon-1)} \hspace{0.1cm} \mathbb{E}\left[\sup_{t\leq t_\text{fin}} \Lambda_{kp}(\mu^N_t)\right]^{1/2} \left\|m^N\left((0,t_\text{fin}]\times \mathcal{S}_N\right)\right\|_{L^p(\mathbb{P})}^{p/2}.\end{split}\end{equation} The moment term is controlled by the initial moment bound and Proposition \ref{thrm:momentinequalities} : \begin{equation} \label{eq: LU mg moment bound} \mathbb{E}\left[\sup_{t\leq t_\text{fin}} \Lambda_{kp}(\mu^N_t)\right] \lesssim (1+t_\text{fin})\Lambda_{kp}(\mu^N_0)\leq (1+t_\text{fin})a^p.\end{equation}  Since the rates of the Kac process are bounded by $2N$, we can stochastically dominate $m^N(dt \times \mathcal{S}_N)$ by a Poisson random measure $\mathfrak{m}^N(dt)$ of rate $2N$. By the additive property of Poisson processes, it follows that \begin{equation} \label{eq: Poisson} \|m^N((0,t_\text{fin}]\times \mathcal{S}_N) \|_{L^p(\mathbb{P})}\le \|\mathfrak{m}^N(0,t_\text{fin}]\|_{L^p(\mathbb{P})} \lesssim N(1+t_\text{fin}). \end{equation} Combining (\ref{eq: LU mg bound on J 2}, \ref{eq: LU mg moment bound}, \ref{eq: Poisson}), we have the control of the integrand: \begin{equation} \begin{split} \sup_{t\leq t_\text{fin}} \hspace{0.2cm} \mathbb{E} \left[\sum_{j=0}^J \sum_{B\in \mathcal{P}_{j,l}} \left\{ \left|J^{N;B;t}_t\right|\right\}^p\right] \lesssim N^{p(\epsilon-1/2)}\hspace{0.1cm} a^{p/2} (1+t_\text{fin})^{\frac{p+1}{2}}. \end{split} \end{equation} This gives the following control of the integral term in Lemma \ref{lemma:break up LU}: \begin{equation} \label{eq:control of integral term} \begin{split} t_\text{fin}^{p/p'} \hspace{0.2cm} \mathbb{E} \left[\sum_{j=0}^J \sum_{B\in \mathcal{P}_{j,l}} \mathlarger{\mathlarger{\int}}_0^{t_\text{fin}}\left\{\left|J^{N;B;t}_t\right|\right\}^p dt \right] \lesssim N^{p(\epsilon-1/2)}\hspace{0.1cm} a^{p/2} (1+t_\text{fin})^{\frac{p+3}{2}+\frac{p}{p'}}. \end{split} \end{equation} Using the definition of $p'$ as the H\"older conjugate to $p$, it is straightforward to see that the exponent of $(1+t_\text{fin})$ is $\frac{3p+1}{2}$. \medskip \\ We now perform a similar analysis for the terms $M^{N;B}_{t_\text{fin}}$ in Lemma \ref{lemma:break up LU}. Let $(M^{N;B;t}_s)_{s\leq t}$ be the martingale defined in (\ref{eq: MNBTS}). The quadratic variation is \begin{equation} \begin{split} \left[M^{N;B;t}\right]_s&=\int_{(0,s]\times\mathcal{S}_N}\langle h_B, \phi_{t-u}(\mu^N)-\phi_{t-u}(\mu^N_{u-})\rangle^2 \hspace{0.1cm} m^N(du, d\mu^N) \\ & \leq \int_{(0,s]\times\mathcal{S}_N}\langle h_B, |\phi_{t-u}(\mu^N)-\phi_{t-u}(\mu^N_{u-})|\rangle^2 \hspace{0.1cm} m^N(du, d\mu^N). \end{split} \end{equation}  Arguing using Burkholder and the stability estimate Corollary \ref{cor: new stability for BE}, an identical calculation to the above shows that  \begin{equation} \label{eq: control of M at terminal time}  \sum_{j=0}^J \sum_{B\in \mathcal{P}_{j,l}} \left\|M^{N;B}_{{t_\text{fin}}}\right\|_{L^p(\mathbb{P})}^p   \lesssim N^{p(\epsilon-1/2)}\hspace{0.1cm}a^{p/2}\hspace{0.1cm}(1+t_\text{fin})^{\frac{p+1}{2}}. \end{equation}    Hence, by Lemma \ref{lemma:break up LU}, we obtain \begin{equation} \mathbb{E} \left[ \sum_{j=0}^J \sum_{B\in \mathcal{P}_{j,l}} \left\{ \left|M^{N;B}_{\star,t_\text{fin}}\right|\right\}^p\right] \lesssim N^{p(\epsilon-1/2)}a^{p/2}(1+t_\text{fin})^\frac{3p+1}{2}.\end{equation} We control the coefficients $2^{-2j}a_B(f)$ as in the argument of Lemma \ref{thrm: pointwise martingale control}. Using H\"{o}lder's inequality in place of Cauchy-Schwarz, we obtain \begin{equation} \begin{split} \label{eq: control of mg term LU}  &\left\|\hspace{0.1cm}\sup_{f\in \mathcal{A}} \hspace{0.1cm} \sup_{t\leq t_\text{fin}} \hspace{0.1cm} \left|\sum_{j=0}^J\sum_{l=2}^L \sum_{B \in \mathcal{P}_{j,l}} 2^{-2j}a_B(f)M^{N;B}_t \hspace{0.1cm} \right|  \hspace{0.1cm}\right\|_{L^p(\mathbb{P})}  \\& \hspace{2cm} \lesssim \sum_{l=2}^L   \left[ \mathbb{E}\sum_{j=0}^J \sum_{B \in \mathcal{P}_{j,l}} \left\{M^{N;B}_{\star,t_\text{fin}}\right\}^p\right]^{1/p}2^{(d/p'-1)l}J^{1/p'}  \\& \hspace{2cm}  \lesssim \sum_{l=2}^L  N^{\epsilon-\frac{1}{2}} \hspace{0.1cm} a^{1/2} \hspace{0.1cm} (1+t_\text{fin})^{\frac{3p+1}{2p}}\hspace{0.1cm}  2^{(d/p'-1)l}J^{1/p'} \\ & \hspace{2cm} \lesssim N^{\epsilon-\frac{1}{2}} \hspace{0.1cm} a^{1/2} \hspace{0.1cm} (1+t_\text{fin})^{\frac{3p+1}{2p}}\hspace{0.1cm}2^{(d/p'-1)L} \hspace{0.1cm}   J^{1/p'}. \end{split} \end{equation} Following the argument of Lemma \ref{thrm: pointwise martingale control}, we wish to control the error terms $R^{N,f}_t$ given by (\ref{eq: definition of RNFT}), locally uniformly in time. As in (\ref{eq: majorise integrand of error term}), we majorise, for $m^N+\overline{m}^N$-almost all $(s, \mu^N)$, \begin{equation} \begin{split}  \sup_{f\in \mathcal{A}} \left|\langle \beta(f), \phi_{t-s}(\mu^N)-\phi_{t-s}(\mu^N_{s-})\rangle \right| &\lesssim (2^{-2J}+2^{-L}) \hspace{0.1cm}N^{\epsilon-1} \hspace{0.1cm} \Lambda_{k}(\mu^N_{s-})^\frac{1}{2} \\& =:H'_s. \end{split} \end{equation} As in (\ref{eq: dominate integrand and integrator seperately}), we may bound  \begin{equation}  \label{eq: break up error term}  \left\|\hspace{0.1cm} \sup_{f\in\mathcal{A}}\hspace{0.1cm} \sup_{t\le t_\text{fin}}\left|R^{N,f}_t\right|\hspace{0.1cm}\right\|_{L^p(\mathbb{P})} \le \left\|\hspace{0.1cm} \int_0^{t_\text{fin}}H'_s(m^N+\overline{m}^N)(ds, \mathcal{S}_N)\hspace{0.1cm}\right\|_{L^p(\mathbb{P})}  \le \mathcal{T}_1+\mathcal{T}_2 \end{equation} where the two error terms are \begin{equation} \mathcal{T}_1= \left\|\int_0^{t_\text{fin}} H'_s \hspace{0.1cm}m^N(ds, \mathcal{S}_N)\right\|_{L^p(\mathbb{P})}\end{equation} and \begin{equation} \mathcal{T}_2=\left\|\int_0^{t_\text{fin}} H'_s \hspace{0.1cm}\overline{m}^N(ds, \mathcal{S}_N)\right\|_{L^p(\mathbb{P})}. \end{equation}  We now deal with the two terms separately. For the $\mathcal{T}_1$, we dominate $\overline{m}^N(ds, \mathcal{S}_N)\leq 2N ds$ to see that \begin{equation} \begin{split}    \int_0^{t_\text{fin}} H'_s\hspace{0.1cm} \overline{m}^N(ds, \mathcal{S}_N)  \lesssim (2^{-2J}+2^{-L})\hspace{0.1cm}N^{\epsilon}\hspace{0.1cm}t_\text{fin}\hspace{0.1cm}\left( \hspace{0.1cm} \sup_{s\leq t_\text{fin}} \Lambda_{k}(\mu^N_s)^\frac{1}{2}\right). \end{split} \end{equation}  Using the monotonicity of $L^p$ norms, and using the moment control in the usual way, \begin{equation} \begin{split} \label{eq: control of mbar integral} \mathcal{T}_1 &\lesssim (2^{-2J}+2^{-L})\hspace{0.1cm}N^{\epsilon}\hspace{0.1cm}t_\text{fin}\hspace{0.1cm}\mathbb{E}\left[ \hspace{0.1cm} \sup_{s\leq t_\text{fin}} \Lambda_{pk}(\mu^N_s)\right]^\frac{1}{2p} \\[1ex] &  \lesssim (2^{-2J}+2^{-L})\hspace{0.1cm}N^{\epsilon}\hspace{0.1cm}a^{1/2}\hspace{0.1cm}(1+t_\text{fin})^\frac{2p+1}{2p}. \end{split} \end{equation} For $\mathcal{T}_2$, we dominate $m^N(ds, \mathcal{S}_N)$ by a Poisson random measure $\mathfrak{m}^N(ds)$ of rate $2N$, as above. Controlling $\mathfrak{m}^N$ as in (\ref{eq: Poisson}), we obtain \begin{equation} \begin{split} \label{eq: control of m integral}   \mathcal{T}_2 & \lesssim (2^{-2J}+2^{-L})N^{\epsilon-1} \left\|\int_0^{t_\text{fin}} \Lambda_{k}(\mu^N_{s-})^\frac{1}{2}\mathfrak{m}^N(ds) \right\|_{L^p(\mathbb{P})} \\[1ex]&   \lesssim (2^{-2J}+2^{-L})N^{\epsilon-1} \left\|\left(\hspace{0.1cm}\sup_{s\leq t_\text{fin}} \Lambda_{k}(\mu^N_s)^\frac{1}{2}\right)\right\|_{L^{2p}(\mathbb{P})}\left\|\mathfrak{m}^N\left(\left(0, t_\text{fin}\right]\right)\right\|_{L^{2p}(\mathbb{P})} \\[1ex]& = \lesssim (2^{-2J}+2^{-L})N^{\epsilon}(1+t_\text{fin})^\frac{2p+1}{2p}. \end{split} \end{equation} Combining the local uniform estimates (\ref{eq: control of mg term LU}, \ref{eq: break up error term}, \ref{eq: control of mbar integral}, \ref{eq: control of m integral}) of the terms in the decomposition (\ref{eq: decomposition of MNFT}), we find that \begin{multline*} \left\|\hspace{0.1cm} \sup_{f\in \mathcal{A}} \hspace{0.1cm}M^{N,f}_{\star,t_\text{fin}} \hspace{0.1cm} \right\|_{L^p(\mathbb{P})}  \lesssim N^\epsilon a^{1/2} \hspace{0.1cm} (1+t_\text{fin})^\frac{3p+1}{2p}\hspace{0.1cm} \left(N^{-1/2} \hspace{0.1cm} 2^{(d/q-1)L}J^{1/p'}+2^{-2J}+2^{-L}\right). \end{multline*} Taking $J=\lfloor\frac{p'}{4d}\log_2(N)\rfloor$ and $L=\lfloor \frac{p'}{2d}\log_2(N)\rfloor$ proves the result claimed. \end{proof}

   \section{Proof of Theorem \ref{thm: low moment regime}} \label{sec: LMR}

We now turn to the proof of Theorem \ref{thm: low moment regime}, which establishes a convergence estimate in the presence of a $k^\text{th}$ moment bound, for any $k>2$. Our strategy will be to use the ideas of \cite{ACE}, which work well with few moments, to prove convergence on a small initial time interval $[0,u_N]$, for some $u_N$ to be chosen later. Then, thanks to the moment production property recalled in Proposition \ref{thrm:momentinequalities}, we may use Theorems \ref{thrm: PW convergence}, \ref{thrm: Main Local Uniform Estimate} to control the behaviour at times $t\ge u_N$. The argument is similar to the final argument in the proof of Theorem \ref{thrm: W-W continuity of phit} given in Section \ref{sec: continuity of BE}, which may be read as a warm-up to this proof. \medskip \\ Throughout, let $k, a, (\mu^N_t), \mu_0$ be as in the statement of the Theorem.   \medskip \\  We begin by recalling the representation formula established in \cite[Proposition 4.2]{ACE}, which is a noisy version of Proposition \ref{prop: bad representation formula}. \begin{proposition} \label{prop: very bad rep formula} Let $\mu \in \mathcal{S}^k$ for some $k>2$, and let $\mu^N_t$ be a Kac process on $N$ particles. Let $\rho_t=(\phi_t(\mu_0)+\mu^N_t)/2$, and for $f\in \mathcal{A}, 0\le s\le t$, let $f_{st}$ be the propagation described in Definition \ref{def: LKP} in this environment. Then, for all $t\ge 0$, we have the equality \begin{equation} \begin{split} & \langle f, \mu^N_t-\phi_t(\mu_0)\rangle =\langle f_{0t}, \mu^N_0-\mu_0\rangle \\&\hspace{2cm}+\int_{(0,t]\times \mathcal{S}_N} \langle f_{st},\mu^N-\mu^N_{s-}\rangle (m^N-\overline{m}^N)(ds,d\mu^N) \end{split} \end{equation} where $m^N, \overline{m}^N$ are as defined in Section \ref{sec: interpolation decomposition}.\end{proposition}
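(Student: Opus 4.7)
My plan is to emulate the proof of Proposition \ref{prop: bad representation formula}, and of \cite[Proposition 4.2]{ACE}, by introducing, for each fixed $t>0$, the interpolating quantity
\begin{equation*}
\Psi_s := \langle f_{st}, \mu^N_s - \phi_s(\mu_0)\rangle, \qquad 0 \le s \le t.
\end{equation*}
Since $f_{tt} = f$, we have $\Psi_0 = \langle f_{0t}, \mu^N_0 - \mu_0\rangle$ and $\Psi_t = \langle f, \mu^N_t - \phi_t(\mu_0)\rangle$, so the identity is equivalent to
\begin{equation*}
\Psi_t - \Psi_0 = \int_{(0,t]\times \mathcal{S}_N}\langle f_{st}, \mu^N - \mu^N_{s-}\rangle (m^N - \overline{m}^N)(ds, d\mu^N),
\end{equation*}
which I will obtain by decomposing $\Psi_t - \Psi_0$ into a drift contribution on the intervals of constancy of $\mu^N$ and a sum of jump contributions.

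For the drift: between jumps of $\mu^N$, the function $s \mapsto f_{st}$ satisfies the backward Kolmogorov equation for the linearised Kac process in environment $\rho$, which, tested against any $\nu \in Y$, reads $-\partial_s \langle f_{st}, \nu \rangle = 2\langle f_{st}, Q(\rho_s, \nu)\rangle$ (using the symmetry $Q(\mu, \nu) = Q(\nu, \mu)$, which holds because the collision kernel and deflection law are symmetric in $(v, v_\star)$). Combined with $\partial_s \phi_s(\mu_0) = Q(\phi_s(\mu_0))$ and the choice $\rho_s = \frac{1}{2}(\phi_s(\mu_0) + \mu^N_s)$, bilinearity of $Q$ yields
\begin{equation*}
\partial_s \Psi_s = -2\langle f_{st}, Q(\rho_s, \mu^N_s - \phi_s(\mu_0))\rangle - \langle f_{st}, Q(\phi_s(\mu_0))\rangle = -\langle f_{st}, Q(\mu^N_s)\rangle,
\end{equation*}
where the cross terms $\pm Q(\phi_s(\mu_0), \mu^N_s)$ cancel by symmetry. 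By the definition (\ref{eq: definition of mbar}) of $\overline{m}^N$ together with $\int_{\mathcal{S}_N}\langle g, \mu^N - \mu^N_{s-}\rangle \mathcal{Q}_N(\mu^N_{s-}, d\mu^N) = \langle g, Q(\mu^N_{s-})\rangle$, the accumulated drift is precisely $-\int \langle f_{st}, \mu^N - \mu^N_{s-}\rangle \overline{m}^N(ds, d\mu^N)$. Each jump of $\mu^N$ at time $s$ contributes $\Delta \Psi_s = \langle f_{st}, \mu^N_s - \mu^N_{s-}\rangle$, since $s \mapsto f_{st}$ is path-wise continuous at jump times of $\mu^N$ (the discontinuities of $\rho$ sit on a Lebesgue-null set, which is invisible to the continuous-time branching process computing $f_{st}$). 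Summing the jumps against $m^N$ and combining with the drift yields the claimed decomposition.

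The main technical obstacle is justifying the analytic regularity of $s \mapsto f_{st}$ underlying the above formal calculation, in particular the validity of the backward Kolmogorov equation almost everywhere in $s$, the continuity across jumps of $\rho$, and the various exchanges of integrals and expectations. Because the environment $\rho$ is now random, the argument is carried out path-wise, conditional on the realisation of $(\mu^N_s)_{s\le t}$. The integrability to make sense of $f_{st}$ and of the resulting stochastic integral reduces to the growth bound (\ref{eq: no explosion}) on $E\langle 1+|v|^2, |\Xi_t|\rangle$, which in turn requires finiteness of $\int_0^t \Lambda_3(\rho_s) ds$; this last point follows from Proposition \ref{thrm:momentinequalities} applied to $\phi_s(\mu_0)$ (almost-sure moment production) and to $\mu^N_s$ (in expectation). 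With these controls in place, the argument proceeds as in \cite[Proposition 4.2]{ACE}, the only new feature being that $\rho$ is a random environment rather than the deterministic one used there.
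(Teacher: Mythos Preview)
Your proposal is correct and follows essentially the same approach as the paper's cited source \cite[Proposition 4.2]{ACE}: interpolate via $\Psi_s=\langle f_{st},\mu^N_s-\phi_s(\mu_0)\rangle$, use the backward equation for $f_{st}$ together with the Boltzmann dynamics of $\phi_s(\mu_0)$ to identify the drift as $-\langle f_{st},Q(\mu^N_s)\rangle$, and collect the jumps against $m^N$. The paper itself does not give an independent proof but simply recalls the result from \cite{ACE}, noting (as you do) that the only new feature relative to the deterministic-environment version in Proposition~\ref{prop: bad representation formula} is that $\rho$ is now random, so the argument is carried out path-wise.
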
 The major difficulty in using this representation formula is the appearance of an exponentiated random moment in the quantity $z_t$ parametrising the continuity of $f_{st}$. We will use the following proposition, which controls the stochastic integrals on the right-hand side, modulo this difficulty. \begin{proposition} \label{prop: short time mg estimate} Let $\rho_t$ be a potentially random environment such that, for some $\beta>0$, \begin{equation} \label{eq: moment condition for environment} w=\left\|\hspace{0.1cm}\sup_{t\le 1} \left(\frac{ \Lambda_3(\rho_t)}{\beta t^{\beta-1}+1}\right)\hspace{0.1cm}\right\|_{L^\infty(\mathbb{P})}<\infty. \end{equation} For $f\in \mathcal{A}$ and $0\le s\le t\le 1$, let $f_{st}[\rho]$ denote the propagation in this environment, as described in Definition \ref{def: LKP}. \medskip \\ Let $k>2$ and $a\ge 1$, and let $\mu^N_t$ be a Kac process with initial moment $\Lambda_k(\mu^N_0)\le a$, and let $m^N, \overline{m}^N$ be as in Section \ref{sec: interpolation decomposition}. We write \begin{equation} \widetilde{M}^{N,f}_t[\rho]=\int_{(0,t]\times \mathcal{S}_N} \langle f_{st}[\rho], \mu^N-\mu^N_{s-}\rangle (m^N-\overline{m}^N)(ds, d\mu^N).\end{equation} In this notation, we have the bound \begin{equation} \left\|\hspace{0.1cm} \sup_{t\le 1}\hspace{0.1cm} \sup_{f\in \mathcal{A}} \hspace{0.1cm} \widetilde{M}^{N,f}_t[\rho]\right\|_1 \le CaN^{-\eta}\end{equation} for some $C=C(d,k,\beta)$ and $\eta=\eta(d,\beta)>0$. Here, we emphasise that $\|\cdot\|_{L^1(\mathbb{P})}$ refers to the $L^1$ norm with simultaneous expectation over $\mu^N_t$ and the environment $\rho$. \end{proposition}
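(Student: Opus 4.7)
The plan is to mimic the hierarchical-decomposition and martingale-maximal arguments used to prove Lemmas \ref{thrm: pointwise martingale control} and \ref{thrm: local uniform martingale control}, replacing the role of the Mischler--Mouhot stability estimates by uniform control of the branching propagator $f_{st}[\rho]$. The starting observation is that, by (\ref{eq: moment condition for environment}), $\int_0^t \Lambda_3(\rho_u)\,du \le w(t^\beta + t) \le 2w$ for all $t \in [0,1]$, so Proposition \ref{prop: continuity for branching process} gives $z_t \le z^\star := 3 e^{16w}$ and hence $f_{st}[\rho] \in z^\star\mathcal{A}$ uniformly in $0 \le s \le t \le 1$, almost surely. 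This uniform control lets the argument proceed pathwise in $\rho$, with constants depending only on $d$, $k$ and $\beta$, in place of the stability estimates of Corollary \ref{cor: new stability for BE}.

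Applying the dyadic partition $\{\mathcal{P}_{j,l}\}$ of $\mathbb{R}^d$ from the proof of Lemma \ref{thrm: pointwise martingale control} to $\widehat{f_{st}[\rho]}$ gives a decomposition
\begin{equation*}
 f_{st}[\rho] = \sum_{j=0}^{J}\sum_{l=2}^{L}\sum_{B \in \mathcal{P}_{j,l}} a_B(s,t)\, h_B + \beta_{st},
\end{equation*}
with $h_B = 2^{2j}(1+|v|^2)1_B$, coefficients $|a_B(s,t)| \lesssim z^\star 2^{-j-l}$ by (\ref{eq:scalebound}), and remainder $|\beta_{st}(v)| \lesssim z^\star(2^{-2J}+2^{-L})(1+|v|^4)$. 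The coefficients are predictable in $s$, so substituting into $\widetilde{M}^{N,f}_t[\rho]$ splits it into principal martingale terms indexed by boxes $B$ together with a remainder which can be controlled on $B_J^c$ using only the hypothesis $\Lambda_k(\mu^N_s) \lesssim a$ with $k > 2$ and the trivial size bound $|\beta_{st}| \le 2z^\star(1+|v|^2)$ there, via $(1+|v|^2)\le 2^{-(k-2)J}(1+|v|^k)$ on $B_J^c$.

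The supremum over $t \in [0,1]$ is handled by time-discretization: although $t \mapsto \widetilde{M}^{N,f}_t[\rho]$ is itself not a martingale (the integrand depends on $t$ through $f_{st}$), for each fixed terminal time $T$ the process
\begin{equation*}
 s \mapsto \widetilde{M}^{N,f,T}_s[\rho] := \int_{(0,s]\times\mathcal{S}_N} \langle f_{uT}[\rho], \mu^N - \mu^N_{u-}\rangle (m^N - \overline{m}^N)(du, d\mu^N)
\end{equation*}
\emph{is}. Partitioning $[0,1]$ into $n$ equal subintervals with endpoints $\{T_k\}$, I would control $\max_k \sup_{f\in\mathcal{A}} \widetilde{M}^{N,f,T_k}_{T_k}[\rho]$ via Doob's $L^2$ maximal inequality, Burkholder--Davis--Gundy and Cauchy--Schwarz over the hierarchy, exactly as in the passage from (\ref{eq: use of CS}) to (\ref{eq: pointwise bound on martingale term}). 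The oscillation between successive grid points is bounded using the differential equation satisfied by $T \mapsto f_{sT}[\rho]$, which yields $\|f_{sT_1} - f_{sT_2}\|_{\mathrm{TV}+2} \lesssim z^\star \int_{T_1}^{T_2} \Lambda_3(\rho_u)\,du$, together with the expectation bound $\mathbb{E}(m^N + \overline{m}^N)((T_k, T_{k+1}]\times \mathcal{S}_N) \lesssim N/n$. Taking $L \sim d^{-1}\log_2 N$, $J\to\infty$ and $n = \lceil N^\gamma\rceil$ for a suitably small $\gamma > 0$ balances the three contributions and yields $Ca N^{-\eta}$ with $\eta = \eta(d, \beta) > 0$.

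The main obstacle will be to quantify the $t$-continuity of $f_{sT}[\rho]$ tightly enough that the time-discretization cost does not destroy the gain from the hierarchy; a crude bound would force $n$ to be taken very large. The polynomial weighting $\beta t^{\beta-1} + 1$ in the environmental hypothesis is precisely what accommodates the mild singularity of $\Lambda_3(\rho_u)$ at $u\downarrow 0$ coming from the Boltzmann moment-creation estimate in Proposition \ref{thrm:momentinequalities}, which is the type of environment encountered in the intended application to Theorem \ref{thm: low moment regime}.
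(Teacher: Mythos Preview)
Your overall plan is close to the paper's: hierarchical decomposition in $v$, uniform propagator control via the $L^\infty$ hypothesis, and some time discretization. You correctly extract from (\ref{eq: moment condition for environment}) the almost-sure bound $z_t\le z^\star=3e^{16w}$, hence $f_{st}[\rho]\in z^\star\mathcal{A}$ uniformly; this is exactly the role the paper assigns to $z_1$.

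There is, however, a genuine gap. You assert that ``the coefficients are predictable in $s$'' and that, for fixed $T$, the process $s\mapsto\widetilde{M}^{N,f,T}_s[\rho]$ \emph{is} a martingale. In the intended application the environment is built from the Kac process itself, so $f_{st}[\rho]$ depends on $\rho_u$ for $u\in[s,t]$ and is \emph{not} $\mathcal{F}^N_s$-measurable; the paper explicitly flags this anticipating feature of the \cite{ACE} representation in the remark after Formula~\ref{form:newdecomposition}. Doob and Burkholder--Davis--Gundy therefore do not apply as you propose, and discretizing in the terminal time $t$ does nothing to remove the $s$-dependence of the integrand at a fixed grid point $T_k$.

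The paper's route, inherited from \cite[Theorem~1.1]{ACE}, handles this via continuity of $f_{st}$ in the \emph{initial} time $s$: the second key quantity is
\[
y_\beta \;=\; z_1\sup_{0\le s\le s'\le 1}\Bigl[(s'-s)^{-\beta}\int_s^{s'}\Lambda_3(\rho_u)\,du\Bigr],
\]
which your hypothesis also bounds in $L^\infty(\mathbb{P})$, and which quantifies the $\beta$-H\"older regularity of $s\mapsto f_{st}$ in the $\mathcal{A}_0$-norm. With this one discretizes in $s$: on each $s$-cell the coefficient $a_B(f_{s_k,t})$ is frozen, the integral reduces to a bounded (random but almost-surely bounded by $z^\star$) multiple of an increment of the genuine box martingale $\int\langle h_B,\mu^N-\mu^N_{u-}\rangle(m^N-\overline{m}^N)(du)$, and the discretization error is controlled by $y_\beta$. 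This is precisely what the paper means by ``a similar estimate for the dependence on the initial time $s$''. Your $t$-continuity estimate (incidentally, $\|f_{sT_1}-f_{sT_2}\|_{\mathrm{TV}+2}$ should be an $\mathcal{A}_0$-type weighted sup-norm, not a total-variation norm on measures) may ultimately be made to work, but it is not the mechanism used in the paper and does not by itself resolve the anticipating-integrand issue you have glossed over.
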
 This largely follows from the proof of \cite[Theorem 1.1]{ACE}, and the argument follows a similar pattern to Lemmas \ref{thrm: pointwise martingale control}, \ref{thrm: local uniform martingale control}, using the continuity estimate recalled in Proposition \ref{prop: continuity for branching process} and a similar estimate for the dependence on the initial time $s$. The key difference is that the hypotheses on the environment $\rho$ guarantee an $L^\infty(\mathbb{P})$ control on the quantities \begin{equation} z_1=\exp\left(8\int_0^1 \Lambda_3(\rho_u)du\right); \hspace{0.8cm} y_\beta=z_1\hspace{0.1cm}\sup_{0\le s\le s'\le 1}\left[(s'-s)^{-\beta}\int_s^{s'}\Lambda_3(\rho_u)du\right]\end{equation} which describe the continuity of $f_{st}(v)$ in $v$ and $s$ respectively. By contrast, these are only controlled in probability in \cite[Theorem 1.1]{ACE}; correspondingly, we obtain an $L^1(\mathbb{P})$ estimate rather than an estimate in probability. With this estimate, we turn to the proof of Theorem \ref{thm: low moment regime}.
\begin{proof}[Proof of Theorem \ref{thm: low moment regime}] We first introduce a localisation argument, following the argument in Section \ref{sec: continuity of BE}, which allows us to guarantee that (\ref{eq: moment condition for environment}) holds for the environment $\rho= (\mu^N_t+\phi_t(\mu_0))/2$. Let $\beta=\frac{k-2}{2}$, and let $u_N \le 1$ be chosen later. Now, define $T_N$ to be the stopping time \begin{equation} T_N=\inf\left\{t\le u_N: \Lambda_3(\rho_t) > \frac{(\beta t^{\beta-1}+1)}{8\sqrt{2}}\right\}. \end{equation} We use the convention that $\inf \emptyset =\infty$, so that if $T_N>u_N$, then $T_N=\infty$. Let $\rho^T$ be the stopped environment $\rho^T_t=\rho_{t\land T_N}$, and write $f_{st}^T$ for the propagation in the stopped environment.\medskip \\  We observe first that on the event $T_N=\infty$, we have the equality $f_{st}^T=f_{st}$ for all $f\in \mathcal{A}, s\le t\le u_N$. Moreover, since $\Lambda_3(\rho_t)$ increases by a factor of at most $4\sqrt{2}$ at jumps by Lemma \ref{lemma:momentincreaseatcollision}, we have the bound, almost surely for all $t\ge 0$,  \begin{equation} \Lambda_3(\rho^T_t) \le \frac{(\beta t^{\beta-1}+1)}{2}.\end{equation} Therefore, the stopped environment $\rho^T$ satisfies the bound \ref{eq: moment condition for environment} with $w=\frac{1}{2}$. Now, we write $\widetilde{M}^{N,f}_t=\widetilde{M}^{N,f}_t[\rho^T]$ as in the proposition above, and by the representation formula in Proposition \ref{prop: very bad rep formula}, we have the bound for all $t\le u_N$,\begin{equation}\begin{split} &W\left(\mu^N_t,\phi_t(\mu_0)\right) 1[T_N=\infty] \le CW(\mu^N_0, \mu_0)+\sup_{f\in \mathcal{A}}\hspace{0.2cm}\widetilde{M}^{N,f}_t \end{split}\end{equation} for some absolute constant $C$. By Proposition \ref{prop: short time mg estimate}, we obtain the estimate \begin{equation} \label{eq: shorttime bound 1} \left\|\sup_{t\le u_N} W\left(\mu^N_t, \phi_t(\mu_0)\right)1[T_N=\infty]\right\|_1 \lesssim W(\mu^N_0,\mu_0)+aN^{-\eta}.\end{equation} Let $k_0=k_0(d)$ be large enough that Theorem \ref{thrm: PW convergence} holds with $\epsilon=\frac{1}{2d}$. By applying Theorem \ref{thrm: PW convergence}, restarted at time $u_N$, and the moment production property, we obtain \begin{equation} \label{eq: restarted estimate} \begin{split}\sup_{t\ge u_N} \left\|W(\mu^N_t,\phi_{t-u_N}(\mu^N_{u_N}))\right\|_2&\lesssim N^{\epsilon-1/d}\hspace{0.1cm}\mathbb{E}\left[\Lambda_{k_0}(\mu^N_{u_N})\right]^{1/2}  \\ & \lesssim N^{\epsilon-1/d}u_N^{1-k_0/2}.\end{split} \end{equation} Using our continuity estimate Theorem \ref{thrm: W-W continuity of phit}, we have the bound for some $\zeta=\zeta(d)$\begin{equation} \begin{split}  &\sup_{t\ge u_N} W(\phi_{t-u_N}(\mu^N_{u_N}),\phi_t(\mu_0)) \\&\hspace{1cm}\lesssim W(\mu^N_{u_N},\phi_{u_N}(\mu_0))^\zeta\Lambda_{k_0}(\mu^N_{u_N},\phi_{u_N}(\mu_0))\end{split}\end{equation} and, considering the cases $\{T_N\le u_N\}, \{T_N=\infty\}$ separately, we see that \begin{equation} \begin{split}  &\sup_{t\ge u_N} W(\phi_{t-u_N}(\mu^N_{u_N}),\phi_t(\mu_0)) \\&\hspace{2.5cm}\lesssim W(\mu^N_{u_N},\phi_{u_N}(\mu_0))^\zeta\Lambda_{k_0}(\mu^N_{u_N},\phi_{u_N}(\mu_0))1[T_N=\infty]\\[1ex]&\hspace{3.5cm}+ 1[T_N\le u_N].\end{split}\end{equation}To ease notation, we will write $\mathcal{T}_1, \mathcal{T}_2$ for the two terms respectively. We estimate the expectation of $\mathcal{T}_1$ using H\"older's inequality: for some $k_1>k_0$, \begin{equation}\label{eq: holder estimate on BF}\begin{split} &\left\|\mathcal{T}_1\right\|_{L^1(\mathbb{P})}  \lesssim\mathbb{E}\left(W(\mu^N_{u_N},\phi_{u_N}(\mu_0))1[T_N=\infty]\right)^\zeta\mathbb{E}\left(\Lambda_{k_1}(\mu^N_{u_N},\phi_{u_N}(\mu_0))\right) \\[1ex] & \hspace{3cm}\lesssim (N^{-\eta}+W(\mu^N_0,\mu_0))^\zeta\hspace{0.1cm} u_N^{1-k_1/2}.\end{split}\end{equation} where $\eta$ is as in (\ref{eq: shorttime bound 1}) with our choice of $\beta$. In order to deal with $\mathcal{T}_2$, we now estimate $\mathbb{P}(T_N\le u_N)$. Let $Z_N$ be given by \begin{equation} Z_N=\sum_{l:2^{-l}\le u_N} 2^{(\beta-1)l+1}\beta^{-1}\sup_{t\in [2^{-l},2^{1-l}]}\langle 1+|v|^3, \rho_t\rangle \end{equation} and observe that, for all $t\le u_N$, we have the bound \begin{equation} \langle 1+|v|^3, \rho_t\rangle \le \frac{(\beta t^{\beta-1}+1)Z_N}{2}. \end{equation} Therefore, \begin{equation} \mathbb{P}(T_N\le u_N)\le \mathbb{P}(Z_N>1/8) \le 8 \mathbb{E}[Z_N].\end{equation} Using the moment production property of the Kac process and Boltzmann equation in Proposition \ref{thrm:momentinequalities}, we compute \begin{equation} \mathbb{E}(Z_N)\le \sum_{l: 2^{-l}\le u_N}2^{(\beta-1)l+1}2^{-l(k-3)}\hspace{0.1cm} \beta^{-1}a \lesssim a u_N^\beta \end{equation} and so \begin{equation} \label{eq: estimate on restarted BF} \begin{split}  &\left\|\sup_{t\ge u_N} W(\phi_{t-u_N}(\mu^N_{u_N}),\phi_t(\mu_0))\right\|_{L^1(\mathbb{P})} \\[1ex]& \hspace{1.5cm}\lesssim (N^{-\eta}+W(\mu^N_0,\mu_0))^\zeta\hspace{0.1cm} u_N^{1-k_1/2} +au_N^\beta. \end{split} \end{equation} We now return to (\ref{eq: shorttime bound 1}) and observe that \begin{equation} \label{eq: shorttime bound 2} \begin{split}& \left\|\sup_{t\le u_N} W(\mu^N_t, \phi_t(\mu_0))\right\|_1  \\& \hspace{2cm}\lesssim \left\|\sup_{t\le u_N} W\left(\mu^N_t, \phi_t(\mu_0)\right)1[T_N=\infty]\right\|_1 +\mathbb{P}(T_N\le u_N) \\[1ex] & \hspace{2cm} \lesssim W(\mu^N_0,\mu_0)+aN^{-\eta}+au_N^\beta. \end{split} \end{equation} Combining (\ref{eq: restarted estimate}, \ref{eq: estimate on restarted BF}, \ref{eq: shorttime bound 2}) and keeping the worst terms, we have shown that \begin{equation} \sup_{t\ge 0} \left\|W(\mu^N_t, \phi_t(\mu_0)\right\|_{L^1(\mathbb{P})}\lesssim (N^{-\eta}+W(\mu^N_0,\mu_0))^{\delta} u_N^{-\alpha}+ au_N^\beta\end{equation} for some $\eta, \delta, \alpha, \beta>0$, depending on $d, k$. If we choose \begin{equation} u_N=(N^{-\eta}+W(\mu^N_0,\mu_0))^{\delta/(\alpha+\beta)}\end{equation} then we finally obtain \begin{equation}\begin{split} \sup_{t\ge 0} \left\|W(\mu^N_t, \phi_t(\mu_0))\right\|_{L^1(\mathbb{P})}& \lesssim a(N^{-\eta}+W(\mu^N_0,\mu_0))^{-\beta \delta/(\alpha+\beta)} \\ & \lesssim a\left(N^{-\eta \beta \delta/(\alpha+\beta)}+W(\mu^N_0,\mu_0)^{\beta \delta/(\alpha+\beta)}\right) \\ & \lesssim  a\left(N^{-\epsilon}+W(\mu^N_0,\mu_0)^\epsilon\right)\end{split} \end{equation} as desired, for sufficiently small $\epsilon=\epsilon(d,k)>0$. The case for the local uniform estimate is similar, using Theorem \ref{thrm: Main Local Uniform Estimate} in place of Theorem \ref{thrm: PW convergence}. \end{proof}

\section{Proof of Theorem \ref{thrm: No Uniform Estimate}}
The proof of Theorem \ref{thrm: No Uniform Estimate} is based on the following heuristic argument: \begin{heuristic} Fix $N$, and consider a Kac process $(\mu^N_t)$ on $N$ particles. As $t\rightarrow \infty$, its law relaxes to the equilibrium distribution $\pi_N$, which is known to be the uniform distribution $\sigma^N$ on $\mathcal{S}_N$. Since this measure assigns non-zero probability to regions $R_N$ at macroscopic distance from the fixed point $\gamma$, given by \begin{equation}
\gamma(dv)=\frac{e^{-\frac{d}{2}|v|^2}}{(2\pi d^{-1})^{d/2}}dv,
 \end{equation} the process will almost surely hit $R_N$ on an unbounded set of times. Meanwhile, the Boltzmann flow $\phi_t(\mu_0)$ will converge to $\gamma$. Therefore, at some large time, the particle system $\mu^N_t$ will have macroscopic distance from the Boltzmann flow $\phi_t(\mu^N_0)$.\end{heuristic}
The regions $R_N$ which we construct in the proof are those where the energy is concentrated in only a few particles, which might na\"{i}vely be considered `highly ordered, and so low-entropy'. This appears to contradict the principle that entropy should increase; this \emph{apparent} paradox is explained in the discussion section at the beginning of the paper. \medskip\\ 
We recall that a \emph{labelled} Kac process is the Markov process of velocities $(v_1(t),....,v_N(t))$ corresponding to the particle dynamics. The state space is the set $\mathbb{S}^{N}=\left\{(v_1, ..., v_N) \in (\mathbb{R}^d)^N: \sum_{i=1}^N v_i=0, \sum_{i=1}^N |v_i|^2 = N\right\}$, which we call the labelled Boltzmann Sphere. We denote $\theta_N$ the map taking $(v_1,...,v_N)$ to its empirical measure in $\mathcal{S}_N$: \begin{equation} \theta_N: \mathbb{S}^N \rightarrow \mathcal{S}_N; \hspace{1cm} (v_1, ..., v_N) \mapsto \frac{1}{N} \sum_{i=1}^N \delta_{v_i}.\end{equation} Moreover, if $\mathcal{V}^N_t$ is a labelled Kac process, then the empirical measures $\mu^N_t:= \theta_N(\mathcal{V}^N_t)$ are a Kac process in the sense defined in the introduction.\medskip \\  
Considered as a $((N-1)d-1)$-dimensional sphere, $\mathbb{S}^N$ has a uniform (Hausdorff) distribution $\gamma^N$. We define the `uniform distribution' $\sigma^N$ on $\mathcal{S}_N$ to be the pushforward of $\gamma^N$ by $\theta_N$: \begin{equation} \label{eq: defn of sigmaN} \sigma^N(A):=\gamma^N\left\{(v_1, ...v_N)\in \mathbb{S}^d: \theta_N(v_1,...,v_N)\in A\right\}. \end{equation} We will use this definition to transfer the positivity of the measure $\gamma^N$ forward to $\sigma^N$. \medskip\\
As discussed in the literature review, the problem of relaxation to equilibrium for the Kac process is a subtle problem, and has been extensively studied. For our purposes, the following $L^2$ convergence is sufficient: \begin{proposition}\label{prop: relaxation} Suppose that $(\mu^N_t)_{t\geq 0}$ is a hard-spheres Kac process, where the law of the initial data $\mathcal{L}\mu^N_t$ has a density  $h^N_0\in L^2(\sigma^N)$ with respect to $\sigma^N$. Then at all positive times $t\geq 0$, the law $\mathcal{L}\mu^N_t$ has a density $h^N_t\in L^2(\sigma^N)$ with respect to $\sigma^N$, and for some universal constant $\lambda_0>0$, we have \begin{equation} \left\|h^N_t-1\right\|_{L^2(\sigma^N)} \leq e^{-\lambda_0 t}\left\| h^N_0-1\right\|_{L^2(\sigma^N)}. \end{equation} \end{proposition}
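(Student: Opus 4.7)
The plan is to reduce the claim to a known uniform spectral gap for the \emph{labelled} Kac process on $\mathbb{S}^N$, then apply the spectral theorem for self-adjoint contraction semigroups. Let $\bar P^N_t$ denote the semigroup of the labelled Kac process on $\mathbb{S}^N$, and $P^N_t$ the semigroup of its image $\mu^N_t = \theta_N(\mathcal{V}^N_t)$ on $\mathcal{S}_N$.

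First, I would establish reversibility and the appropriate identification of $L^2$ spaces. A standard detailed-balance computation, using that the collision map $(v,v_\star,\sigma) \mapsto (v',v_\star',-\sigma)$ is an involution and preserves the rate $|v-v_\star|$, shows that $\gamma^N$ is reversible for $\bar P^N_t$. Because the jump rates are symmetric under the $S_N$-action on coordinates, $\bar P^N_t$ preserves the subspace $L^2_{\mathrm{sym}}(\gamma^N) \subset L^2(\gamma^N)$ of functions invariant under coordinate permutations. The pullback $h \mapsto h\circ\theta_N$ is an isometric isomorphism $L^2(\sigma^N) \to L^2_{\mathrm{sym}}(\gamma^N)$, by the very definition $\sigma^N = (\theta_N)_*\gamma^N$ (\ref{eq: defn of sigmaN}) and because $\theta_N$ identifies exactly the $S_N$-orbits on $\mathbb{S}^N$. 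This isomorphism intertwines $P^N_t$ with the restriction $\bar P^N_t|_{L^2_{\mathrm{sym}}}$, so $P^N_t$ is a self-adjoint contraction semigroup on $L^2(\sigma^N)$ with invariant measure $\sigma^N$.

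Second, I would invoke the resolution of Kac's spectral gap conjecture for hard spheres \cite{Carlen 00,Janvresse,Carlen 03,Malsen}, which furnishes a constant $\lambda_0 > 0$, independent of $N \ge 2$, such that for every mean-zero $H \in L^2(\gamma^N)$,
\[
\|\bar P^N_t H\|_{L^2(\gamma^N)} \le e^{-\lambda_0 t}\|H\|_{L^2(\gamma^N)}.
\]
Applied to the symmetric mean-zero function $H = (h^N_0 - 1)\circ\theta_N$, this transfers via the isometry to the desired decay estimate in $L^2(\sigma^N)$. Since $P^N_t$ is a Markov semigroup reversible with respect to the probability measure $\sigma^N$, the forward evolution of densities coincides with $P^N_t$ itself acting on $h^N_0$, and mass conservation gives $\int h^N_t\, d\sigma^N = 1$ for all $t$, so $h^N_t - 1$ remains mean-zero as required.

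The only substantive input beyond routine functional analysis is the uniform-in-$N$ spectral gap on $\mathbb{S}^N$, which would be the main obstacle were it not already available in the literature. The subtle point in our reduction is the passage through $\theta_N$: one must check that the identification of $L^2(\sigma^N)$ with the symmetric subspace of $L^2(\gamma^N)$ is isometric and intertwines the semigroups, which follows because $\theta_N$ is the quotient map for the $S_N$-action and the labelled dynamics respect this quotient by construction.
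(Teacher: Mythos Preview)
Your proposal is correct and follows essentially the same route as the paper: the paper does not give a self-contained proof but simply cites the labelled version from \cite[Theorem 6.8 and corollary]{M+M} and remarks that the unlabelled statement ``follows by a pushforward argument,'' which is precisely the $\theta_N$-quotient/intertwining argument you have written out in detail. One minor point: you invoke the spectral-gap references \cite{Carlen 00,Janvresse,Carlen 03,Malsen} directly, whereas the paper routes through \cite{M+M}; since several of those spectral-gap works treat Maxwell-type kernels rather than hard spheres, the safer citation for the uniform-in-$N$ gap in the hard-spheres case is indeed \cite{M+M}, but this does not affect the structure of your argument.
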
  A version of this, for the labelled Kac process, appears as \cite[Theorem 6.8 and corollary]{M+M}; the result stated above follows by a pushforward argument. This is sufficient to prove the following weak ergodic theorem: \begin{lemma}\label{lemma: ergodic theorem} Let $(\mu^N_t)_{t\geq 0}$ be a hard-spheres Kac process on $N$ particles, started from $\mu^N_0\sim \sigma^N$. Let $R_N\subset \mathcal{S}_N$ be such that $p=\sigma^N(R_N)>0$. Then \begin{equation} \frac{1}{t}\int_0^t 1(\mu^N_s\in R_N)ds \rightarrow p\end{equation} in $L^2$. In particular, almost surely, $\mu^N_t$ visits $R_N$ on an unbounded set of times. \end{lemma}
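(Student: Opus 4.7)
The plan is to apply a standard weak ergodic theorem argument using the spectral gap provided by Proposition~\ref{prop: relaxation}. Since $\sigma^N$ is the equilibrium law for the Kac process on $\mathcal{S}_N$, starting from $\mu^N_0\sim \sigma^N$ makes the process stationary: $\mu^N_s\sim \sigma^N$ for all $s\ge 0$. By Fubini, this already gives the correct mean
\begin{equation*}
\mathbb{E}\left[\frac{1}{t}\int_0^t 1(\mu^N_s\in R_N)\,ds\right] = p,
\end{equation*}
so it suffices to show that the variance vanishes as $t\to\infty$.

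For the variance, introduce the mean-zero function $g = 1_{R_N}-p\in L^2(\sigma^N)$ with $\|g\|_{L^2(\sigma^N)}^2 = p(1-p)$, and let $(T_\tau)_{\tau\ge 0}$ be the semigroup of the Kac process, acting on functions by $(T_\tau g)(x)=\mathbb{E}_x[g(\mu^N_\tau)]$. The key step is to read Proposition~\ref{prop: relaxation} as an operator estimate: it says exactly that the dual semigroup $T_\tau^\ast$, acting on mean-zero $L^2(\sigma^N)$ functions, has operator norm at most $e^{-\lambda_0\tau}$. Since an operator on a Hilbert space has the same norm as its adjoint, the same bound holds for $T_\tau$ on mean-zero functions, and for $s\le u$, stationarity together with the Markov property yields
\begin{equation*}
\mathrm{Cov}(1_{R_N}(\mu^N_s),1_{R_N}(\mu^N_u)) = \langle g,T_{u-s}g\rangle_{L^2(\sigma^N)} \le e^{-\lambda_0(u-s)}p(1-p).
\end{equation*}

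Integrating this bound over $[0,t]^2$ produces $\mathrm{Var}(t^{-1}\int_0^t 1_{R_N}(\mu^N_s)\,ds) \le 2p(1-p)/(\lambda_0 t)$, giving the claimed $L^2$ convergence. For the almost sure conclusion, Chebyshev's inequality applied along the subsequence $t_n = n^2$ gives
\begin{equation*}
\mathbb{P}\left(\left|\frac{1}{t_n}\int_0^{t_n}1_{R_N}(\mu^N_s)\,ds-p\right|>\frac{p}{2}\right) \lesssim \frac{1}{n^2},
\end{equation*}
which is summable, so Borel--Cantelli yields almost sure convergence along $(t_n)$. In particular, almost surely $\int_0^{t_n}1_{R_N}(\mu^N_s)\,ds \ge p t_n/2\to\infty$, so the set $\{s:\mu^N_s\in R_N\}$ has infinite Lebesgue measure and is thus unbounded.

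The only real subtlety is bridging between the statement of Proposition~\ref{prop: relaxation} (which is phrased as decay of densities) and the operator bound on $T_\tau$ needed to estimate $\langle g,T_{u-s}g\rangle$; this is where the duality/adjoint remark is essential. Everything else is the classical $L^2$ proof of the mean ergodic theorem with exponentially decaying correlations.
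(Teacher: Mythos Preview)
Your proof is correct and follows essentially the same strategy as the paper: stationarity gives the mean, and the $L^2$ relaxation from Proposition~\ref{prop: relaxation} yields an exponentially decaying covariance bound, which integrates to an $O(1/t)$ variance. The only difference is cosmetic: the paper conditions on $\{\mu^N_{s_1}\in R_N\}$ and applies Proposition~\ref{prop: relaxation} directly with the initial density $p^{-1}1_{R_N}$, whereas you read the proposition as an operator-norm bound on the dual semigroup and pass to $T_\tau$ by taking adjoints; this gives you the slightly sharper constant $p(1-p)$ in place of the paper's $\sqrt{p(1-p)}$. You also spell out the Borel--Cantelli step for the almost sure conclusion, which the paper leaves implicit.
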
 \begin{proof} Observe that \begin{equation} \mathbb{E}\left[\frac{1}{t}\int_0^t 1(\mu^N_s \in R_N) ds\right] = \frac{1}{t}\int_0^t \mathbb{P}(\mu^N_s\in R_N) ds = p\end{equation} so our claim reduces to bounding the variance. \\ \\ For times $t\geq 0$, write $A(t)$ as the event $A(t)=\{\mu^N_t\in R_N\}$; we will compute the covariance of $1_{A(s_1)}$ and $1_{A(s_2)}$, for $0 \leq s_1 \leq s_2$. Observe that \begin{equation} \mathbb{E}\left[1_{A(s_1)}(1_{A(s_2)}-p)\right]=p\left(\mathbb{P}\left(A(s_2)|A(s_1)\right)-p\right).\end{equation} Conditional on $A(s_1)$, the law of $\mu^N_{s_1}$ has a conditional density $h^N_{s_1}\propto 1_{R_N}$ with respect to $\sigma^N$. By  Proposition \ref{prop: relaxation}, conditional on $A(s_1)$, $\mu^N_{s_2}$ has a density $h^N_{s_2}$, and we can bound \begin{equation}|\mathbb{P}(A(s_2)|A(s_1))-p|\leq \left\|h^N_{s_2}-1\right\|_{L^1(\sigma^N)}\leq \left\|h^N_{s_2}-1\right\|_{L^2(\sigma^N)} \leq C(R_N) e^{-\lambda_0(s_2-s_1)} \end{equation} for some constant $C(R_N)$ independent of time. Hence \begin{equation} \mathbb{E}\left[(1_{A(s_1)}-p)(1_{A(s_2)}-p)\right]=p(\mathbb{P}(A(s_2)|A(s_1))-p) \leq p C(R_N) e^{-\lambda_0(s_2-s_1)}.\end{equation}We can now integrate to bound the variance: \begin{equation} \begin{split} \text{Var}\left(\frac{1}{t}\int_0^t 1(\mu^N_s \in R_N) ds\right) & =\frac{2}{t^2}\int_0^t ds_1 \int_{s_1}^t ds_2 \hspace{0.2cm}  \mathbb{E}\left[(1_{A(s_1)}-p)(1_{A(s_2)}-p)\right]  \\[1ex] &  \leq \frac{2pC}{t^2} \int_0^t ds_1\int_{s_1}^\infty  ds_2\hspace{0.2cm} e^{-\lambda_0(s_2-s_1)} \\[1ex] & \leq \frac{2pC}{\lambda_0 t} \rightarrow 0.  \end{split} \end{equation} \end{proof} An immediate corollary is that the long-run deviation must be bounded \emph{below} by the essential supremum of the deviation under the invariant measure: \begin{corollary} Let $(\mu^N_t)_{t\geq 0}$ be a $N$- particle Kac process in equilibrium. Then, almost surely, \begin{equation} \begin{split} \limsup_{t\rightarrow \infty}W(\mu^N_t, \gamma) \geq &\left\|W(\cdot, \gamma)\right\|_{L^\infty(\sigma^N)} \\ =& \esssup_{\sigma^N(d\mu)}\hspace{0.05cm} W(\mu, \gamma). \end{split} \end{equation} \end{corollary}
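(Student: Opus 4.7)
The corollary follows almost immediately from the ergodic-type statement in Lemma \ref{lemma: ergodic theorem}, applied to carefully chosen level sets of the deviation functional $\mu \mapsto W(\mu, \gamma)$.

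Write $M = \|W(\cdot,\gamma)\|_{L^\infty(\sigma^N)}$ for brevity, and for each $\epsilon > 0$, define the sublevel set
\[
R_N^{\epsilon} = \left\{ \mu \in \mathcal{S}_N : W(\mu, \gamma) > M - \epsilon \right\}.
\]
Since $W(\cdot, \gamma)$ is continuous on $\mathcal{S}_N$ (indeed, it is $1$-Lipschitz in the $W$-topology), the set $R_N^\epsilon$ is Borel, and by the very definition of the essential supremum we have $\sigma^N(R_N^\epsilon) > 0$.

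Lemma \ref{lemma: ergodic theorem} then applies to this choice of $R_N^\epsilon$: almost surely, the Kac process $(\mu^N_t)$ in equilibrium visits $R_N^\epsilon$ on an unbounded set of times. In particular, for $\mathbb{P}$-almost every realisation, there is a sequence $t_n \to \infty$ with $W(\mu^N_{t_n},\gamma) > M - \epsilon$, and so
\[
\limsup_{t\to\infty} W(\mu^N_t, \gamma) \geq M - \epsilon
\]
with probability one. The claim follows by intersecting the full-probability events associated to $\epsilon = 1/k$, $k\in\mathbb{N}$.

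No real obstacle arises here --- the only point requiring any care is observing that the positivity $\sigma^N(R_N^\epsilon) > 0$ holds for every $\epsilon > 0$ (which is the definition of $\esssup$) and that the exceptional null set produced by Lemma \ref{lemma: ergodic theorem} depends on $\epsilon$, so one must take a countable intersection to conclude. The continuity of $W(\cdot, \gamma)$ ensures $R_N^\epsilon$ is a genuine Borel (in fact open) set, so no measurability issues arise.
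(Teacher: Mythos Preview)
Your proof is correct and follows essentially the same approach as the paper: define the level sets $R_{N,\epsilon}=\{\mu:W(\mu,\gamma)>W^*-\epsilon\}$, use the definition of essential supremum to get $\sigma^N(R_{N,\epsilon})>0$, apply Lemma~\ref{lemma: ergodic theorem} to conclude the process visits $R_{N,\epsilon}$ on an unbounded set of times, and then intersect over a countable sequence $\epsilon_n\downarrow 0$. Your version is slightly more careful about measurability and the countable-intersection step, but the argument is the same.
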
 \begin{proof} For ease of notation, write $W^*$ as the essential supremum appearing on the right hand side. For any $\epsilon>0$, let $R_{N, \epsilon}=\{\mu\in \mathcal{S}_N: W(\mu, \gamma)>W^*-\epsilon\}$; it is immediate that $\sigma^N(R_{N, \epsilon})>0$. By the remark in Lemma \ref{lemma: ergodic theorem}, almost surely, $\mu^N_t$ visits $R_{N, \epsilon}$ on an unbounded set of times, and so \begin{equation} \limsup_{t\rightarrow \infty} W(\mu^N_t, \gamma) \geq W^*- \epsilon. \end{equation} The conclusion now follows on taking an intersection over some sequence $\epsilon_n \downarrow 0$. \end{proof} To prove Theorem \ref{thrm: No Uniform Estimate}, it now only remains to show a lower bound on the essential supremum.    \begin{lemma} \label{lemma: construct bad regions} Let $f$ be given by \begin{equation} f(v)=(1+|v|^2)\min\left(\frac{|v|}{\sqrt{N/2}},1\right). \end{equation} Then $f \in \mathcal{A}$, and \begin{equation} \left\|\langle f, \mu-\gamma\rangle\right\|_{L^\infty(\sigma^N)} \geq 1-\frac{C}{\sqrt{N}} \end{equation} for some constant $C=C(d)$. In particular, this is a lower bound for the essential supremum $W^*$, and so for the long-run deviation.\end{lemma}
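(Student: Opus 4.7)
The plan is to verify that $f \in \mathcal{A}$ and then exhibit a set of configurations of positive $\sigma^N$-measure where $\langle f, \mu - \gamma\rangle \geq 1 - C/\sqrt{N}$. First, $\hat{f}(v) = \min(|v|/\sqrt{N/2}, 1)$ is bounded by $1$ and, as the composition of the $1$-Lipschitz norm with a $\sqrt{2/N}$-Lipschitz one-dimensional map, has Lipschitz constant at most $\sqrt{2/N} \leq 1$; hence $\|\hat{f}\|_X \leq 1$ and $f \in \mathcal{A}$. Using the trivial bound $\hat{f}(v) \leq |v|/\sqrt{N/2}$, the Gaussian integral is controlled by
\begin{equation}
\langle f, \gamma\rangle \leq \frac{1}{\sqrt{N/2}} \bigl(\langle |v|, \gamma\rangle + \langle |v|^3, \gamma\rangle \bigr) \leq \frac{C(d)}{\sqrt{N}},
\end{equation}
since $\gamma$ is the standard Gaussian with $\langle |v|^2, \gamma\rangle = 1$.

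For the lower bound, I would work with the set
\begin{equation}
A = \bigl\{(v_1,\ldots,v_N) \in \mathbb{S}^N : |v_1|^2 \geq N/2 - 1,\; |v_2|^2 \geq N/2 - 1\bigr\}
\end{equation}
and show $\gamma^N(A) > 0$, which by the definition (\ref{eq: defn of sigmaN}) yields $\sigma^N(\theta_N(A)) > 0$. To see $A$ is non-empty, take $v_1 = \sqrt{N/2}\, e$, $v_2 = -\sqrt{N/2}\, e$ and $v_3 = \cdots = v_N = 0$ for any unit vector $e$: this satisfies $\sum v_i = 0$ and $\sum |v_i|^2 = N$, and lies in the relative interior of $A$. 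Since the defining constraints of $\mathbb{S}^N$ are independent near this point, $\mathbb{S}^N$ is a smooth submanifold of $(\mathbb{R}^d)^N$ there, and the uniform (Hausdorff) measure $\gamma^N$ assigns positive mass to any non-empty relatively open subset.

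On $A$, the energy constraint $\sum |v_i|^2 = N$ forces $|v_i|^2 \in [N/2 - 1, N/2 + 1]$ for $i = 1,2$ and $\sum_{i \geq 3} |v_i|^2 \leq 2$. For $i = 1, 2$ we have
\begin{equation}
\hat{f}(v_i) \geq \min\bigl(\sqrt{1 - 2/N}, 1\bigr) = \sqrt{1 - 2/N} \geq 1 - 2/N,
\end{equation}
so that
\begin{equation}
f(v_i) \geq (1 + N/2 - 1)(1 - 2/N) = N/2 - 1.
\end{equation}
Using $f \geq 0$ on the remaining particles, this gives $\langle f, \mu\rangle \geq (N - 2)/N = 1 - 2/N$ for every $\mu \in \theta_N(A)$, and combining with the Gaussian bound we conclude $\langle f, \mu - \gamma\rangle \geq 1 - C(d)/\sqrt{N}$ on the positive-measure set $\theta_N(A)$, which yields the claimed essential-supremum bound. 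There is no substantial obstacle: the only subtle point is the measure-theoretic verification that $A$ has positive measure, which reduces to the smoothness of $\mathbb{S}^N$ at the explicit configuration above.
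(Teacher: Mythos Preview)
Your proof is correct and follows essentially the same approach as the paper. The only cosmetic difference is that the paper defines its positive-measure region directly as $\widetilde{R}_N = \{(v_1,\ldots,v_N)\in\mathbb{S}^N:\langle f,\theta_N(v_1,\ldots,v_N)\rangle>1\}$, observes that this open set contains the same configuration $(\sqrt{N/2}\,e_1,-\sqrt{N/2}\,e_1,0,\ldots,0)$ you use, and then reads off $\langle f,\mu\rangle>1$ on it; you instead carve out the set $A$ by constraints on $|v_1|^2,|v_2|^2$ and compute $\langle f,\mu\rangle\geq 1-2/N$ there, which is equally sufficient.
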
 
\begin{proof} It is easy to see that $f \in \mathcal{A}$. Moreover, the region \begin{equation} \widetilde{R}_{N}=\left\{(v_1,...v_N)\in \mathbb{S}^N: \langle f, \theta_N(v_1,...,v_N) \rangle > 1 \right\}\end{equation} is an open subset of $\mathbb{S}^N$, containing $\left(\sqrt{\frac{N}{2}}e_1,-\sqrt{\frac{N}{2}}e_1,0,..,0\right)$. By positivity of the uniform measure $\gamma^N$ on $\mathbb{S}^N$, it follows that $\gamma^N(\widetilde{R}_{N})>0$. The corresponding region in $\mathcal{S}_N$: \begin{equation} R_{N}=\{\mu^N \in \mathcal{S}_N: \langle f, \mu^N\rangle >1 \} \supset \theta_N(\widetilde{R}_{N}).\end{equation} By definition (\ref{eq: defn of sigmaN}) of $\sigma^N$, we have \begin{equation} \sigma^N(R_{N}) \geq \gamma^N(\widetilde{R}_{N})>0. \end{equation} For all $\mu^N \in R_{N}$, we have \begin{equation} W(\mu^N, \gamma) \geq \langle f, \mu^N-\gamma \rangle \geq 1-N^{-1/2}\langle (1+|v|^2)|v|, \gamma\rangle. \end{equation} Since $R_{N}$ has positive measure, taking $C=\langle (1+|v|^2)|v|, \gamma\rangle$, we can conclude that \begin{equation} W^*\geq 1-\frac{C}{\sqrt{N}}. \end{equation} \end{proof}

\begin{proof}[Proof of Theorem \ref{thrm: No Uniform Estimate}] From the previous two lemmas, we know that for all $N\geq 2$, and for $\sigma^N$- almost all $\mu^N$, \begin{equation} \label{eq: ergodicistation} \mathbb{P}_{\mu^N}\left(\limsup_{t\rightarrow \infty} W(\mu^N_t, \gamma) \geq 1-\frac{C}{\sqrt{N}}\right)=1 \end{equation} where $\mathbb{P}_{\mu^N}$ denotes the law of a Kac process started at $\mu^N$. \\ \\ Let $N\geq 2, k> 2$ and  $a> 1$. The region $R_{\star, N}$ of the labelled sphere such that $\Lambda_{k}(\theta_N(\mathcal{V}))<a$ is an open set; to conclude that it has positive $\sigma^N$- measure, it suffices to show that it is nonempty. \medskip\\ Let $r$ be a rotation by $\frac{2\pi}{N}$ in the plane corresponding to the first two axes $(e_1,e_2)$. Then the data \begin{equation} \mathcal{V}_\star=(e_1, re_1, ..., r^{N-1}e_1) \end{equation} belongs to $\mathbb{S}^N$, and has $\Lambda_{k}(\theta_N(\mathcal{V}_\star))=\frac{1}{N}\sum_{i=1}^N 1^s = 1$. Hence $\mathcal{V}_\star \in R_{\star, N}$ is open and nonempty, so $\gamma^N(R_{\star, N})>0$. The positivity transfers to the corresponding region of $\mathcal{S}_N$: \begin{equation} \sigma^N\left\{\mu^N\in \mathcal{S}_N: \Lambda_{k}(\mu^N)<a\right\}=\gamma^N(R_{N, \star})>0.\end{equation} Hence, for any $N\geq 2$, we can choose an initial datum $\mu^N_0=\mu^N$, with $\Lambda_{k}(\mu^N_0)<a$, such that (\ref{eq: ergodicistation}) holds. Observing that \begin{equation} W(\phi_t(\mu^N_0),\gamma) \leq \|\phi_t(\mu^N_0)-\phi_t(\gamma)\|_{\mathrm{TV}+2} \rightarrow 0\end{equation} it follows that, $\mathbb{P}_{\mu^N}$- almost surely \begin{equation} \limsup_{t\rightarrow \infty} W(\mu^N_t, \gamma) = \limsup_{t\rightarrow \infty} W(\mu^N_t, \phi_t(\mu^N_0)) \geq 1-\frac{C}{\sqrt{N}}. \end{equation} \end{proof}
\begin{remark} \begin{enumerate}[label=\roman{*}).] \item The proof of Lemma \ref{lemma: ergodic theorem} leaves open the possibility that there is a non-empty `exceptional set' of initial data $\mu^N$ where (\ref{eq: ergodicistation}) does not hold. A stronger assertion would be positive Harris recurrence, as defined in \cite{Harris recurrence}, which allows a similar ergodic theorem for \emph{any} initial data $\mu^N$. This is not necessary for our purposes. \item In principle, one could use this compute the typical time scales necessary for these deviations to occur, and sharper estimates may be obtained by using more detailed forms of relaxation, such as the entropic relaxation considered by \cite{Carlen 08}. This is not necessary for our arguments. \end{enumerate}\end{remark}

\section{Proof of Theorem \ref{corr: PW convergence as POC}} \label{sec: proof of POC}

Finally, we show that Theorems \ref{thrm: PW convergence}, \ref{thm: low moment regime} implies the claimed chaoticity estimates in Theorem \ref{corr: PW convergence as POC}. The following proof largely follows that of \cite[Theorem 3.1]{M+M}, using the estimates derived in this paper. As remarked in the introduction, the novelty is the use of the H\"older estimate (\ref{eq: good continuity estimate}) to control the term $\mathcal{T}_3$. 

\medskip In the following proof, we will use estimates from Theorem \ref{thm: low moment regime}, which allow us to minimise the moment conditions required on the initial data. Better results can be obtained using Theorem \ref{thrm: PW convergence} at the cost of requiring a stronger moment estimate, although these still do not obtain optimal rates.
\begin{proof}[Proof of Theorem \ref{corr: PW convergence as POC}] Let $k>2$, and $\epsilon=\epsilon(d,k)>0$ be the resulting exponent from Theorem \ref{thm: low moment regime}. Let $\mu^N_0 \in \mathcal{S}_N$ satisfy $\Lambda_k(\mu^N_0)\le a$. \medskip \\  Recall that we wish to estimate \begin{equation} \frac{\mathcal{W}_{1,l}\left(\Pi_l[\mathcal{P}^N_t(\mu^N_0, \cdot), \phi_t(\mu^N_0)^{\otimes l}\right)}{l}\end{equation}  uniformly in $t\ge 0$ and $l=1,...,N$, and where $\mathcal{W}_{1,l}$ is the Wasserstein$_1$ distance on laws, given by (\ref{eq: definition of script W}). Let $\mathcal{V}^N_t$ be a labelled Kac process, and let $\mu^N_t$ be the associated process of empirical measures. Fixing a test function $f\in B_X^{\otimes l}$, we break up the difference as \begin{equation}\begin{split} \label{eq: decomposition for chaos} &\int_{(\mathbb{R}^d)^N} f(V)\left(\Pi_l[\mathcal{P}^N_t(\mu^N_0, \cdot)]-(\phi_t(\mu^N_0))^{\otimes l}\right)(dV) \\ & \hspace{1cm} =\mathbb{E}_{\mu^N_0}\left[\prod_{j=1}^l f_j(v_j(t))\right]-\prod_{j=1}^l\langle f_j, \phi_t(\mu^N_0)\rangle \\[1ex] & \hspace{1cm} = \mathcal{T}_1+\mathcal{T}_2\end{split} \end{equation} where $\mathbb{E}_{\mu^N_0}$ denotes expectation under the law $\mathcal{P}^N_t(\mu^N_0, \cdot)$, and where the two error terms are\begin{equation} \mathcal{T}_1:= \mathbb{E}_{\mu^N_0}\left[\prod_{j=1}^l f_j(v_j(t))-\prod_{j=1}^l \langle f_j, \mu^N_t\rangle\right]; \end{equation}  \begin{equation} \mathcal{T}_2:=\mathbb{E}_{\mu^N_0}\left[\prod_{j=1}^l \langle f_j, \mu^N_t\rangle-\prod_{j=1}^l\langle f_j, \phi_t(\mu^N_0)\rangle \right].\end{equation} Now, $\mathcal{T}_1$ is a purely combinatorial term, based on the use of empirical measures, and $\mathcal{T}_2$ may be controlled using the pointwise estimates Theorems \ref{thrm: PW convergence}, \ref{thm: low moment regime}. We will indicate how these terms may be controlled for the simple case $l=2$, and use this to show the full, `infinite dimensional' chaos estimate claimed.
\paragraph{Step 1: Estimate on $\mathcal{T}_1$} Since the law $\mathcal{P}^N_t(\mu^N_0, \cdot)$ is symmetric, we may rewrite  \begin{equation} \mathbb{E}_{\mu^N_0}\left[f_1(v_1(t))f_2(v_2(t))\right]=\mathbb{E}_{\mu^N_0}\left[\frac{1}{N(N-1)}\sum_{i\neq j} f_1(v_i(t))f_2(v_j(t))\right] \end{equation} where $N(N-1)$ counts the number of \emph{ordered} pairs of indexes $(i,j)$. Similarly, the second term may be written \begin{equation} \mathbb{E}_{\mu^N_0} \left[\langle f_1, \mu^N_t\rangle\langle f_2, \mu^N_t\rangle\right]=\mathbb{E}_{\mu^N_0}\left[\left(\frac{1}{N}\sum_{i=1}^N f_1(v_i(t))\right)\left(\frac{1}{N}\sum_{j=1}^N f_1(v_j(t))\right)\right].\end{equation}  Comparing the two terms, and using the bound $\|f_j\|_{L^\infty}\le \|f_j\|_X\le 1$ for $j=1,2$, we obtain the estimate \begin{equation} \begin{split} \left|\mathcal{T}_1\right| &\le \sum_{i\neq j} \left|\frac{1}{N(N-1)}-\frac{1}{N^2}\right|+\sum_{i=1}^N \frac{1}{N^2}. \end{split}  \end{equation} Therefore, we have the bound $|\mathcal{T}_1| \le \frac{2}{N}$, uniformly in $f$ and $t$. 
\paragraph{Step 2: Estimate on $\mathcal{T}_2$} For the case $l=2$, we break up the product as \begin{equation} \begin{split} &\prod_{j=1}^2 \langle f_j, \mu^N_t\rangle-\prod_{j=1}^2\langle f_j, \phi_t(\mu^N_0)\rangle \\&\hspace{1cm} = \langle f_1, \mu^N_t-\phi_t(\mu^N_0)\rangle \langle f_2, \mu^N_t\rangle + \langle f_1, \phi_t(\mu^N_0)\rangle \langle f_2, \mu^N_t-\phi_t(\mu^N_0)\rangle. \end{split}\end{equation} In each case, the difference term is dominated by a multiple of the Wasserstein distance $W(\mu^N_t, \phi_t(\mu))$, where $W$ is as in (\ref{eq: definition of W}), and the remaining term is absolutely bounded, by the boundedness of $f_j, j=1,2$. Therefore, we estimate \begin{equation}\label{eq: extensivity}\left|\prod_{j=1}^l \langle f_j, \mu^N_t\rangle-\prod_{j=1}^l\langle f_j, \phi_t(\mu^N_0)\rangle\right| \lesssim W(\mu^N_t, \phi_t(\mu^N_0)). \end{equation} Now, the right-hand side is precisely the term controlled by Theorems \ref{thrm: PW convergence}, \ref{thm: low moment regime}, in the special case $\mu_0=\mu^N_0$. By the choice of $\epsilon$ and $k$ above, we obtain the control \begin{equation} \mathcal{T}_2\lesssim \hspace{0.05cm}\Lambda_k(\mu^N_0)^\frac{1}{2}\hspace{0.05cm}N^{-\epsilon} \lesssim \hspace{0.05cm}a\hspace{0.05cm}N^{-\epsilon} \end{equation} for some explicit $\epsilon=\epsilon(d,k)>0$.\medskip \\ We also remark here that this implication, given Theorems \ref{thrm: PW convergence}, \ref{thm: low moment regime} is immediate. However, attempting to reverse this implication, and deduce a theorem similar to \ref{thrm: PW convergence} from a control of $\mathcal{T}_2$, requires moving the supremum over test functions $f$ \emph{inside} the expectation. This corresponds to the most technical step in our proof (Lemmas \ref{thrm: pointwise martingale control}, \ref{thrm: local uniform martingale control}). Therefore, while it may be possible to deduce a version Theorem \ref{thrm: PW convergence} from the control of $\mathcal{T}_2$ given by \cite{M+M}, this would scarcely be less technical than the proof given, and would not lead to a proof of Theorem \ref{thrm: Main Local Uniform Estimate}.
\paragraph{Step 3: Deduction of Infinite-Dimensional Chaos} Combining the two estimates for the case $l=2$ above, we deduce that there exists $\epsilon=\epsilon(d,k)>0$ such that\begin{equation} \sup_{t\ge 0} \hspace{0.05cm} \mathcal{W}_{1,2}\left(\Pi_2\left[\mathcal{P}^N_t(\mu^N_0, \cdot)\right], \phi_t(\mu^N_0)^{\otimes l}\right) \lesssim a\hspace{0.05cm}N^{-\epsilon}.\end{equation} To deduce the full statement, we appeal to the following result from \cite{Hauray Mischler}, which may also be found in \cite[Theorem 2.1]{Mischler}. For any probability measure $\mu$ on $\mathbb{R}^d$, and any symmetric distribution $\mathcal{L}^N$ on $(\mathbb{R}^d)^N$, we may estimate \begin{equation} \max_{l\le N}\hspace{0.05cm}\frac{\mathcal{W}_{1,l}\left(\Pi_l[\mathcal{L}^N], \mu^{\otimes l}\right)}{l} \le C\left(\mathcal{W}_{1,2}\left(\Pi_2[\mathcal{L}^N], \mu^{\otimes 2}\right)^{\alpha_1}+N^{-\alpha_2}\right)\end{equation} for some explicit constants $C, \alpha_1, \alpha_2>0$ depending on the dimension $d$. The claimed result (\ref{eq: CPOC}) now follows. \medskip \\ We now turn to the two consequences claimed as a result. \paragraph{i). Chaotic Case} Let $\mu_0 \in \mathcal{S}$ have an $k^\mathrm{th}$ moment $\Lambda_k(\mu_0)\le a$, and construct  $\mathcal{V}^N_0=(v_1(0),...,v_N(0))$ be as described in the statement of the theorem with associated empirical measure $\mu^N_0$. It is straightforward to show that this construction preserves moments up to a constant: that is, $\mathbb{E}(\Lambda_k(\mu^N_0))\lesssim a.$ \medskip \\ For a fixed test function $f\in B_X^{\otimes l}$, we return to the decomposition (\ref{eq: decomposition for chaos}). For this case, where $\mu^N_0\neq \mu_0$, we have a third error term: \begin{equation} \int_{(\mathbb{R}^d)^N} f(V)(\Pi_l[\mathcal{LV}^N_t]-(\phi_t(\mu_0))^{\otimes l})(dV)=\mathcal{T}_1+\mathcal{T}+\mathcal{T}_3.\end{equation} Here, $\mathcal{T}_1$ and $\mathcal{T}_2$ are as above, replacing $\mathbb{E}_{\mu^N_0}$ by the full expectation $\mathbb{E}$, and $\mathcal{T}_3$ is an additional error term, from approximating $\mu_0$ by $\mu^N_0$: \begin{equation} \mathcal{T}_3:=\mathbb{E}\left[\prod_{j=1}^l \langle f_j, \phi_t(\mu^N_0)\rangle-\prod_{j=1}^l\langle f_j, \phi_t(\mu_0)\rangle\right].\end{equation} As in the case above, we consider first the case $l=2$. The first two terms $\mathcal{T}_1, \mathcal{T}_2$ may be estimated as above, by conditioning on $(v_1(0),...,v_N(0))$ to conclude that \begin{equation} \mathcal{T}_1+\mathcal{T}_2 \lesssim aN^{-\epsilon}\end{equation} for some $\epsilon>0$, uniformly in $f\in B_X^{\otimes l}$ and $t\ge 0$. \medskip \\ Arguing as in (\ref{eq: extensivity}), we bound \begin{equation} \mathcal{T}_3 \lesssim\mathbb{E}W(\phi_t(\mu^N_0), \phi_t(\mu_0)).\end{equation}We estimate this term using the contunity estimate Theorem \ref{thrm: W-W continuity of phit}. Let $k'\in (2,k)$, and let $\zeta>0$ be the resulting exponent using Theorem \ref{thrm: W-W continuity of phit}; by making $\zeta$ smaller if necessary, we assume that \begin{equation} \frac{\zeta k}{k-k'} \le 1. \end{equation}  From Theorem \ref{thrm: W-W continuity of phit}, we have the estimate \begin{equation} \sup_{t\ge 0} W(\phi_t(\mu^N_0),\phi_t(\mu_0))\lesssim\Lambda_{k'}(\mu^N_0,\mu_0)W(\mu^N_0,\mu_0)^\zeta\end{equation} and we use H\"older's inequality to obtain, uniformly in $t\ge 0$, \begin{equation} \begin{split}\mathbb{E}\left[W(\phi_t(\mu^N_0)\phi_t(\mu_0))\right] &\lesssim \mathbb{E}\left[\Lambda_k(\mu^N_0)\right]^{k'/k}\mathbb{E}\left[W(\mu^N_0,\mu_0)^\frac{\zeta k}{k-k'}\right]^\frac{k-k'}{k} \\[1ex] & \lesssim a^{k'/k} \hspace{0.1cm}\mathbb{E}\left[W(\mu^N_0,\mu_0)\right]^\zeta. \end{split} \end{equation} From \cite[Proposition 9.2]{ACE}, there is a constant $\beta=\beta(d,k)>0$ such that $ \mathbb{E} W(\mu^N_0,\mu_0)\lesssim N^{-\beta}$, so we obtain \begin{equation} \mathbb{E}\left[W(\phi_t(\mu^N_0),\phi_t(\mu_0))\right] \lesssim aN^{-\beta\zeta}.\end{equation} Combining, and since all of our estimates are uniform in $f$ and $t$, we have shown that \begin{equation} \mathcal{W}_{1,2}\left(\Pi_2[\mathcal{LV}^N_t], \phi_t(\mu_0)^{\otimes 2}\right) \lesssim aN^{-\alpha}\end{equation} for some $\alpha=\alpha(d,k)>0$. The improvement to infinite-dimensional chaos is exactly as above.  
\paragraph{ii). General Case} The general case follows from the first case, by taking expectations over the initial data $\mu^N_0$. Indeed, for all $l\le N$, all $f\in B_X^{\otimes l}$ and $t\ge 0$, and for any initial data $(v_1(0),...v_N(0))$ with associated measure $\mu^N_0$, we have the bound \begin{equation} \frac{1}{l}\hspace{0.1cm}\mathbb{E}_{\mu^N_0}\left[f_1(v_1(t))...f_l(v_l(t))-\prod_{j=1}^l \langle f_j, \phi_t(\mu^N_0)\rangle\right] \lesssim \Lambda_k(\mu^N_0)N^{-\epsilon}.\end{equation} Taking expectation over the random initial data $(v_1(0),...,v_N(0))$ produces a full expectation on the left-hand side, and by definition of $\mathcal{L}^l_t$ in (\ref{eq: defn of ll}), \begin{equation}\mathbb{E}\left[\prod_{j=1}^l \langle f_j, \phi_t(\mu^N_0)\rangle\right]=\int_{(\mathbb{R}^d)^l} f(V)\hspace{0.1cm}\mathcal{L}^l_t(dV). \end{equation} Optimising over $f\in B_X^{\otimes l}$, $l\le N$ and $t\ge 0$ proves the claimed result.      \end{proof}

\appendix

\section{Calculus of Martingales} \label{sec: Calculus of mgs}

We also review some basic facts and inequalities for martingales associated to the Kac process. All of these facts are true for general Markov chains, see \cite{D&N}.

Let $\mu^N_t$ be a Kac process, and write $m^N$, $\overline{m}^N$ for the jump measure and compensator defined in Section \ref{sec: interpolation decomposition}. Then, for any bounded and measurable $F^N: [0,T]\times \mathcal{S}_N  \rightarrow \mathbb{R}$, the process \begin{equation} \label{eq: M is for martingale} \mathcal{M}^N_t= \int_{(0,t]\times \mathcal{S}_N} \left\{F^N_s(\mu^N)-F^N_s(\mu^N_{s-})\right\} (m^N-\overline{m}^N)(ds, d\mu^N) , \hspace{0.5cm} 0\leq t\leq T\end{equation} is a martingale for the natural filtration $(\mathcal{F}^N_t)_{t\geq 0}$ of the process. We have the $L^2$ control \begin{equation} \label{eq: elltwo martingale control} \left\| \mathcal{M}^N_t\right\|^2_2 = \mathbb{E}\left\{\int_{(0,t]\times \mathcal{S}_N} \left\{F^N_s(\mu^N)-F^N_s(\mu^N_{s-})\right\}^2 \overline{m}^N(ds, d\mu^N)\right\}. \end{equation} We will also use another special case of It\^{o}'s isometry for the measure $m^N-\overline{m}^N$ for a similar form of martingale. If $F^N$ is bounded and measurable on $[0,T]\times \mathcal{S}_N$, then for $t\leq T$, \begin{equation} \label{eq: QV of M}\left\|\int_0^t F^N_s(\mu^N_{s-})(m^N-\overline{m}^N)(ds, \mathcal{S}_N)\right\|^2_2 = \mathbb{E}\left\{\int_0^t F^N_s(\mu^N_s)^2\hspace{0.1cm} \overline{m}^N(ds, \mathcal{S}_N)\right\}.\end{equation} For the local uniform case, Theorem \ref{thrm: Main Local Uniform Estimate}, it will be necessary to control martingales of the form (\ref{eq: M is for martingale}) in general $L^p$ spaces, rather than simply $L^2$. Since $\mathcal{M}^N$ of this form are finite variation martingales, the quadratic variation is given by \begin{equation} \left[\mathcal{M}^N\right]_t=\int_{(0,t]\times \mathcal{S}_N} \left\{F^N_s(\mu^N)-F^N_s(\mu^N_{s-})\right\}^2 m^N(ds, d\mu^N), \hspace{0.5cm} 0\leq t\leq T.\end{equation} Our analysis in $L^p$ is based on Burkholder's inequality for c\`{a}dl\`{a}g martingales, which we state here for the class of martingales constructed above:  \begin{lemma} \label{lemma:Burkholder} Suppose that $(\mathcal{M}^N_t)_{t=0}^T$ is the process given by (\ref{eq: M is for martingale}), and let $p\geq 2$. Then there exists a constant $C=C(p)<\infty$ such that for all $t\leq T$, we have the $L^p$ control \begin{equation} \left\|\hspace{0.1cm}\sup_{s\leq t} \left|\mathcal{M}^N_s\right|\hspace{0.1cm}\right\|^p_p \leq C(p)  \mathbb{E}\left[\left(\int_0^t \left\{F^N_s(\mu^N)-F^N_s(\mu^N_{s-})\right\}^2 m^N(ds, d\mu^N) \right)^{p/2} \right].\end{equation} \end{lemma}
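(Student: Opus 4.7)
The plan is to reduce the statement to the classical Burkholder-Davis-Gundy inequality for càdlàg martingales. The process $(\mathcal{M}^N_t)$ defined by (\ref{eq: M is for martingale}) is a purely discontinuous martingale of finite variation, since the compensator $\overline{m}^N$ has a density with respect to Lebesgue time and the jump measure $m^N$ is a locally finite counting measure (the total jump rate of the Kac process is bounded by $2N$, so $m^N((0,t]\times\mathcal{S}_N)$ has finite moments of all orders). Consequently, the optional quadratic variation of $\mathcal{M}^N$ is simply the sum of its squared jumps, which matches the right-hand side:
\[[\mathcal{M}^N]_t = \int_{(0,t]\times\mathcal{S}_N}\bigl\{F^N_s(\mu^N)-F^N_s(\mu^N_{s-})\bigr\}^2 m^N(ds,d\mu^N).\]

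First I would verify this identity for $[\mathcal{M}^N]_t$ directly from the definition of $\mathcal{M}^N$: because $\overline{m}^N$ is absolutely continuous in time, the compensated integral against $\overline{m}^N$ contributes only to the drift part (which is continuous in $t$) and no quadratic variation, while every jump of $\mathcal{M}^N$ is of the form $F^N_s(\mu^N)-F^N_s(\mu^N_{s-})$ at a jump time $s$ of the Kac process with post-jump measure $\mu^N$. Summing the squares of these jumps yields exactly the integral against $m^N$.

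Next I would invoke the Burkholder-Davis-Gundy inequality (see e.g.\ \cite{D&N} for the formulation suited to Markov chain martingales, or any standard reference for càdlàg martingales): for every $p\ge 2$ there is a universal constant $C(p)<\infty$ such that
\[\left\|\sup_{s\le t}|\mathcal{M}^N_s|\right\|_p^p \le C(p)\,\mathbb{E}\bigl[[\mathcal{M}^N]_t^{p/2}\bigr].\]
Substituting the identity for $[\mathcal{M}^N]_t$ derived in the previous step yields exactly the claimed inequality. The boundedness assumption on $F^N$, together with the moment bounds on $m^N((0,t]\times\mathcal{S}_N)$, ensures that both sides are finite so no truncation argument is required; if one wanted to be more careful, one could first prove the bound for $\mathcal{M}^N$ stopped at $T_M = \inf\{t:|\mathcal{M}^N_t|\ge M\}$ and pass to the limit by monotone convergence on the right-hand side and Fatou on the left.

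There is no serious obstacle here: the only subtlety is checking that the finite-variation structure of $\mathcal{M}^N$ makes the predictable quadratic variation $\langle \mathcal{M}^N\rangle$ (which would be the integral against $\overline{m}^N$) irrelevant, and that the correct quantity entering BDG is the pure-jump quadratic variation $[\mathcal{M}^N]$ based on $m^N$. Once this identification is in place, the lemma is an immediate corollary of BDG.
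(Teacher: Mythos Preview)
Your proposal is correct and follows exactly the approach the paper indicates: the paper does not give a formal proof of this lemma, but immediately before stating it identifies $[\mathcal{M}^N]_t$ as the integral against $m^N$ (since $\mathcal{M}^N$ is of finite variation) and then simply invokes the classical Burkholder inequality for c\`adl\`ag martingales. Your write-up spells out the same two steps in slightly more detail.
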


\end{document}